\newtheorem{theorem}{Theorem}[section]
\numberwithin{equation}{section}
\numberwithin{figure}{section}
\numberwithin{table}{section}
\renewcommand{\vec}[1]{\mbox{\boldmath \small $#1$}}
\newtheorem{example}{Example}[section]
\newtheorem{lemma}{Lemma}[section]
\newtheorem{remark}{Remark}[section]
\numberwithin{equation}{section}
\numberwithin{figure}{section}
\numberwithin{table}{section}
\numberwithin{thm}{section}
\newenvironment{proof}[1][Proof]{\begin{trivlist}
\item[\hskip \labelsep {\bfseries #1}]}{\end{trivlist}}
\renewcommand{\qed}{\hfill \nobreak \ifvmode \relax \else
      \ifdim\lastskip<1.5em \hskip-\lastskip
      \hskip1.5em plus0em minus0.5em \fi \nobreak
      \vrule height0.75em width0.5em depth0.25em\fi}
\begin{document}

\begin{frontmatter}

\title{Runge-Kutta discontinuous local evolution Galerkin methods for the shallow water equations on the cubed-sphere}

\author{Yangyu Kuang},
\ead{kyy@pku.edu.cn}
\author{Kailiang Wu},
\ead{wukl@pku.edu.cn}
\author[label2]{Huazhong Tang}
\thanks[label2]{Corresponding author. Tel:~+86-10-62757018;
Fax:~+86-10-62751801.}
\ead{hztang@math.pku.edu.cn}
\address{HEDPS, CAPT \& LMAM, School of Mathematical Sciences, Peking University,
Beijing 100871, P.R. China}
 \date{\today{}}

\maketitle

\begin{abstract}
The paper develops high order accurate Runge-Kutta discontinuous local evolution Galerkin (RKDLEG) methods
on the cubed-sphere grid for the shallow water equations (SWEs). Instead of using the dimensional splitting method or solving  one-dimensional Riemann problem in the direction normal to the cell interface, the RKDLEG methods are built on genuinely multi-dimensional approximate local evolution operator of the locally linearized SWEs on a sphere by considering all bicharacteristic directions. Several numerical experiments  are conducted to demonstrate the accuracy and performance of our RKDLEG methods, in comparison to the { Runge-Kutta discontinuous Galerkin} method with Godunov's flux etc.
\end{abstract}

\begin{keyword}
RKDLEG method, evolution operator, genuinely multi-dimensional method, shallow water equations, cubed-sphere grid.
\end{keyword}
\end{frontmatter}

%\thispagestyle{plain} \markboth{ } {}
%%%%%%%%%%%%%%%%%%%%%%%%%%%%%%%%%%%%%%%%%%%%%%%%%%%% 5

\section{Introduction}
\label{sec:intro}

The shallow water equations (SWEs)
describe the motion of a thin layer of fluid held down by gravity.
%Atmospheric model is essential and important for numerical weather predictions and climatic simulations \cite{Chen:2014}. The governing equations are the fundamental atmospheric conservation laws for mass, momentum, and energy etc \cite{Lauter:2007}. We simplified them to the {\color{red}{SWEs}} on sphere.
%The SWE{\color{red}{s}} {\color{red}{constitute}} a hyperbolic system of conservation laws. It is one of the simplest nonlinear hyperbolic systems, covering the main features of the atmospheric model \cite{Lauter:2007}.
%
%Williamson summarized many forms in which the SWE{\color{red}{s}} on sphere can be written \cite{Williamson:1992}.
The SWEs on the sphere exhibit the major difficulties associated with the horizontal dynamical aspects of atmospheric modeling on the spherical earth and thus are important in
studying the dynamics of large-scale atmospheric flows
and developing numerical methods of more complex atmospheric models.
In comparison with the planar case, the main difficulties in solving the SWEs on the sphere
 {come} from the spherical geometry,
the choice of coordinates,   nonlinearity,
and the large scale difference between the horizontal  and  vertical motions of the fluids. %, with large horizontal scale and small vertical scale \cite{Ullrich:2010}. Finally, in climate simulation, we need numerical methods with good numerical stability to satisfy with the long term calculation.
High-order accurate numerical methods
are becoming increasingly popular in atmospheric modeling, but the numerical methods should be competent for long time simulation.
In order to evaluate numerical methods for the solutions of SWEs in spherical geometry, Williamson et al.
  proposed a suite of seven test cases and offered reference solutions to those tests obtained by using  a pseudo-spectral method \cite{Williamson:1992}.

%SWE{s} on sphere {require} solving nonlinear equations, and the analytic treatment of practical problems {\color{red}{is}} more difficult. Hence studying them numerically is the primary approach.

Representation of the spherical geometry plays an important role in solving SWEs on the sphere.
The latitude-longitude (LAT/LON) coordinates or grids are naturally and popularly chosen in the  early stage \cite{Bates: 1990,Lin:1997,McDonald:1989},  but the singularity at the poles leads to big numerical difficulty.
Overcoming such pole singularity  needs special numerical technique and boundary conditions \cite{Sadourny:1972}.
 To avoid  the pole singularity in the LAT/LON coordinates,
 other choices are the icosahedral hexagonal or triangular grids \cite{Lauter:2007,Lee:2008,Pudykiewicz: 2011,Tometa 2001}, Yin-Yang grid \cite{Kageyama:2004,Li:2008,Li:2012}, and cubed-sphere grid \cite{Chen:2008,Nair:2005-1,Putman: 2007,Ronchi:1996,Sadourny:1972,Ullrich:2010}.
 Comparisons of those frequently-used grids are given in \cite{Chen:2014,StCyr: 2008}.
An icosahedral-hexagonal grid on the sphere is created by dividing the faces of an icosahedron and projecting the vertices onto the sphere, thus it is non-quadrilateral and  unstructured.
%which brings about substantial difficulties in developing high-order global models,
The Ying-Yang grid is  overset  in spherical geometry
and composes of two identical component grids  combined in a complementary way
to cover a spherical surface with partial overlap on their boundaries %\cite{Kageyama:2004}
so that the interpolation should be used between two  component grids.  %\cite{Chen:2014,Peng:2006}.
%does not automatically guarantee
%the numerical conservativeness for the interpolation for communicating data
%across the Yin-Yang border
%or requires complicated numerical treatment to complete conservativeness
%across the Yin-Yang overlapping region.
The cubed-sphere grid is quasi-uniform  and easily generated by  dividing  the sphere into six identical regions
with the aid of projection of the sides of a circumscribed cube onto a spherical surface
and  choosing the coordinate lines on each region to be arcs of great circles.
%is popular now for its grid uniformity, convenient for numerical conservation, easiness to implement different numerical methods, and excellent parallel efficiency. % \cite{Putman: 2007}.
%So in this paper we employ the cubed-sphere grid for RKDLEG method.
The mainly existing  numerical methods  for  the SWE{{s}} on the sphere are as follows:
 %, reached high accuracy but resulted non-physical numerical oscillations and a large computational cost of Legendre transforms.
 finite-difference \cite{Bates: 1990,Sadourny:1972,Thuburn:1997,Tometa 2001},
  finite-volume   \cite{Lee:2008,Lin:1997,Yang:2010},
  %, but all of these rely on low-order classical finite-volume methods. To improve the order,
  multi-moment finite volume \cite{Chen:2014,Chen:2008,Li:2008,Li:2012},
    spectral transform   \cite{Jakob: 1995},
  %spectral-element and discontinuous-Galerkin method to get higher order.
  spectral element \cite{Giraldo:2005-1,Taylor: 1997,Thomas:2005},
  %and the main disadvantage of spectral-element method is lack of conservation which affects the accuracy in long time simulations \cite{Giraldo:2008,Nair:2005-2}.
   and discontinuous Galerkin (DG) methods \cite{Giraldo: 2002,Giraldo:2008,Lauter:2007,Nair:2005-1,Nair:2005-2}
  etc.
%Many of the models are based on the exact or approximate Riemann solvers.
 Most of them are built on the one-dimensional exact or approximate Riemann solver.
 %Due to its briefness, local Lax-Friedrich(or Rusanov) is a popular one-dimensional approximate Riemann solver used by \cite{Nair:2005-2,Giraldo:2008,Lauter:2007} et al, piecewise upwind approximate Riemann solver \cite{vanLeer1977} used by \cite{Lee:2008}, Osher's approximate Riemann solver \cite{Osher: 1983} used by \cite{Yang:2010}, and Ullrich compared Rusanov \cite{Rusanov:1961}, Roe \cite{Roe: 1981} and the new $AUSM^{+}-up$ \cite{Liou: 2006} approximate Riemann solver \cite{Ullrich:2010}. Unfortunately, such multi-{\color{red}{dimensional}} methods, based on one-dimensional (exact or approximate) Riemann solver, may lead to a misinterpretation of local wave structure in the solutions of the quasi-linear hyperbolic system \cite{Roe1986}. Hence, it is useful to develop genuinely multi-dimensional methods for SWE{\color{red}{s}} on sphere. Meanwhile, as long time period calculation requirement in climate simulation, we need numerical methods with good numerical stability to satisfy with the long term calculation. By Shu and Cheng, it was shown that the error of the {\color{red}{DG}} scheme will not grow for fine grids over a long time period in \cite{Cheng:2008,Cheng:2010,Zhang:2004} for linear hyperbolic equations. In our experiments, we can see this property of DG scheme is preserved in our model.

The aim of the paper is to develop {Runge-Kutta discontinuous local evolution Galerkin (RKDLEG)} methods for the SWEs on the cubed sphere. They are  the (genuinely) multi-dimensional   and  combining  the {Runge-Kutta discontinuous Galerkin (RKDG)} methods on the cubed-sphere  with the local evolution Galerkin (LEG) method, which
 is a modification and simplification  of the original  finite volume  evolution Galerkin (EG) method for  {multi-dimensional} nonlinear hyperbolic system \cite{LukacovaSaibertovaWarnecke2002,SunRen2009}.
The EG method generalizes  the Godunov method
by using an evolution operator coupling the flux formulation of each direction for the multi-dimensional hyperbolic system.
 The basic idea of the {{EG}} method {was} introduced in \cite{Morton1998},
and then it {was} developed for the linear hyperbolic system in \cite{LukacovaMortonWarnecke2000}
and  nonlinear hyperbolic systems in \cite{LukacovaMortonWarnecke2002,LukacovaSaibertovaWarnecke2002}.
The EG method is constructed by using the theory of bicharacteristics
in order to take all infinitely many directions of wave propagation
into account and give the exact and approximate evolution operators of the linearized hyperbolic system,
in other words, integrating the linearized hyperbolic system along its bicharacteristics
to obtain an equivalent integral system,
then making a suitable approximations of the integral system.
Similar bicharacteristic-type methods for hyperbolic system,
can be found in the early literature such as \cite{Butler1960,Huang1995,Johnston1972}.
The EG method may be considered as a genuinely multi-dimensional Godunov-type scheme,
in which the so-called approximate evolution operator is used to
get the explicit approximate solutions at each cell interface along all infinite bicharacteristics
of the linearized system,
instead of solving the local one-dimensional Riemann problem
in direction normal to each cell interface by any Riemann solver.
It has been used successfully for various physical applications,
e.g. the wave propagation in heterogeneous media \cite{Lukacova:2009},
the Euler equations of gas dynamics \cite{LukacovaMortonWarnecke2004}, and the SWEs \cite{Lukacova:2011,Lukacova:2013,LukacovaHundertmark:2011,LukacovaNoelleKraft2004} with well-balanced property with or without dry beds.
A survey of finite volume {EG} method {was} presented in \cite{LukacovaMortonWarnecke2010}.
The LEG method {was} proposed in \cite{SunRen2009} to simplify the evaluation of the EG
numerical fluxes
by taking the limit of the approximate evolution operator  at time level $t_{n}$
as the time $t_{n}+\tau$ approaches $t_{n}$. It has been  successfully extended  to the relativistic hydrodynamics \cite{Wu:2014}.
%Although all above-mentioned EG methods are in the finite volume frame,
{Few attensions were} paid to a combination of the {RKDG} methods
with the {{EG}} operator for the highly accurate simulation of  planar compressible flows, see e.g. \cite{Lukacova:2012,Lukacova:2014}.
%Although the {EG} method or {{LEG}} method
%is applied for various equations with finite volume method or {\color{red}{DG}} method,
%its application on the system on sphere is untouched.
%
It is challenging to extend the EG or LEG method to the SWE{{s}} in the spherical
geometry.
%we want to derive the evolution operator {\color{red}{of SWEs}} on sphere.
Due to the complex geometry and the choice of coordinates,
the derivation of evolution operator is much more complicate
than the planar case.
%and due to the discontinuity of the transformation
 %rom the cube face  to the sphere surface across the edge of cube face,
%we need special treatment on the edges of cube face.
%To simplify the operator,
%we choose to use the {\color{red}{LEG}} method as \cite{SunRen2009,Wu:2014}.
%And for the reason proposed in the last paragraph,
%we use the {\color{red}{LEG}} method for the numerical fluxes calculation
%in the {\color{red}{DG}} method in this paper.

The paper is organized as follows.
 Section \ref{sec:Preliminaries}
 introduces the cubed-sphere grid and spherical SWEs and derives the exact evolution
 operator of the locally linearized spherical SWEs in the reference coordinates.
 %
 %\ref{subsec:cube} introduces the cubed-sphere grid
 %and Section \ref{subsec:equations} gives several form of SWE{\color{red}{s}} on sphere.
 %Section \ref{subsec:exact operator} derives the exact evolution operator of the linearized {\color{red}{SWEs}} on sphere.
 Section \ref{sec:method} presents our RKDLEG method in the reference coordinates,
 including  the DG spatial discretization in Section \ref{subsubsec:DG},
  Runge-Kutta time discretization in Section \ref{subsubsec:RK},
  and the approximate evolution operators  in the cubed sphere face in Section \ref{subsec:Approximate},
where a special treatment is given  for  the points on the
 edges of cubed sphere  face in order to   % in Section \ref{subsubsection:edges}
 preserve the conservation of numerical fluxes there.
 The approximate local evolution operator  is equal to
 the limit of the approximate evolution operator as the time parameter tends to zero,
 %  A proof of conservation of the fluxes is in Section \ref{subsec:conservation}.
 Section \ref{sec:numerical-results} conducts several numerical experiments
 to demonstrate the accuracy and effectiveness
 as well as the multi-dimensional behavior
 of the proposed RKDLEG method.
 Section \ref{sec:conclud} concludes the paper. % with several remarks.

% % % % % % % % % % % % % % % % % % % % % % % % % % % % % % % % % % % % % % % % % % % %
% % % % % % % % % % % % % % % % % % % % % % % % % % % % % % % % % % % % % % % % % % % %
\section{Preliminaries and notations}
\label{sec:Preliminaries}
This section introduces the cubed-sphere grid and SWE{s},
and derives  the exact evolution operator of
the locally linearized SWE{s}  in spherical geometry.

\subsection{The cubed-sphere grid}
\label{subsec:cube}
%[Sadourny:1972}] A class of conservative finite-difference approximations of the primitive equations
%is given for quasi-uniform spherical grids derived from regular polyhedrons.
%The earth is split into several contiguous regions. Within
%each region, a coordinate system derived from central
%projections is used, instead of the spherical coordinate
%system, to avoid the use of inconsistent boundary conditions at the poles.
%The presence of artificial internal boundaries has no effect
%on the conservation properties of the approximations.
%Examples of conservative schemes, up to the second order in the case of a cube, are given.
%A selective damping operator is needed to remove the
%two-grid interval waves generated by the existence of internal boundaries.

%[M. Taylor et al.JCP1997]

%Each of  6 identical subregions $P_i$ of the sphere
%is divided into $N\times N$ uniform cells in $\left(x,y\right)$ plane.

Since the cubed-sphere grid {was} proposed,
it  has widely been  used in the literature because the boundary conditions
at  the pole in spherical coordinates are seen to vanish in the finite-difference formulation \cite{Sadourny:1972}.
%and the simplest schemes are mass- and energy- conserving.
%Consistency as well as computational efficiency
%are ewy to obtain in the absence of singularities.
It {was} numerically demonstrated that the cubed-sphere grid with gnomonic (equiangular central) projection
{was} an excellent choice for high-order accurate numerical methods in global
modeling applications, see e.g. \cite{Nair:2005-1,Nair:2005-2} etc.

The sphere with radius of $R$ is decomposed into 6 identical cubed-sphere faces
$\{\rm P_{i}, i=1,2,\cdots,6\}$ by
using the central (gnomonic) projection of an inscribed cube
with side $\frac{2\sqrt{3}}{3}R$ \cite{Sadourny:1972},
see Fig. \ref{fig:sphere} (a),
where the thick line denotes the edge of cubed-sphere face.
There are  two different central projections:
employing the local Cartesian coordinates \cite{Sadourny:1972} and the
equiangular (central) coordinates \cite{Ronchi:1996,Thomas-Loft2002}.
%(Thomas and Loft 2002).
%
In the equiangular projection,  each  face of cube or cubed-sphere may be mapped to a
reference region $\tilde{\Omega}=[-\frac{\pi}{4}, \frac{\pi}{4}]\times [-\frac{\pi}{4}, \frac{\pi}{4}]$,
in which the equiangular (central)  coordinates are denoted by  $x$ and $y$ here.
As an example,
%We use equiangular projection \cite{Sadourny:1972}.
%the point on  the cubed-sphere face $\rm P_{1}$
%can be determined by the local coordinate $\left(x,y\right)$ \cite{Sadourny:1972}.
%For each face of the cube, $x,y\in\left[-\frac{\pi}{4},\frac{\pi}{4}\right]$.
the mapping relation between the cubed-sphere face $\rm P_{1}$
and the reference region $\tilde{\Omega}$
%are shown in (\cite{Chen:2008} Fig. 2 and \cite{Nair:2005-1} Fig. 1).
%The transformation between the LAT/LON coordinate $\xi \in \left[-\pi ,\pi \right),\eta \in \left[-\frac{\pi }{2},\frac{\pi }{2}\right]$, with , and
is given by %can be derived from the geometrical relations
\begin{equation}
\label{tranform}
x=\xi,\quad y=\arctan \left(\frac{\tan\eta }{\cos \xi }\right),
\end{equation}
where
$\eta \in \left[-\arctan\left(|\cos\xi|\right),\arctan\left(|\cos\xi|\right)\right]\subset  \left[-\frac{\pi }{2},\frac{\pi }{2}\right]$,
$\xi \in \left[-\frac{\pi}{4} ,\frac{\pi}{4} \right]\subset  \left[-\pi ,\pi \right)$, and
 $\xi$ and $\eta$ denote the longitude and   latitude  (LAT/LON) coordinates.
In fact, the transformation rules in \eqref{tranform} are also satisfied for any face
of the cubed-sphere
in rotated  LAT/LON coordinates $(\xi,\eta)$
with the origin located at the center of corresponding face.
With the application of the transformations between
the rotated and  original LAT/LON coordinates,
the transformation rules for other faces of the cubed-sphere can be obtained,
see Appendix A of \cite{Nair:2005-1} for a detailed description.

Divide $\tilde{\Omega}$ into a { square} grid, see Fig. \ref{fig:sphere} (b),
and then such grid is inversely mapped to the cubed-sphere face to get
the cubed-sphere grid, see its schematic diagram in Fig. \ref{fig:sphere} (c).
%
%On the sphere, we denote longitude by $\eta$, latitude
%by $\xi$, and the associated unit vectors by  and u. We will
%denote the derivative of the mapping from (x, y) R (l, u)
%by the matrix D,  %[M. Taylor et al.JCP1997]
It is worth noting that  the equiangular projection generates more uniform grid on the sphere as
opposed to the equidistant projection \cite{Nair:2005-1},
the reference coordinate system is free of pole singularities, and all grid lines on the sphere are
great-circle arcs. However,  the transformation from $\tilde{\Omega}$ to the sphere
is not conformal  and such central mapping creates identical non-orthogonal curvilinear coordinates on
each face of the cubed-sphere.
%Edges of the cube-faces are discontinuous.
% in Equiangular projection, $(x,y)$ denotes the reference coordinates;
%while in equidistant projection, $(x,y)$ denotes the local coordinates;

\begin{figure}[htbp]
  \centering
  \subfigure[{\small Cubic subdivision of  sphere}]{
  \begin{minipage}{4cm}
  \includegraphics[width=4cm,height=4cm]{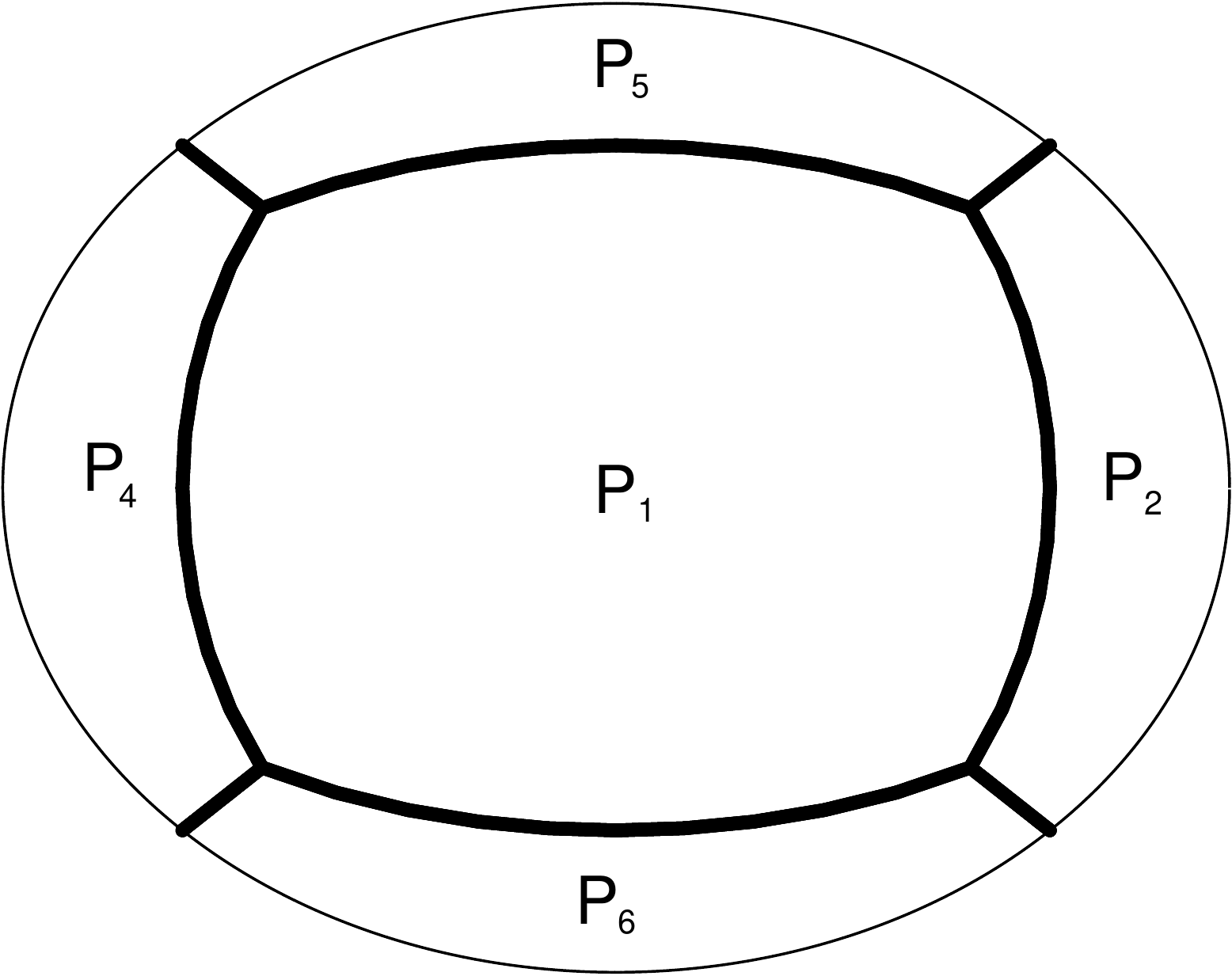}
 %   \psfrag{P}{$P_1$}
  \end{minipage}}\quad \quad
    \subfigure[{\small Square mesh in $\tilde{\Omega}$}]{
    	\begin{minipage}[c]{4cm}
    		\includegraphics[width=4cm,height=4cm]{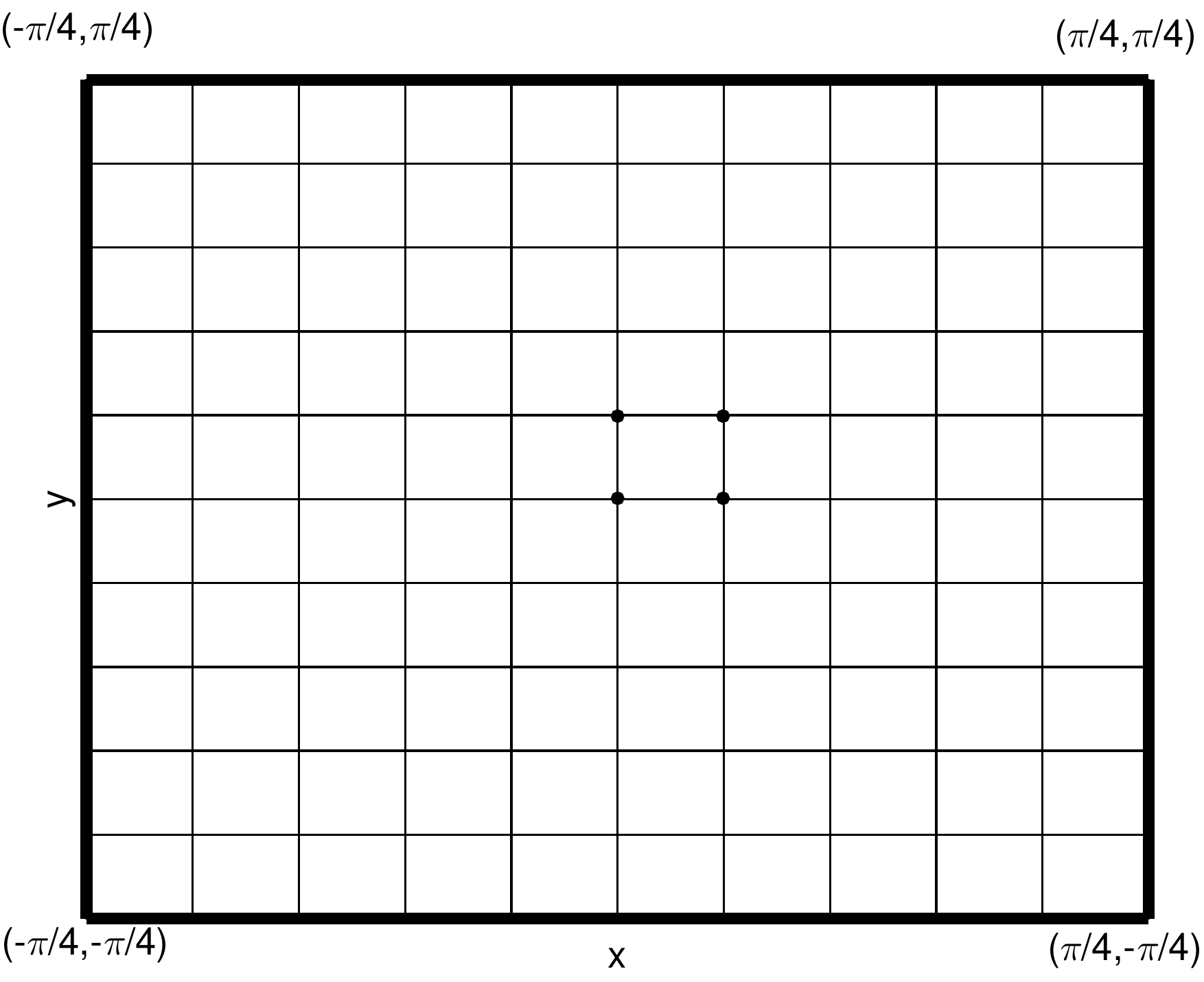}
    	\end{minipage}}\qquad
  \subfigure[{\small Cubed-sphere grid}]{
  \begin{minipage}{4cm}
  \includegraphics[width=4cm,height=4cm]{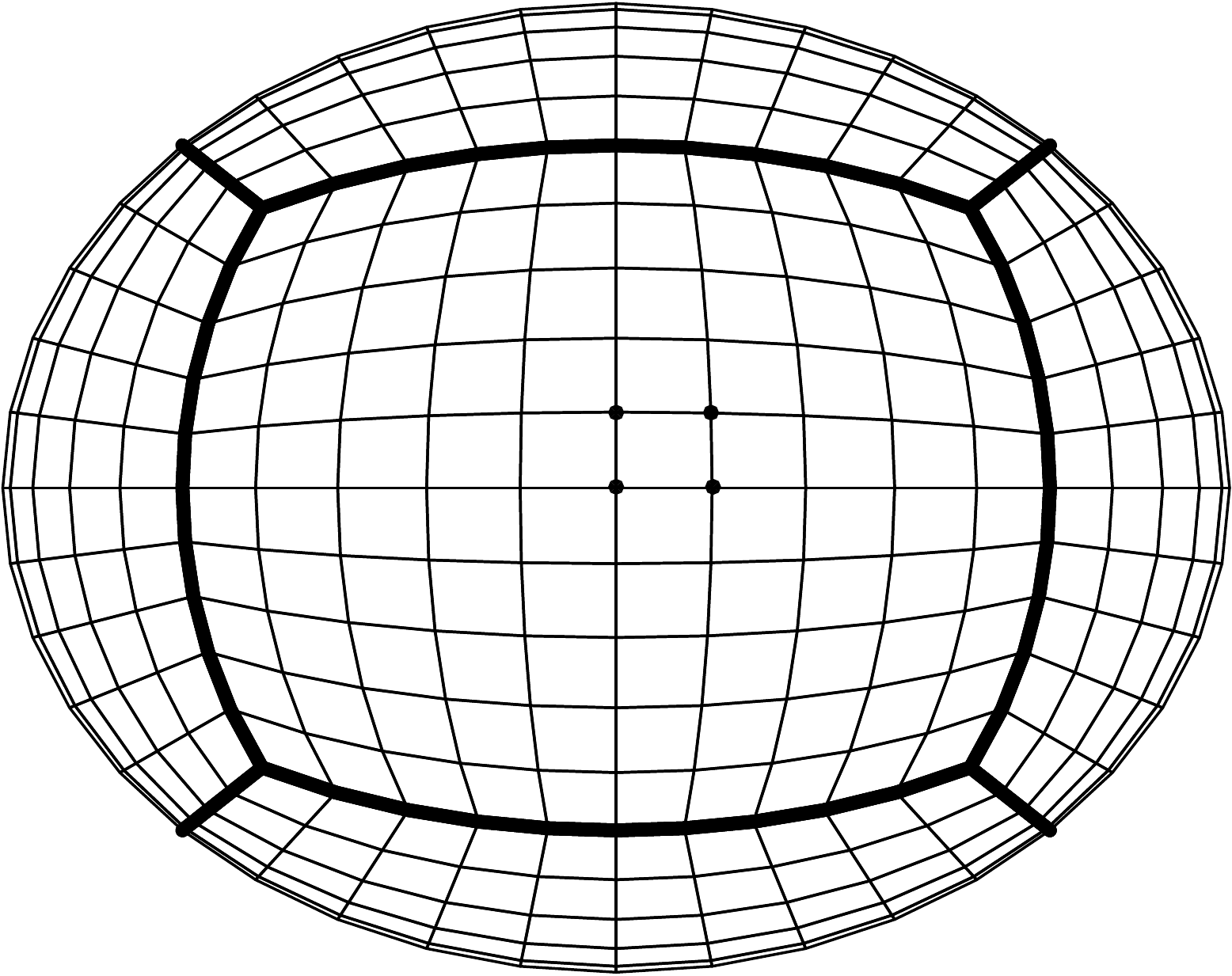}
  \end{minipage}}
  \caption{Schematic diagram of  the
  cubed-sphere and  cubed-sphere grid.}
  \label{fig:sphere}
\end{figure}

%General curvilinear treatment (tensor form) of the coordinates necessitates two
%sets of variables (vectors) viz., covariant ($x_i$ ) and contravariant ($x^i$ ).

It is now possible to compute the metric tensor and corresponding quantities using \eqref{tranform}.
Let $\vec r$ be the position vector of the point on the spherical surface,
and  $\vec{v}$ denote the velocity vector of the fluid on the sphere satisfying
 $\vec{v}\cdot\vec{k}=0$, where $\vec{k}$ is the outward unit normal vector of the spherical surface.
The covariant velocity $\left(\hat{u},\hat{v}\right)$ may be calculated by $\hat{u}=\vec{v}\cdot\frac{\partial \vec{r}}{\partial x},\hat{v}=\vec{v}\cdot\frac{\partial \vec{r}}{\partial y}$,
while the contravariant velocity $\left(u,v\right)$ may be given
by solving $\vec{v}=u\frac{\partial \vec{r}}{\partial x}+v\frac{\partial \vec{r}}{\partial y}$ and  $\vec{v}\cdot\vec{k}=0$,
where $\frac{\partial \vec{r}}{\partial x}$ and $\frac{\partial \vec{r}}{\partial y}$ denote the covariant base vectors of the transformation between the reference region $\tilde{\Omega}$
and spherical surface.
If using $\left(u_{s},v_{s}\right)$ to denote the velocity in LAT/LON coordinates $(\xi,\eta)$, that is,
 $u_{s}$ and $v_{s}$ are the longitude and latitude components of the velocity respectively,
then the relations among $\left(u_{s},v_{s}\right)$, $\left(u,v\right)$ and $\left(\hat{u},\hat{v}\right)$ can be given by
\begin{equation}
\label{veclocity}
\vec{A}\begin{pmatrix}
u\\
v
\end{pmatrix}=
\begin{pmatrix}
u_{s}\\
v_{s}
\end{pmatrix},
\quad
\vec{A}^{T}\begin{pmatrix}
u_{s}\\
v_{s}
\end{pmatrix}=
\begin{pmatrix}
\hat{u}\\
\hat{v}
\end{pmatrix},
\quad
\vec{A}=\begin{pmatrix}
   R\xi_{x}\cos\eta  & R\xi_{y}\cos\eta   \\
   R\eta_{x} & R\eta_{y}  \\
\end{pmatrix}
,\end{equation}
where $R$ is the radius of the sphere.
The metric tensor $\vec{G}$ for the above transformation
can be obtained by
\begin{equation}
\label{metric}
   \vec{G}=
   \begin{pmatrix}
   {g}_{11} & {g}_{12}  \\
   {g}_{21} & {g}_{22}  \\
   \end{pmatrix}    =\vec{A}^{T}\vec{A}=
    \frac{R^{2}}{\rho^{4}\cos^{2}x\cos^{2}y}
   \begin{pmatrix}
    1+\tan^{2}x & -\tan x\tan y  \\
   -\tan x\tan y & 1+\tan^{2}y  \\
    \end{pmatrix},
\end{equation}
and its inverse is
\begin{equation}
\vec{G}^{-1}=
\begin{pmatrix}
   {g}^{11} & {g}^{12}  \\
   {g}^{21} & {g}^{22}  \\
\end{pmatrix}
=\frac{1}{\det { (\vec{G})}}
\begin{pmatrix}
   {g}_{22} & -{g}_{12}  \\
   -{g}_{21} & {g}_{11}  \\
\end{pmatrix},
\end{equation}
where ${{\rho }^{2}}=1+{{\tan }^{2}}x+{{\tan }^{2}}y$.
It is worth noting that  the metric tensor  has the same form for each face of the cubed-sphere \cite{Nair:2005-2},
and   some  special numerical treatments are required around the edges  of the cubed-sphere face,
see Section \ref{sec:method},
because the coordinate transformation across the edges of the cubed-sphere face is not continuous.

%which may degrade the accuracy of numerical solution or cause non-physical oscillation.

%\begin{remark}
%The cubed-sphere grid is generated by mapping the sphere onto an spherically inscribed cube using the equiangular projection.
%The six same cubed-sphere faces  connected to each other by 12 edges called
%the edges of the cubed-sphere face.
%We obtain more uniform spacing by the projection, but the mesh on each cubed-sphere face is not orthogonal.
%\end{remark}

% % % % % % % % % % % % % % % % % % % % % % % % % % % %
\subsection{Governing equations}
\label{subsec:equations}

The spherical shallow water equations in the LAT/LON coordinates
$(\xi,\eta)$ may be written as follows \cite{Williamson:1992}
%
%We consider SWE{\color{red}{s}} on sphere. Considering the continuous equation and the momentum equations on Cartesian coordinates in , on , the {\color{red}{SWEs}} in spherical component form {\color{red}{are}} given as follows \cite{Williamson:1992},
\begin{equation}
\label{eq:latlon}
 \begin{aligned}
  & \frac{\partial h}{\partial t}+\frac{1}{R\cos \eta }\left[\frac{\partial }{\partial \xi }\left(h{u}_{s}\right)+\frac{\partial }{\partial \eta }\left(h{v}_{s}\cos \eta\right)\right]=0, \\
 & \frac{\partial {u}_{s}}{\partial t}+\frac{1}{R\cos \eta }\left[\frac{\partial \left(g\left(h+b\right)\right)}{\partial \xi }+\left({u}_{s}\frac{\partial {u}_{s}}{\partial \xi }+{{v}_{s}}\cos \eta \frac{\partial {u}_{s}}{\partial \eta }\right)\right]-f{v}_{s}-\frac{{u}_{s}\tan \eta }{R}{v}_{s}=0, \\
 & \frac{\partial {v}_{s}}{\partial t}+\frac{1}{R\cos \eta }\left[\cos \eta \frac{\partial \left(g\left(h+b\right)\right)}{\partial \eta }+\left({u}_{s}\frac{\partial {v}_{s}}{\partial \xi }+{{v}_{s}}\cos \eta \frac{\partial {v}_{s}}{\partial \eta }\right)\right]+f{u}_{s}+\frac{{u}_{s}\tan \eta }{R}{u}_{s}=0, \\
\end{aligned}
\end{equation}
where $b$ denotes the height of the bottom mountain, $h$  is the height of the fluid over the bottom mountain, $u_{s}$ and $v_{s}$ are two velocity components in the longitude $\xi$ and   latitude $\eta$ directions respectively, $g$ is the gravitational constant, $f$ is the Coriolis parameter defined by $f=2\Omega \sin\theta$, and $\Omega=7.292\times 10^{-5} \mbox{ s}^{-1}$ is the angular speed of the Earth's rotation.

%The same reference coordinates $(x,y)$ are used for the six faces of the cubed-spahere.
%and the governing equations can be written into the same expressions.
Under the transformation between the LAT/LON coordinates $(\xi,\eta)$
and the reference coordinates $(x,y)$, given in Section \ref{subsec:cube},
the   SWE{s} \eqref{eq:latlon} may be  transformed into the following divergence form \cite{Yang:2010}
\begin{equation}
\label{eq:cube}
\frac{\partial \vec{U} }{\partial t} + \frac{\partial \vec{F}_{1} }{\partial x} +\frac{\partial \vec{F}_{2} }{\partial y} = \vec{S}_0,
\end{equation}
where
\[
\vec{U}=
\begin{pmatrix} \Lambda h \\ \Lambda hu\\ \Lambda hv\end{pmatrix},
\
\vec{F}_{1} =
\begin{pmatrix}
\Lambda hu \\ \Lambda\left(hu^{2}+\frac{1}{2}gg^{11}h^{2}\right)\\ \Lambda \left(huv+\frac{1}{2}gg^{12}h^{2}\right)
\end{pmatrix},
\
\vec{F}_{2}=
\begin{pmatrix} \Lambda hv \\ \Lambda\left(huv+\frac{1}{2}gg^{12}h^{2}\right)\\ \Lambda \left(hv^{2}+\frac{1}{2}gg^{22}h^{2}\right)\end{pmatrix},
\
\vec{S}_0=
\begin{pmatrix} 0\\ \Lambda S_{0}^{(1)}\\ \Lambda S_{0}^{(2)}\end{pmatrix},
\]
Here  $\left(u,v\right)$ denotes the contravariant velocity vector,
$\Lambda =\sqrt{\det {(\vec{G})}}$ is the Jacobian of the  transformation, $\vec{G}$ is defined in \eqref{metric}, and
\begin{align*}
S_{0}^{(1)}&=-\Gamma_{11}^{1}hu^{2}-2\Gamma_{12}^{1}huv-f\Lambda\left(g^{12}hu-g^{11}hv\right)-gh\left(g^{11}b_{x}+g^{12}b_{y}\right),\\
S_{0}^{(2)}&=-{ \Gamma_{22}^{2}}hv^{2}-2\Gamma_{12}^{2}huv-f\Lambda\left(g^{22}hu-g^{12}hv\right)-gh\left(g^{12}b_{x}+g^{22}b_{y}\right),\end{align*}
in which  the Christoffel symbols  are given by \cite{Yang:2010}
\[{
\Gamma _{11}^{1}=\frac{2\tan x~{{\tan }^{2}}y}{{{\rho }^{2}}},\quad \Gamma _{12}^{1}=-\frac{\tan y}{{{\rho^2}} {{\cos }^{2}}y}, \quad \Gamma _{12}^{2}=-\frac{\tan x}{{{\rho^2}} {{\cos }^{2}}x}, \quad
 \Gamma _{22}^{2}=\frac{2{{\tan }^{2}}x\tan y}{{{\rho^2}} }.}
\]

%We consider equations \eqref{eq:cube}.

When  the solutions are smooth,   \eqref{eq:cube} is equivalent
to the following primitive variable form
\begin{equation}
\label{eq:primitive}
\frac{\partial \vec{V} }{\partial t} + \vec{A}_{1}(\vec{V},\vec{x}) \frac{\partial \vec{V} }{\partial x} + \vec{A}_{2}(\vec{V},\vec {x}) \frac{\partial \vec{V} }{\partial y} = \vec{S}_{1},
\end{equation}
where $\vec{V}=\left(h,u,v\right)^{T}$, $\vec{x}=\left(x,y\right)$,
 and
\[
\vec{A}_{1}(\vec{V},\vec{x})= \begin{pmatrix}
u &h &0\\
gg^{11} & u & 0\\
gg^{12} & 0 & u
\end{pmatrix},
\quad
\vec{A}_{2}(\vec{V},\vec{x})= \begin{pmatrix}
v &0 &h\\
gg^{12} & v & 0\\
gg^{22} & 0 & v
\end{pmatrix}
. \]
Here the source term { $\vec{S}_{1}= \vec{S}_{1}^{B}-\vec{S}_{1}^{S}$ with
 $\vec{S}_{1}^{B}:=(0, S_{1}^{B(1)}, S_{1}^{B(2)})^{T} $ and $\vec{S}_{1}^{S}:=\left(0, S_{1}^{S(1)}, S_{1}^{S(2)}\right)^{T}$},
in which
\begin{align*}
{S_{1}^{B(1)}}=&g\left(g^{11}\frac{\partial b}{\partial x}+g^{12}\frac{\partial b}{\partial y}\right), \quad
{S_{1}^{B(2)}}=g\left(g^{12}\frac{\partial b}{\partial x}+g^{22}\frac{\partial b}{\partial y}\right),\\
S_{1}^{S(1)}=&\Gamma_{11}^{1}u^{2}+2\Gamma_{12}^{1}uv+f\Lambda\left(g^{12}u-g^{11}v\right)\\
&+\frac{1}{2}gh\left(\frac{\partial g^{11}}{\partial x}+\frac{\partial g^{12}}{\partial y}\right)+\frac{1}{2\Lambda}gh\left(\Lambda_{x}g^{11}+\Lambda_{y}g^{12}\right),\\
S_{1}^{S(2)}=&\Gamma_{22}^{2}v^{2}+2\Gamma_{12}^{2}uv-f\Lambda\left(g^{22}u-g^{12}v\right)\\
&+\frac{1}{2}gh\left(\frac{\partial g^{12}}{\partial x}+\frac{\partial g^{22}}{\partial y}\right)+\frac{1}{2\Lambda}gh\left(\Lambda_{x}g^{12}+\Lambda_{y}g^{22}\right).
\end{align*}

The system \eqref{eq:primitive}
or \eqref{eq:cube} is hyperbolic in time, and
the linearized version of   \eqref{eq:primitive}
becomes the start point of the approximate local evolution operator in our RKDLEG methods for the SWEs \eqref{eq:cube}, see Section \ref{subsec:exact operator}.

\begin{lemma}[Hyperbolicity in time]\label{lemma:hyper}

For all admissible states $\vec{V}$ and any real angle $\theta$,
the matrix $\vec{A}(\vec{V},\vec{x}{;\theta}):= \vec{A}_{1}(\vec{V},\vec{x})\cos\theta+\vec{A}_{2}(\vec{V},\vec{x})\sin\theta$ may be  diagonalized as
\[
\vec{A}(\vec{V},\vec{x}{;\theta})= \vec{R}(\vec{V},\vec{x}{;\theta}) \vec{\Lambda}(\vec{V},\vec{x}{;\theta}) \vec{L}\left(\vec{V},\vec{x}{;\theta}\right),
%\quad %\vec{L}\left(\vec{V},\vec{x}{\color{red};\theta}\right)\vec{R}\left(\vec{V},\vec{x}{\color{red};\theta}\right)=\vec{I},
\]
where $\vec{\Lambda}(\vec{V},\vec{x}{;\theta})$ is a diagonal matrix with three real entries
\begin{equation}
\label{lambda}
\lambda^{(1)}(\vec{V},\vec{x}{;\theta})=v_{\theta} -cK_{\theta},
\quad
\lambda^{(2)}(\vec{V},\vec{x}{;\theta})=v_{\theta},
\quad
\lambda^{(3)}(\vec{V},\vec{x}{;\theta})=v_{\theta} +cK_{\theta},
\end{equation}
and the matrix $\vec{L}(\vec{V},\vec{x}{;\theta})$ and its inverse $\vec{R}(\vec{V},\vec{x}{;\theta})$ are given by
\begin{align}
%\label{lvector}
\vec{L}(\vec{V},\vec{x}{;\theta})=
\begin{pmatrix}
-\frac{1}{2} & \frac{c\cos\theta}{2gK_{\theta}} & \frac{c\sin\theta}{2gK_{\theta}} \\
0 & \frac{G_{s}(\theta)}{K_{\theta}} & -\frac{G_{c}(\theta)}{K_{\theta}} \\
\frac{1}{2} & \frac{c\cos\theta}{2gK_{\theta}} & \frac{c\sin\theta}{2gK_{\theta}}
\end{pmatrix},
%=: \begin{pmatrix}
%\vec{L}^{(1)}\left(\vec{V},\vec{x}{\color{red};\theta}\right)\\
%\vec{L}^{(2)}\left(\vec{V},\vec{x}{\color{red};\theta}\right)\\
%\vec{L}^{(3)}\left(\vec{V},\vec{x}{\color{red};\theta}\right)
%\end{pmatrix},
\
  \vec{R}(\vec{V},\vec{x}{;\theta})=
%&=\vec{L}^{-1}\left(\vec{V},\vec{x}{\color{red};\theta}\right)=
\begin{pmatrix}
-1 &0 &1\\
\frac{g}{c}G_{c}(\theta) & \sin\theta & \frac{g}{c}G_{c}(\theta)\\
\frac{g}{c}G_{s}(\theta) & -\cos\theta & \frac{g}{c}G_{s}(\theta)
\end{pmatrix}.
%\\
%&=:\left(\vec{R}^{(1)}\left(\vec{V},\vec{x}{\color{red};\theta}\right),
%\vec{R}^{(2)}\left(\vec{V},\vec{x}{\color{red};\theta}\right),\vec{R}^{(3)}\left(\vec{V},\vec{x}{\color{red};\theta}\right)\right).
\label{rvector}
\end{align}
Here
\begin{equation}
\label{kth,vth,c}
K_{\theta}=\sqrt{g^{11}\cos^{2}\theta+g^{12}\sin2\theta+g^{22}\sin^{2}\theta},\quad
v_{\theta}=u\cos\theta+v\sin\theta,\quad
c=\sqrt{gh},
\end{equation}
and
\[
G_{c}(\theta)=\frac{g^{11}\cos\theta+g^{12}\sin\theta}{K_{\theta}},\quad
G_{s}(\theta)=\frac{g^{12}\cos\theta+g^{22}\sin\theta}{K_{\theta}}
.\]
\end{lemma}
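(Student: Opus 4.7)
The plan is a direct computational verification: show that the characteristic polynomial of $\vec{A}(\vec{V},\vec{x};\theta)$ has three real roots given by \eqref{lambda}, that those roots are distinct under the admissibility assumption (so diagonalizability is automatic), and then exhibit the left and right eigenvectors given in \eqref{rvector}.

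First I would form the combined matrix
\[
\vec{A}=\begin{pmatrix} v_\theta & h\cos\theta & h\sin\theta \\ g(g^{11}\cos\theta+g^{12}\sin\theta) & v_\theta & 0 \\ g(g^{12}\cos\theta+g^{22}\sin\theta) & 0 & v_\theta \end{pmatrix},
\]
set $\mu=\lambda-v_\theta$, and expand $\det(\vec{A}-\lambda\vec{I})$ along the first row. The two off-diagonal terms collapse, via the identity $\cos\theta(g^{11}\cos\theta+g^{12}\sin\theta)+\sin\theta(g^{12}\cos\theta+g^{22}\sin\theta)=K_\theta^2$, into the single contribution $\mu gh K_\theta^2$, yielding the characteristic equation $\mu(\mu^2-c^2K_\theta^2)=0$. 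This immediately gives the eigenvalues $\lambda^{(1,2,3)}$ in \eqref{lambda}.

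Next I would verify the eigenvector formulas. For $\lambda^{(2)}=v_\theta$ one reads off from the second and third rows that the first component must vanish (since $g^{11}\cos\theta+g^{12}\sin\theta$ and $g^{12}\cos\theta+g^{22}\sin\theta$ cannot vanish simultaneously, as this would force $K_\theta=0$), and the first row then gives the acoustic-orthogonal vector $(0,\sin\theta,-\cos\theta)^T$. For $\lambda^{(3)}=v_\theta+cK_\theta$, normalizing the first component to $1$, the second and third rows immediately produce $(1,(g/c)G_c(\theta),(g/c)G_s(\theta))^T$, and the first row is then verified by using the same $K_\theta^2$ identity together with $c^2=gh$; the case $\lambda^{(1)}$ is identical with the obvious sign flip. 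Once $\vec{R}$ is established I would obtain $\vec{L}=\vec{R}^{-1}$ either by direct inversion of the $3\times 3$ matrix, or more cheaply by verifying that the rows in \eqref{rvector} satisfy $\vec{L}\vec{R}=\vec{I}$.

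The only subtle point — the spot where something could in principle fail — is showing that the three eigenvalues are genuinely real and distinct, which reduces to $K_\theta>0$ and $c>0$. The latter follows from the admissibility condition $h>0$. For the former I would note that the tensor $\vec{G}$ defined in \eqref{metric} has positive diagonal entry $g_{11}$ and, by a brief computation, determinant $\det(\vec{G})=R^4/(\rho^6\cos^4 x\cos^4 y)>0$ on $\tilde\Omega$, hence $\vec{G}$ is positive definite; therefore $\vec{G}^{-1}$ is positive definite as well, and $K_\theta^2=(\cos\theta,\sin\theta)\vec{G}^{-1}(\cos\theta,\sin\theta)^T>0$ for every $\theta$. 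This is the main, though still short, obstacle; the rest of the proof is straightforward linear algebra.
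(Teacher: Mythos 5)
Your proof is correct and is exactly the routine verification the paper has in mind when it states that ``the proof of this lemma is trivial and omitted here'': the characteristic polynomial computation, the eigenvector check via the identity $G_{c}(\theta)\cos\theta+G_{s}(\theta)\sin\theta=K_{\theta}$, and the confirmation that $\vec{L}\vec{R}=\vec{I}$ all go through as you describe. Your added observation that $K_{\theta}>0$ follows from the positive definiteness of $\vec{G}^{-1}$ (with $\det(\vec{G})=R^{4}/(\rho^{6}\cos^{4}x\cos^{4}y)>0$) is the one point the paper leaves entirely implicit, and it is handled correctly.
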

%\begin{proof}
The proof of this lemma is trivial and omitted here.
%\qed\end{proof}

%The scalar $\lambda^{(\ell)}\left(\vec{V},\vec{x}{\color{red};\theta}\right)$, the row vector
%$ \vec{L}^{(\ell)}\left(\vec{V},\vec{x}{\color{red};\theta}\right)$, and the column %vector
%$ \vec{R}^{(\ell)}\left(\vec{V},\vec{x}{\color{red};\theta}\right)$ are the $\ell$th eigenvalue and corresponding left and right eigenvectors of $\vec{A}\left(\vec{V},\vec{x}{\color{red};\theta}\right) $, $\ell = 1,2,3$.

\subsection{Exact evolution operator}
\label{subsec:exact operator}
This section derives the exact evolution operator of the locally linearized SWE{{s}}, or equivalently, integrates the locally linearized SWE{{s}} along their bicharacteristics to give an equivalent integral system.

%We take SWE{{s}} on the cubed-sphere \eqref{eq:primitive} as an example.

Use $\vec{\tilde{x}}=\left(\tilde{x},\tilde{y}\right)$ and $\vec{\tilde{V}} =\left(\tilde{h},\tilde{u},\tilde{v}\right)^{T}$  to denote the reference position and state of the vector  $\vec{V}$ in \eqref{eq:primitive}, and linearize the system
\eqref{eq:primitive} as follows
\begin{equation}
\label{eq:linear}
\frac{\partial \vec{ V } }{\partial t} + \vec{A}_{1}(\vec{ \tilde{V}  },\vec{\tilde{x}}) \frac{\partial \vec{ V } }{\partial x} + \vec{A}_{2}(\vec{ \tilde{V}  },\vec{\tilde{x}}) \frac{\partial \vec{  V } }{\partial y} = \vec{{S}}_{1}(\vec{V},\vec{x}).
\end{equation}
It is obvious that \eqref{eq:linear} is still hyperbolic in time thanks to {{Lemma}} \ref{lemma:hyper}. For the sake of convenience, we will shorten notations in the following such as $\vec{\tilde{A}}_{i}:=\vec{A}_{i}(\vec{\tilde{V}},\vec{\tilde{x}}) $,
$\vec{\tilde{L}}(\theta):=\vec{L}(\vec{\tilde{V}},\vec{\tilde{x}}{;\theta}) $,
$\vec{\tilde{R}}(\theta):=\vec{R}(\vec{\tilde{V}},\vec{\tilde{x}}{;\theta}) $,
$\tilde{\lambda}^{(\ell)}(\theta):=\lambda^{(\ell)}(\vec{\tilde{V}},\vec{\tilde{x}}{;\theta}) $, and so on.

Multiplying the system \eqref{eq:linear} from the left by $ \vec{\tilde{L}}(\theta)$
gives its characteristic form
\begin{equation}
\label{eq:charact}
\frac{\partial \vec{ W } }{\partial t} + \vec{\tilde{B}}_{1}(\theta) \frac{\partial \vec{W} }{\partial x} + \vec{\tilde{B}}_{2}(\theta)  \frac{\partial \vec{ W} }{\partial y} = \vec{\tilde{L}}(\theta)\vec{S}_{1}
, \end{equation}
or the quasi-diagonalized form
\begin{equation}
\label{eq:diag}
\frac{\partial \vec{ W } }{\partial t} + \vec{\tilde{D}}_{1}(\theta) \frac{\partial \vec{ W } }{\partial x} + \vec{\tilde{D}}_{2}(\theta)  \frac{\partial \vec{  W} }{\partial y} = \vec{S}(\vec{W}{;\theta})+ \vec{S}^{(a)}
, \end{equation}
where  $\vec{W} %=\left(w_{1},w_{2},w_{3}\right)^{T}
= \vec{\tilde{L}}(\theta)\vec{V} $ is the characteristic variable vector with three components
\begin{equation}
\label{w1w2w3}
w_{1}= -\frac{h}{2}+\frac{\tilde{c}}{2g\tilde{K}_{\theta}}v_{\theta},\quad
w_{2}=\frac{1}{\tilde{K}_{\theta}}\left[\tilde{G}_{s}(\theta)u-\tilde{G}_{c}(\theta)v\right],\quad
w_{3}= \frac{h}{2}+\frac{\tilde{c}}{2g\tilde{K}_{\theta}}v_{\theta},
\end{equation}
and the matrix $\vec{\tilde{D}}_{i}(\theta)$ denotes the diagonal component of $\vec{\tilde{B}}_{i}(\theta)
=\vec{\tilde{L}}(\theta)  \vec{\tilde{A}}_{i}
\vec{\tilde{R}}(\theta)$, $i=1,2$. Moreover, the ``source'' terms
are expressed by
\begin{equation}
\label{swtheta}
\vec{S}(\vec{W}{;\theta})
:=\left(\vec{\tilde{D}}_{1}(\theta)-\vec{\tilde{B}}_{1}(\theta)\right)\frac{\partial \vec{W}}{\partial x}+\left(\vec{\tilde{D}}_{2}(\theta)-\vec{\tilde{B}}_{2}(\theta)\right)\frac{\partial \vec{W}}{\partial y},\\
\end{equation}
and
\[
\vec{S}^{(a)} %=\left(s_{1}^{(a)},s_{2}^{(a)},s_{3}^{(a)}\right)^{T}
=\vec{\tilde{L}}(\theta)\vec{S}_{1}.
\]
%and the matrix $\vec{\tilde{D}_{i}}(\theta)$ denotes the diagonal component of $\vec{\tilde{B}_{i} }(\theta)$.
Because the entry in the $\imath$th row and $\jmath$th column of a matrix $\vec{\tilde{B}}_{i}(\theta)$ is $\vec{\tilde{L}}^{(\imath)}(\theta)\vec{\tilde{A}}_{i}\vec{\tilde{R}}^{(\jmath)}(\theta)$,
where $\vec{\tilde{L}}^{(\imath)}(\theta)$ denotes
the $\imath$th row vector of the matrix  $\vec{\tilde{L}}(\theta)$ and
 $\vec{\tilde{R}}^{(\jmath)}(\theta)$ denotes
the $\jmath$th column vector of the matrix  $\vec{\tilde{R}}(\theta)$,
the diagonal entries of $\vec{\tilde{D}}_{i}(\theta)
={\rm diag} \left\{ d_{i}^{(1)}(\theta) ,d_{i}^{(2)}(\theta), d_{i}^{(3)}(\theta) \right\}$
may be expressed as
\begin{align*}
d_{i}^{(\ell)}(\theta)=
\vec{\tilde{L}}^{(\ell)}(\theta)\vec{\tilde{A}}_{i}\vec{\tilde{R}}^{(\ell )}(\theta),
%& =
%{\rm diag} %\left\{\vec{\tilde{L}^{(1)}}(\theta)\vec{\tilde{A}_{i}}\ %vec{\tilde{R}^{(1)}}(\theta),
%\vec{\tilde{L}^{(2)}}(\theta)\vec{\tilde{A}_{i}}\vec{\wi %detilde{R}^{(2)}}(\theta) ,\vec{\tilde{L}^{(3)}}(\theta)\vec{\tilde{A}_{i}}\vec{\tilde{R}^{(3)}}(\theta) \right\}\\
%&,
\ \ \ell=1,2,3,\ i=1,2.
\end{align*}
Those diagonal entries { determine the bicharacteristics of \eqref{eq:linear}} by
\begin{equation}
\label{eq:bich}
\frac{dx}{dt}=d_{1}^{(\ell)}(\theta),
\ \frac{dy}{dt}=d_{2}^{(\ell)}(\theta),\ \ell=1,2,3.
\end{equation}
\begin{figure}[htbp]
  \centering
  \includegraphics[width=0.4\textwidth]{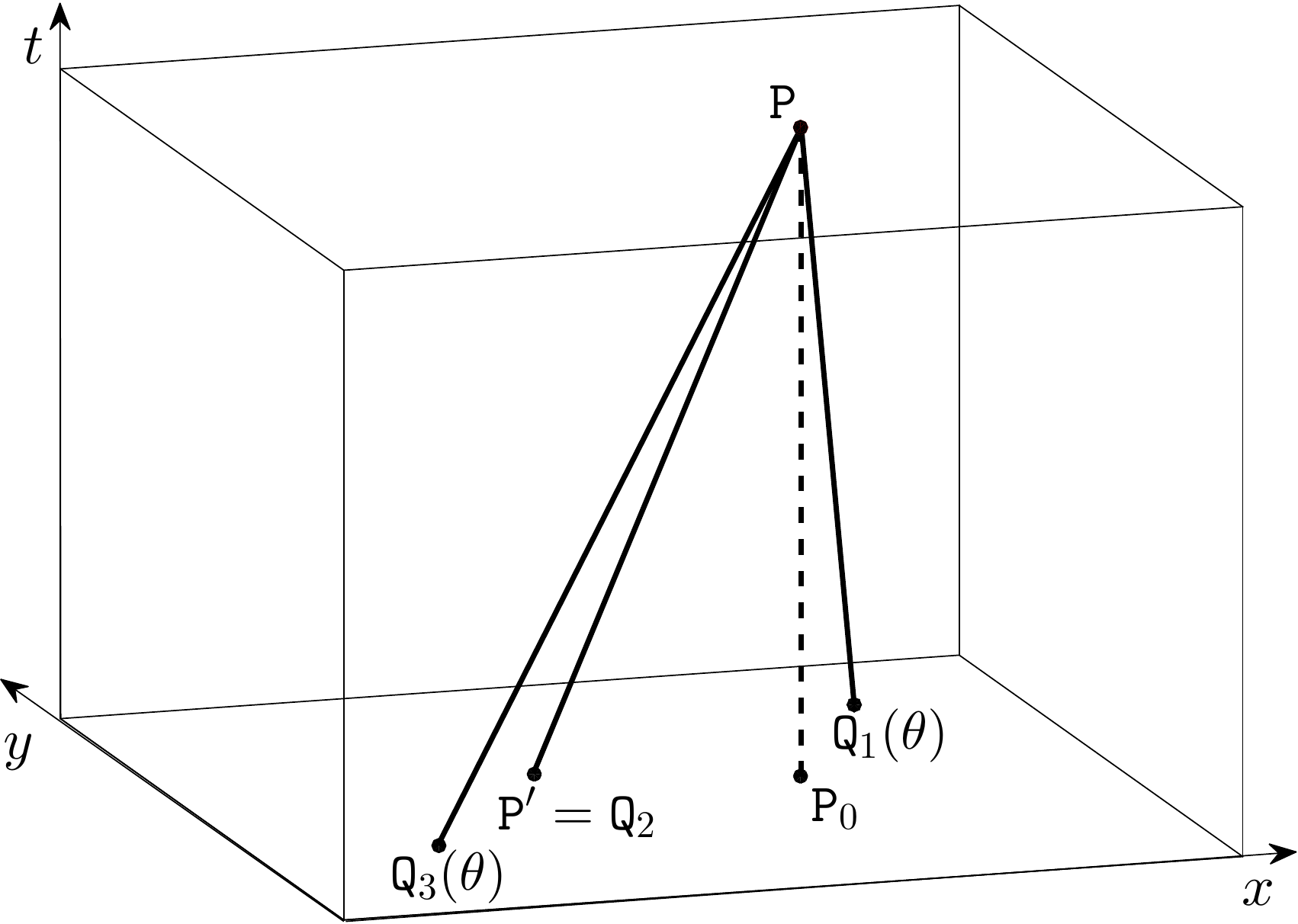}
  \includegraphics[width=0.4\textwidth]{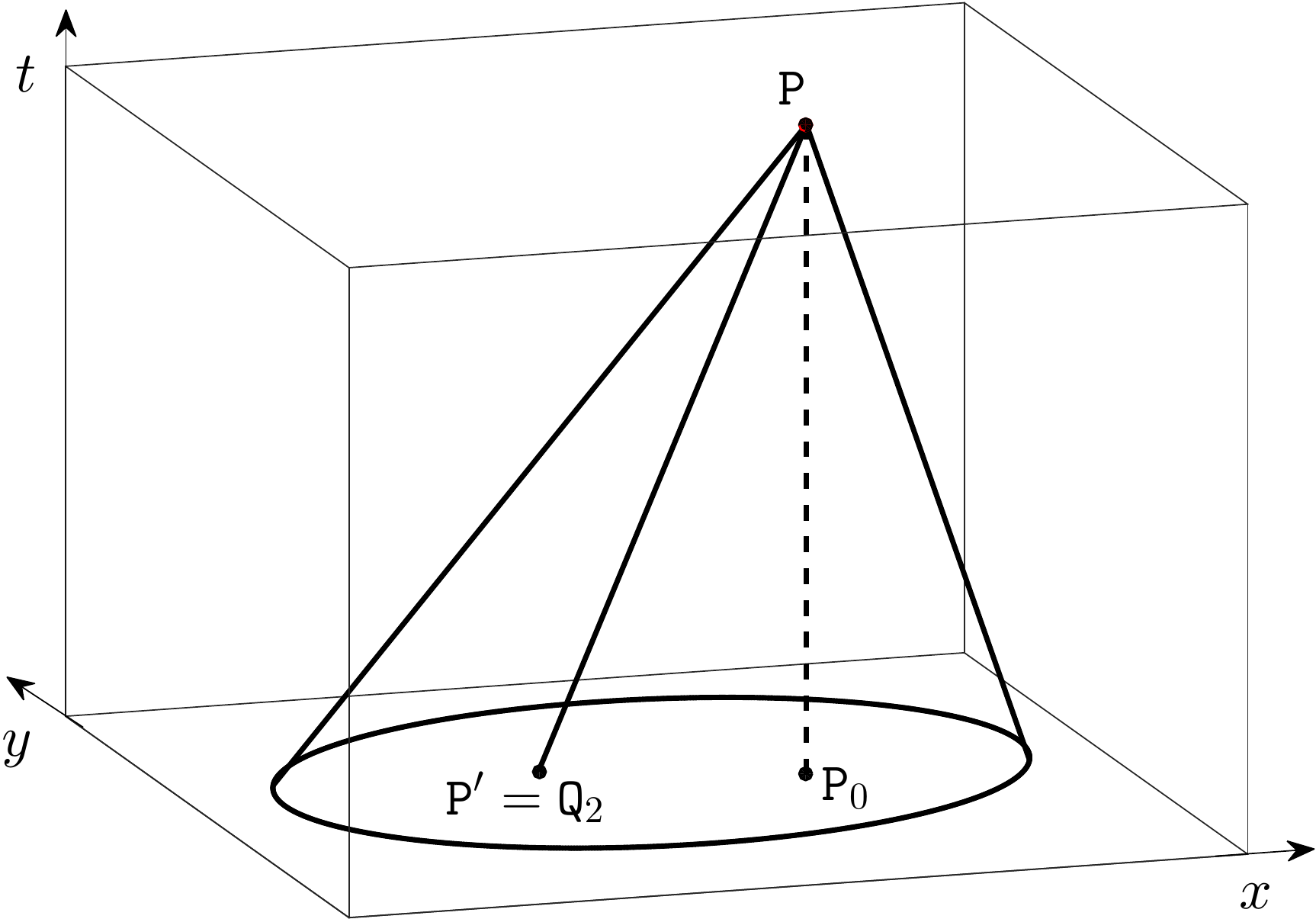}
  \caption{The bicharacteristic curves with the fixed angle $\theta$ (left) and the bicharacteristic cone (right) of the system \eqref{eq:linear}. }
  \label{fig:bicharacteristic}
\end{figure}
It is worth nothing that the second bicharacteristic direction does not depend on the real angle $\theta$. The left figure in Fig. \ref{fig:bicharacteristic} shows three bicharacteristics in the $\left(x,y,t\right)$ space for
a fixed angle $\theta$, i.e.  straight (solid) line segments ${\tt PQ_{\ell}},\ell=1,2,3 $, where ${\tt PQ_{2}}$ is
also denoted by ${\tt PP'}$, and the dotted line ${\tt PP_{0}}$ is only a line segment perpendicular to the horizontal $(x,y)$ plane. If assuming that the coordinate of the point ${\tt P}$ is $(x,y,t+\tau)$, then ones of
${\tt P_{0}}$ and ${\tt Q_{\ell}}(\theta)$ are $(x,y,t)$ and $\left(x-d_{1}^{(\ell)}(\theta)\tau,y-d_{2}^{(\ell)}
(\theta)\tau, t\right)$, respectively, $\ell=1,2,3$,
%see the left figure in Fig. \ref{fig:bicharacteristic}.
The right plot in Fig. \ref{fig:bicharacteristic} displays corresponding bicharacteristic cone past the point ${\tt P}$, which may be described by the set
\begin{equation}
\label{bichline}
\left\{\left(x-d_{1}^{(\ell)}(\theta)\zeta,y-d_{2}^{(\ell)}(\theta)\zeta, t+\tau-\zeta
\right),\ {{\ell}}=1,3,\ \theta\in\left[0,2\pi\right),\ \zeta\in\left[0,\tau\right]\right\},
\end{equation}
which is an elliptic cone in general.
\begin{lemma}
The diagonal entries $ d_{i}^{(\ell)}(\theta)$ and the ``source'' term $\vec{S}(\vec{W}{;\theta})$ in the
quasi-diagonalized system \eqref{eq:diag} {{have}} the following explicit form
\[
d_{1}^{(1)}(\theta) =\tilde{u}-\tilde{c}\tilde{G}_{c}(\theta){,\quad} d_{1}^{(2)}(\theta) =\tilde{u}{,\quad}
d_{1}^{(3)}(\theta) =\tilde{u}+\tilde{c}\tilde{G}_{c}(\theta),
\]
\[
d_{2}^{(1)}(\theta) =\tilde{v}-\tilde{c}\tilde{G}_{s}(\theta){,\quad}d_{2}^{(2)}(\theta) =\tilde{v}{,\quad}
d_{2}^{(3)}(\theta) =\tilde{v}+\tilde{c}\tilde{G}_{s}(\theta),
\]
and
\begin{equation}
\label{sourceadd}
 \vec{S}(\vec{W}{;\theta}) =\left(s_{1},s_{2},s_{3}\right)^{T}
=\vec{M}(\theta)\left(\frac{ \partial \vec{W}}{\partial y}\cos\theta- \frac{\partial \vec{W}}{\partial x}\sin\theta\right),
\end{equation}
where $\vec{M}(\theta) $ is a $3\times 3$ matrix defined by
\begin{equation}
\label{mtheta}
\vec{M}(\theta)=
\begin{pmatrix}
0 & -M_{1}(\theta) & 0\\
-M_{2}(\theta) & 0 & M_{2}(\theta) \\
0 & M_{1}(\theta)  & 0
\end{pmatrix},
%\begin{pmatrix}
%0 & -\frac{1}{2}\tilde{h} & 0\\
%- g\frac{ 1 }{\tilde{K_{\theta}^{2}} \tilde{\Lambda} } & 0 & %g\frac{1}{\tilde{K_{\theta}^{2}} \tilde{\Lambda} } \\
%0 & \frac{1}{2}\tilde{h}  & 0
%\end{pmatrix}.
\end{equation}
here $  M_{1}(\theta)=\frac{\tilde{h}}{2}$ and $  M_{2}(\theta)=\frac{g}{\tilde{K}_{\theta}^{2} \tilde{\Lambda} }$.
\end{lemma}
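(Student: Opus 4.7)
The plan is to compute the matrix triple products $\vec{\tilde{B}}_i(\theta) = \vec{\tilde{L}}(\theta)\vec{\tilde{A}}_i\vec{\tilde{R}}(\theta)$ explicitly and then read off both the diagonal entries $d_i^{(\ell)}(\theta)$ (which are just the diagonal of $\vec{\tilde{B}}_i$) and the off-diagonal parts that assemble $\vec{S}(\vec{W};\theta)$ according to~\eqref{swtheta}.

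The single algebraic shortcut I would lean on throughout is the fact, already built into Lemma~\ref{lemma:hyper}, that $\vec{L}(\theta)\bigl(\vec{A}_1\cos\theta+\vec{A}_2\sin\theta\bigr)\vec{R}(\theta) = \vec{\Lambda}(\theta)$ is diagonal. Applied at $(\vec{\tilde V},\vec{\tilde x})$ this gives $\vec{\tilde{B}}_1(\theta)\cos\theta + \vec{\tilde{B}}_2(\theta)\sin\theta = \vec{\tilde{\Lambda}}(\theta)$. The diagonal part delivers the consistency identity $d_1^{(\ell)}\cos\theta + d_2^{(\ell)}\sin\theta = \tilde{\lambda}^{(\ell)}(\theta)$, while the off-diagonal part forces $(\vec{\tilde B}_1-\vec{\tilde D}_1)\cos\theta + (\vec{\tilde B}_2-\vec{\tilde D}_2)\sin\theta = \vec{0}$. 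The latter immediately explains the structural form claimed in~\eqref{sourceadd}: the source can only involve the transverse combination $\cos\theta\,\partial_y\vec W - \sin\theta\,\partial_x\vec W$, with coefficient $\vec{M}(\theta) = (\vec{\tilde B}_1 - \vec{\tilde D}_1)/\sin\theta = (\vec{\tilde D}_2 - \vec{\tilde B}_2)/\cos\theta$. Equivalently, $\vec{M}(\theta)$ is the off-diagonal part of $\vec{\tilde L}(\theta)\bigl(\vec{\tilde A}_1\sin\theta - \vec{\tilde A}_2\cos\theta\bigr)\vec{\tilde R}(\theta)$, which is manifestly regular both at $\sin\theta=0$ and at $\cos\theta=0$.

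What remains is to carry out the column-by-column multiplications: three scalar products $d_1^{(\ell)} = \vec{\tilde L}^{(\ell)}\vec{\tilde A}_1\vec{\tilde R}^{(\ell)}$ (from which the $d_2^{(\ell)}$ are obtained for free using the consistency identity above), and four scalar products for the non-zero off-diagonal entries of $\vec{M}(\theta)$. Using the explicit $\vec{L},\vec{R}$ in~\eqref{rvector} and the primitive-variable matrices $\vec{A}_1, \vec{A}_2$ from~\eqref{eq:primitive}, each product collapses to a rational expression in $\tilde c, \tilde u, \tilde v, \tilde G_c(\theta), \tilde G_s(\theta)$ that simplifies through the identity $g^{11}\cos^2\theta + g^{12}\sin 2\theta + g^{22}\sin^2\theta = \tilde{K}_\theta^2$ recorded in~\eqref{kth,vth,c}.

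I expect the main obstacle to be verifying the clean form $M_2 = g/(\tilde K_\theta^2\tilde\Lambda)$: this is the only step where brute simplification does not terminate on its own, and requires the inverse-metric determinant identity $g^{11}g^{22}-(g^{12})^2 = 1/\tilde\Lambda^2$ to absorb the emerging combination into $\tilde\Lambda^{-1}$. By contrast, the vanishing of the $(1,3)$ and $(3,1)$ entries of $\vec{M}(\theta)$ should fall out for free from the sign-symmetry between the first and third columns of $\vec{R}(\theta)$ (equivalently, the first and third rows of $\vec{L}(\theta)$) visible in~\eqref{rvector}, so no further computation is needed there.
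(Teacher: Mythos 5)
Your proposal is correct and follows essentially the same route as the paper: both reduce to computing $\vec{\tilde{B}}_i(\theta)=\vec{\tilde{L}}(\theta)\vec{\tilde{A}}_i\vec{\tilde{R}}(\theta)$ and reading off the decomposition $\vec{\tilde{B}}_1=\vec{\tilde{D}}_1+\vec{M}(\theta)\sin\theta$, $\vec{\tilde{B}}_2=\vec{\tilde{D}}_2-\vec{M}(\theta)\cos\theta$, which together with \eqref{swtheta} yields \eqref{sourceadd}. The paper simply writes out both $3\times 3$ matrices in full; your refinement --- extracting $d_1^{(\ell)}\cos\theta+d_2^{(\ell)}\sin\theta=\tilde{\lambda}^{(\ell)}(\theta)$ and $(\vec{\tilde{B}}_1-\vec{\tilde{D}}_1)\cos\theta+(\vec{\tilde{B}}_2-\vec{\tilde{D}}_2)\sin\theta=\vec{0}$ from the diagonalization in Lemma \ref{lemma:hyper} --- is sound, roughly halves the bookkeeping, and explains a priori why the source can only involve the transverse combination $\cos\theta\,\partial_y-\sin\theta\,\partial_x$; your regular characterization of $\vec{M}(\theta)$ as the off-diagonal part of $\vec{\tilde{L}}(\vec{\tilde{A}}_1\sin\theta-\vec{\tilde{A}}_2\cos\theta)\vec{\tilde{R}}$ also correctly handles the points where $\sin\theta$ or $\cos\theta$ vanishes. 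One point to watch when you actually carry out the computation of $M_2$: the determinant identity you invoke reads $\tilde{g}^{11}\tilde{g}^{22}-(\tilde{g}^{12})^2=\det(\vec{\tilde{G}}^{-1})=1/\tilde{\Lambda}^2$ under the paper's definition $\Lambda=\sqrt{\det\vec{G}}$, so the $(2,1)$ entry it produces carries a factor $\tilde{\Lambda}^{-2}$ rather than the $\tilde{\Lambda}^{-1}$ appearing in the stated $M_2=g/(\tilde{K}_\theta^2\tilde{\Lambda})$; reconcile that power before signing off on the constant.
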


\begin{proof}
Because
\[
\vec{\tilde{B}}_{i}(\theta) = \vec{\tilde{L}}(\theta)\vec{\tilde{A}}_{i}\vec{\tilde{R}}(\theta),\ i=1,2,
\]
one has
\begin{equation*}
\begin{aligned}
\vec{\tilde{B}}_{1} (\theta)&= \begin{pmatrix}
\tilde{u}- \tilde{c}\tilde{G}_{c}(\theta) & - \frac{\tilde{h}}{2}\sin\theta &0\\
- \frac{1}{\tilde{K}_{\theta}^{2} \tilde{\Lambda} }g\sin\theta & \tilde{u} & \frac{1}{\tilde{K}_{\theta}^{2} \tilde{\Lambda} }g\sin\theta \\
0 & \frac{\tilde{h}}{2}\sin\theta  & \tilde{u}+ \tilde{c}\tilde{G}_{c}(\theta)
\end{pmatrix},\\
\vec{\tilde{B}}_{2}(\theta)&=
\begin{pmatrix}
\tilde{v}- \tilde{c}\tilde{G}_{s}(\theta) & \frac{\tilde{h}}{2}\cos\theta &0\\
\frac{1}{\tilde{K}_{\theta}^{2} \tilde{\Lambda} }g\cos\theta & \tilde{v} & -\frac{1}{\tilde{K}_{\theta}^{2} \tilde{\Lambda} }g\cos\theta \\
0 & -\frac{\tilde{h}}{2}\cos\theta  & \tilde{v}+ \tilde{c}\tilde{G}_{s}(\theta)
\end{pmatrix}.
\end{aligned}
\end{equation*}
Thus the identities
\begin{equation}
\label{Biethetam}
\vec{\tilde{B}}_{1}(\theta)=\vec{\tilde{D}}_{1}(\theta)+\vec{M}(\theta)\sin\theta,\quad
\vec{\tilde{B}}_{2}(\theta) =\vec{\tilde{D}}_{2}(\theta)-\vec{M}(\theta)\cos\theta,
\end{equation}
holds for all $\theta \in \mathbb{R}$.
 {{Using}} \eqref{swtheta} and \eqref{Biethetam} may {{complete the proof}}.
\qed\end{proof}

\begin{remark}
The left-hand side of the quasi-diagonalized system \eqref{eq:diag} does fully decouple the components of the
characteristic variable vector $\vec{W}= (w_{1},w_{2},w_{3})$,
but the right-hand side of \eqref{eq:diag} weakly couples
 three characteristic variables $w_{1},w_{2}$, and $w_{3}$, that is,
 the $\ell$th component of $\vec{S}(\vec{W}{;\theta}) $ in \eqref{eq:diag} does not depend on
the characteristic variable $w_{\ell}$.
\end{remark}

Along the bicharacteristics \eqref{eq:bich}, the system \eqref{eq:diag} reduces to the following system of ordinary differential equations
\begin{equation}
\label{eq:differential}
\frac{\mbox{\rm D}^{(\ell)}w_{\ell}}{\mbox{\rm D}t}=s_{\ell}+s_{\ell}^{(a)},\ell=1,2,3,
\end{equation}
where the differential operator $\frac{\mbox{\rm D}^{(\ell)}}{\mbox{\rm D}t}:=\frac{\partial}{\partial t}+d_{1}^{(\ell)}(\theta)\frac{\partial}{\partial x}+d_{2}^{(\ell)}(\theta)\frac{\partial}{\partial y}
$ denotes the total derivative operator along the $\ell$th bicharacteristic in \eqref{eq:bich}. Along the $\ell$th bicharacteristic in \eqref{eq:bich}, integrating the $\ell$th equation in \eqref{eq:differential} {in terms of} the time from $t$ to $t+\tau$ with $\tau>0$ gives the following equivalent integral system of \eqref{eq:linear}
\begin{align}
\nonumber w_{\ell}\left(x,y,t+\tau;\theta\right)&
=w_{\ell}\left(x-d_{1}^{(\ell)}(\theta)\tau,y-d_{2}^{(\ell)}(\theta)\tau, t;\theta\right)\\
&+s_{\ell}^{\tau}\left(x,y,t;\theta\right)+s_{\ell}^{\tau,(a)}\left(x,y,t;\theta\right),\label{eq:intergralw}
\end{align}
or
\begin{equation}
\label{eq:sinteralw}
\begin{pmatrix}
w_{1}({\tt P};\theta)\\
w_{2}({\tt P};\theta)\\
w_{3}({\tt P};\theta)
\end{pmatrix}
=\begin{pmatrix}
w_{1}\left({\tt Q_{1}}(\theta);\theta\right)
+
s_{1}^{\tau}\left({\tt P_{0}};\theta\right) + s_{1}^{\tau,(a)}\left({\tt P_{0}};\theta\right)\\
w_{2}\left({\tt Q_{2}}(\theta);\theta\right)
+
s_{2}^{\tau}\left({\tt P_{0}};\theta\right) + s_{2}^{\tau,(a)}\left({\tt P_{0}};\theta\right)\\
w_{3}\left({\tt Q_{3}}(\theta);\theta\right)
+
s_{3}^{\tau}\left({\tt P_{0}};\theta\right) + s_{3}^{\tau,(a)}\left({\tt P_{0}};\theta\right)\\
\end{pmatrix},
\end{equation}
%,\quad
where
\begin{equation}
\label{stau}
\begin{aligned}
s_{\ell}^{\tau}\left(x,y,t;\theta\right)&=\int_{t}^{t+\tau}s_{\ell}\left(x-d_{1}^{(\ell)}(\theta)\left(t+\tau-\zeta\right),y-d_{2}^{(\ell)}(\theta)\left(t+\tau-\zeta \right), \zeta;\theta\right)d\zeta,\\
s_{\ell}^{\tau,(a)}\left(x,y,t;\theta\right)&=\int_{t}^{t+\tau}s_{\ell}^{(a)}\left(x-d_{1}^{(\ell)}(\theta)\left(t+\tau-\zeta\right),y-d_{2}^{(\ell)}(\theta)\left(t+\tau-\zeta\right), \zeta;\theta\right)d\zeta,
\end{aligned}
\end{equation}
and ${\tt P},{\tt P_{0}}$, and ${\tt Q_{\ell}}(\theta)$ denote the points $\left(x,y, t+\tau\right),\left(x,y,t\right)$, and $\left(x-d_{1}^{(\ell)}(\theta)\tau,y-d_{2}^{(\ell)}(\theta)\tau, t \right)$, respectively.
The integral equation \eqref{eq:intergralw} or \eqref{eq:sinteralw} gives the time evolution of the variable $w_{\ell}$ in the quasi-diagonalized system \eqref{eq:diag} along its bicharacteristics \eqref{eq:charact},  $\ell=1,2,3$.

Multiplying \eqref{eq:sinteralw} by $\vec{\tilde{R}}(\theta)$ from the left and integrating it with respect to $\theta$ from $0$ to $2\pi$ (i.e. superposition of all the waves together) yield the exact evolution operator  $\mathcal{E}(\tau) $  of \eqref{eq:linear}    as follows
\begin{equation}
\label{eq:exactinteral}
\mathcal{E}(\tau)      \vec{V}({\tt P_{0}}):=\vec{V}({\tt P})=\frac{1}{2\pi}\int_{0}^{2\pi}\sum_{\ell=1}^{3}\vec{\tilde{R}}^{(\ell)}(\theta)\left(w_{\ell}\left({\tt Q_{\ell}}(\theta);\theta\right)+s_{\ell}^{\tau}\left({\tt P_{0}};\theta\right) + s_{\ell}^{\tau,(a)}({\tt P_{0}};\theta)\right).
\end{equation}

\begin{theorem}\label{theor:interal}
The exact integral equations \eqref{eq:exactinteral} are equivalent to
\begin{align}
h\left({\tt P}\right)=&\frac{1}{2\pi}\int_{0}^{2\pi}Jd\theta
          -\frac{\tilde{h}}{2\pi}\int_{0}^{2\pi}\int_{t}^{t+\tau} \mathcal{S}({\tt Q_{\zeta}}(\theta);\theta) d\zeta d\theta,\label{eq:interalh}\\
\nonumber
u\left({\tt P}\right)=&-\frac{g}{2\pi\tilde{c}} \int_{0}^{2\pi} J\tilde{G}_{c}(\theta)d\theta
+\left[u({\tt P'})J_{6}-v({\tt P'})J_{4}\right]\\
	  \nonumber +&\frac{g}{\tilde{\Lambda}  } \int_{t}^{t+\tau}\left(h_{y}({\tt P}_{\zeta}')J_{1}-h_{x}({\tt P}_{\zeta}')J_{2}\right)d\zeta
         +\frac{\tilde{c}}{2\pi} \int_{0}^{2\pi}\int_{t}^{t+\tau} \tilde{G}_{c}(\theta)\mathcal{S}({\tt Q_{\zeta}}(\theta);\theta)d\zeta d\theta\\
         +&\frac{1}{2\pi} \int_{0}^{2\pi}\int_{t}^{t+\tau} S_{1}^{(1)}({\tt Q_{\zeta}}(\theta);\theta)d\zeta d\theta,\label{eq:interalu}\\
\nonumber
v\left({\tt P}\right)=&-\frac{g}{2\pi\tilde{c}} \int_{0}^{2\pi} J\tilde{G}_{s}(\theta) d\theta
-\left[u({\tt P'})J_{7}-v({\tt P'})J_{5}\right]\\
	   \nonumber+&\frac{g}{\tilde{\Lambda} } \int_{t}^{t+\tau}\left(h_{x}({\tt P}_{\zeta}')J_{1}-h_{y}({\tt P}_{\zeta}')J_{3}\right) d\zeta
         +\frac{\tilde{c}}{2\pi} \int_{0}^{2\pi}\int_{t}^{t+\tau}\tilde{G}_{s}(\theta)\mathcal{S}({\tt Q_{\zeta}}(\theta);\theta) d\zeta d\theta\\
          +&\frac{1}{2\pi} \int_{0}^{2\pi}\int_{t}^{t+\tau} S_{1}^{(2)}({\tt Q_{\zeta}}(\theta);\theta)d\zeta d\theta,\label{eq:interalv}
\end{align}
where  $ \mathcal{S}\left(x,y,t;\theta\right)= \frac{1}{\tilde{K}_{\theta}}\left(\Phi_{x}\sin\theta-\Phi_{y}\cos\theta\right)$,
$\Phi_{x}$ and $\Phi_{y}$ are the partial derivatives of
$\Phi(x,y,t;\theta):= \tilde{G}_{s}(\theta)u-\tilde{G}_{c}(\theta)v$
with respect to $x$ and $y$ respectively,
%\begin{equation}
%\label{sinteral}
%\mathcal{S}\left(x,y,t;\theta\right):= \frac{1}{\tilde{K_{\theta}}}\left(\Phi_{x}\sin\theta-\Phi_{y}\cos\theta\right),\quad
%\Phi\left(x,y,t;\theta\right):= \tilde{G_{s}}(\theta)u-\tilde{G_{c}}(\theta)v,
%\end{equation}
and the shortened notations ${\tt P'}:={\tt Q_{2}}$ and ${\tt Q}(\theta):={\tt Q_{1}}(\theta)$.
Moreover, ${\tt Q_{\zeta}}(\theta)$ and ${\tt P}_{\zeta}'$ denote the points
\[
\left(x-d_{1}^{(1)}(\theta)\left(t+\tau-\zeta\right),y-d_{2}^{(1)}(\theta)\left(t+\tau-\zeta\right), \zeta\right)
,\]
and
\[
\left(x-\tilde{u}\left(t+\tau-\zeta\right),y-\tilde{v}\left(t+\tau-\zeta\right),\zeta\right)
,\]
respectively.
Here, J and $J_{i},i=1,2,\cdots,7$, are defined by
\begin{align}
J&:=h({\tt Q(\theta)})-\frac{\tilde{c}}{g\tilde{K}_{\theta}}\left(u({\tt Q(\theta)})\cos\theta+v({\tt Q(\theta)})\sin\theta\right)\\
J_{1}&:=\frac{1}{2\pi} \int_{0}^{2\pi} \frac{\sin\theta\cos\theta}{\tilde{K}_{\theta}^2}d\theta =
\begin{cases}
0,                        &\vec{\tilde{x}}=\vec{0},\\
\frac{\tilde{g}^{12}}{\tilde{H}}\left(2-\left(\tilde{g}^{11}+\tilde{g}^{22}\right)\tilde{\Lambda}\right),                         & \vec{\tilde{x}}\neq \vec{0},
\end{cases}\label{j1}\\
J_{2}&:=\frac{1}{2\pi} \int_{0}^{2\pi} \frac{\sin^{2}\theta}{\tilde{K}_{\theta}^2}d\theta=
\begin{cases}
\frac{1}{2},    &\vec{\tilde{x}}=\vec{0},\\
\frac{1}{\tilde{H}}\left(\left(\left(\tilde{g}^{11}\right)^{2}-\tilde{g}^{11}\tilde{g}^{22}
+2\left(\tilde{g}^{12}\right)^{2}\right)\tilde{\Lambda}+\tilde{g}^{22}-\tilde{g}^{11}\right),
&\vec{\tilde{x}}\neq \vec{0},
\end{cases}\label{j2}\\
J_{3}&:=\frac{1}{2\pi} \int_{0}^{2\pi} \frac{\cos^{2}\theta}{\tilde{K}_{\theta}^2}d\theta=
{
\begin{cases}
\frac{1}{2},    &\vec{\tilde{x}}=\vec{0},\\
\frac{1}{\tilde{H}}\left(\left(\left(\tilde{g}^{22}\right)^{2}-\tilde{g}^{11}\tilde{g}^{22}
+2\left(\tilde{g}^{12}\right)^{2}\right)\tilde{\Lambda}+\tilde{g}^{11}-\tilde{g}^{22}\right),
&\vec{\tilde{x}}\neq \vec{0},
\end{cases}
}
\label{j3}\\
J_{4}&:=\frac{1}{2\pi} \int_{0}^{2\pi} \frac{\tilde{G}_{c}(\theta)\sin\theta}{\tilde{K}_{\theta}}d\theta=\tilde{g}^{11}J_{1}+\tilde{g}^{12}J_{2}\label{j4},\\
J_{5}&:=\frac{1}{2\pi} \int_{0}^{2\pi} \frac{\tilde{G}_{c}(\theta)\cos\theta}{\tilde{K}_{\theta}}d\theta=\tilde{g}^{11}J_{3}+\tilde{g}^{12}J_{1}\label{j5},\\
J_{6}&:=\frac{1}{2\pi} \int_{0}^{2\pi} \frac{\tilde{G}_{s}(\theta)\sin\theta}{\tilde{K}_{\theta}}d\theta=\tilde{g}^{12}J_{1}+\tilde{g}^{22}J_{2}\label{j6},\\
J_{7}&:=\frac{1}{2\pi} \int_{0}^{2\pi} \frac{{ \tilde{G}_{s}}(\theta)\cos\theta}{\tilde{K_{\theta}}}d\theta=\tilde{g}^{12}J_{3}+\tilde{g}^{22}J_{1}\label{j7},
\end{align}
with
\[
\tilde{H}=\left(\tilde{g}^{11}\right)^{2}+\left(\tilde{g}^{22}\right)^{2}-2\tilde{g}^{11}\tilde{g}^{22}
+4\left(\tilde{g}^{12}\right)^2.
\]
\end{theorem}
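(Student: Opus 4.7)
The plan is to verify the three scalar identities by projecting each row of \eqref{eq:exactinteral} using the columns of $\vec{\tilde R}(\theta)$ in \eqref{rvector} and the definitions \eqref{w1w2w3} of $w_1,w_2,w_3$, and then to simplify using the $\pi$-periodicity symmetry $\theta\mapsto\theta+\pi$. Under this shift $\tilde K_\theta$, $\tilde\Lambda$ and $\mathcal S(\cdot;\theta)$ are invariant, while $\cos\theta,\sin\theta,v_\theta,\tilde G_c(\theta),\tilde G_s(\theta)$ all flip sign; moreover $d_i^{(3)}(\theta+\pi)=d_i^{(1)}(\theta)$ so ${\tt Q}_1(\theta)\leftrightarrow{\tt Q}_3(\theta)$, and the characteristic variables satisfy $w_3(\cdot;\theta+\pi)=-w_1(\cdot;\theta)$. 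These facts fold each pair of wave-1 and wave-3 contributions into a single wave-1 integral.

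Applying the fold to the first row, $\int_0^{2\pi}[-w_1({\tt Q}_1;\theta)+w_3({\tt Q}_3;\theta)]\,d\theta$ collapses to $\int_0^{2\pi}(-2w_1({\tt Q}_1;\theta))\,d\theta=\int_0^{2\pi}J\,d\theta$ by the very definition of $J$. The coupling piece, after substituting $s_1=\tfrac{\tilde h}{2}\mathcal S=-s_3$ (from the preceding lemma) and $s_2=M_2(h_y\cos\theta-h_x\sin\theta)$, produces the $-\tilde h\mathcal S$ double integral in \eqref{eq:interalh}. The analogous folding in rows 2--3, weighted by $\tfrac{g}{\tilde c}\tilde G_c(\theta)$ and $\tfrac{g}{\tilde c}\tilde G_s(\theta)$ and using $g\tilde h/\tilde c=\tilde c$, delivers the $\tilde G_c J,\tilde G_s J$ and $\tilde G_c\mathcal S,\tilde G_s\mathcal S$ integrals in \eqref{eq:interalu}--\eqref{eq:interalv}. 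Because the second bicharacteristic is independent of $\theta$, the $w_2({\tt P}';\theta)$ and $s_2^\tau$ contributions let me pull $u({\tt P}'),v({\tt P}'),h_x({\tt P}'_\zeta),h_y({\tt P}'_\zeta)$ out of the $\theta$-integral, leaving precisely the trigonometric averages $J_1,\ldots,J_7$ defined in \eqref{j1}--\eqref{j7}. The inhomogeneous source $\vec S^{(a)}=\vec{\tilde L}(\theta)\vec S_1$ is then substituted row by row; using the identity $\tilde G_c(\theta)\cos\theta+\tilde G_s(\theta)\sin\theta=\tilde K_\theta$ together with the same parity fold reproduces the $S_1^{(1)},S_1^{(2)}$ terms of \eqref{eq:interalu}--\eqref{eq:interalv} and shows that the corresponding $h$-contribution vanishes.

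It remains to establish the closed forms \eqref{j1}--\eqref{j3}; the formulas \eqref{j4}--\eqref{j7} for $J_4,\ldots,J_7$ then follow immediately by linear combination of $J_1,J_2,J_3$ using the definitions of $\tilde G_c,\tilde G_s$. I would compute $J_1,J_2,J_3$ by passing to $z=e^{i\theta}$ on the unit circle, where $\tilde K_\theta^2=(4z^2)^{-1}[a z^4+b z^2+\bar a]$ with $a=\tilde g^{11}-\tilde g^{22}-2i\tilde g^{12}$ and $b=2(\tilde g^{11}+\tilde g^{22})$; the discriminant simplifies to $b^2-4|a|^2=16/\tilde\Lambda^2$, so the quartic factors cleanly and a residue evaluation at its two simple zeros inside $|z|=1$ delivers the rational expressions stated, after a short simplification using $\tilde H=|a|^2$ and $\det\vec{\tilde G}^{-1}=1/\tilde\Lambda^2$. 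The degenerate case $\vec{\tilde x}=\vec 0$ is handled by direct integration since $\tilde K_\theta$ then reduces to a constant. The principal obstacle is the bookkeeping of the parity fold inside the source terms: one must simultaneously swap the bicharacteristic endpoints ${\tt Q}_1\leftrightarrow{\tt Q}_3$ via $d_i^{(3)}(\theta+\pi)=d_i^{(1)}(\theta)$ and track the compensating sign changes in $\tilde G_c,\tilde G_s,\cos\theta,\sin\theta,\Phi$ inside the inner $\zeta$-integrands of $s_\ell^\tau$ and $s_\ell^{\tau,(a)}$ before any identification can be made; once this is done correctly, the remaining algebra is mechanical.
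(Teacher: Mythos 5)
Your proposal follows essentially the same route as the paper's proof: the $\theta\mapsto\theta+\pi$ parity fold collapsing the wave-1 and wave-3 contributions (the paper's identities for $\vec{\tilde{R}}^{(1)}(\theta+\pi)=-\vec{\tilde{R}}^{(3)}(\theta)$, $w_1\leftrightarrow -w_3$, and $s_1^{\tau}\leftrightarrow -s_3^{\tau}$), the explicit forms $s_1=\tfrac{\tilde h}{2}\mathcal S$ and $s_2=\tfrac{g}{\tilde K_\theta^2\tilde\Lambda}(h_y\cos\theta-h_x\sin\theta)$ from the preceding lemma, and extraction of the trigonometric averages $J_1,\dots,J_7$ from the $w_2({\tt P}')$ and $s_2^\tau$ terms, with the recombination of $\vec{\tilde L}(\theta)\vec S_1$ via $\tilde G_c\cos\theta+\tilde G_s\sin\theta=\tilde K_\theta$. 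The only material you add beyond the paper is the residue computation of the closed forms \eqref{j1}--\eqref{j3}, which the paper states without proof; your discriminant identity $b^2-4|a|^2=16/\tilde\Lambda^2$ checks out, so this is a sound supplement rather than a different argument.
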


\begin{proof}
Because the variables $ w_{\ell}({\tt Q_{\ell}}(\theta);\theta)$ and $\vec{\tilde{R}}^{{ (\ell)}}(\theta) $ are $2\pi$-periodic with respect to $\theta$, $\ell=1,3$, and
\[
 \vec{\tilde{R}}^{(1)}(\theta+\pi) = - \vec{\tilde{R}}^{(3)}(\theta),\  d_{i}^{(1)}(\theta+\pi)=d_{i}^{(3)}(\theta),\  i=1,2
,\
\vec{M}(\theta+\pi)=\vec{M}(\theta),
\]
which imply
\[
 {\tt Q_{1}}(\theta+\pi)
 ={\tt Q_{3}}(\theta),
 \ w_{1}({\tt Q_{1}}(\theta+\pi);\theta+\pi)
  = -w_{3}({\tt Q_{3}}(\theta); \theta)
,\
w_{2}({\tt P_{0}};\theta+\pi)=-w_{2}({\tt P_{0}};\theta),
\]
 one has
\begin{equation}
\label{rwinteral}
\int_{0}^{2\pi}\vec{\tilde{R}}^{(1)}(\theta)w_{1}({\tt Q_{1}}(\theta);\theta)d\theta=\int_{0}^{2\pi}\vec{\tilde{R}}^{(3)}(\theta)w_{3}({\tt Q_{3}}(\theta);\theta)d\theta,
\end{equation}
and
\begin{align*}
s_{1}(\vec{x},t;\theta+\pi) =& M_{1}(\theta+\pi)\left( \frac{\partial w_{2}(\vec{x},t;\theta+\pi)}{\partial x}\sin(\theta+\pi)- \frac{\partial w_{2}(\vec{x},t;\theta+\pi)}{\partial y}\cos(\theta+\pi)\right)\\
                               =&M_{1}(\theta)\left( \frac{\partial w_{2}(\vec{x},t;\theta)}{\partial x}\sin\theta- \frac{\partial w_{2}(\vec{x},t;\theta)}{\partial y}\cos\theta \right)
                               = -s_{3}(\vec{x},t;\theta).
\end{align*}
Substituting the last equation into \eqref{stau} gives
\begin{align*}
s_{1}^{\tau}(\vec{x},t;\theta+\pi)=& \int_{t}^{t+\tau}s_{1}\left(x-d_{1}^{(1)}(\theta+\pi)(t+\tau-\zeta),y-d_{2}^{(1)}
(\theta+\pi)(t+\tau-\zeta),\zeta;         \theta+\pi\right) d\zeta\\
                          =& \int_{t}^{t+\tau}s_{1}\left(x-d_{1}^{(3)}(\theta)
                          (t+\tau-\zeta),y-d_{2}^{(3)}(\theta)
                          (t+\tau-\zeta),\zeta;   \theta+\pi\right)        d\zeta\\
                          =&- \int_{t}^{t+\tau}s_{3}\left(x-d_{1}^{(3)}(\theta)
                          (t+\tau-\zeta),y-d_{2}^{(3)}(\theta)
                          (t+\tau-\zeta),\zeta;    \theta+\pi\right) d\zeta\\
                          =& -s_{3}^{\tau}(\vec{x},t;\theta),
\end{align*}
and thus the identity
\begin{equation}
\label{rsinteral}
\int_{0}^{2\pi}\vec{\tilde{R}}^{(1)}(\theta)
s_{1}^{\tau}(\vec x,t;\theta)d\theta
=\int_{0}^{2\pi}\vec{\tilde{R}}^{(3)}(\theta)s_{3}^{\tau}(\vec x,t;\theta)d\theta,
\end{equation}
holds.
With the definition of $\vec{\tilde{R}}^{(\ell)}(\theta) $ and the identities \eqref{rwinteral} and \eqref{rsinteral}, the exact integral equations \eqref{eq:exactinteral} can be rewritten as follows
\begin{align}
\nonumber \vec{V}({\tt P})=&\frac{1}{2\pi}\int_{0}^{2\pi}
\begin{pmatrix}
-2\left(w_{1}({\tt Q}(\theta);\theta)+s_{1}^{\tau}(\vec x,t;\theta)\right)\\
2\frac{g}{\tilde{c}}\tilde{G}_{c}(\theta)\left(w_{1}({\tt Q}(\theta);\theta)+s_{1}^{\tau}(\vec x,t;\theta)\right)+\left(w_{2}({\tt P'};\theta)+s_{2}^{\tau}(\vec x,t;\theta)\right)\sin\theta\\
2\frac{g}{\tilde{c}}\tilde{G}_{s}(\theta)\left(w_{1}({\tt Q}(\theta);\theta)+s_{1}^{\tau}(\vec x,t;\theta)\right)-\left(w_{2}({\tt P}';\theta)+s_{2}^{\tau}(\vec x,t;\theta)\right)\cos\theta\\
\end{pmatrix}
d\theta\\
+&\frac{1}{2\pi} \int_{0}^{2\pi}\int_{t}^{t+\tau} \vec{S}_{1}({\tt Q}_{\zeta}(\theta);\theta)d\zeta d\theta.\label{eq:exactinteral rewritten}
\end{align}
Noting the relations in \eqref{w1w2w3} { gives}
\begin{align}
\nonumber
\frac{1}{2\pi}\int_{0}^{2\pi}w_{2}({\tt P'};\theta)\sin\theta d\theta &=  \frac{1}{2\pi}\int_{0}^{2\pi} \frac{1}{\tilde{K}_{\theta}}\left[ \tilde{G}_{s}(\theta)u({\tt P'}) - \tilde{G}_{c}(\theta)v({\tt P'})\right]\sin\theta d\theta\\
&=u({\tt P'})J_{6}-v({\tt P'})J_{4}\label{w1integral},\\
\nonumber
\frac{1}{2\pi}\int_{0}^{2\pi}w_{2}({\tt P'};\theta)\cos\theta d\theta &= \frac{1}{2\pi}\int_{0}^{2\pi} \frac{1}{\tilde{K}_{\theta}}\left[ \tilde{G}_{s}(\theta)u({\tt P'}) - \tilde{G}_{c}(\theta)v({\tt P'})\right]{ \cos\theta}d\theta\\
&= u({\tt P'})J_{7}-v({\tt P'})J_{5}\label{w2integral}.
\end{align}
On the other hand, one has
\begin{align}
\nonumber
s_{1}(\vec x,t;\theta)=&-M_{1}(\theta)\left( \frac{\partial w_{2}(\vec x,t;\theta)}{\partial y}\cos\theta - \frac{\partial w_{2}(\vec x,t;\theta)}{\partial x}\sin\theta \right)\\
\nonumber    =&-\frac{1}{2}\tilde{h}\left( \frac{\partial w_{2}(\vec x,t;\theta)}{\partial y}\cos\theta - \frac{\partial w_{2}(\vec x,t;\theta)}{\partial x}\sin\theta \right)\\ =&\frac{1}{2}\tilde{h}\frac{\sin\theta}{\tilde{K}_{\theta}}\left[\tilde{G}_{s}(\theta)u_{x}-\tilde{G}_{c}(\theta)v_{x}\right]-\frac{1}{2}\tilde{h}\frac{\cos\theta}{\tilde{K_{\theta}}}\left[\tilde{G_{s}}(\theta)u_{y}-\tilde{G_{c}}(\theta)v_{y}\right]
\label{s1intergral},\\
\nonumber
s_{2}(\vec x,t;\theta)=&M_{2}(\theta)\left( \frac{\partial (w_{3}-w_{1})(\vec x,t;\theta)}{\partial y}\cos\theta - \frac{\partial (w_{3}-w_{1})(\vec x,t;\theta)}{\partial x}\sin\theta \right)\\
\nonumber		     =&\frac{g }{\tilde{K}_{\theta}^{2}\tilde{\Lambda}}\left(-(w_{3}-w_{1})_{x}\sin\theta+(w_{3}-w_{1})_{y}\cos\theta\right)\\
		     =&\frac{g }{\tilde{K}_{\theta}^{2}\tilde{\Lambda} }\left(-h_{x}\sin\theta+h_{y}\cos\theta\right)
\label{s2intergral}.
\end{align}
Substituting \eqref{w1integral}-\eqref{s2intergral} into \eqref{eq:exactinteral rewritten} may give
the integral equations in \eqref{eq:interalh}-\eqref{eq:interalv}. The proof is completed.
\qed\end{proof}

The integral equations  \eqref{eq:interalh}-\eqref{eq:interalv} are the base of  our RKDLEG methods
presented in Section \ref{sec:method}.
In order to derive the RKDLEG method, the exact evolution operator $\mathcal{E}(\tau)$ or the integrals in \eqref{eq:interalh}-\eqref{eq:interalv} has to be further numerically approximated, see
Section \ref{subsubsec:nonlocal}.

% % % % % % % % % % % % % % % % % % % % % % % % % % % % % % % % % % % % % % % % % % % % % % % % % % % % %
\section{Numerical method}
\label{sec:method}
This section is devoted to present the RKDLEG method for the {{SWEs}}
 \eqref{eq:cube} on the cubed-sphere.
 Let $x$ and $y$ be the Cartesian coordinates in a face of the cube and
 restrict our attention   to the following square mesh  in the $(x,y)$ plane:
\[
  x_{j}= -\frac{\pi}{4} + \frac{j\pi}{2N},\ y_{k}=-\frac{\pi}{4}+  \frac{k\pi}{2N},\ \ j,k=0,1,\cdots, N,
  %\ \Delta x= \Delta y= \frac{\pi}{2N}
\]
where $N$ is the grid number in $x$ or $y$ direction. Moreover, the time interval $\left[0,T\right]$ is
{ assumed} to be partitioned into $\left\{t_{n}|t_{0}=0,t_{n+1}=t_{n}+\Delta t_{n},n\geq 0\right\}$, where $\Delta t_{n}$ is the time step size determined by
\begin{equation}
\label{delta t}
\Delta t_{n}=\frac{    {\pi} C_{cfl} }{2N \max \limits_{j,k,1\leq\ell\leq3}\left\{|\lambda^{(\ell)}
(\vec{\bar{V}}^n_{j+\frac{1}{2},k+\frac{1}{2}},\vec{x}_{j+\frac{1}{2},k+\frac{1}{2}}, 0)|+|\lambda^{(\ell)}
(\vec{\bar{V}}^n_{j+\frac{1}{2},k+\frac{1}{2}},\vec{x}_{j+\frac{1}{2},k+\frac{1}{2}}, \frac{\pi}{2})|\right\}},
\end{equation}
where $\lambda^{(\ell)}\left(\vec{V},\vec{x}{;\theta}\right)$ is given in \eqref{lambda}, $\ell=1,2,3$,
 $C_{cfl}$ denotes the CFL number, and
\[
\vec{\bar{V}}_{j+\frac{1}{2},k+\frac{1}{2}}^n \approx  { \frac{4N^2}{\pi^2}}
 \iint_{C_{j+\frac{1}{2},k+\frac{1}{2} }}\vec{V}(\vec x,t_{n} )d\vec x
.\]
here the cell $ C_{j+\frac{1}{2},k+\frac{1}{2}} $ defined by
\[
 C_{j+\frac{1}{2},k+\frac{1}{2}} =\left\{\left(x,y\right)|x_{j}\leq x\leq x_{j+1},y_{k}\leq y\leq y_{k+1},0\leq j,k\leq N-1\right\}.
\]

%\subsection{RKDG method}
%\label{subsec:RKDG}
%
\subsection{DG spatial discretization}
\label{subsubsec:DG}
This section gives the DG spatial discretizations of the SWE{{s}} \eqref{eq:cube}. % \cite{Cockburn: 1998}.
The purpose is to seek an approximation $\vec{U}_{h}$ to $\vec{U}$ such that for each time $t\in \left(0,T\right]$,
each component of $\vec{U}_{h}$ belongs to the finite dimensional space
\[
V_{h}:=\left\{v(\vec{x})\in L^{2}(\tilde{\Omega})  :
v(\vec{x})\big|_{C_{j+\frac{1}{2},k+\frac{1}{2}}}\in { \mathbb{P}^{K}}\left( C_{j+\frac{1}{2},k+\frac{1}{2}}  \right)\right\}
,\]
where $\tilde{\Omega}=[ -\frac{\pi}{4}, \frac{\pi}{4}  ] \times [ -\frac{\pi}{4}, \frac{\pi}{4}  ]$,
${ \mathbb{P}^{K}}\left( C_{j+\frac{1}{2},k+\frac{1}{2}} \right)$ is the space of polynomials in the cell $ C_{j+\frac{1}{2},k+\frac{1}{2}} $ of degree at most $K$, with the dimension at most $\left(K+1\right)\left(K+2\right)/2$.

Multiplying \eqref{eq:cube} with a test function $v(\vec{x})\in { \mathbb{P}^{K}}\left( C_{j+\frac{1}{2},k+\frac{1}{2}} \right)$, integrating by parts over the cell $ C_{j+\frac{1}{2},k+\frac{1}{2}} $, and {{replacing}} the exact solution $\vec{U}$ with the approximate solution $\vec{U}_{h}$ give
\begin{align}
\nonumber
 \frac{d}{dt}  & \iint_{ C_{j+\frac{1}{2},k+\frac{1}{2} }}\vec{U}_{h}(\vec{x},t)  v(\vec{x})
 d\vec{x}
 =
- \int _{\partial  C_{j+\frac{1}{2},k+\frac{1}{2} }}
 \vec{F}_{n}\left(\mathcal{E}_{h,0} \vec{V}_h (\vec{{x}},t)\right)   v(\vec{x})
 ds\\
 &+
 \iint_{C_{j+\frac{1}{2},k+\frac{1}{2} }}
\Big( \vec{S}_{0}\left(\vec{U}_{h}   (\vec{x},t\right))     v(\vec{x})+
 \vec{F}\left( \vec{U}_{h}  (\vec{x},t)   \right) \cdot\nabla v(\vec{x})\Big)   d\vec{x},
 \label{eq:DG}
\end{align}
where $\vec{F}=\left(\vec{F}_{1},\vec{F}_{2}\right)$,
$\vec{F}_{n}(\vec U)=\vec{F}(\vec U)\cdot \vec n$,
$\nabla v (\vec{x})=(\partial x,\partial y)v (\vec{x})$,
$ \vec{n}=\left(n_{1},n_{2}\right)$ is the outward unit normal vector of the cell boundary $\partial C_{j+\frac{1}{2},k+\frac{1}{2}}$,
$\mathcal{E}_{h,0}$ is the approximate local evolution operator and will be discussed in Section \ref{subsec:Approximate},
and $ \vec{V}_h\left(\vec{x},t\right)$ is  the primitive variable vector corresponding to
$ \vec{U}_{h}\left( \vec{x} ,t\right)$. %, so we can get $ \vec{V}\left(\vec{x},t\right)$ by \eqref{Uhphi}.
It is worth noting that  in the traditional RKDG method for hyperbolic conservation laws,
see e.g. \cite{Cockburn: 1998},
  the first term at the right-hand side of  \eqref{eq:DG} is replaced with
  $$- \int _{\partial  C_{j+\frac{1}{2},k+\frac{1}{2} }}
  \vec{\widehat{F}}_{n} \left( \vec{U}_{h}\left(\vec{x}-0,t\right),  \vec{U}_{h}\left(\vec{x}+0,t\right)    \right)
  v(\vec{x})
  ds,
  $$
  where   $\vec{\widehat{ F}}_{n}( \cdot, \cdot)$ is the two-point numerical flux vector satisfying
  the consistency condition  $\vec{\widehat{ F}}_{n} ( \vec{U},  \vec{U})=\vec{ F}_{n}(\vec U)$.

If using $\left\{ \phi_{j,k}^{(\ell)}(\vec{x}),\ell=0,1,\cdots,K\left(K+3\right)/2\right\} $ to denote a basis of the space ${ \mathbb{P}^{K}}\left( C_{j+\frac{1}{2},k+\frac{1}{2}} \right)$, then the DG approximate solution $\vec{U}_{h}$ may be
expressed by
\begin{equation}
\label{Uhphi}
\vec{U}_{h}(\vec{x},t)= \sum_{\ell=0}^{K(K+3)/2}\vec{U}_{j,k}^{(\ell)}(t) \phi_{j,k}^{(\ell)}(\vec x), %=:\vec{U}_{j,k}\left(\vec{x},t\right),
\ \mbox{if}\ \vec{x}\in C_{j+\frac{1}{2},k+\frac{1}{2}},
\end{equation}
and the scaled Legendre polynomials are taken
as the basis in this paper, see Section 3.3.2 in \cite{Zhao:2013}.

The  first and second terms at the right-hand side of \eqref{eq:DG}  are further respectively discretized
by using Gaussian quadratures of  high order accuracy as follows
\begin{align}
%\iint_{ C_{j+\frac{1}{2},k+\frac{1}{2} } } \vec{F}\left( \vec{U}_{h}\left(\vec{x},t\right) \right) \cdot \nabla v(\vec{x})d\vec{x}\approx &
& -| \partial  C_{j+\frac{1}{2},k+\frac{1}{2} }|\sum_{m=1}^{K+2}\tilde{\omega}_{m}
 %\widehat{ \vec{F}}_n \left( \vec{U}_{h}\left( \vec{\tilde{x}}_{m}^{G}                -0,t\right),  \vec{U}_{h}\left(\vec{\tilde{x}}_{m}^{G}  +0,t\right)    \right)
 % v\left(  \vec{\tilde{x}}_{m}^{G}\right)
 %
 \vec{F}_n\left(\mathcal{E}_{h,0} \vec{V}_h (\vec{\tilde{x}}_{m}^{G},     t)\right)
 v(  \vec{\tilde{x}}_{m}^{G})
 ds,
 \label{fluxboundary}
 \\
 & | C_{j+\frac{1}{2},k+\frac{1}{2}}|
 \sum_{m=1}^{(K+2)^2}\omega_{m}
 \Big(
 \vec{F}\left(\vec{U}_{h} (\vec{x}_{m}^{G},t)\right)\cdot \nabla v (\vec{x}_{m}^{G})
 +\vec{S}_{0}\left(\vec{U}_{h} (\vec{x}_{m}^{G},t) \right)    v(\vec{x}_{m}^{G})
 \Big),
 \label{fluxinner}
%\label{sourceinner}
\end{align}
where
$\{ \tilde{\omega}_{m} , \vec{\tilde{x}}_{m}^{G}\}$, $m=1,\cdots, K+2$,
and $\{{\omega}_{m}, \vec{x}_{m}^{G}\}$, $m=1,\cdots,  (K+2)^2$,
denote the Gauss-Lobatto quadrature weights and nodes
in $ \partial  C_{j+\frac{1}{2},k+\frac{1}{2} } $ and
$ C_{j+\frac{1}{2},k+\frac{1}{2}} $,  respectively.
%while $\tilde{\omega}_{m}$ and $\omega_{m} $ are  corresponding Gaussian quadrature weights.
%The flux $\vec{F}\left(\vec{U}_{h}\left(\vec{\tilde{x}}_{m}^{G},t\right)\right) \cdot\vec{n}$ in \eqref{fluxboundary} is further replaced with a monotone numerical flux.

In conclusion, our semi-discrete ${ \mathbb{P}^{K}}$-based DG methods for \eqref{eq:cube} may be given as
\begin{align}
\sum_{\ell=0}^{K\left(K+3\right)/2}&
 \iint_{ C_{j+\frac{1}{2},k+\frac{1}{2} }}\phi_{j,k}^{(\ell)}v(\vec{x})d\vec{x}  \frac{d\vec{U}_{j,k}^{(\ell)}\left(t\right)}{dt}
\nonumber
=  -| \partial  C_{j+\frac{1}{2},k+\frac{1}{2} }|\sum_{m=1}^{K+2}\tilde{\omega}_{m} %\vec{\hat{F}}\left(\vec{\tilde{x}}_{m}^{G},t\right) \cdot\vec{n}v\left(\vec{\tilde{x}}_{m}^{G}\right)
 \vec{F}_n\left(\mathcal{E}_{h,0} \vec{V}_h (\vec{\tilde{x}}_{m}^{G},     t)\right)
 v(  \vec{\tilde{x}}_{m}^{G})
\\
&+| C_{j+\frac{1}{2},k+\frac{1}{2} }|\sum_{m=1}^{(K+2)^2}\omega_{m}
\Big( \vec{F}\left(\vec{U}_{h} (\vec{x}_{m}^{G},t)\right)\cdot \nabla v (\vec{x}_{m}^{G})+
\vec{S}_{0}\left(\vec{U}_{h}
(\vec{x}_{m}^{G},t) \right)
v(\vec{x}_{m}^{G})   \Big),
%\
\label{semiDG}
\end{align}
for $v(\vec{x})=\phi_{j,k}^{(\ell')}(\vec{x}),\ell'=0,1,\cdots,K\left(K+3\right)/2$.
It  forms a nonlinear system of
ordinary differential equations evolving the degrees of freedom or moments $\vec{U}_{j,k}^{(\ell)}\left(t\right),\ell=0,1,\cdots,K\left(K+3\right)/2$.
%\begin{remark}
%For the above $2D$ ${ \mathbb{P}^{K}}$-based DG methods, the boundary integration in \eqref{fluxboundary} uses $K+2$ Gauss-Lobatto quadrature points, i.e. $\tilde{q}=K+2$, while the element integration in \eqref{fluxinner},\eqref{sourceinner} uses $(K+2)^{2}$ Gauss-Lobatto quadrature points, i.e. $q=\left(K+2\right)^{2}$.
%\end{remark}

%\begin{remark}
%The scaled Legendre polynomials $\left\{ \phi_{j,k}^{(\ell)}(\vec{x}),\ell=0,1,\cdots,K\left(K+3\right)/2\right\} $
 %is a basis of the polynomial space ${ \mathbb{P}^{K}}\left( C_{j+\frac{1}{2},k+\frac{1}{2}} \right)$,
% the details of the scaled Legendre polynomials see .
%\end{remark}

\subsection{Time discretization}
\label{subsubsec:RK}
The semi-discrete schemes \eqref{semiDG} may be  rewritten into {an} abstract  form
\begin{equation}\label{RK-01}
\frac{d\vec{U}}{dt}=\vec{L}\left(\vec{U},t\right),
\end{equation}
which is a nonlinear system of ordinary differential equations {{with respect to}} $\vec{U}$.
Following the traditional RKDG methods, the system \eqref{RK-01} may be approximated by some
strong stability-preserving high-order time discretization.  % explicit Runge-Kutta method
 For example, the
explicit third order Runge-Kutta discretization \cite{Shu: 1988} for \eqref{RK-01} may be given by
\begin{align*}
\vec{U}^{(1)}=&\vec{U}^{n}+\Delta t_{n}\vec{L}\left(\vec{U}^{n},t_{n}\right),\\
\vec{U}^{(2)}=&\frac{3}{4}\vec{U}^{n}+\frac{1}{4}\left(\vec{U}^{(1)}+\Delta t_{n}\vec{L}\left(\vec{U}^{(1)},t_{n}+\Delta t_{n}\right)
\right),\\
\vec{U}^{n+1}=&\frac{1}{3}\vec{U}^{n}+\frac{2}{3}\left(  \vec{U}^{(2)}+\Delta t_{n}\vec{L}\left(\vec{U}^{(2)},t_{n}+\frac{1}{2}\Delta t_{n}\right) \right).
\end{align*}

In our practical computations, in order to match  the accuracy of DG spatial discretization,
the $(K+1)${th} order strong stability-preserving  explicit Runge-Kutta method
 is used for the ${ \mathbb{P}^{K}}$-based { RKDLEG} methods,
$K=1,2$, but
a general explicit { fourth}-order explicit Runge-Kutta method is employed
for the  ${ \mathbb{P}^{3}}$-based {RKDLEG} methods.

\subsection{Approximate   evolution operators}
\label{subsec:Approximate}
This section will derives the approximate local evolution operator
$\mathcal{E}_{h,0}$ used in our RKDLEG methods, see \eqref{semiDG}.
The operator $\mathcal{E}_{h,0} $ is the limit
of the approximate evolution operator $\mathcal{E}_{h}(\tau) $ as $\tau$ approaches to zero, i.e. $\mathcal{E}_{h,0}=\lim\limits_{\tau\to 0}\mathcal{E}_{h}(\tau)$, where $\mathcal{E}_{h}(\tau) $ is
an appropriate approximation of the exact evolution operator $\mathcal{E}(\tau) $ defined in
\eqref{eq:exactinteral} by numerically approximating the ``source'' terms in   \eqref{eq:interalh}-\eqref{eq:interalv}, specifically, the integrands of the integral terms depending on $ \mathcal{S}({\tt Q_{\zeta}}(\theta);\theta) $,  $ S_{1}^{(1)}({\tt Q_{\zeta}}(\theta);\theta)$, and $S_{1}^{(2)}({\tt Q_{\zeta}}(\theta);\theta)$.

\subsubsection{Approximate evolution operator $\mathcal{E}_{h}(\tau) $}
\label{subsubsec:nonlocal}
%The exact integral equations in \eqref{eq:exactinteral} or \eqref{eq:interalh}-\eqref{eq:interalv} only give a nonlinear   system of the integral equations with respect to the unknowns
%$\vec{V}({\tt P}) $ implicitly due to the time integrals dependent on  $ \mathcal{S}({\tt Q_{\zeta}}
%(\theta);\theta) $,
%$ S_{1}^{(1)}({\tt Q_{\zeta}}(\theta);\theta)$,
%and $S_{1}^{(2)}({\tt Q_{\zeta}}(\theta);\theta)$,
%thus they have to be further accurately approximated to get the approximate evolution operator $\mathcal{E}_{h}(\tau) $ as well as its limit $\mathcal{E}_{h,0}$
%for the numerical computations.

Our RKDLEG methods only require the approximate local evolution operator $\mathcal{E}_{h,0}$
at the Gauss-Lobatto quadrature nodes.
Without loss of generality, we will only discuss the
(approximate) evolution operator at the grid point $(x_{j},y_{k})$
(not on the edges of the cube face).
 The inner points on the cell edge $ \partial C_{j+\frac{1}{2},k+\frac{1}{2}}$
 will be similarly discussed and simpler than  those grid {points}.
Assume that the coordinates points ${\tt P_{0}}$ and ${\tt P}$ in Fig. \ref{fig:bicharacteristic}
are  $(x_{j},y_{k}, t_{n})$ and
$(x_{j},y_{k},t_{n}+\tau)$ with  $0<\tau\leq\Delta t_{n}$. Such constraint on $\tau$ guarantees
that the bicharacteristic cones past  the Gauss-Lobatto quadrature nodes
do not interact { with} each other. Use $\mathcal{C}_{{\tt P}}^{n}$ to denote the close curve $\left\{\left(x_{j}-d_{1}^{(\ell)}(\theta)\tau,y_{k}-d_{2}^{(\ell)}(\theta)\tau,
t_{n}\right)=:{\tt Q(\theta)}|\ell=1,3,   \theta\in[0,2\pi)\right\} $, which are the intersection of  bicharacteristic cone past the point ${\tt P}$ defined by
\[
\left\{ \left(  x_{j}-d_{1}^{(\ell)}(\theta)\zeta,
y_{k}-d_{2}^{(\ell)}(\theta)\zeta,
t_{n}+\tau-\zeta
\right),\ell=1,3, \theta\in\left[0,2\pi\right),\zeta\in\left[0,\tau\right]\right\} ,\]
with the  ${(\vec{x},t_{n})} $ plane. Under the assumption of $  0<\tau\leq\Delta t_{n} $, the closed curve $\mathcal{C}_{{\tt P}}^{n}$ possibly intersects with following four cell edges past the grid point ${\tt P_{0}}$
\begin{align*}
\mathcal{L}_{P_{0}}^{(1)}&=\left\{(x,y_{k})|x_{j-1}\leq x\leq x_{j}\right\}, \ \mathcal{L}_{P_{0}}^{(2)}=\left\{(x_{j},y)|y_{k-1}\leq y\leq y_{k}\right\},\\
\mathcal{L}_{P_{0}}^{(3)}&=\left\{(x,y_{k})|x_{j}\leq x\leq x_{j+1}\right\}, \ \mathcal{L}_{P_{0}}^{(4)}=\left\{(x_{j},y)|y_{k}\leq y\leq y_{k+1}\right\}
.\end{align*}
In the following, the notation $\hat{N}$ will be used to denote the number of the arc segments of the closed curve $\mathcal{C}_{{\tt P}}^{n}$. Use $\theta_{i} $ to denote the angle corresponding to the $i$th intersection point between $\mathcal{C}_{{\tt P}}^{n}$ and $\left\{\mathcal{L}_{{\tt P}_{0}}^{(\ell)},\ell=1,2,3,4\right\}$ so that the $i$th intersection point is ${\tt Q}
\left(\theta_{i}\right),i=1,2,\cdots,{{\hat{N}}} $, and the closed curve $\mathcal{C}_{{\tt P}}^{n}$ is divided into $\hat{N}$ arc segments, i.e. ``${\rm arc}\ {\tt Q}(\theta_{i}){\tt Q}(\theta_{i+1})$'', $i=1,2,\cdots,\hat{N}$, with $\theta_{\hat{N}+1}=\theta_{1}+2\pi $. Calculation of
$\theta_{i} $ is presented in Appendix \ref{sec:AppendixA}, where the case of inner points on the edge
$ \partial C_{j+\frac{1}{2},k+\frac{1}{2}}$
is also included.

From the exact integral equations \eqref{eq:interalh}-\eqref{eq:interalv}, one may derive the approximate integral equations or the approximate evolution operator $\mathcal{E}_{h}(\tau) $ for the linearized system \eqref{eq:linear} as follows.

\begin{theorem}\label{theor:interalapproximiate}

The linearized system \eqref{eq:linear} has the approximate evolution operator $\mathcal{E}_{h}(\tau)$
defined by
\[
\mathcal{E}_{h}(\tau)\vec{V}({\tt P_{0}})  %:=\vec{V}_{EG}({\tt P})
=\left(h_{EG}({\tt P}),u_{EG}({\tt P}),v_{EG}({\tt P})\right)^T,
\]
%defined by the following approximate integral equations
with
\begin{align}
h_{EG}({\tt P})=&\frac{1}{2\pi} \sum_{i=1}^{{{\hat{N}}}}\int_{\theta_{i}}^{\theta_{i+1}}Jd\theta
          -\frac{\tilde{c}\tilde{\Lambda}}{2\pi g} \sum_{i=1}^{{{\hat{N}}}} \Pi_{0}^{i}(\tau),\label{eq:inapprh}\\
\nonumber
u_{EG}({\tt P})=&-\frac{g}{2\pi\tilde{c}} \sum_{i=1}^{{{\hat{N}}}}\int_{\theta_{i}}^{\theta_{i+1}}  J\tilde{G}_{c}(\theta)d\theta+\left[u({\tt P'})J_{6}-v({\tt P'})J_{4}\right]\\
\nonumber&+{ \frac{\tilde{\Lambda}}{2\pi}} \sum_{i=1}^{{{\hat{N}}}} \Pi_{c}^{i}(\tau)+\frac{1}{2\pi}   \sum_{i=1}^{{{\hat{N}}}} \Pi_{1}^{i}(\tau)+\frac{1}{\tilde{\Lambda} }\left[\left({\tilde{g}_{12}}J_{1}-{\tilde{g}_{11}}J_{2}\right)\left(u({\tt P'})-u({\tt P}) +\tau S_{1}^{(1)}({\tt P'})\right)\right.\\
	    &+\left.\left({\tilde{g}_{22}}J_{1}-{\tilde{g}_{12}}J_{2}\right)\left(v({\tt P'})-v({\tt P})+\tau S_{1}^{(2)}({\tt P'})\right)\right],\label{eq:inappru}\\
\nonumber
v_{EG}({\tt P})=&-\frac{g}{2\pi\tilde{c}}  \sum_{i=1}^{{{\hat{N}}}} \int_{\theta_{i}}^{\theta_{i+1}}  J\tilde{G}_{s}(\theta)d\theta-\left[u({\tt P'})J_{7}-v({\tt P'})J_{5}\right]\\
\nonumber&+{\frac{\tilde{\Lambda}}{2\pi}} \sum_{i=1}^{{{\hat{N}}}} \Pi_{s}^{i}(\tau)+\frac{1}{2\pi}
               \sum_{i=1}^{{{\hat{N}}}} \Pi_{2}^{i}(\tau)+\frac{1}{\tilde{\Lambda} }\left[\left(\tilde{g}_{11}J_{1}-\tilde{g}_{12}J_{3}\right)\left(u({\tt P'})-u({\tt P}) +\tau S_{1}^{(1)}({\tt P'})\right)\right.\\
&+\left.\left(\tilde{g}_{12}J_{1}-\tilde{g}_{22}J_{3}\right)\left(v({\tt P'})-v({\tt P})+\tau S_{1}^{(2)}({\tt P'})\right)\right],\label{eq:inapprv}
\end{align}
where
\begin{equation}
\begin{aligned}
\Pi_{0}^{i}(\tau)=& \int_{\theta_{i}}^{\theta_{i+1}}\left(\phi_{1}'(\theta)u({\tt Q}(\theta))-\phi_{2}'(\theta)v({\tt Q}(\theta))\right)d\theta,\\
\Pi_{c}^{i}(\tau)=&\int_{\theta_{i}}^{\theta_{i+1}}\left(\phi_{3}'(\theta)u({\tt Q}(\theta))-\phi_{4}'(\theta)v({\tt Q}(\theta))\right)d\theta,\\
\Pi_{s}^{i}(\tau)=& \int_{\theta_{i}}^{\theta_{i+1}} \left(\phi_{5}'(\theta)u({\tt Q}(\theta))-\phi_{6}'(\theta)v({\tt Q}(\theta))\right)d\theta,\\
\Pi_{1}^{i}(\tau)=& \tau\int_{\theta_{i}}^{\theta_{i+1}}S_{1}^{(1)}({\tt Q}(\theta);\theta)d\theta,\quad \Pi_{2}^{i}(\tau)= \tau\int_{\theta_{i}}^{\theta_{i+1}}S_{1}^{(2)}({\tt Q}(\theta);\theta)d\theta.
\end{aligned}
\label{intervalgamma}
\end{equation}
Here
\begin{align*}
\phi_{1}:=& \tilde{K}_{\theta}^{2}\tilde{G}_{s}(\theta),
\ \ \phi_{2}:= \tilde{K}_{\theta}^{2}\tilde{G}_{c}(\theta),
\  \  \phi_{3}:=\tilde{G}_{c}(\theta)\phi_{1}(\theta)
,\\
\phi_{4}:=&\tilde{G}_{c}(\theta)\phi_{2}(\theta),
\ \ \phi_{5}:=\tilde{G}_{s}(\theta)\phi_{1}(\theta),
\ \ \phi_{6}:=\tilde{G}_{s}(\theta)\phi_{2}(\theta)
.\end{align*}
\end{theorem}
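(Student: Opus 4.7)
The plan is to derive \eqref{eq:inapprh}--\eqref{eq:inapprv} from the exact integral representation \eqref{eq:interalh}--\eqref{eq:interalv} of Theorem \ref{theor:interal} through three layered approximations. First, since $\vec{V}_h$ is piecewise polynomial and generally discontinuous across cell interfaces, I would partition the $\theta$-circle into the $\hat N$ arc segments $[\theta_i,\theta_{i+1}]$ whose endpoints coincide with the intersections of the base circle $\mathcal{C}_{{\tt P}}^n$ with the cell edges $\mathcal{L}_{{\tt P}_0}^{(\ell)}$; this is what produces the $\sum_{i=1}^{\hat N}\int_{\theta_i}^{\theta_{i+1}}$ structure in every term of \eqref{eq:inapprh}--\eqref{eq:inapprv}. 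Second, every temporal integral $\int_{t_n}^{t_n+\tau} f(\cdot,\zeta;\theta)\,d\zeta$ running along a bicharacteristic would be replaced by the left-endpoint rectangle rule $\tau f(\cdot,t_n;\theta)$, freezing the integrand at the foot ${\tt Q}(\theta)$ or ${\tt P'}$; applied to $S_1^{(1)}$ and $S_1^{(2)}$ this delivers the $\Pi_1^i,\Pi_2^i$ contributions immediately, while applied to the $\mathcal{S}$-integrals it leaves a pure $\theta$-integral whose integrand still carries spatial derivatives of $u,v$.

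Third, I would eliminate those remaining spatial derivatives by integrating by parts in $\theta$ on each arc. The key identity is the chain rule along the foot curve, $\frac{d}{d\theta}u({\tt Q}(\theta)) = \tau\tilde c\bigl(u_x\tilde G_c'(\theta)+u_y\tilde G_s'(\theta)\bigr)$ (since $d_1^{(1)}=\tilde u-\tilde c\tilde G_c$ and $d_2^{(1)}=\tilde v-\tilde c\tilde G_s$), with the analogous identity for $v$. After rearranging $\mathcal S$, $\tilde G_c\mathcal S$, and $\tilde G_s\mathcal S$ into scalar multiples of these chain-rule combinations, which is precisely where the auxiliary functions $\phi_k$ in \eqref{intervalgamma} arise, one integration by parts replaces $u_x,u_y,v_x,v_y$ evaluated at ${\tt Q}(\theta)$ by $\phi_k'(\theta)u({\tt Q}(\theta))$ and $\phi_k'(\theta)v({\tt Q}(\theta))$, up to boundary terms at the $\theta_i$; this yields the $\Pi_0^i,\Pi_c^i,\Pi_s^i$ integrals.

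The $\frac{1}{\tilde\Lambda}$ terms carrying the differences $u({\tt P'})-u({\tt P})$ and $v({\tt P'})-v({\tt P})$ in \eqref{eq:inappru}--\eqref{eq:inapprv} come from the two pressure-gradient integrals $\frac{g}{\tilde\Lambda}\int(h_y J_1-h_x J_2)d\zeta$ and $\frac{g}{\tilde\Lambda}\int(h_x J_1-h_y J_3)d\zeta$ in \eqref{eq:interalu}--\eqref{eq:interalv}, which I would not freeze naively. Instead, the momentum rows of \eqref{eq:linear} along the straight particle path ${\tt P}'_\zeta$ read $\frac{du}{d\zeta}+g(\tilde g^{11}h_x+\tilde g^{12}h_y)=S_1^{(1)}$ and $\frac{dv}{d\zeta}+g(\tilde g^{12}h_x+\tilde g^{22}h_y)=S_1^{(2)}$, where $\frac{d}{d\zeta}$ is the total derivative along ${\tt P}'_\zeta$. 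Integrating these exactly in $\zeta$ telescopes $u_\zeta,v_\zeta$ into $u({\tt P})-u({\tt P'})$ and $v({\tt P})-v({\tt P'})$; the remaining $S_1^{(i)}$ integrals are frozen to give $\tau S_1^{(i)}({\tt P'})$; and inverting the $2\times 2$ metric block $(\tilde g^{ij})$ produces precisely the coefficients $\tilde g_{12}J_1-\tilde g_{11}J_2$ and $\tilde g_{22}J_1-\tilde g_{12}J_2$ appearing in \eqref{eq:inappru} (and analogously for \eqref{eq:inapprv}).

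The main obstacle will be the bookkeeping of the boundary terms $\phi_k(\theta_i)u({\tt Q}(\theta_i))$ and $\phi_k(\theta_i)v({\tt Q}(\theta_i))$ produced by the $\theta$-integration by parts at every arc endpoint. These must either telescope across consecutive arcs, using $2\pi$-periodicity of the $\phi_k$ combined with the symmetry $\vec{\tilde{R}}^{(1)}(\theta+\pi)=-\vec{\tilde{R}}^{(3)}(\theta)$ already exploited in Theorem \ref{theor:interal}, or be absorbed into the exact algebraic contributions $u({\tt P'})J_6-v({\tt P'})J_4$ and $u({\tt P'})J_7-v({\tt P'})J_5$ (which themselves arise from the $w_2({\tt P'};\theta)$ pieces via \eqref{w1integral}--\eqref{w2integral}) after invoking \eqref{j1}--\eqref{j7}. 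Verifying this cancellation is the technical heart of the proof; once done, the remainder reduces to collecting terms and identifying coefficients.
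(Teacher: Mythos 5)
Your overall architecture matches the paper's proof: the treatment of the pressure-gradient integrals (writing $(h_x,h_y)^T$ from the momentum rows of \eqref{eq:linear}, integrating exactly along the particle path ${\tt P}'_\zeta$ to telescope into $\vec u({\tt P})-\vec u({\tt P'})$, freezing the residual $S_1^{(\ell)}$ integrals, and multiplying by $-\frac1g(\tilde g_{ij})$ to produce the $\tilde g_{12}J_1-\tilde g_{11}J_2$-type coefficients) is exactly the paper's Step~1; the left rectangle rule in $\zeta$ for all the source integrals is exactly the first half of its Step~2; and the idea of trading the spatial derivatives in $\mathcal S$, $\tilde G_c\mathcal S$, $\tilde G_s\mathcal S$ for $\phi_k'(\theta)$ via the chain rule along the foot circle (your identity $\frac{d}{d\theta}u({\tt Q}(\theta))=\tau\tilde c\,(u_x\tilde G_c'+u_y\tilde G_s')$ is equivalent to the paper's $\frac{d}{d\theta}d_1^{(1)}=\frac{\tilde c}{\tilde\Lambda\tilde K_\theta^3}\sin\theta$, $\frac{d}{d\theta}d_2^{(1)}=-\frac{\tilde c}{\tilde\Lambda\tilde K_\theta^3}\cos\theta$) is the content of the paper's Lemma \ref{lemma:phipsi}.

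The gap is precisely at the step you yourself flag as ``the technical heart.'' You integrate by parts arc-by-arc and then hope the boundary terms $\phi_k(\theta_i)u({\tt Q}(\theta_i))$, $\phi_k(\theta_i)v({\tt Q}(\theta_i))$ either telescope or get absorbed into $u({\tt P'})J_6-v({\tt P'})J_4$ and its companion. Neither happens: the $\theta_i$ are exactly the angles at which $\mathcal C_{\tt P}^n$ crosses cell interfaces, so $u({\tt Q}(\theta))$ and $v({\tt Q}(\theta))$ jump there, the one-sided limits from adjacent arcs differ, and the boundary terms sum to $-\sum_i\phi_k(\theta_i)\,[u]_{\theta_i}$ rather than zero; and the $J_4,\dots,J_7$ terms come from the $w_2({\tt P'};\theta)$ contribution via \eqref{w1integral}--\eqref{w2integral}, which has nothing to do with these jumps. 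The paper avoids the issue entirely: Lemma \ref{lemma:phipsi} performs the integration by parts over the \emph{whole} period $[0,2\pi]$, where the boundary term $\phi(\theta)\psi({\tt Q}(\theta))\big|_0^{2\pi}$ vanishes by $2\pi$-periodicity (the spatial derivatives being understood as generalized derivatives), yielding $\tau\int_0^{2\pi}\mathcal S({\tt Q}(\theta);\theta)\,d\theta=\frac{\tilde\Lambda}{\tilde c}\int_0^{2\pi}\bigl(\phi_1'u-\phi_2'v\bigr)d\theta$; only \emph{then} is the right-hand side split into the $\hat N$ arc integrals defining $\Pi_0^i$, a splitting that is exact because the integrand no longer contains derivatives of $u,v$. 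You should restructure your third layer this way (global identity first, arc decomposition second); as written, carrying out your plan would leave you with uncancelled jump terms and no way to match \eqref{eq:inapprh}--\eqref{eq:inapprv}.
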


Before proving {{Theorem}} \ref{theor:interalapproximiate}, the following lemma is first introduced.

\begin{lemma}
\label{lemma:phipsi}
If  $\phi(\theta)\in C^{1}(\mathbb{R})$
and {$\psi(\vec{x},t)$} is continuous and differentiable along the arc segment ``${\rm arc}\ {\tt Q}(\theta_{i}){\tt Q}(\theta_{i+1})$'',
$i=1,2,\cdots,\hat{N}$,  $\theta_{\hat{N}+1}=\theta_{1}+2\pi$,
then the integral relation
\begin{equation}
\label{lemmaphipsi}
\frac{\tilde{c}\tau}{\tilde{\Lambda}}\int_{0}^{2\pi}\frac{\phi(\theta)}{\tilde{K}_{\theta}^{3}}
\left(\sin\theta\psi_{x}({{\tt Q}}(\theta))-\cos\theta\psi_{y}({{\tt Q}}(\theta))\right)d\theta= \int_{0}^{2\pi}\phi'(\theta)\psi({\tt Q}(\theta))d\theta,
\end{equation}
{ holds, where $\psi_{x}$ and $\psi_{y}$ denote  generalized derivatives of $\psi$.}
\end{lemma}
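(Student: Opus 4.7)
The strategy is to recognize the LHS integrand as a total $\theta$-derivative of $\psi(\mathtt{Q}(\theta))$ in disguise, and then invoke integration by parts in $\theta$. The underlying geometric fact is that, as $\theta$ sweeps $[0,2\pi)$, the point $\mathtt{Q}(\theta)=(x_j - d_1^{(1)}(\theta)\tau,\, y_k - d_2^{(1)}(\theta)\tau)$ traces the closed curve $\mathcal{C}_{\mathtt{P}}^n$, with $d_1^{(1)}=\tilde{u}-\tilde{c}\tilde{G}_c(\theta)$ and $d_2^{(1)}=\tilde{v}-\tilde{c}\tilde{G}_s(\theta)$. First I would apply the chain rule to obtain
\[
\frac{d}{d\theta}\psi(\mathtt{Q}(\theta)) = \tilde{c}\tau\bigl[\tilde{G}_c'(\theta)\,\psi_x(\mathtt{Q}(\theta)) + \tilde{G}_s'(\theta)\,\psi_y(\mathtt{Q}(\theta))\bigr].
\]

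The heart of the proof is then to produce closed-form expressions for $\tilde{G}_c'(\theta)$ and $\tilde{G}_s'(\theta)$. Starting from $\tilde{G}_c(\theta)\tilde{K}_\theta = \tilde{g}^{11}\cos\theta+\tilde{g}^{12}\sin\theta$ and its analogue for $\tilde{G}_s$, together with $\tilde{K}_\theta^2 = \tilde{g}^{11}\cos^2\theta+\tilde{g}^{12}\sin 2\theta+\tilde{g}^{22}\sin^2\theta$, direct differentiation and collection of terms, combined with the Pythagorean identity and the metric determinant relation $\tilde{g}^{11}\tilde{g}^{22}-(\tilde{g}^{12})^2 = 1/\tilde{\Lambda}^{2}$, causes the several mixed products of $\tilde{g}^{ij}$ with trigonometric monomials to collapse, leaving the compact identities
\[
\tilde{G}_c'(\theta) \propto -\frac{\sin\theta}{\tilde{K}_\theta^{3}}, \qquad \tilde{G}_s'(\theta) \propto \frac{\cos\theta}{\tilde{K}_\theta^{3}},
\]
with an explicit proportionality constant involving $\tilde{\Lambda}$. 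Substituting these into the chain rule identity above shows that $\frac{d}{d\theta}\psi(\mathtt{Q}(\theta))$ is a constant multiple of $\frac{1}{\tilde{K}_\theta^{3}}(\sin\theta\,\psi_x(\mathtt{Q}) - \cos\theta\,\psi_y(\mathtt{Q}))$. Matching the constant with the prefactor $\tilde{c}\tau/\tilde{\Lambda}$ on the LHS of \eqref{lemmaphipsi} rewrites the LHS as $-\int_0^{2\pi}\phi(\theta)\,\frac{d}{d\theta}\psi(\mathtt{Q}(\theta))\,d\theta$.

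Finally I would integrate by parts on each arc segment $[\theta_i,\theta_{i+1}]$, on which $\psi(\mathtt{Q}(\theta))$ is differentiable by hypothesis, and sum over $i=1,\ldots,\hat{N}$. This yields $\int_0^{2\pi}\phi'(\theta)\psi(\mathtt{Q}(\theta))\,d\theta$ plus the telescoping endpoint sum, which collapses to $[\phi(\theta)\psi(\mathtt{Q}(\theta))]_{\theta_1}^{\theta_1+2\pi}$. This vanishes by the $2\pi$-periodicity of $\phi$ and of the closed curve $\mathtt{Q}(\theta)$, combined with the continuity of $\psi$ at the junction points between consecutive arcs. This matches the RHS of \eqref{lemmaphipsi}.

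The main obstacle is the algebraic verification in the second step: the derivative computations for $\tilde{G}_c'$ and $\tilde{G}_s'$ produce numerators that are sums of several trigonometric terms weighted by products of metric components, and it is only after invoking the determinant identity and Pythagorean identity that these reduce to a single term proportional to $\sin\theta$ (respectively $\cos\theta$) divided by $\tilde{K}_\theta^3$. Once this algebraic collapse is in hand, the chain rule and periodic integration by parts together with continuity of $\psi$ across arc junctions make the rest of the argument routine.
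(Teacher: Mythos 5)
Your proof is correct and follows essentially the same route as the paper's: both rewrite the left-hand integrand as $-\phi(\theta)\,\frac{d}{d\theta}\psi({\tt Q}(\theta))$ using the derivative identities $\frac{d}{d\theta}d_{1}^{(1)}(\theta)\propto\sin\theta/\tilde{K}_{\theta}^{3}$ and $\frac{d}{d\theta}d_{2}^{(1)}(\theta)\propto-\cos\theta/\tilde{K}_{\theta}^{3}$, and then integrate $\frac{d}{d\theta}\bigl(\phi(\theta)\psi({\tt Q}(\theta))\bigr)$ around the closed curve $\mathcal{C}_{{\tt P}}^{n}$, which vanishes by $2\pi$-periodicity. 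The only difference is one of detail: you actually carry out the quotient-rule and determinant-identity computation that produces those derivative formulas (the paper simply asserts them), and you integrate by parts arc-by-arc with a telescoping boundary sum, whereas the paper does the integration over $[0,2\pi]$ in a single step.
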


\begin{proof}
Integrating $\frac{d}{d\theta}\left(\phi(\theta)\psi({\tt Q}(\theta))\right)$ along the { the closed curve $\mathcal{C}_{{\tt P}}^{n}$} and using the relations
\[
\frac{d}{d\theta}d_{1}^{(1)}(\theta)=\frac{\tilde{c}}{\tilde{\Lambda}\tilde{K}_{\theta}^{3} }\sin\theta,\quad \frac{d}{d\theta}d_{2}^{(1)}(\theta)=-\frac{\tilde{c}}{\tilde{\Lambda}\tilde{K}_{\theta}^{3} }\cos\theta,
\]
gives
\begin{align*}
\frac{\tilde{c}\tau}{\tilde{\Lambda}}\int_{0}^{2\pi}\frac{\phi(\theta)}
{\tilde{K}_{\theta}^{3}}&\left(\sin\theta\psi_{x}({{\tt Q}}(\theta))
-\cos\theta\psi_{y}({{\tt Q}}(\theta))\right)d\theta- \int_{0}^{2\pi}\phi'(\theta)\psi({{\tt Q}}(\theta))d\theta\\
=&-\int_{0}^{2\pi}\left(\phi(\theta)\psi({{\tt Q}}(\theta))\right)'d\theta
=- \phi(\theta)\psi({{\tt Q}}(\theta)) |_{0}^{2\pi}=0.
\end{align*}
The proof is completed.
\qed \end{proof}

\begin{proof} {\bf of {Theorem}} \ref{theor:interalapproximiate}\
It is divided into two steps.

{\bf Step 1}. Let us consider both integrals depending on the height gradient $\left(\partial_{x}h,\partial_{y}h\right) $ in \eqref{eq:interalu} and \eqref{eq:interalv}.
The linearized system \eqref{eq:linear}  may give
\begin{equation}
\label{hxhy}
\begin{pmatrix}
\partial_{x}h\\
\partial_{y}h
\end{pmatrix}
=\vec{\tilde{L}}\left(\frac{ \tilde{\rm D}\vec{u}}{ \tilde{\rm D}t}-
\begin{pmatrix}
S_{1}^{(1)}\\
S_{1}^{(2)}
\end{pmatrix}\right),
\end{equation}
where
\[
\vec{u}=\left(u,v\right)^{T},\quad \frac{ \tilde{\rm D}}{ \tilde{\rm D}t}=\partial_{t}+\tilde{u}\partial_{x}+\tilde{v}\partial_{y},\quad
\vec{\tilde{L}}=
-\frac{1}{g}
\begin{pmatrix}
\tilde{g}_{11} &\tilde{g}_{12}\\
\tilde{g}_{12} &\tilde{g}_{22}
\end{pmatrix}.
\]
Hence
\begin{align}
\nonumber &\int_{t_{n}}^{t_{n}+\tau}
\begin{pmatrix}
\partial_{x}h({\tt P}_{\zeta}')\\
\partial_{y}h({\tt P}_{\zeta}')
\end{pmatrix}d\zeta
= \vec{\tilde{L}}\left(\frac{ \tilde{\rm D}\vec{u}}{ \tilde{\rm D}t}|{\tt P}_{\zeta}'d\zeta-
\int_{t_{n}}^{t_{n}+\tau}
\begin{pmatrix}
S_{1}^{(1)}({\tt P}_{\zeta}')\\
S_{1}^{(2)}({\tt P}_{\zeta}')
\end{pmatrix}d\zeta\right)\\
\nonumber = &\vec{\tilde{L}}\left(\frac{d  }{ d\zeta}\vec{u}({\tt P}_{\zeta}')d\zeta-
\int_{t_{n}}^{t_{n}+\tau}
\begin{pmatrix}
S_{1}^{(1)}({\tt P}_{\zeta}')\\
S_{1}^{(2)}({\tt P}_{\zeta}')
\end{pmatrix}d\zeta\right) =\vec{\tilde{L}}\left(\left(\vec{u}({\tt P})-\vec{u}({\tt P'})\right)-
\int_{t_{n}}^{t_{n}+\tau}
\begin{pmatrix}
S_{1}^{(1)}({\tt P}_{\zeta}')\\
S_{1}^{(2)}({\tt P}_{\zeta}')
\end{pmatrix}d\zeta\right)\\
 =&\begin{pmatrix}
\frac{1}{g}\left[\tilde{g}_{11}\left(u({\tt P'})-u({\tt P}) + \int_{t_{n}}^{t_{n}+\tau}S_{1}^{(1)}({\tt P}_{\zeta}')d\zeta\right)+\tilde{g}_{12}\left(v({\tt P'})-v({\tt P})+\int_{t_{n}}^{t_{n}+\tau}S_{1}^{(2)}({\tt P}_{\zeta}')d\zeta\right)\right]\\
\frac{1}{g}\left[\tilde{g}_{12}\left(u({\tt P'})-u({\tt P}) +\int_{t_{n}}^{t_{n}+\tau}S_{1}^{(1)}({\tt P}_{\zeta}')d\zeta\right)+\tilde{g}_{22}\left(v({\tt P'})-v({\tt P})+\int_{t_{n}}^{t_{n}+\tau}S_{1}^{(2)}({\tt P}_{\zeta}')d\zeta\right)\right]
\end{pmatrix},
\label{hxhyfinal}
\end{align}
here we have used the fact that
\[
\frac{d}{d\zeta}\vec{u}({\tt P}_{\zeta}')=\frac{d}{d\zeta}\vec{u}\left(  (x-\tilde{u}(t_{n}+\tau-\zeta),
y-\tilde{v}(t_{n}+\tau-\zeta)), \zeta\right)
=\frac{\tilde{\rm D}\vec{u}}{\tilde{\rm D}t}\Big|_{{\tt P}_{\zeta}'}.
\]

{\bf Step 2}. First, approximate three  double integrals containing $ S({\tt Q}_{\zeta}(\theta);\theta) $ in \eqref{eq:interalh}-\eqref{eq:interalv},
two double integrals dependent on $ S_{1}^{(1)}({\tt Q}_{\zeta}(\theta);\theta)$ and $S_{1}^{(2)}({\tt Q}_{\zeta}(\theta);\theta)$,
two single integrals relying on $ S_{1}^{(1)}({\tt P}_{\zeta}')$ and
$S_{1}^{(2)}({\tt P}_{\zeta}')$ in \eqref{hxhyfinal} with the left rectangle rule in the $\zeta $-direction as follows
\begin{align}
& \int_{0}^{2\pi} \int_{t_{n}}^{t_{n}+\tau}\mathcal{S}({\tt Q}_{\zeta}(\theta);\theta)d\zeta d\theta \approx \tau \int_{0}^{2\pi}  \mathcal{S}({\tt Q}(\theta);\theta) d\theta\label{s0q},\\
\nonumber &\\
& \int_{0}^{2\pi} \int_{t_{n}}^{t_{n}+\tau} \tilde{G}_{c}(\theta) \mathcal{S}({\tt Q}_{\zeta}(\theta);\theta)d\zeta d\theta \approx \tau \int_{0}^{2\pi}  \tilde{G}_{c}(\theta) \mathcal{S}({\tt Q}(\theta);\theta) d\theta\label{sqgc},\\
\nonumber &\\
& \int_{0}^{2\pi} \int_{t_{n}}^{t_{n}+\tau} \tilde{G}_{s}(\theta) \mathcal{S}({\tt Q}_{\zeta}(\theta);\theta)d\zeta d\theta \approx \tau \int_{0}^{2\pi}  \tilde{G}_{s}(\theta) \mathcal{S}({\tt Q}(\theta);\theta) d\theta\label{sqgs},\\
\nonumber &\\
& \int_{0}^{2\pi} \int_{t_{n}}^{t_{n}+\tau} S_{1}^{(\ell)}({\tt Q}_{\zeta}(\theta);\theta) \approx \tau \int_{0}^{2\pi}  S_{1}^{(\ell)}({\tt Q}(\theta);\theta),\quad\ell=1,2 \label{sq},\\
\nonumber &\\
& \int_{t_{n}}^{t_{n}+\tau} S_{1}^{(\ell)}({\tt P}_{\zeta}') d\zeta \approx\tau S_{1}^{(\ell)}({\tt P}'),\quad\ell=1,2\label{sp}.
\end{align}

Next, use {{Lemma}} \ref{lemma:phipsi} to handle three integrals
depending on the spatial derivatives of  $\vec{V}$
at the right-hand sides of \eqref{eq:interalh}-\eqref{eq:interalv}.
Taking $\psi=u$ and $\phi(\theta)=\phi_{1}(\theta)  $ in  \eqref{lemmaphipsi} gives
\[
\frac{\tilde{c}\tau}{\tilde{\Lambda}}
\int_{0}^{2\pi}\frac{\tilde{G}_{s}(\theta)}{\tilde{K}_{\theta}}\left(\sin\theta u_{x}({\tt Q}(\theta))-\cos\theta u_{y}({\tt Q}(\theta))\right)d\theta= \int_{0}^{2\pi}\phi_{1}'(\theta)u({\tt Q}(\theta))d\theta.
\]
Again taking $\psi=v$, $\phi(\theta)=
\phi_{2}(\theta) $ in \eqref{lemmaphipsi} leads to
 \[
\frac{\tilde{c}\tau}{\tilde{\Lambda}}\int_{0}^{2\pi}\frac{\tilde{G}_{c}(\theta)}
{\tilde{K}_{\theta}}\left(\sin\theta v_{x}({\tt Q}(\theta))-\cos\theta { v}_{y}({\tt Q}(\theta))\right)d\theta= \int_{0}^{2\pi}\phi_{2}'(\theta)v({\tt Q}(\theta))d\theta.
\]
Subtracting those two equations gives
\[
\tau \int_{0}^{2\pi}\mathcal{S}({\tt Q}(\theta);\theta)d\theta ={\frac{\tilde{\Lambda}}{\tilde{c}}}\int_{0}^{2\pi}\left(\phi_{1}'(\theta)u({\tt Q}(\theta))-\phi_{2}'(\theta)v({\tt Q}(\theta))\right)d\theta=\frac{\tilde{\Lambda}}{\tilde{c}}\sum_{i=1}^{{{\hat{N}}}} \Pi_{0}^{i}(\tau).
\]
Similarly using {{Lemma}} \ref{lemma:phipsi} may get
\begin{align*}
\tau \int_{0}^{2\pi}\tilde{G}_{c}(\theta)\mathcal{S}({\tt Q}(\theta);\theta)d\theta &={\frac{\tilde{\Lambda}}{\tilde{c}}}\int_{0}^{2\pi}\left(\phi_{3}'(\theta)u({\tt Q}(\theta))-\phi_{4}'(\theta)v({\tt Q}(\theta))\right)d\theta=\frac{\tilde{\Lambda}}{\tilde{c}}\sum_{i=1}^{{{\hat{N}}}} \Pi_{c}^{i}(\tau),\\
\tau \int_{0}^{2\pi}\tilde{G}_{s}(\theta)\mathcal{S}({\tt Q}(\theta);\theta)d\theta &={\frac{\tilde{\Lambda}}{\tilde{c}}}\int_{0}^{2\pi}\left(\phi_{5}'(\theta)u({\tt Q}(\theta))
-\phi_{6}'(\theta)v({\tt Q}(\theta))\right)d\theta=\frac{\tilde{\Lambda}}{\tilde{c}}\sum_{i=1}^{{{\hat{N}}}} \Pi_{s}^{i}(\tau).
\end{align*}
Because the closed curve $\mathcal{C}_{{\tt P}}^{n}$ is divided into $\hat{N}$ arc segments and the approximate solution $\vec{V}_{h}(\vec{x},t_{n})$ smooth along each arc segment of   $\mathcal{C}_{{\tt P}}^{n}$,  combining the above three relations with \eqref{hxhyfinal}-\eqref{sp} as well as \eqref{eq:interalh}-\eqref{eq:interalv} may completes the proof of {{Theorem}} \ref{theor:interalapproximiate}.
\qed \end{proof}

%\begin{remark}
%From the exact integral equations \eqref{eq:interalh}-\eqref{eq:interalv} to the approximate integral equations \eqref{eq:inapprh}-\eqref{eq:inapprv}, numerical approximation(i.e. the left rectangle rule for the integral with regard to $zeta$) is only employed in \eqref{sq} and \eqref{sp}.
%\end{remark}

\begin{remark}
The approximate integral equations \eqref{eq:inappru}-\eqref{eq:inapprv} form a $2\times2 $ system of the linear algebraic equations with respect to the unknowns $\left(u_{h}({\tt P}),v_{h}({\tt P})\right) $.
Solving this linear system may give the explicit expression of the approximate evolution operator $\mathcal{E}_{h}(\tau) $.
However, it still { contains} the complicate integrals with respect to $\theta$
so that the calculation of  flux integral
at the right-hand side of \eqref{semiDG} is very time-consuming and technical.
In order to avoid such difficulty,   the approximate local evolution operator $\mathcal{E}_{h,0} $
is introduced to replace $\mathcal{E}_{h}(\tau) $, see \cite{SunRen2009,Wu:2014}.
% , the limit of $\mathcal{E}_{h}(\tau) $ as $\tau$ approaches to zero,
\end{remark}

% % % % % % % % % % % % % % % % % % % % % % % % % % % % % % % % % % % % % % %5
\subsubsection{Approximate local evolution operator $\mathcal{E}_{h,0} $}
\label{subsubsec:local}
This section derives the approximate local evolution operator
$\mathcal{E}_{h,0} $ defined by
\[
\mathcal{E}_{h,0}:=\lim_{\tau\to {0^+}}\mathcal{E}_{h}(\tau),
\]
which only requires to evolve the solutions to the time $t_n+\tau $  from the ``initial'' time $t_{n} $,
where $0<\tau \ll 1 $.
%The RKDLEG methods may maintain the genuinely multi-dimensional nature in numerical flux evaluation of the EG method.
%
Since the coordinates of ${\tt P'}$ and ${\tt Q}(\theta)$ are
$
\left(x_{j}-\tilde{u}\tau,y_{k}-\tilde{v}\tau,t_{n} \right)
$
and $
\left(x_{j}-d_{1}^{(1)}(\theta)\tau,y_{k}-d_{2}^{(1)}(\theta)\tau, t_{n}\right)
$, respectively,
both ${\tt P'}$ and ${\tt Q}(\theta)$ will tend to the point ${\tt P}_{0}$,
 and the length of the arc segment ``${\rm arc}\ {\tt Q}\left(\theta_{i}\right){\tt Q}\left(\theta_{i+1}\right)$'' will also approach to zero, as $\tau\to {0^{+}} $. Hence, one has
\begin{align*}
\lim_{\tau\to 0^{+}} \Pi_{0}^{i} (\tau)=& \lim_{\tau\to 0^{+}} \int_{\theta_{i}}^{\theta_{i+1}}\left(\phi_{1}'(\theta)u({\tt Q}(\theta))-\phi_{2}'(\theta)v({\tt Q}(\theta))\right)d\theta\\
				           =& u_{i}^{*}\left(\phi_{1}(\theta_{i+1})-\phi_{1}(\theta_{i})\right)- v_{i}^{*}\left(\phi_{2}(\theta_{i+1})-\phi_{2}(\theta_{i})\right)=:\Pi_{0,0}^{i},
\end{align*}
where
\[
\vec{V}_{i}^{*}{ {=}}\left(h_{i}^{*},u_{i}^{*},v_{i}^{*}\right)^{T}:=\lim_{\tau\to 0^{+}}\vec{V}({\tt Q}(\theta)),\theta\in\left(\theta_{i},\theta_{i+1}\right),i=1,2,\cdots,\hat{N}.
\]
Similarly,  one may also get
\begin{align*}
\lim_{\tau\to 0^{+}} \Pi_{c}^{i} (\tau)=& u_{i}^{*}\left(\phi_{3}(\theta_{i+1})-\phi_{3}(\theta_{i})\right)- v_{i}^{*}\left(\phi_{4}(\theta_{i+1})-\phi_{4}(\theta_{i})\right)=:\Pi_{c,0}^{i},\\
\lim_{\tau\to 0^{+}} \Pi_{s}^{i} (\tau)=& u_{i}^{*}\left(\phi_{5}(\theta_{i+1})-\phi_{5}(\theta_{i})\right)- v_{i}^{*}\left(\phi_{6}(\theta_{i+1})-\phi_{6}(\theta_{i})\right)=:\Pi_{s,0}^{i}
,\end{align*}
and
\[
\lim_{\tau\to 0} \Pi_{1}^{i}(\tau)= \lim_{\tau\to 0}\tau\int_{\theta_{i}}^{\theta_{i+1}}S_{1}^{(\ell)}({\tt Q}(\theta);\theta)d\theta=0,\ell=1,2.
\]
%due to the bounded integrand,
In view of the above facts,  taking the limit of the approximate integral equations \eqref{eq:inapprh}-\eqref{eq:inapprv} as $\tau\to 0^{+} $  leads to the following approximate local
integral equations
\begin{align}
h_{LEG}({\tt P})=&\frac{1}{2\pi} \sum_{i=1}^{{{\hat{N}}}} \left[h_{i}^{*}- \frac{\tilde{c}}{g}\left(u_{i}^{*} \int_{\theta_{i}}^{\theta_{i+1}} \frac{\cos\theta}{\tilde{K}_{\theta}}d\theta +v_{i}^{*} \int_{\theta_{i}}^{\theta_{i+1}}\frac{\sin\theta}{\tilde{K}_{\theta}} d\theta\right)\right]
          -\frac{\tilde{c}\tilde{\Lambda}}{2\pi g} \sum_{i=1}^{{{\hat{N}}}} \Pi_{0,0}^{i},
          \label{eq:localapproh}\\
\nonumber
u_{LEG}({\tt P})=&\frac{g}{2\pi\tilde{c}} \sum_{i=1}^{{{\hat{N}}}} \left[-h_{i}^{*} \int_{\theta_{i}}^{\theta_{i+1}} \tilde{G}_{c}(\theta)d\theta + \frac{\tilde{c}}{g}\left(u_{i}^{*} \int_{\theta_{i}}^{\theta_{i+1}} \frac{\tilde{G}_{c}(\theta) \cos\theta}{\tilde{K}_{\theta}}d\theta +v_{i}^{*} \int_{\theta_{i}}^{\theta_{i+1}}\frac{\tilde{G}_{c}(\theta)
 \sin\theta}{\tilde{K}_{\theta}} d\theta\right)\right] \\
\nonumber	  &+\frac{1}{\tilde{\Lambda} }\left[\left({\tilde{g}_{12}}J_{1}-{ \tilde{g}_{11}}J_{2}\right)\left(  u_{0}^{*}  -u_{h}({\tt P})\right)+\left({ \tilde{g}_{22}}J_{1}-{ \tilde{g}_{12}}J_{2}\right)
\left( v_{0}^{*} -v_{h}({\tt P})\right)\right]\\
          &+\left[ u_{0}^{*} J_{6}-v_{0}^{*}J_{4}\right]+{\frac{\tilde{\Lambda}}{2\pi}} \sum_{i=1}^{{{\hat{N}}}} \Pi_{c,0}^{i},\label{eq:localapprou}\\
\nonumber
v_{LEG}({\tt P})=&\frac{g}{2\pi\tilde{c}} \sum_{i=1}^{{{\hat{N}}}} \left[-h_{i}^{*}\int_{\theta_{i}}^{\theta_{i+1}} \tilde{G}_{s}(\theta)d\theta + \frac{\tilde{c}}{g}\left(u_{i}^{*} \int_{\theta_{i}}^{\theta_{i+1}} \frac{\tilde{G}_{s}(\theta) \cos\theta}{\tilde{K}_{\theta}}d\theta +v_{i}^{*} \int_{\theta_{i}}^{\theta_{i+1}}\frac{\tilde{G}_{s}(\theta)
\sin\theta}{\tilde{K}_{\theta}} d\theta\right)\right] \\
\nonumber	   &+\frac{1}{\tilde{\Lambda} }\left[\left(\tilde{g}_{11}J_{1}-\tilde{g}_{12}J_{3}\right)\left(u_{0}^{*}-u_{h}({\tt P})\right)+\left(\tilde{g}_{12}J_{1}-\tilde{g}_{22}J_{3}\right)\left(v_{0}^{*}-v_{h}({\tt P})\right)\right]\\
          &+\left[ u_{0}^{*} J_{7}-v_{0}^{*}J_{5}\right]+{\frac{\tilde{\Lambda}}{2\pi}} \sum_{i=1}^{{{\hat{N}}}} \Pi_{s,0}^{i},\label{eq:localapprov}
\end{align}
where
\[
\left(h_{0}^{*},u_{0}^{*},v_{0}^{*}\right)^T=\vec{V}_{0}^{*}:=\lim_{\tau\to 0^{+}} \vec{V}_{h}({\tt P'}),
 \]
and $\vec{V}_{h}$ is the  approximate solutions in primitive variable of the { RKDLEG} methods.
 Eqs. \eqref{eq:localapproh}-\eqref{eq:localapprov} define our approximate local evolution operator, i.e.
\[
\mathcal{E}_{h,0}\vec{V}\left({\tt P_{0}}\right):=\vec{V}_{LEG}\left({\tt P}\right)=\left(h_{LEG}\left({\tt P}\right),u_{LEG}\left({\tt P}\right),v_{LEG}\left({\tt P}\right)\right)^{T},
\]
implicitly.

\begin{remark}
\label{remark:integral}
All integrals with regard to $\theta$ at the right-hand sides of  \eqref{eq:localapproh}-\eqref{eq:localapprov} can be exactly evaluated.
In fact, the integrands in those
integrals are
\begin{align*}
& \frac{\cos\theta}{\tilde{K}_{\theta}},
\  \frac{\sin\theta}{\tilde{K}_{\theta}},
\   \frac{\tilde{G}_{c}(\theta) \cos\theta}{\tilde{K}_{\theta}},
\
\frac{\tilde{G}_{c}(\theta) \sin\theta}{\tilde{K}_{\theta}},
\
  \frac{\tilde{G}_{s}(\theta) \cos\theta}{\tilde{K}_{\theta}},
\
  \frac{\tilde{G}_{s}(\theta) \sin\theta}{\tilde{K}_{\theta}}, \
 \tilde{G}_{c}(\theta),\   \tilde{G}_{s}(\theta),
\end{align*}
whose antiderivatives or primitive functions may be gotten with the aid of
the following identities
\begin{align*}
&\int^{\theta}
\frac{\sin\tilde{\theta}\cos\tilde{\theta}}{\lambda_{1}
\cos^{2}\tilde{\theta}+\lambda_{2}\sin^{2}\tilde{\theta}}d\tilde{\theta}=
\begin{cases}
-\frac{\cos^{2}\theta}{2\lambda_{1}},&\lambda_{1}=\lambda_{2},\\
\frac{\ln\left(\lambda_{1}\cos^{2}\theta+\lambda_{2}\sin^{2}\theta\right)}
{2\left(\lambda_{2}-\lambda_{1}\right)},&\lambda_{1}\neq\lambda_{2},
\end{cases}
\\
&\int^{\theta} \frac{\sin^{2}\tilde{\theta}}{\lambda_{1}\cos^{2}\tilde{\theta}
+\lambda_{2}\sin^{2}\tilde{\theta}}d\tilde{\theta}=
\begin{cases}
\frac{\theta-\sin\theta\cos\theta}{2\lambda_{1}},&\lambda_{1}=\lambda_{2},\\
\frac{\sqrt{\lambda_{1}/\lambda_{2}}}{\lambda_{1}-\lambda_{2}} \left(\arctan\left(\tan\theta\sqrt{\lambda_{1}/\lambda_{2}}\right)+\pi\lfloor\frac{\theta}{\pi}-\frac{1}{2}\rfloor\right)
-\frac{\theta}{\lambda_{1}-\lambda_{2}},&\lambda_{1}\neq\lambda_{2},
\end{cases}\\
&\int^{\theta} \frac{\cos^{2}\tilde{\theta}}{\lambda_{1}\cos^{2}\tilde{\theta}
+\lambda_{2}\sin^{2}\tilde{\theta}}d\tilde{\theta}=
\begin{cases}
\frac{\theta+\sin\theta\cos\theta}{2\lambda_{1}},&\lambda_{1}=\lambda_{2},\\
\frac{\sqrt{\lambda_{2}/\lambda_{1}}}{\lambda_{2}-\lambda_{1}} \left(\arctan\left(\tan\theta\sqrt{\lambda_{2}/\lambda_{1}}\right)+\pi\lfloor\frac{\theta}{\pi}-\frac{1}{2}\rfloor\right)
-\frac{\theta}{\lambda_{1}-\lambda_{2}},&\lambda_{1}\neq\lambda_{2},
\end{cases}\\
&\int^{\theta} \frac{\sin\tilde{\theta}}{\sqrt{\lambda_{1}\cos^{2}\tilde{\theta}
+\lambda_{2}\sin^{2}\tilde{\theta}}}d\tilde{\theta}=
\begin{cases}
-\frac{\cos\theta}{\sqrt{\lambda_{1}}},&\lambda_{1}=\lambda_{2},\\
-\frac{\ln\left(\frac{\lambda_{2}}{\sqrt{\lambda_{1}-\lambda_{2}}\left|\cos\theta\right|
+\sqrt{\left(\lambda_{1}-\lambda_{2}\right)\cos^{2}\theta+\lambda_{2}} }\right)}{\sqrt{\lambda_{1}-\lambda_{2}}}
,&\cos\theta<0, \lambda_{1}\neq \lambda_{2},\\
-\frac{\ln\left(\sqrt{\lambda_{1}-\lambda_{2}}\cos\theta+\sqrt{\left(\lambda_{1}-\lambda_{2}\right)\cos^{2}\theta
+\lambda_{2}}\right)}{\sqrt{\lambda_{1}-\lambda_{2}}},&\cos\theta\geq0, \lambda_{1}\neq \lambda_{2},
\end{cases}
\\
&\int^{\theta} \frac{\cos\tilde{\theta}}{\sqrt{\lambda_{1}\cos^{2}\tilde{\theta}
+\lambda_{2}\sin^{2}\tilde{\theta}}}d\tilde{\theta}=
\begin{cases}
\frac{\sin\theta}{\sqrt{\lambda_{1}}},&\lambda_{1}=\lambda_{2},\\
-\frac{\left(\arctan\left( \frac{ \frac{\lambda_{1}}{2\sin\theta}+\left(\lambda_{2}-\lambda_{1}\right)\sin\theta }{ \sqrt{\lambda_{1}-\lambda_{2}}\sqrt{\lambda_{1}\cos^{2}\theta+\lambda_{2}\sin^{2}\theta}}\right)
+\pi\lfloor\frac{\theta}{\pi}\rfloor-2\pi\lfloor\frac{\theta}{2\pi}\rfloor\right) }{2\sqrt{\lambda_{1}-\lambda_{2}}},&\lambda_{1}\neq \lambda_{2},
\end{cases}
\end{align*}
where $\lambda_{1}$ and $\lambda_{2} $ are the eigenvalues of  $\vec{G}^{-1}$, satisfying
\[
\vec{\tilde{G}}^{-1}=\begin{pmatrix}
\tilde{g}^{11} & \tilde{g}^{12}\\
\tilde{g}^{21}& \tilde{g}^{22}
\end{pmatrix}
=\vec{\tilde{Q}}^{T}
\begin{pmatrix}
\lambda_{1}&0\\
0&\lambda_{2}
\end{pmatrix}\vec{\tilde{Q}}, \
\vec{\tilde{Q}}=
\begin{pmatrix}
\cos\phi_{G}&-\sin\phi_{G}\\
\sin\phi_{G}&\cos\phi_{G}
\end{pmatrix},  \ \lambda_{1}\geq\lambda_{2}.
\]
\end{remark}

\subsubsection{Treatment of subregion boundaries}
\label{subsubsection:edges}
The transformations from the reference region $\tilde{\Omega}$
to six faces of the cubed sphere are different {from}  each other and
not continuous across the edges of the  cubed sphere.
It means that  the approximate local evolution operators
corresponding to different cubed sphere faces will give different states   \eqref{eq:localapproh}-\eqref{eq:localapprov}  so that
 the conservation of the numerical  flux
 cannot be ensured  on the  edges  of the cubed-sphere face.
 % Section \ref{subsubsec:nonlocal}
Thus it is necessary to propose some special  treatments
%around the edges  of the cubed-sphere face
in order to get the conservation of the numerical  flux on the  edges  of the cubed-sphere face.
%because the coordinate transformation across the edges of the cubed-sphere face is not continuous

To avoid such flaw, around the    edges  of the cubed-sphere face,
%To preserve conservation of flux which will be proofed in section \ref{subsec:conservation},
 the SWEs  in the LAT/LON coordinates %\eqref{eq:latlon}
 are  linearized
on  the edges of cubed-sphere face
 instead of linearizing the {{SWEs}}  in the reference coordinates %\eqref{eq:primitive}
 and then its approximate local evolution operator
 is derived and used to  replace that defined by  \eqref{eq:localapproh}-\eqref{eq:localapprov}.
To accomplish such task,
the SWE{{s}}  \eqref{eq:latlon} are reformulated as follows
\begin{equation}
\label{eq:latlon primitive}
\frac{\partial \vec{V}_{s} }{\partial t} + \vec{A}_{s}^{1}\left(\vec{V}_{s},\vec{\xi}\right) \frac{\partial \vec{V}_{s} }{\partial \xi} + \vec{A}_{s}^{2}\left(\vec{V}_{s},\vec {\xi}\right) \frac{\partial \vec{V}_{s} }{\partial \eta} = \vec{S}_{s}
,\end{equation}
where $\vec{V}_{s}=\left(h,u_{s},v_{s}\cos \eta \right)^{T}$, $\vec{\xi}=\left(\xi,\eta\right)$, $\vec{S}_{s}=\left(0,S_{s}^{(1)},S_{s}^{(2)}\right)^{T}$,
\begin{align*}
&\vec{A}_{s}^{1}\left(\vec{V}_{s},\vec{\xi}\right)=
\frac{1}{R\cos\eta}
 \begin{pmatrix}
u_{s} &h &0\\
gg_{s}^{11} & u_{s} & 0\\
gg_{s}^{12} & 0 & u_{s}
\end{pmatrix}, \quad
\vec{A}_{s}^{2}\left(\vec{V}_{s},\vec{\xi}\right)=
\frac{1}{R\cos\eta}
 \begin{pmatrix}
v_{s}\cos\eta &0 &h\\
gg_{s}^{12} & v_{s}\cos\eta & 0\\
gg_{s}^{22} & 0 & v_{s}\cos\eta
\end{pmatrix},
\\
&S_{s}^{(1)}=\frac{1}{R\cos\eta}\left(fv_{s}\cos\eta+u_{s}v_{s}\sin\eta-gg_{s}^{11}b_{\xi}\right),\\
&S_{s}^{(2)}=-\frac{1}{R\cos\eta}\left(fu_{s}\cos\eta+u_{s}^{2}\sin\eta+gg_{s}^{22}b_{\eta}+v_{s}^{2}\cos\eta\sin\eta\right),
\end{align*}
and
\[
\vec{G}_{s}^{-1}=
\begin{pmatrix}
g_{s}^{11} &g_{s}^{12}\\
g_{s}^{12} &g_{s}^{22}
\end{pmatrix}=
\begin{pmatrix}
1 & 0\\
0 & \cos^{2}\eta
\end{pmatrix}
.\]
Similarly,
if taking $\vec{\tilde{\xi}}=(\tilde{\xi},\tilde{\eta})$ and  $\vec{\tilde{V}}_{s}=[\tilde{h}_{s}, \tilde{u}_{s},\tilde{v}_{s}\cos\tilde{\eta}]^{T}$
and  as a reference point and state of  $\vec{V}_{s}(\xi,\eta,t)$,
then the system \eqref{eq:latlon primitive} may be linearized as follows
\begin{equation}
\label{eq:linear latlon}
\frac{\partial \vec{V}_{s} }{\partial t} + \vec{A}_{s}^{1}
(\vec{\tilde{V}}_{s},\vec{\tilde{\xi}}) \frac{\partial \vec{V}_{s} }{\partial \xi} + \vec{A}_{s}^{2}(\vec{\tilde{V}}_{s},\vec {\tilde{\xi}}) \frac{\partial \vec{V}_{s} }{\partial \eta} = \vec{S}_{s},
\end{equation}
%{eq:linear}
whose form is similar to  the previous linearized system \eqref{eq:linear} in the reference coordinates.
On the other hand, the derivation of the approximate local evolution operator of \eqref{eq:linear}
does not require the concrete form of $\vec{G}^{-1} $, $\vec{V} $ and $\vec{x}$.
 Hence
 the approximate local evolution operator of the system
\eqref{eq:linear latlon} may be
  derived in parallel by replacing $\vec{G}^{-1} $, $\vec{V} $,
 and $\vec{x} $ with
  $ \vec{G}_{s}^{-1}$,  $\vec{V}_{s} $, and  $\vec{\xi} $
  in Sections \ref{subsec:exact operator}, \ref{subsubsec:nonlocal} and \ref{subsubsec:local}, respectively.
 However, a special attention should be paid to
{calculate} the intersection points
between the bottom of the bicharacteristic cone
past the point on the edges of the cubed sphere face
and the cell edges  in the $(\xi,\eta)$ plane.
The readers are referred to Appendix \ref{sec:AppendixC}
for the detailed discussion.
%
%The cell edge $\partial C_{j+\frac12,k+\frac12}$ in the $(x,y)$ plane should be mapped
%to the LAT/LON plane and then
%the intersection points between the bottom of the bicharacteristic cone and those cell edges
%are calculated in order
% to derive the approximate local evolution operator of
% \eqref{eq:linear latlon}.
Because  the cell edges in the LAT/LON plane are not straight in general,
calculation of those {intersection} points is different from those
in the $(x,y)$ plane discussed in Appendix \ref{sec:AppendixA}.
% the equation which the point on the curve of the projection satisfied. These equations including the calculation of the intersection points between the curves and the bottom of the bicharacteristic cone see Appendix \ref{sec:AppendixC}, then we can obtain the approximate local evolution operator on LAT/LON coordinates for the point on the patch boundaries.

%It shows concretely as: $ G_{s}^{-1}$
%corresponding to $G^{-1} $, $\vec{V}_{s} $ corresponding to $\vec{V} $, $\vec{\xi} $
%corresponding to $\vec{x} $.

%For sake of convenience, the notions
%$G^{-1} $, $\vec{V} $ and $\vec{x} $ are used in the following.
\begin{remark}
Because the edges of the cubed sphere face
do not pass through the spherical pole,
  the pole singularity in the LAT/LON coordinates may be gotten around.
\end{remark}

At the end of this section, the conservation of the numerical flux
of the RKDLEG method
on the edges of cubed sphere faces.
 Let $\mathcal{L} $ and $\mathcal{L}_{s}$
 denote the edge  of the cubed sphere face
in the $(x,y)$  and  LAT/LON planes, respectively,
 and $\vec{n}$ and $\vec{n}_{s} $ be their outward unit normal vectors.
%Let $A$ denote the relation between $\left(u_{s},v_{s}\right)$ and %$\left(u,v\right)$   %\eqref{veclocity}
% for one patch,
% the metric tensor is $G=A^{T}A $,

\begin{theorem}
\label{conservation}
The numerical flux
 of the RKDLEG method in the LAT/LON plane
\[
\int_{\mathcal{L}}
\begin{pmatrix}
\vec{I} & 0\\
0 & \vec{A}
\end{pmatrix}
 \vec{F}\left(\mathcal{E}_{h,0}\vec{V}_{h}\left(t\right)\right)\cdot\vec{n}dl,
\]
does not depend on the transformation from
$(x,y)$ to   $(\xi,\eta)$,
but relies on the height $h$ and velocity
$\left(u_{s},v_{s}\right)$ in the LAT/LON plane.
\end{theorem}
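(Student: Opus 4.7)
The idea is to exhibit the integrand $\begin{pmatrix}\vec I & 0 \\ 0 & \vec A\end{pmatrix}\vec F\!\left(\mathcal{E}_{h,0}\vec V_{h}\right)\cdot\vec n\,dl$ as a quantity built solely from $h,u_s,v_s$ and the LAT/LON edge geometry. The key starting observation, from Section \ref{subsubsection:edges}, is that on the cubed-sphere edges $\mathcal{E}_{h,0}\vec V_{h}$ is produced by linearizing the SWEs directly in the LAT/LON coordinates, so its intrinsic content is already $(h,u_s,v_s)$. It remains only to check that the surrounding algebraic and geometric factors involving $(x,y)$ combine into LAT/LON-only expressions.

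Concretely, I would first expand the normal flux $\vec F_n:=\vec F_1 n_1+\vec F_2 n_2$ using the definitions of $\vec F_1,\vec F_2$ from Section \ref{subsec:equations}, obtaining a mass component $\Lambda h(u n_1+v n_2)$ and a momentum component
\[
\Lambda\bigl(h\,(u,v)^{T}(u n_1+v n_2)+\tfrac12 g h^2\,\vec G^{-1}\vec n\bigr).
\]
Applying $\vec A$ to the momentum block and invoking $\vec A(u,v)^T=(u_s,v_s)^T$ from \eqref{veclocity}, $\vec G=\vec A^T\vec A$ from \eqref{metric}, and $\Lambda=|\det\vec A|$, the momentum block collapses to
\[
\Lambda h\,(u_s,v_s)^{T}(u n_1+v n_2)+\tfrac12 g h^2\,\Lambda\,\vec A^{-T}\vec n,
\]
in which every $g^{ij}$ factor has been absorbed into $\vec A$.

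Second, I would handle the geometric transformation of $(n_1,n_2)\,dl$ to $(n_{s,1},n_{s,2})\,dl_s$. Using the orientation identity $(n_1,n_2)\,dl=(dy,-dx)$ along the oriented edge, together with $d\xi=\xi_x\,dx+\xi_y\,dy$ and $d\eta=\eta_x\,dx+\eta_y\,dy$, a direct linear-algebra computation shows that $(n_1,n_2)\,dl$ and $(n_{s,1},n_{s,2})\,dl_s$ are related through the cofactor matrix of $\vec A$, with the explicit $R$ and $\cos\eta$ weights of \eqref{veclocity}. This lets one rewrite both $\Lambda\,\vec A^{-T}\vec n\,dl$ and $(u n_1+v n_2)\,dl$ purely in terms of $h,u_s,v_s,\vec n_s$, and $dl_s$; substituting back produces an integrand manifestly free of any $(x,y)$-specific quantity and dependent only on $h$, $(u_s,v_s)$, and the LAT/LON geometry of $\mathcal{L}_s$.

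The main obstacle will be the second step: one must keep consistent track of the orientation of the edge, the $R\cos\eta$ weighting that distinguishes the two rows of $\vec A$, and the interplay among $\Lambda$, $\det\vec A$, and the cofactor matrix across the two adjacent cubed-sphere faces (whose $\vec A$'s differ). Once these geometric identities are in hand, the cancellation of the $\vec A$-dependent pieces against the metric, Jacobian, and covariant-to-LAT/LON conversion factors is a purely algebraic verification, and the conservation claim follows.
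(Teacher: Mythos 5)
Your proposal is correct and follows essentially the same route as the paper: both rely on the facts that $\mathcal{E}_{h,0}\vec{V}_h$ on the subregion edges is produced by the LAT/LON linearization, that $\vec{A}(u,v)^T=(u_s,v_s)^T$ and $\vec{G}=\vec{A}^T\vec{A}$ collapse the metric terms, and that the normal and line element transform via $\vec{n}=\vec{A}^T\vec{n}_s$ together with $\Lambda\,dl=dl_s$ (your cofactor/Nanson-type identity is just a more explicit statement of the same geometric fact). No substantive difference.
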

\begin{proof}
\label{proof:conservation}
Due to the transformation between the reference coordinates $(x,y)$
and LAT/LON coordinates $(\xi,\eta)$, one has
 $\vec{A}^{T}\cdot \vec{n}_{s}=\vec{n} $.
 Using  \eqref{veclocity}, \eqref{metric}, and
%\[
%G=A^{T}A,\quad A\begin{pmatrix}
%u\\
%v
%   \end{pmatrix}=
%\begin{pmatrix}
%u_{s}\\
%v_{s}
%\end{pmatrix},
%\]
\[
\vec{F}(\vec{U}) =
\begin{pmatrix}
\Lambda hu & \Lambda hv \\
 \Lambda\left(hu^{2}+\frac{1}{2}gg^{11}h^{2}\right) & \Lambda\left(huv+\frac{1}{2}gg^{12}h^{2}\right)\\
\Lambda \left(huv+\frac{1}{2}gg^{12}h^{2}\right) & \Lambda \left(hu^{2}+\frac{1}{2}gg^{22}h^{2}\right)
\end{pmatrix},
\]
gives
\begin{align*}
& \int_{\mathcal{L}}
\begin{pmatrix}
\vec{I} & 0\\
0 & \vec{A}
\end{pmatrix}
 \vec{F}\left(\vec{U}\right)\cdot\vec{n}dl
= \int_{\mathcal{L}}
\begin{pmatrix}
\vec{I} & 0\\
0 & \vec{A}
\end{pmatrix}
 \vec{F}\left(\vec{U}\right)\vec{A}^{T}\vec{n}_{s}dl\\
&=\int_{\mathcal{L}}
\begin{pmatrix}
\vec{I} & 0\\
0 & \vec{A}
\end{pmatrix}
\begin{pmatrix}
\Lambda hu & \Lambda hv \\
 \Lambda\left(hu^{2}+\frac{1}{2}gg^{11}h^{2}\right) & \Lambda\left(huv+\frac{1}{2}gg^{12}h^{2}\right)\\
\Lambda \left(huv+\frac{1}{2}gg^{12}h^{2}\right) & \Lambda \left(hv^{2}+\frac{1}{2}gg^{22}h^{2}\right)
\end{pmatrix}
 \vec{A}^{T}\vec{n}_{s}dl\\
&=\int_{\mathcal{L}}
\begin{pmatrix}
\Lambda hu_{s} & \Lambda hv_{s} \\
 \Lambda\left((hu_{s}^{2}+\frac{1}{2}gh^{2}\right) & \Lambda\left(hu_{s}v_{s}+\frac{1}{2}gh^{2}\right)\\
\Lambda \left((hu_{s}v_{s}+\frac{1}{2}gh^{2}\right) & \Lambda \left(hv_{s}^{2}+\frac{1}{2}gh^{2}\right)
\end{pmatrix}
\vec{n}_{s}dl\\
&=\int_{\mathcal{L}_{s}}
\begin{pmatrix}
hu_{s} &  hv_{s} \\
\left(hu_{s}^{2}+\frac{1}{2}gh^{2}\right) & \left(hu_{s}v_{s}+\frac{1}{2}gh^{2}\right)\\
 \left(hu_{s}v_{s}+\frac{1}{2}gh^{2}\right) & \left(hv_{s}^{2}+\frac{1}{2}gh^{2}\right)
\end{pmatrix}
\vec{n}_{s}dl_{s},
\end{align*}
in which the states %in the last boundary integral
are evolved through the linearized SWE{{s}} in the LAT/LON coordinates.
The proof is completed.
\qed\end{proof}

%Due {\color{red}{to}} the treatment of patch boundaries in Section \ref{subsubsection:edges} ({\color{red}{using}} approximate local evolution operator for the linearized system on LAT/LON coordinates for the points on the patch boundaries), we obtain that the numerical flux which gives in the patches located by both sides of the interface are consistent, for the cell interface which {\color{red}{locates}} on the patch boundaries.

%%%%%%%%%%%%%%%%%%%%%%%%%%%%%%%%%%%%%%%%%%%%%%%%%%%%%%%%%%%%%%
\section{Numerical experiments}
\label{sec:numerical-results}
This section will apply  the proposed  % high-order accurate Runge-Kutta discontinuous local evolution Galerkin
RKDLEG methods to  several benchmark problems \cite{Williamson:1992} for the SWE{{s}} on the sphere to demonstrate the accuracy and performance of the present methods.
In our computations,  the CFL number $C_{cfl}$ is taken as
0.25, 0.15, and 0.1 for the ${ \mathbb{P}^{1}}$-, ${ \mathbb{P}^{2}}$-, and ${\mathbb{P}^{3}}$-based RKDLEG methods, respectively,
and the region $\{ (x,y)|~ x,y\in {\tilde{\Omega}}\}$ in
Fig. \ref{fig:sphere} {(b)}
is divided into $N\times N$ uniform cells, that is, the sphere surface is  partitioned into $6(N\times N)$ cells.

\begin{figure}[htbp]
  \centering
  \includegraphics[width=10cm,height=5cm]{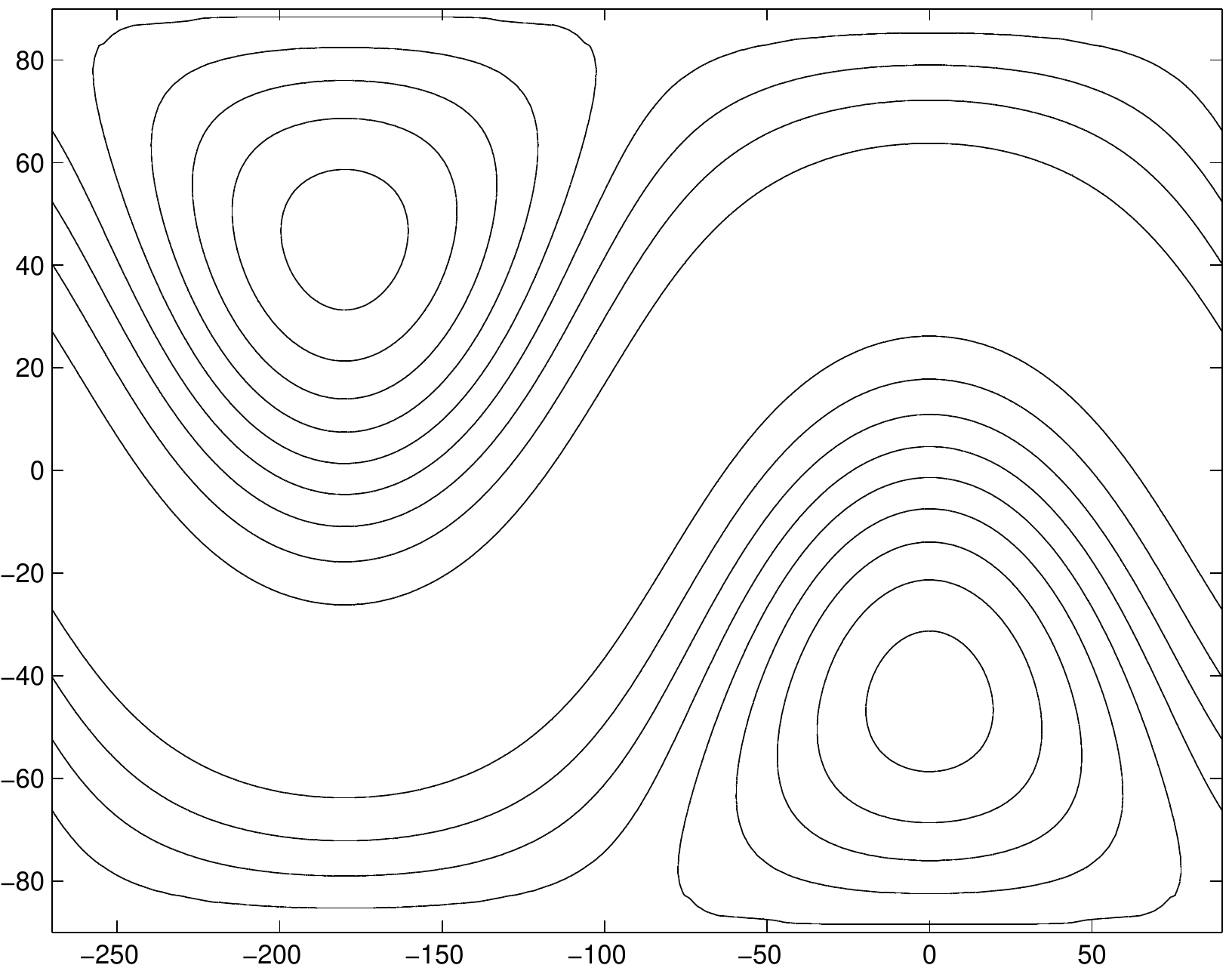}
  \caption{\small Example \ref{steady}: The height  $h(\xi,\eta,t)$ at $t=3$ days obtained by using
  	the ${ \mathbb{P}^{3}}$-based RKDLEG method with $N=64$. Contour lines are equally spaced {{from}} $1150$ m to $2950$ m
  	with a stepsize of $200$ m. }
  \label{fig:steady}
\end{figure}

\begin{example}[Steady state zonal geostrophic flow] \label{steady}\rm
This example is   Williamson's test case 2 \cite{Williamson:1992}, in which the initial height and divergence-free velocity vector in the LAT/LON coordinates $(\xi,\eta)$
are given by
\begin{align}\label{EQ-Example4.1} \begin{aligned}
h(\xi,\eta,0)=&h_{0}-g^{-1}\left(R\Omega u_{0}+\frac{u_{0}^{2}}{2}\right)\left(-\cos\xi\cos\eta\sin\alpha+\sin\eta\cos\alpha\right)^2,
\\
%the initial divergence-free velocity field in spherical component form is taken as
u_{s}(\xi,\eta,0)=& u_{0}\left(\cos\eta\cos\alpha+\cos\xi\sin\eta\sin\alpha\right),\
v_{s}(\xi,\eta,0)=-u_{0}\sin\xi\sin\alpha,
\end{aligned}\end{align}
where $h_{0}=2.94\times 10^{4}  {\mbox{ m}}$, $u_{0}=\frac{2\pi R}{12}\mbox{ day}^{-1}$, and $\alpha$ denotes the angle between the rotational  and polar axises of the sphere (or {the} Earth) and may be chosen as $\alpha=0$ or $ \frac{\pi}{4} $ or $\frac{\pi}{2}$.
The Coriolis force is calculated as
\[
f=2\Omega\left(-\cos\xi\cos\eta\sin\alpha+\sin\eta\cos\alpha\right).
\]
The exact solution to this problem describes a steady state flow, where
the physical variables $h$, $u_s$, and $v_s$ at any time are the same as the initial.
Fig.~\ref{fig:steady}  shows the height $h(\xi,\eta,t)$  at $t=3$ days obtained by the ${ \mathbb{P}^{3}}$-based RKDLEG method with $N=64$.
Tables \ref{tab:steady1}-\ref{tab:steady3} list the relative errors in the height $h$
at $t=3$ days  and corresponding convergence rates of   the ${ \mathbb{P}^{1}}$-, ${ \mathbb{P}^{2}}$-, and ${\mathbb{P}^{3}}$-based RKDLEG methods, where
the $l_1$-, $l_2$-, and $l_\infty$-error{{s}} are respectively measured by \cite{Williamson:1992}
\[
\frac{\int_{S}\left|  h_{h}-h\right|ds}{\int_{S}\left|h\right|ds},\quad \frac{\left[\int_{S}\left(h_{h}-h\right)^{2}ds\right]^{\frac{1}{2}}}
{\left(\int_{S}h^{2}ds\right)^{\frac{1}{2}}},\quad
\frac{\max  \{ \left|h_{h}-h\right| \}   }{\max \{\left|h\right| \}  }.
\]
Here $S$ is the whole sphere surface,
$h_{h}$, and $h$ denote the numerical and exact heights, respectively, and
those  integrations are calculated by using the Gauss-Lobatto quadrature rule.
Those data show that the ${\mathbb{P}^{K}}$-based RKDLEG method is of {$(K+1)$th} order of convergence, $k=1,2,3$.
Fig. \ref{fig:steadyerror3} displays the  time evolutions of the log of relative errors to base 10 in $h,u,v$  obtained by the ${ \mathbb{P}^{3}}$-based RKDLEG method with $N=64$.

%We examined the convergence rate on three gradually refined grids with resolutions of
%$N=16,32$ and $64$.
%According to the observation of $\ell_{1}$, $\ell_{2}$ and $\ell_{\infty}$ relative errors,  the second-order,  third-order and fourth-order convergence rate are obtained for ${\color{red} \mathbb{P}^{1}}, {\color{red} \mathbb{P}^{2}}$ and ${\color{red} \mathbb{P}^{3}}$-based RKDLEG methods.

 Fig.~\ref{fig:steadycon3} plots the  relative conservation errors
 of the ${ \mathbb{P}^{3}}$-based RKDLEG method with $N=64$ in
 the total mass, energy, and potential enstrophy \cite{Ullrich:2010},
  defined by
\[
\mathcal{M}(t)=\int_{S}hds, \ \mathcal{E}(t)=\int_{S}\left(\frac{1}{2}h\left(u_{s}^{2}+v_{s}^{2}\right)+\frac{1}{2}g\left(\left(h+b\right)^{2}-b^{2}\right)\right)ds
,  \
\mathcal{P}(t)=\int_{S}\frac{\left(\varsigma+f\right)^{2}}{2h}ds,
\]
where $\varsigma=\frac{1}{\Lambda}\left(\frac{\partial \hat{v}}{\partial x}-\frac{\partial \hat{u}}{\partial y}\right)$ denotes the relative vorticity, and $(\hat{u},\hat{v})$ are given in Eq. \eqref{veclocity}.
% the relative error of total mass is defined by $ (\mathcal{M}(t)-\mathcal{M}(0))/\mathcal{M}(0)$.
The results show that  the error of total mass $ (\mathcal{M}(t)-\mathcal{M}(0))/\mathcal{M}(0)$ is very close to the machine (or round-off) precision,
 the  error of  total energy is very small and oscillatory decreasing,
 while the error of potential enstrophy is also small but monotonically increasing.
%$N=64$
%

\end{example}

\begin{table}[htbp]\small
  \centering
    \caption{\small Example \ref{steady}: The relative errors in the height $h$ at $t=3$ days and
    convergence rates by ${\mathbb{P}^{1}}$-based RKDLEG method.
    }
\begin{tabular}{|c||c|c||c|c||c|c|}
  \hline
{$N$}
% &\multicolumn{2}{c|}{$l_{1}$}&\multicolumn{2}{c|}{$l_{2}$}&\multicolumn{2}{c|}{$l_{\infty}$}\\
% \cline{2-7}
 &$l_{1}$-error &order & $l_{2}$-error &order & $l_{\infty}$-error &order \\
 \hline
16&1.23e-03& --   & 1.56e-03 &   -- & 1.07e-02 &--\\
32&2.67e-04&2.2019& 3.54e-04 &2.1392& 3.05e-03 &1.8046\\
64&6.09e-05&2.1326& 8.42e-05 &2.0707& 1.07e-03  &1.8931\\
\hline
\end{tabular}\label{tab:steady1}

  \centering
    \caption{\small Same as Table \ref{tab:steady1} except for ${ \mathbb{P}^{2}}$-based RKDLEG method.
  }
\begin{tabular}{|c||c|c||c|c||c|c|}
  \hline
%\multirow{3}{2pt}{$N$}
% &\multicolumn{2}{c|}{$l_{1}$}&\multicolumn{2}{c|}{$l_{2}$}&\multicolumn{2}{c|}{$l_{\infty}$}\\
% \cline{2-7}
% &error &order & error &order & error &order \\
 {$N$}
 % &\multicolumn{2}{c|}{$l_{1}$}&\multicolumn{2}{c|}{$l_{2}$}&\multicolumn{2}{c|}{$l_{\infty}$}\\
 % \cline{2-7}
 &$l_{1}$-error &order & $l_{2}$-error &order & $l_{\infty}$-error &order \\
 \hline
16&2.83e-05& --   & 4.28e-05 &   -- & 4.11e-04 &--\\
32&3.47e-06&3.0260& 5.32e-06 &3.0059& 5.64e-05 &2.8648\\
64&4.31e-07&3.0070& 6.65e-07 &3.0018& 8.24e-06 &2.7759\\
\hline
\end{tabular}\label{tab:steady2}

  \centering
    \caption{\small Same as Table \ref{tab:steady1} except for ${ \mathbb{P}^{3}}$-based RKDLEG method.
  }
\begin{tabular}{|c||c|c||c|c||c|c|}
  \hline
%\multirow{3}{2pt}{$N$}
 %&\multicolumn{2}{c|}{$l_{1}$}&\multicolumn{2}{c|}{$l_{2}$}&\multicolumn{2}{c|}{$l_{\infty}$} %\\
 %\cline{2-7}
% &error &order & error &order & error &order \\
 {$N$}
 % &\multicolumn{2}{c|}{$l_{1}$}&\multicolumn{2}{c|}{$l_{2}$}&\multicolumn{2}{c|}{$l_{\infty}$}\\
 % \cline{2-7}
 &$l_{1}$-error &order & $l_{2}$-error &order & $l_{\infty}$-error &order \\
 \hline
16&9.82e-07& --   & 1.61e-06 &   -- & 4.68e-05 &--\\
32&6.04e-08&4.0222& 9.99e-08 &4.0060& 3.83e-06 &3.6088\\
64&3.82e-09&3.9843& 6.55e-09 &3.9318& 7.07e-07 &2.4384\\
\hline
\end{tabular}\label{tab:steady3}
\end{table}

\begin{figure}[htbp]
	\centering
	\begin{minipage}{5cm}
		\includegraphics[width=1\textwidth]{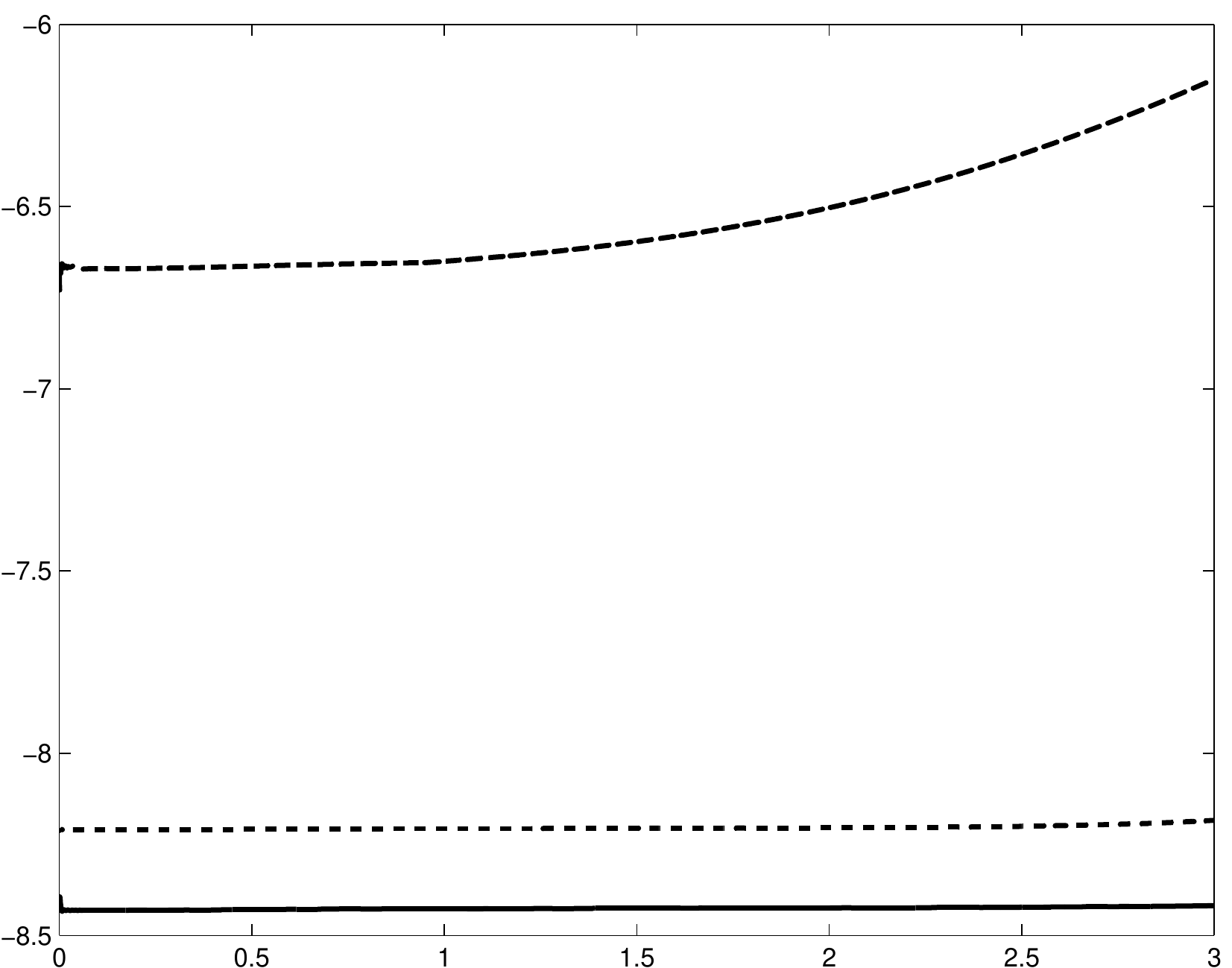}
	\end{minipage}
	\begin{minipage}{5cm}
		\includegraphics[width=1\textwidth]{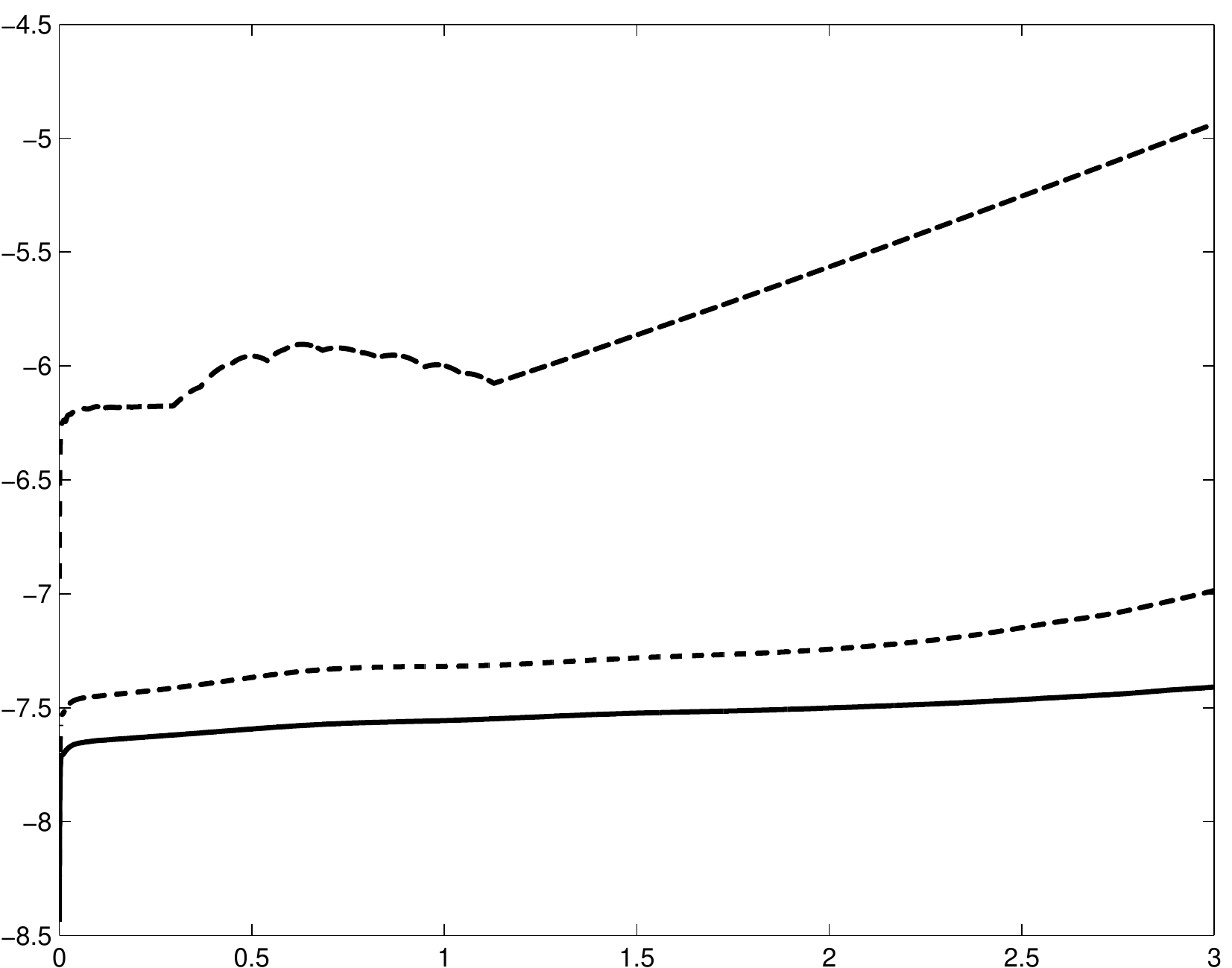}
	\end{minipage}
	\begin{minipage}{5cm}
		\includegraphics[width=1\textwidth]{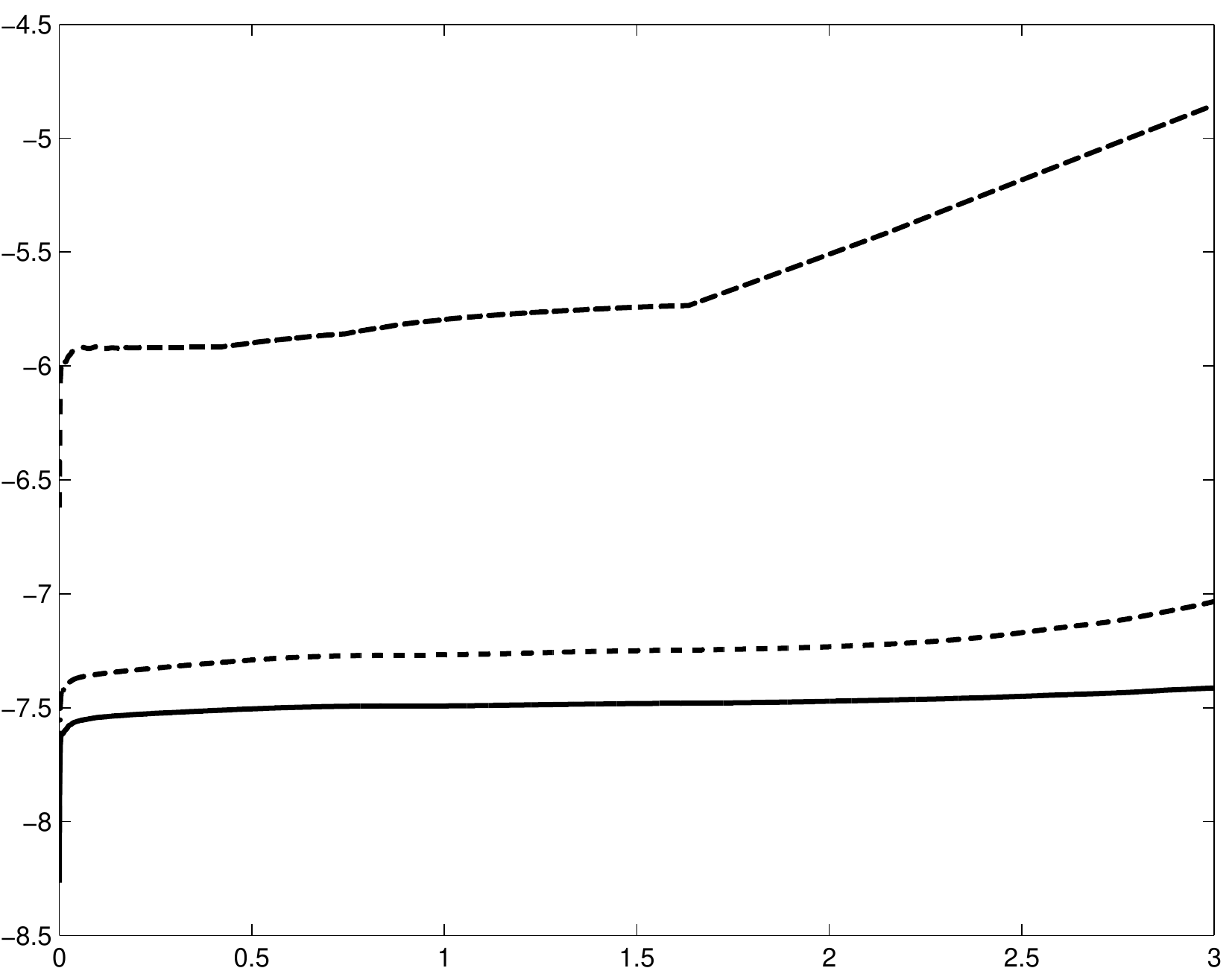}
	\end{minipage}
	\caption{\small Example \ref{steady}. The time evolution of the log of  relative errors  to base 10 in $h$, $u$, and $v$ (from left to right) obtained by using the ${\mathbb{P}^{3}}$-based RKDLEG method with $N=64$, where the solid, dashed, and dotted lines denote  the $l_{1}$-, $l_{2}$-, and $l_{\infty}$-errors, respectively.}
	\label{fig:steadyerror3}
\end{figure}

\begin{figure}[htbp]
  \centering
    \begin{minipage}{5cm}
  \includegraphics[width=1\textwidth]{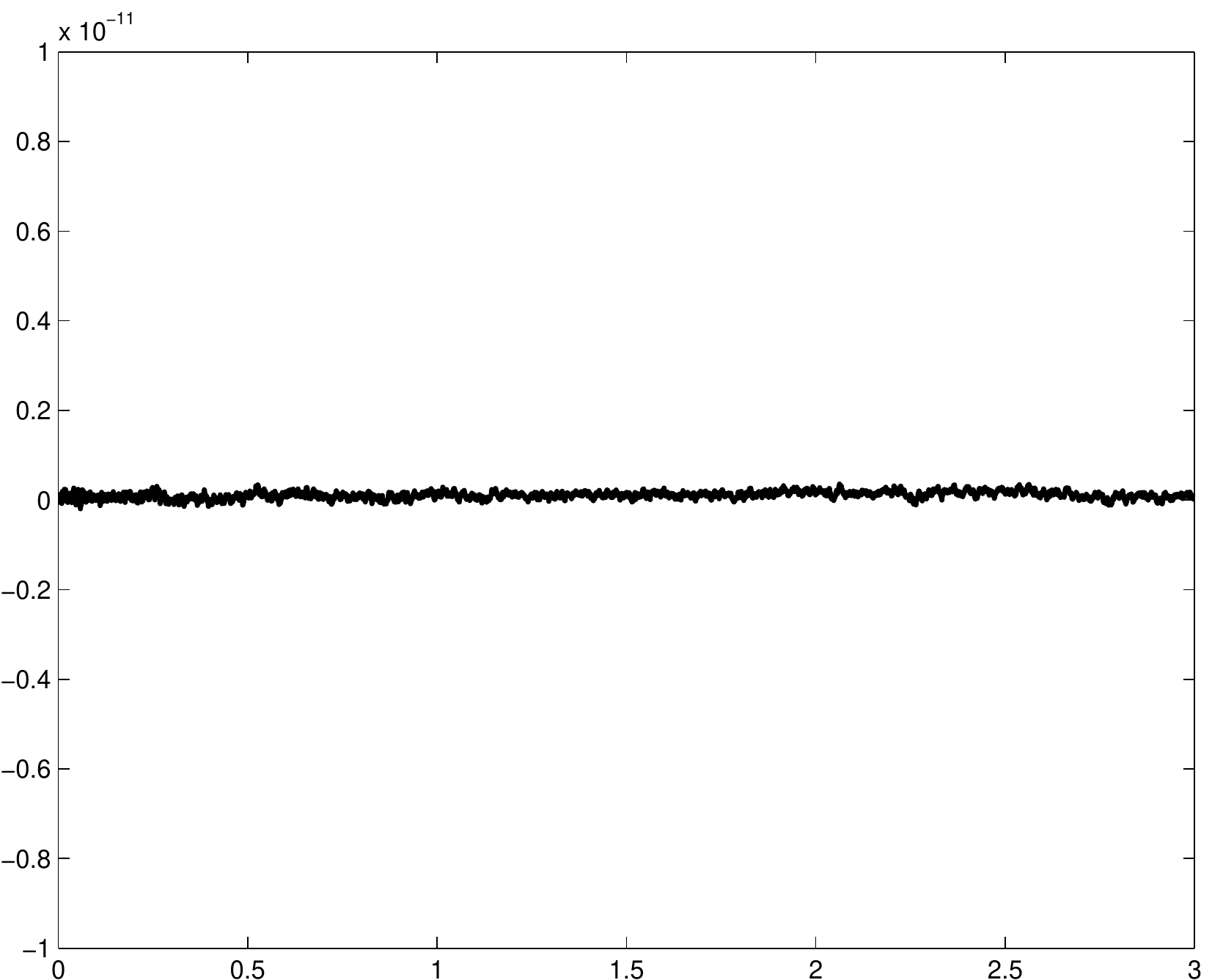}
  \end{minipage}
  \begin{minipage}{5cm}
  \includegraphics[width=1\textwidth]{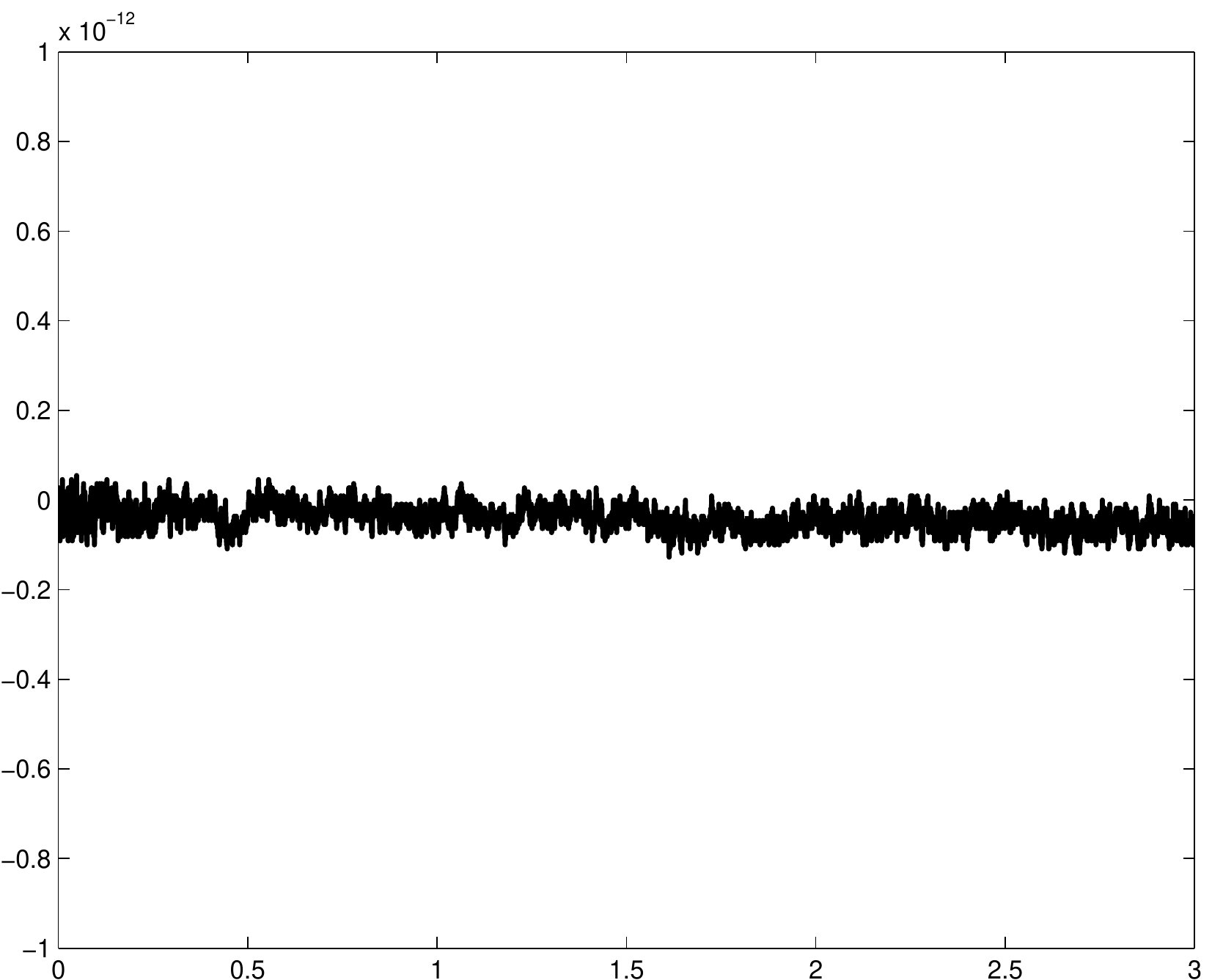}
  \end{minipage}
  \begin{minipage}{5cm}
  \includegraphics[width=1\textwidth]{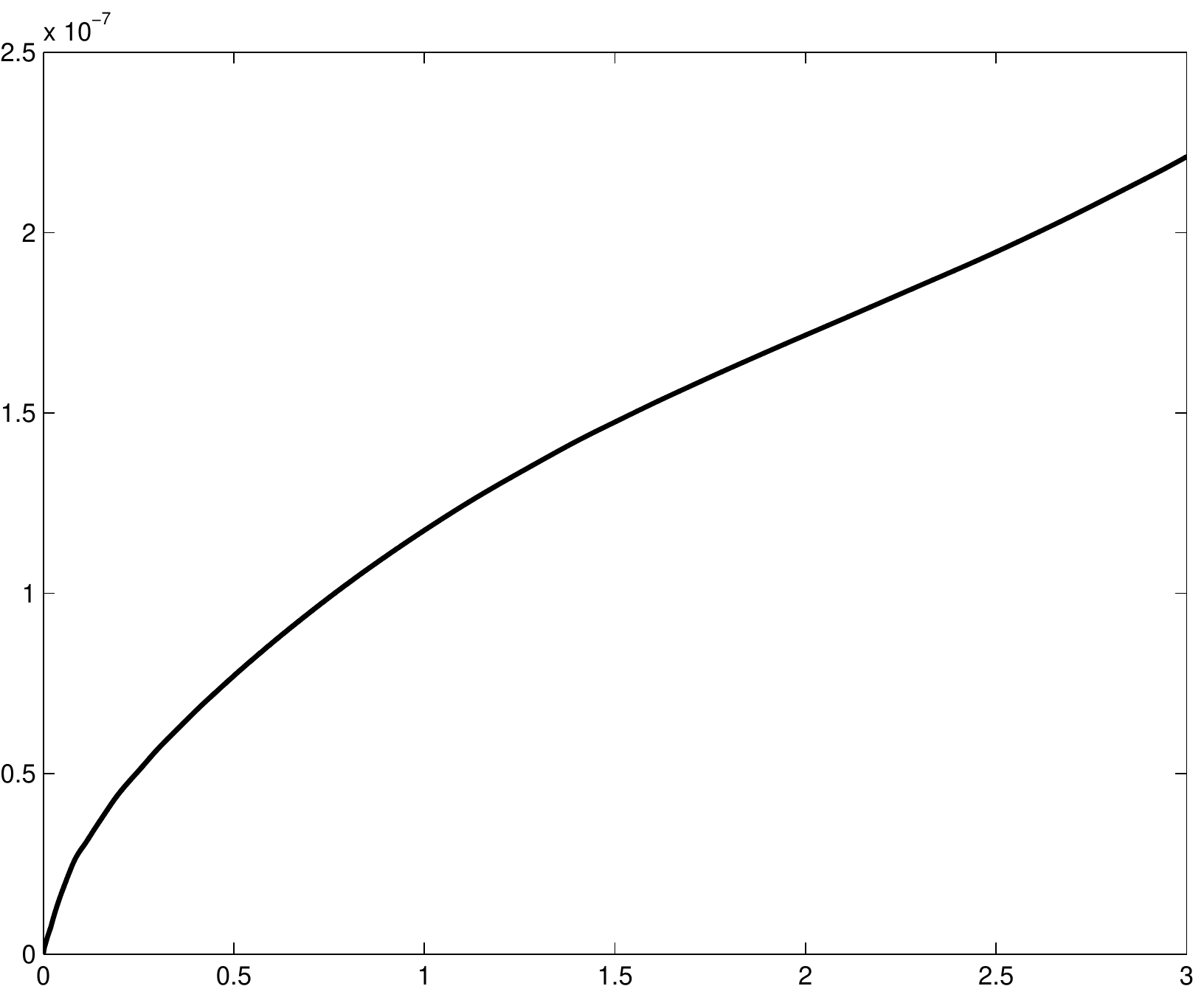}
  \end{minipage}
  \caption{\small Example \ref{steady}: The time evolution of the relative conservation errors of total mass, energy, and potential enstrophy (from left to right) obtained by using the ${\mathbb{P}^{3}}$-based RKDLEG method with $N=64$.}
  \label{fig:steadycon3}
\end{figure}

% % % % % % % % % % % % % % % % % % % % % % % % % % % % % % % % % % % % % % % % % % % % %
\begin{example}[Time dependent zonal flow]\label{time}\rm
	This example is about  a time dependent zonal flow and challenging to evaluate the  numerical methods,
	see \cite{Chen:2014,Pudykiewicz: 2011}.
The analytical solutions to this problem   \cite{Lauter:2005} can be given by
%	The height field depends on time
%$t$ as
\begin{align*}
h\left(\xi,\eta,t\right)=&-\frac{1}{2g}\left[u_{0}\sin\alpha\cos\theta
\left(-\cos\xi\cos\left(\Omega t\right)+\sin\xi\sin\left(\Omega t\right)+\cos\alpha\sin\eta\right)\right.\\
&+\left.R\Omega\sin\eta\right]^{2}+\frac{1}{2g}\left(R\Omega\sin\eta\right)^{2}+g^{-1}k_{1}-b(\eta),\\
u_{s}\left(\xi,\eta,t\right)=&u_{0}\left[ \sin\alpha\sin\eta\left(\cos\xi\cos\left(\Omega t\right)
-\sin\xi\sin\left(\Omega t\right)\right)+\cos\alpha\cos\eta\right],\\
v_{s}\left(\xi,\eta,t\right)=&-u_{0}\left[\sin\alpha\left(\sin\xi\cos\left(\Omega t\right)
+\cos\xi\sin\left(\Omega t\right)\right)\right],
\end{align*}
where $b(\eta)=\frac{1}{2g}\left(R\Omega\sin\eta\right)^{2}+g^{-1}k_{2}$ is the {{height}} of bottom mountain,
$u_{0}=\frac{2\pi R}{12}\mbox{ day}^{-1}$, $k_{1}=133681$ m, $k_{2}=0$ m, and $\alpha$ is taken as $\frac{\pi}{4}$.
Fig. \ref{fig:time}  gives the contour plot of  height $h$ at $t=5$ days  obtained by using the ${ \mathbb{P}^{2}}$-based RKDLEG method with $N=64$.

Fig.~\ref{fig:errorcompare} gives a comparison of the ${\mathbb{P}^{1}}$-based  RKDLEG method
to the second-order accurate finite volume {{LEG}} (abbr. FVLEG) method.
Those error plots show that
 the errors of the ${ \mathbb{P}^{1}}$-based RKDLEG method grows more slowly and much smaller %over a long time period
 than the FVLEG method.  The relative errors in $h,u,v$  of the ${ \mathbb{P}^{2}}$-based RKDLEG method  shown in Fig. \ref{fig:timeerror2}
 is similar to  the ${ \mathbb{P}^{1}}$-based RKDLEG method.
% display respectively the  time evolutions of the log of relative errors to base 10
Fig. \ref{fig:timecon2}
plots the relative conservation errors in the total mass,
energy, and potential enstrophy  obtained by the ${ \mathbb{P}^{2}}$-based RKDLEG method with $N=64$.
%Fig. \ref{fig:timecon2} shows the time evolution of the relative conservation errors in the total mass,
%energy, and potential enstrophy (from left to right) by using the ${ \mathbb{P}^{2}}$-based RKDLEG method with $N=64$
The results show  the total mass is numerically conservative, the total energy is  decreasing with an approximate
slope of $-7.2\times 10^{-11}$,
and the error in the total potential enstrophy is about $O(10^{-6})$.
%  about the convergence rates, relative errors, and relative conservation errors.
%Since we can see relative errors and the convergence rates in the figures, we don't show the tables of them again.
%Fig.~\ref{fig:time} gives

\end{example}
\begin{figure}[htbp]
	\centering
	\includegraphics[width=10cm,height=5cm]{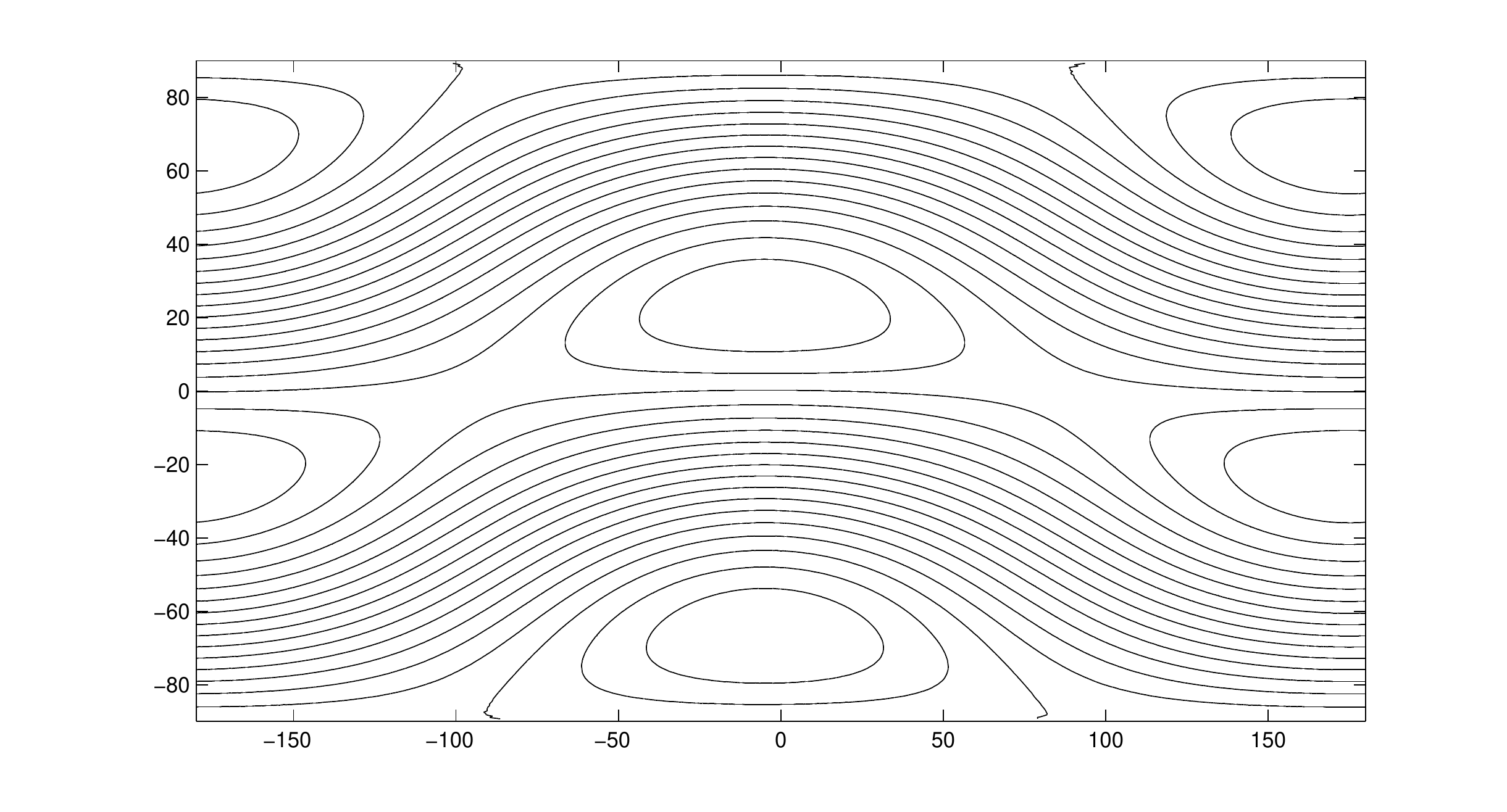}
	\caption{\small Example \ref{time}: The height  $h(\xi,\eta,t)$ at $t=5$ days obtained by using the ${ \mathbb{P}^{2}}$-based RKDLEG method with $N=64$. Contour lines are equally spaced {{from}} $12000$ m to $13800$ m
		with a stepsize of $100$ m.}
	\label{fig:time}
\end{figure}

\begin{figure}[htbp]
	\centering
	\begin{minipage}{5cm}
		\includegraphics[width=1\textwidth]{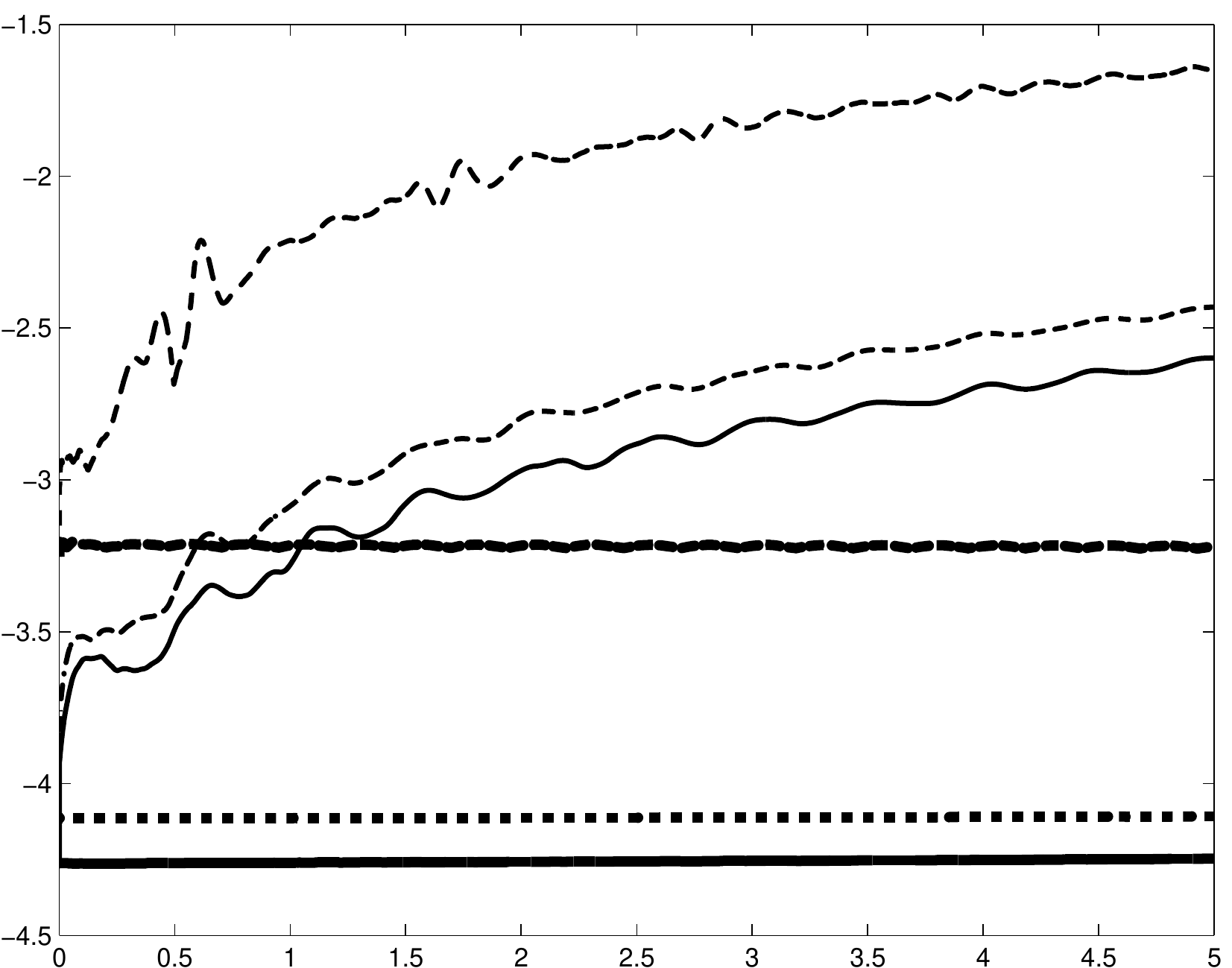}
	\end{minipage}
	\begin{minipage}{5cm}
		\includegraphics[width=1\textwidth]{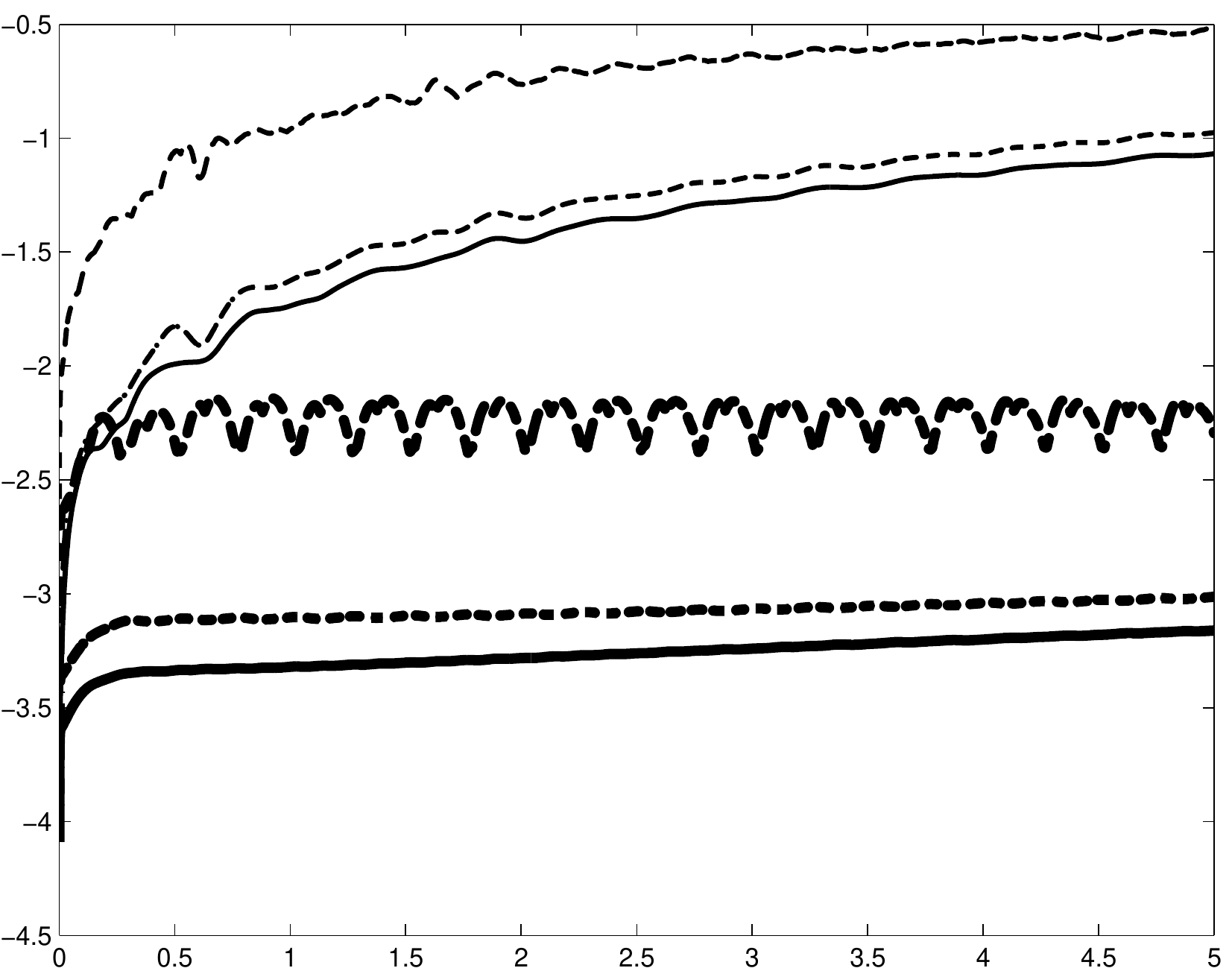}
	\end{minipage}
	\begin{minipage}{5cm}
		\includegraphics[width=1\textwidth]{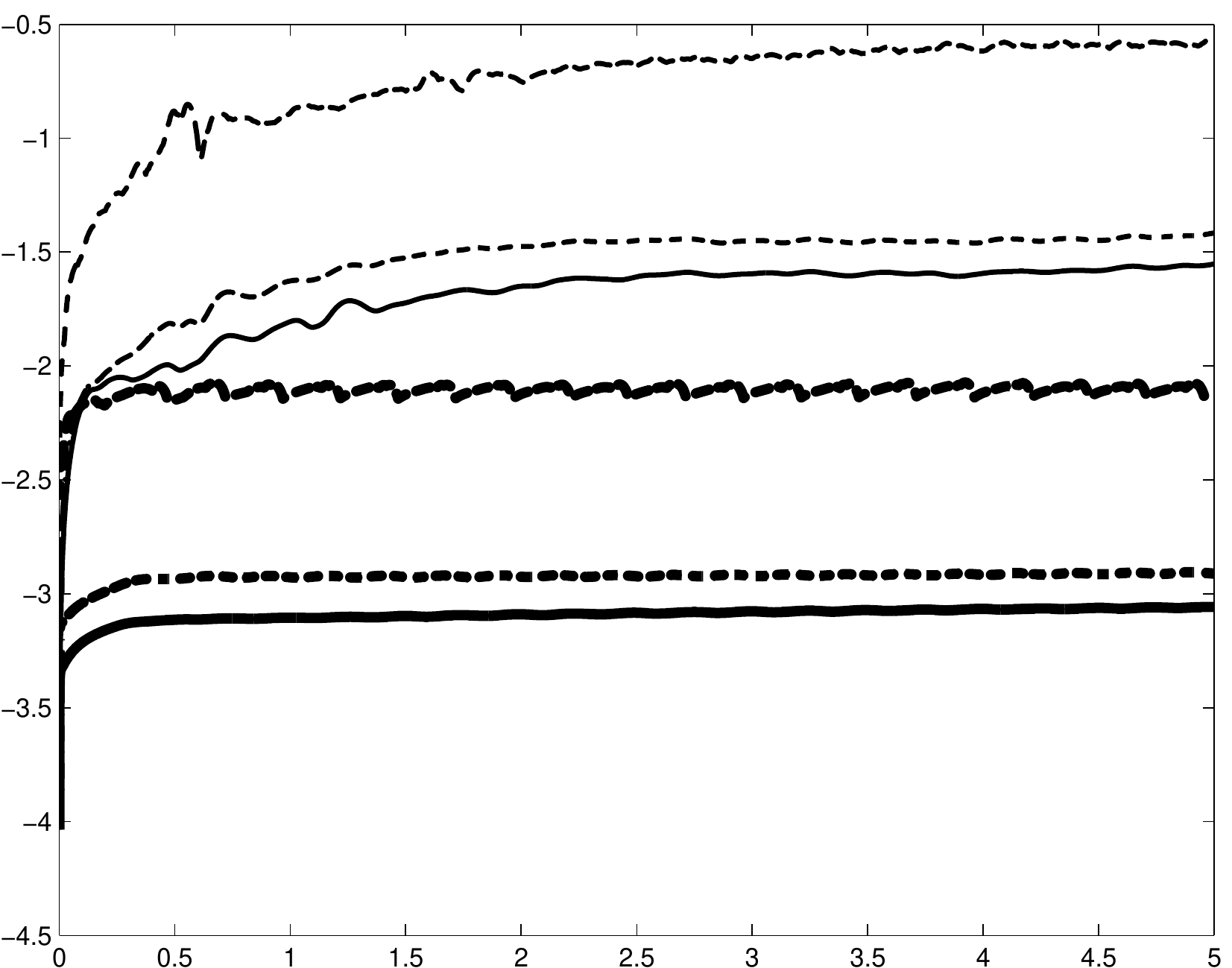}
	\end{minipage}
	\caption{\small Example \ref{time}: The time evolution of the log of relative errors to base 10 in $h$, $u$, and $v$ (from left to right) obtained by using the ${ \mathbb{P}^{1}}$-based RKDLEG method (wide lines) and FVLEG method (thin lines) with $N=16,32,64$ (resp. dashed, dotted, and solid lines), respectively.}
	\label{fig:errorcompare}
\end{figure}

\begin{figure}[htbp]
	\centering
	\centering
	\begin{minipage}{5cm}
		\includegraphics[width=1\textwidth]{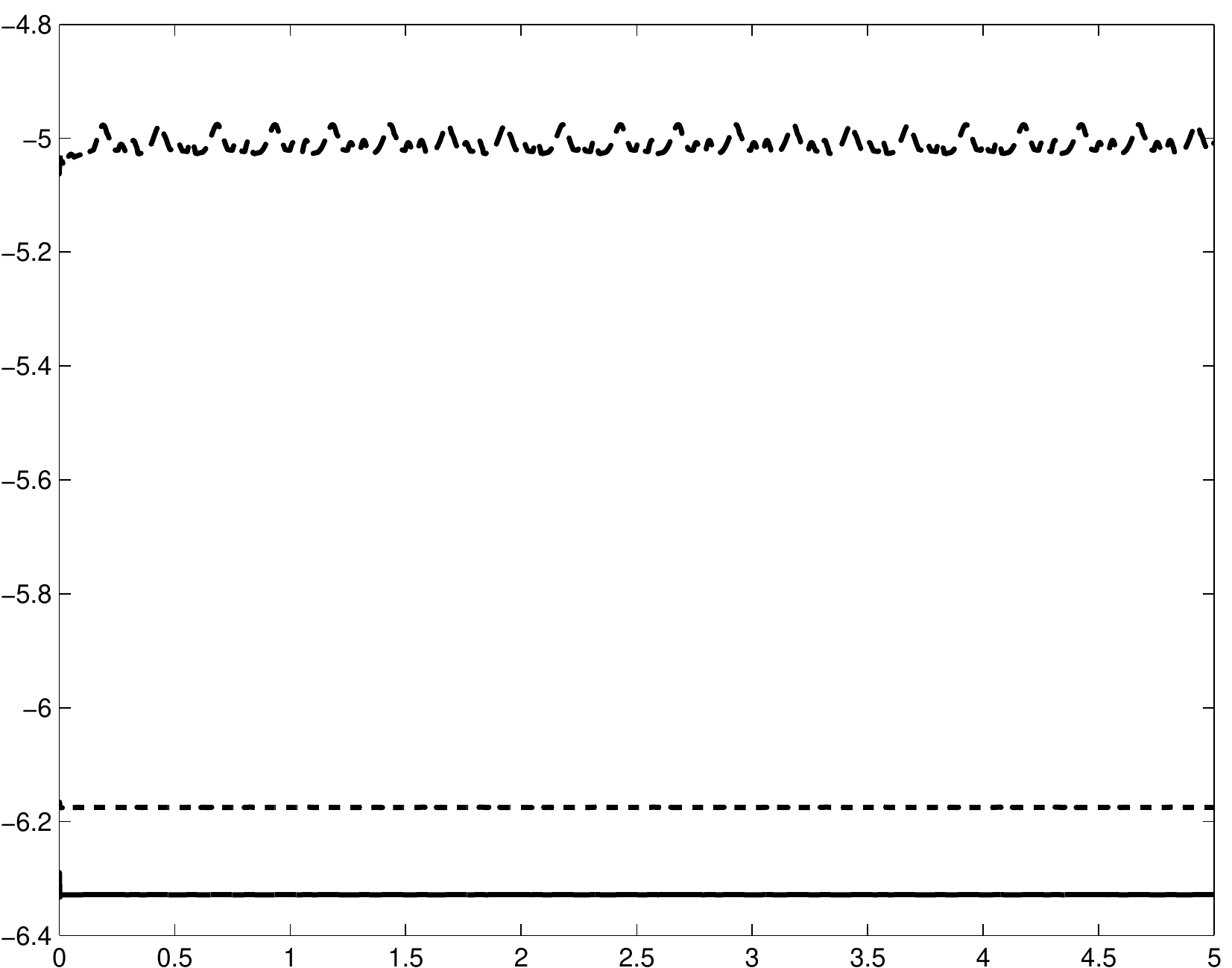}
	\end{minipage}
	\begin{minipage}{5cm}
		\includegraphics[width=1\textwidth]{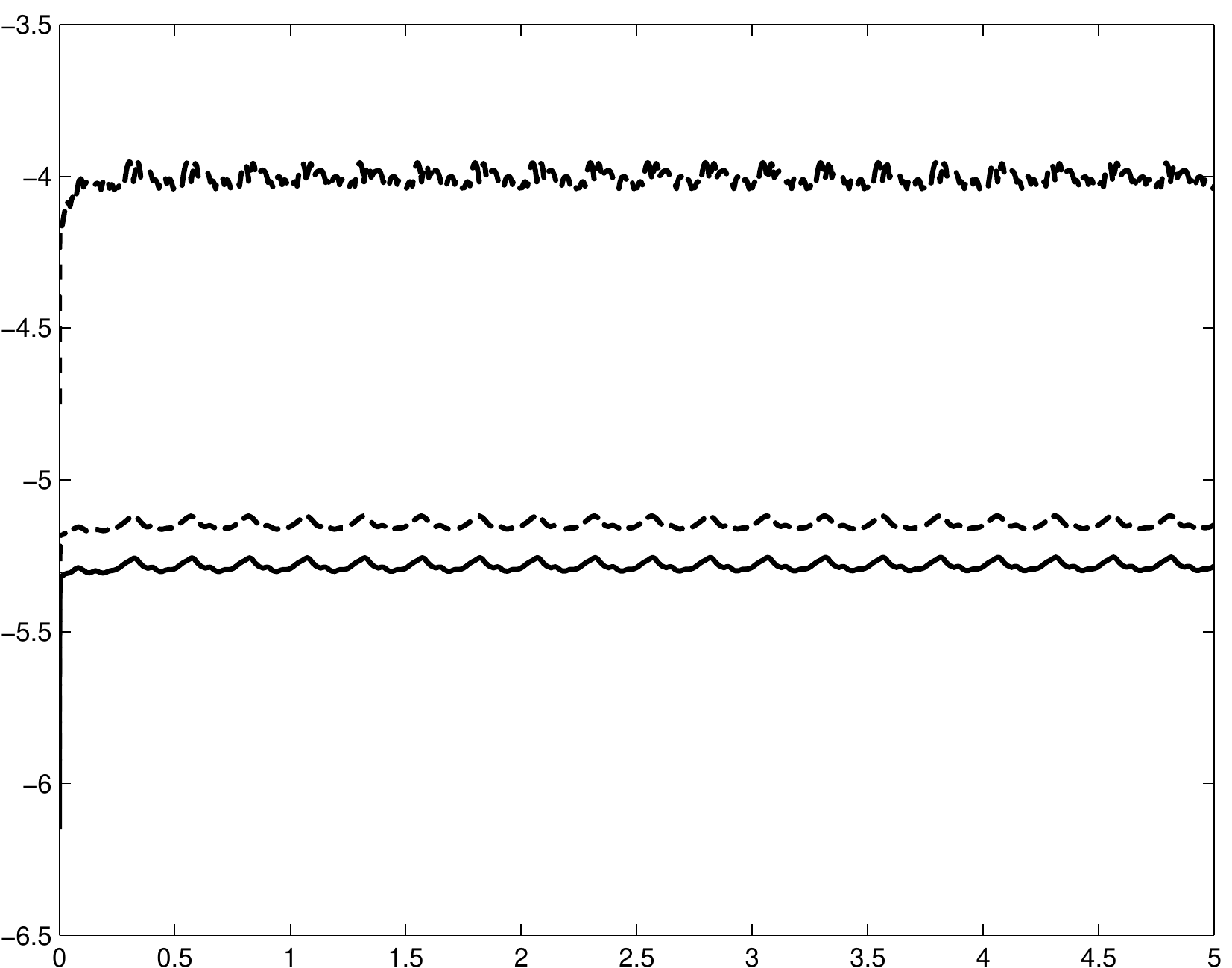}
	\end{minipage}
	\begin{minipage}{5cm}
		\includegraphics[width=1\textwidth]{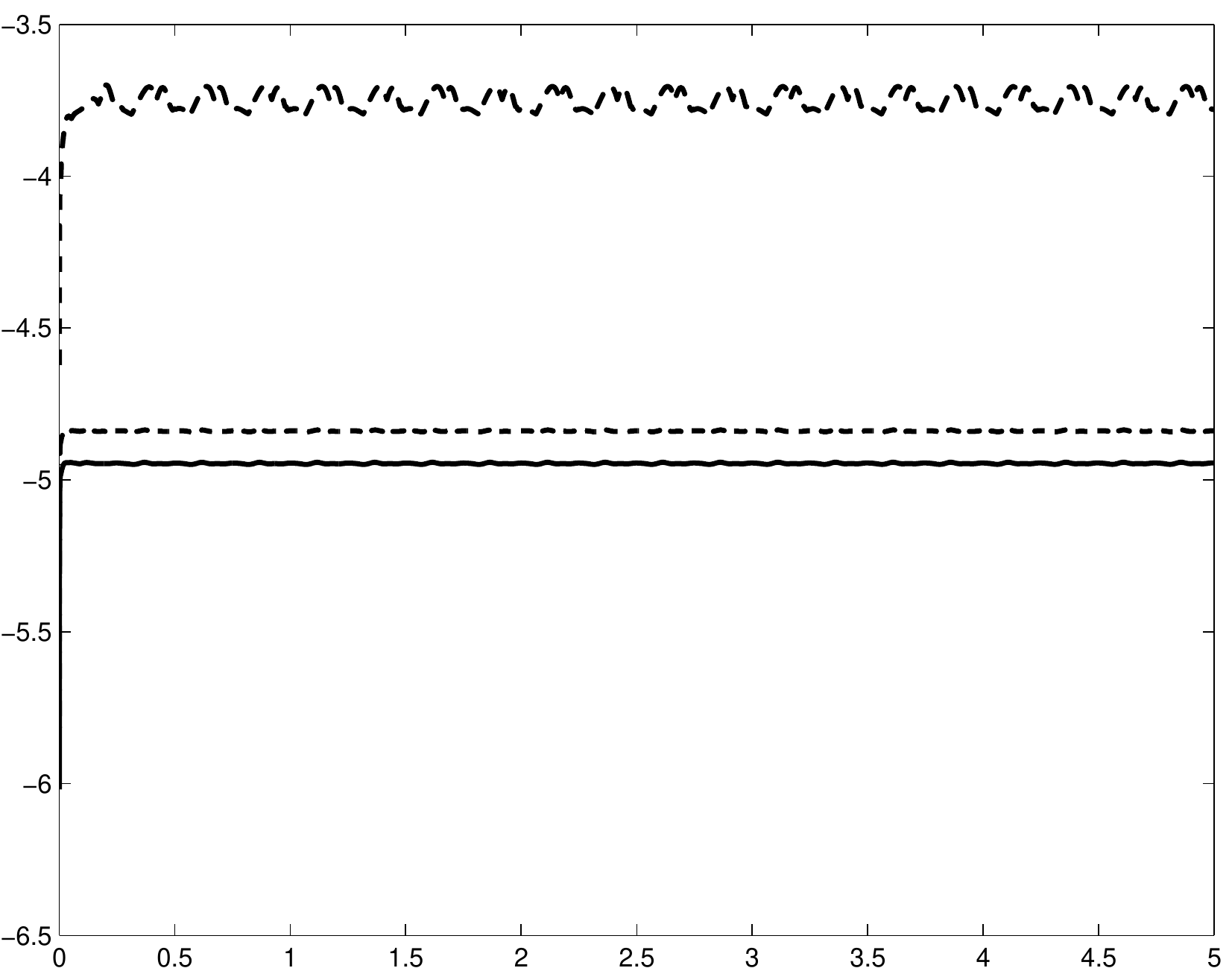}
	\end{minipage}
	\caption{\small Example \ref{time}: {Same as Fig. \ref{fig:steadyerror3} except for example \ref{time} by using $\mathbb{P}^{2}$-based RKDLEG method.}}%The time evolution of the log of relative errors to base 10
%		in $h$, $u$, and $v$ (from left to right) obtained by using the ${\color{red} \mathbb{P}^{2}}$-based RKDLEG method with $N=64$, where the solid, dashed, and dotted lines denote  the $l_{1}$-, $l_{2}$-, and $l_{\infty}$-errors, respectively.}
	\label{fig:timeerror2}
\end{figure}

\begin{figure}[htbp]
  \centering
    \begin{minipage}{5cm}
  \includegraphics[width=1\textwidth]{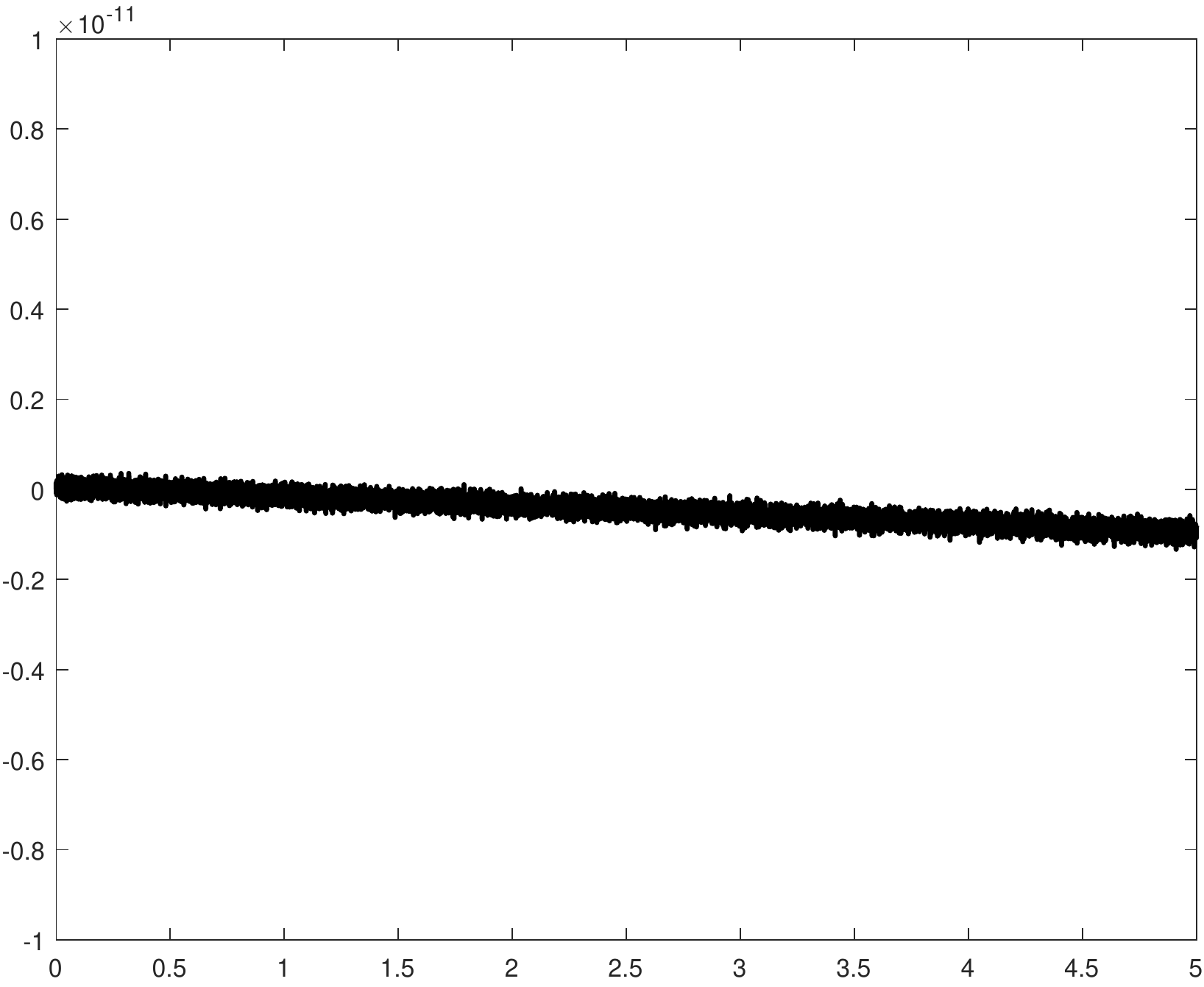}
  \end{minipage}
  \begin{minipage}{5cm}
  \includegraphics[width=1\textwidth]{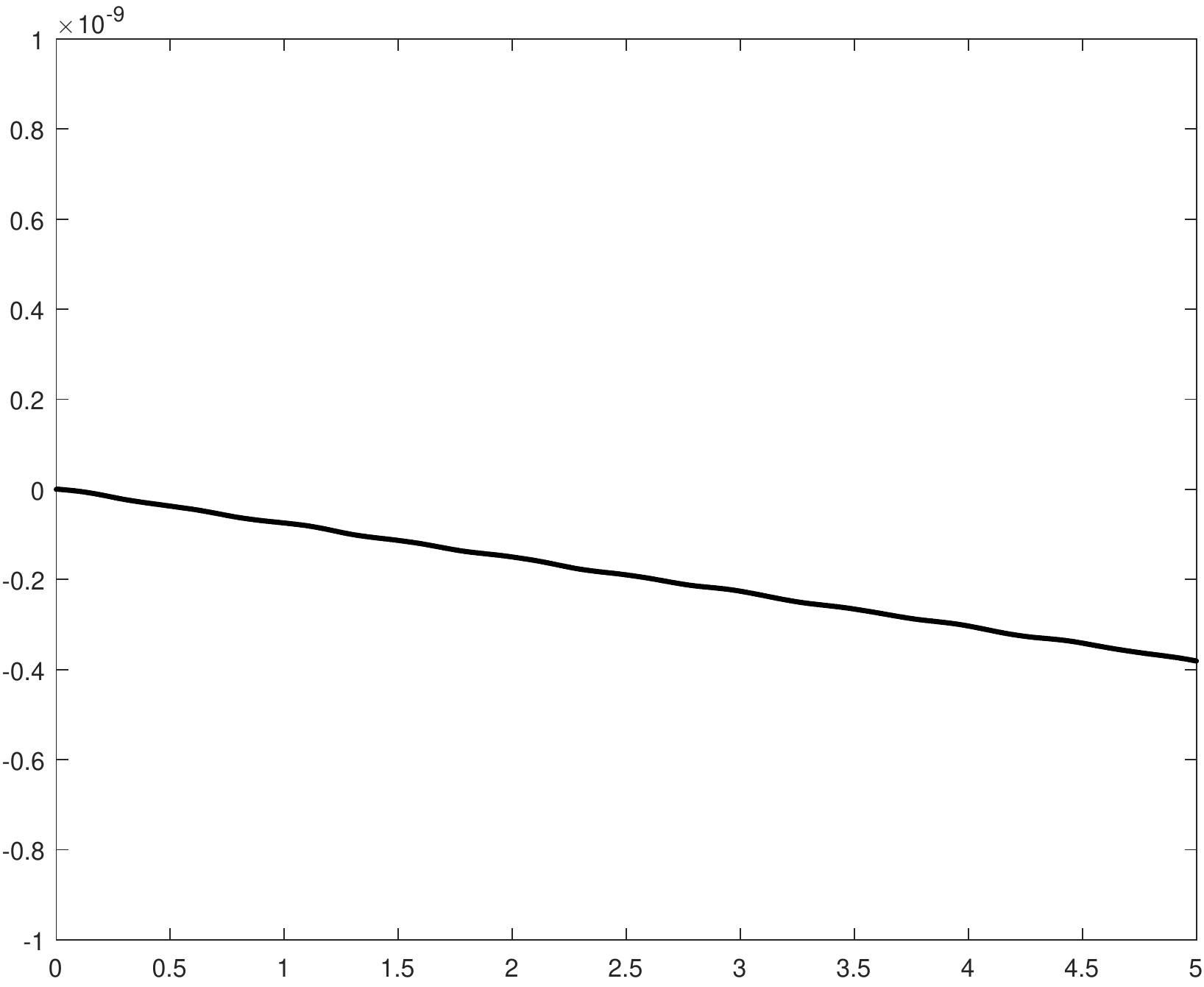}
  \end{minipage}
  \begin{minipage}{5cm}
  \includegraphics[width=1\textwidth]{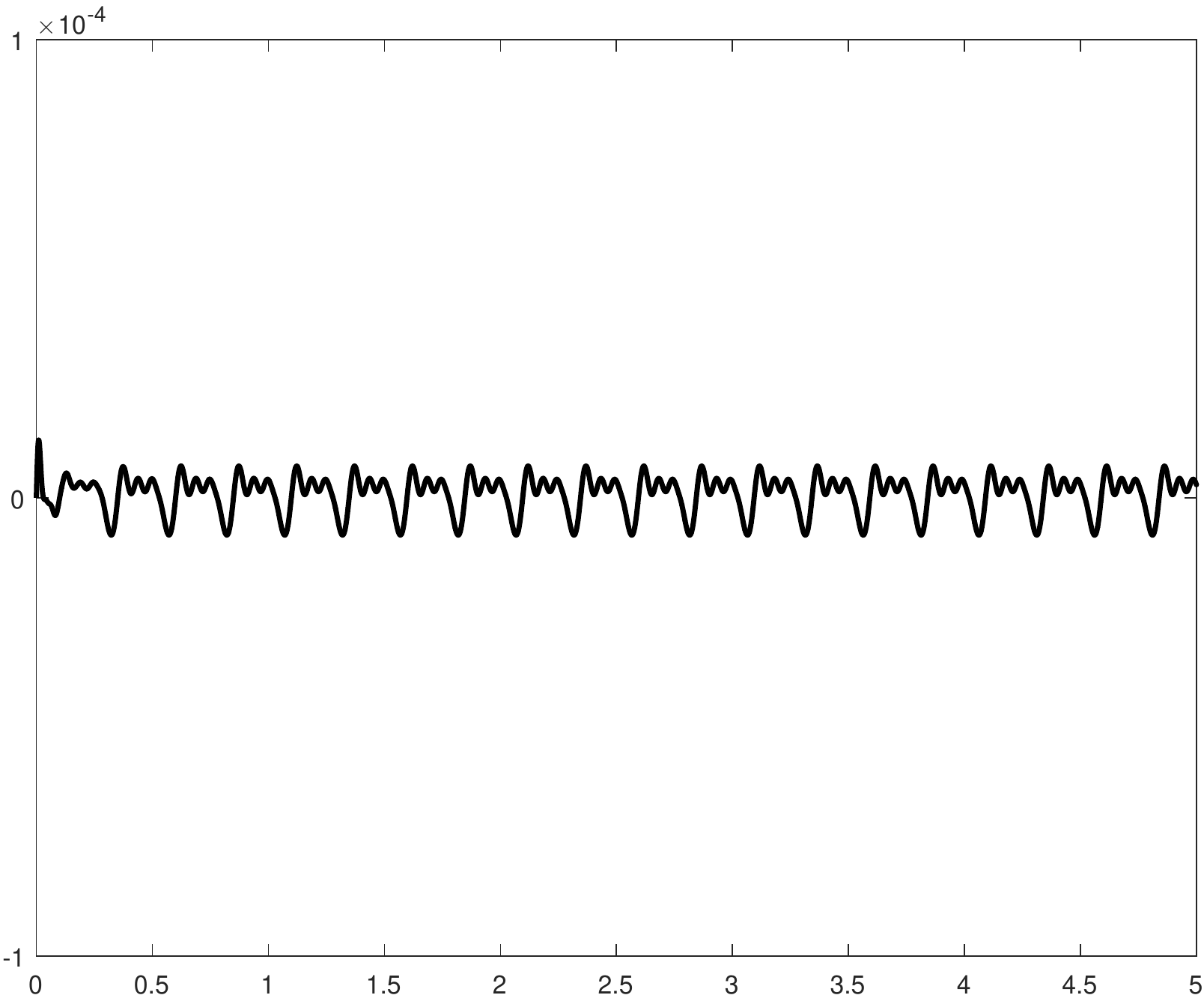}
  \end{minipage}
  \caption{\small Example \ref{time}: {Same as Fig. \ref{fig:steadycon3} except for example \ref{time} by using $\mathbb{P}^{2}$-based RKDLEG method.} }
  \label{fig:timecon2}
  \end{figure}

% % % % % % % % % % % % % % % % % % % % % % % % % % % % % % % % % % % % % % % % % % % %
\begin{figure}[htbp]
   \centering
  \includegraphics[width=11cm,height=6cm]{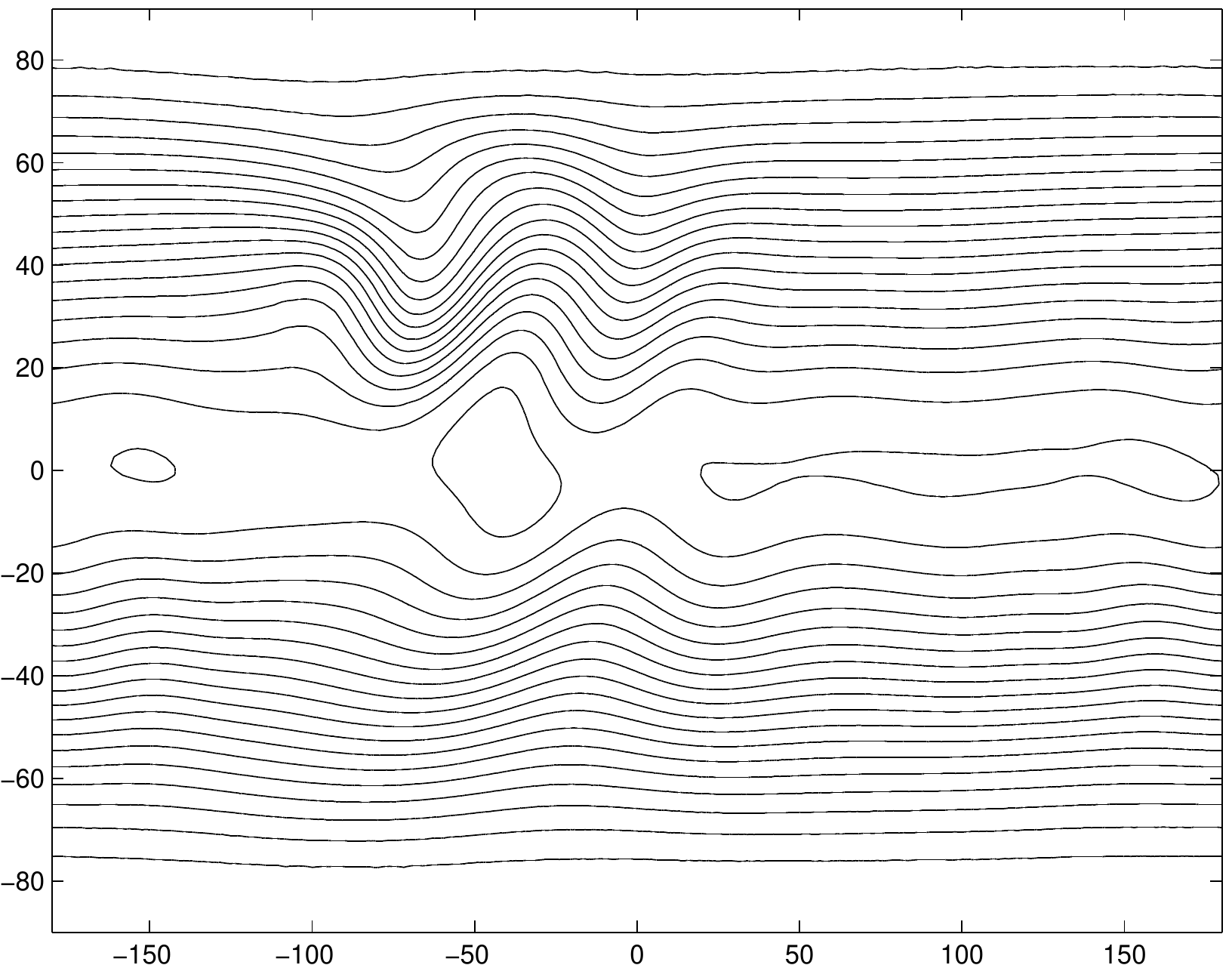}

  \centering
  \includegraphics[width=11cm,height=6cm]{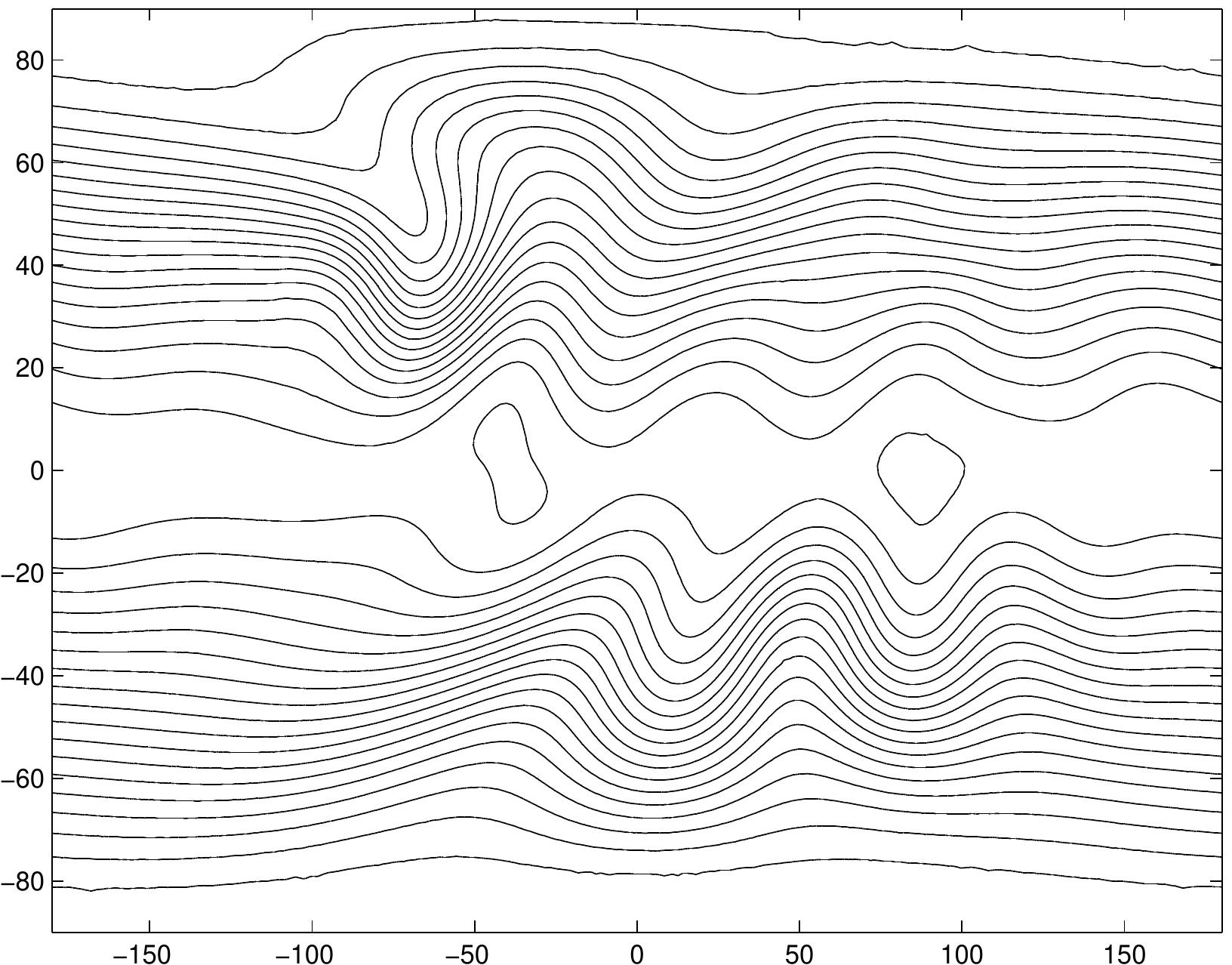}

    \centering
  \includegraphics[width=11cm,height=6cm]{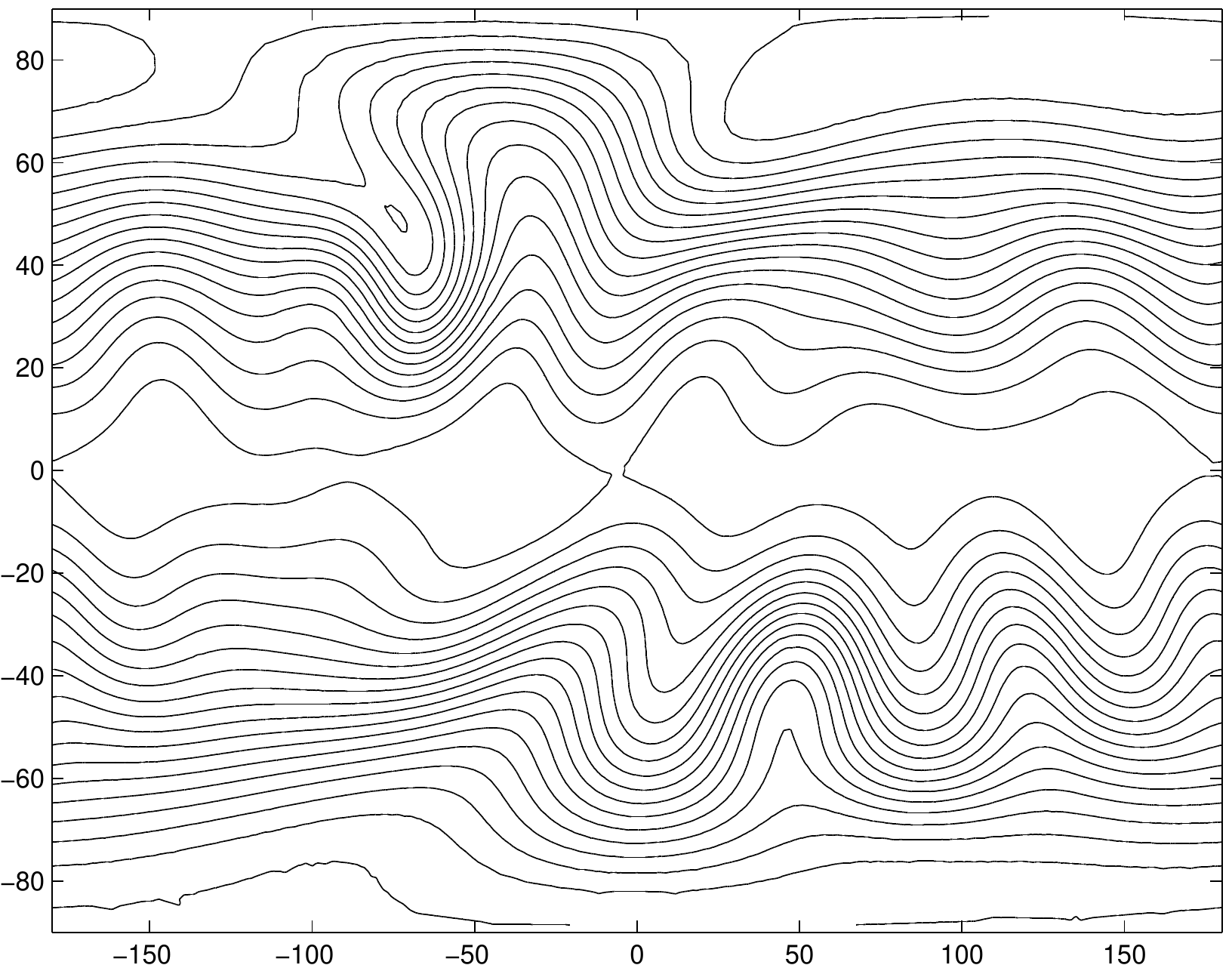}

  \caption{\small Example \ref{zonal}: The   heights $h(\xi,\eta,t)$ at $t = 5,10$ and $15$ days (from top to bottom)
  obtained by using	the ${ \mathbb{P}^{3}}$-based RKDLEG method with $N = 32$. Contour lines are equally spaced from $5050$ m to $5950$ m with a stepsize of $50$ m.}\label{fig:zonal}
\end{figure}

% % % % % % % % % % % % % % % % % % % % % % % % % % % % % % % % % % % % % % % % % % % % % %
\begin{example}[Zonal flow over a bottom mountain]\label{zonal}\rm
	% in order	to further evaluate  the numerical methods in solving the problem with a bottom mountain
The third example considers Williamson's test case 5 \cite{Williamson:1992}, in which the initial height $h$
and velocity vector $(u_s, v_s)$ are given in
Eq. \eqref{EQ-Example4.1} with
 $h_{0}=5960$ m, $\alpha=0$, and $u_{0}=20\mbox{ s}^{-1}$.
The  bottom mountain is
centered at $\left(\xi_{c},\eta_{c}\right)=\left(-\frac{\pi}{2},\frac{\pi}{6}\right)$, and its height is given by
\[
b=b_{0}\left(1-\frac{r}{r_{0}}\right),
\]
where $ r=\min\left\{r_{0},\sqrt{\left(\xi-\xi_{c}\right)^{2}+\left(\eta-\eta_{c}\right)^2}\right\}$, $b_{0}=2000$ m, and $r_{0}=\frac{\pi}{9}$.
Fig.~\ref{fig:zonal} shows the contour plots of height $h$ at $t=5,10$ and $15$ days obtained by using the ${ \mathbb{P}^{3}}$-based  RKDLEG method.  Due to the bottom  mountain, the flow pattern  observed here is unsteady and fully different  from  that in Fig. \ref{fig:steady}.
Comparing those  with the reference solution in \cite{Jakob: 1995},  the present RKDLEG  method may calculate the height  accurately.
Corresponding relative {{conservation}} errors in the total mass, total energy and potential enstrophy {{are}} given in Fig.~\ref{fig:zonalcon}. We see that the total mass is  conservative and the errors in
the total energy and potential enstrophy are about $O(10^{-8})$ and $O(10^{-4})$, respectively.
%{\color{red}{and the same phenomenon can be observed as in Fig. \ref{fig:steadycon3}.}}
\end{example}

\begin{figure}[htbp]
  \centering
  \begin{minipage}{5cm}
  \includegraphics[width=1\textwidth]{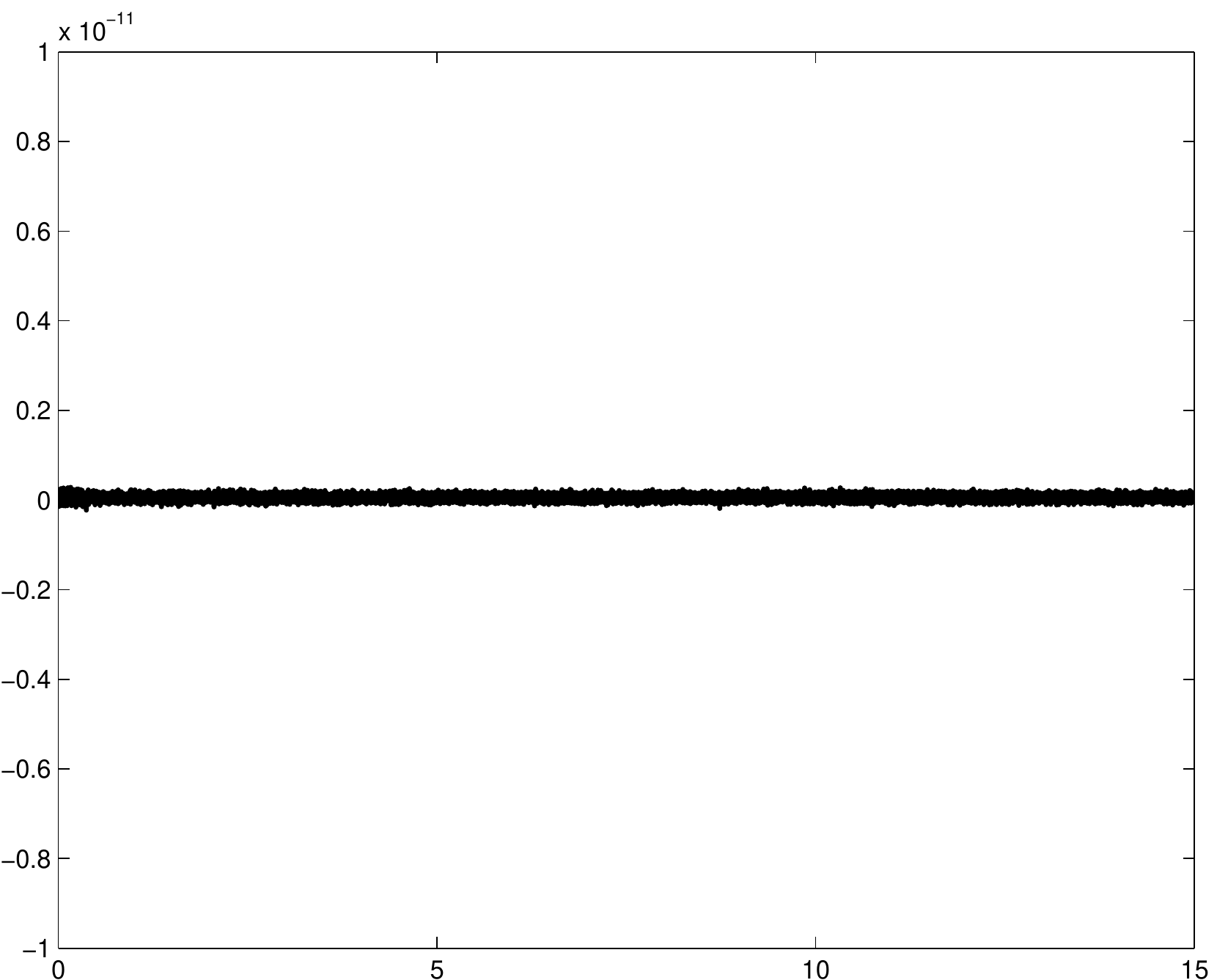}
  \end{minipage}
  \begin{minipage}{5cm}
  \includegraphics[width=1\textwidth]{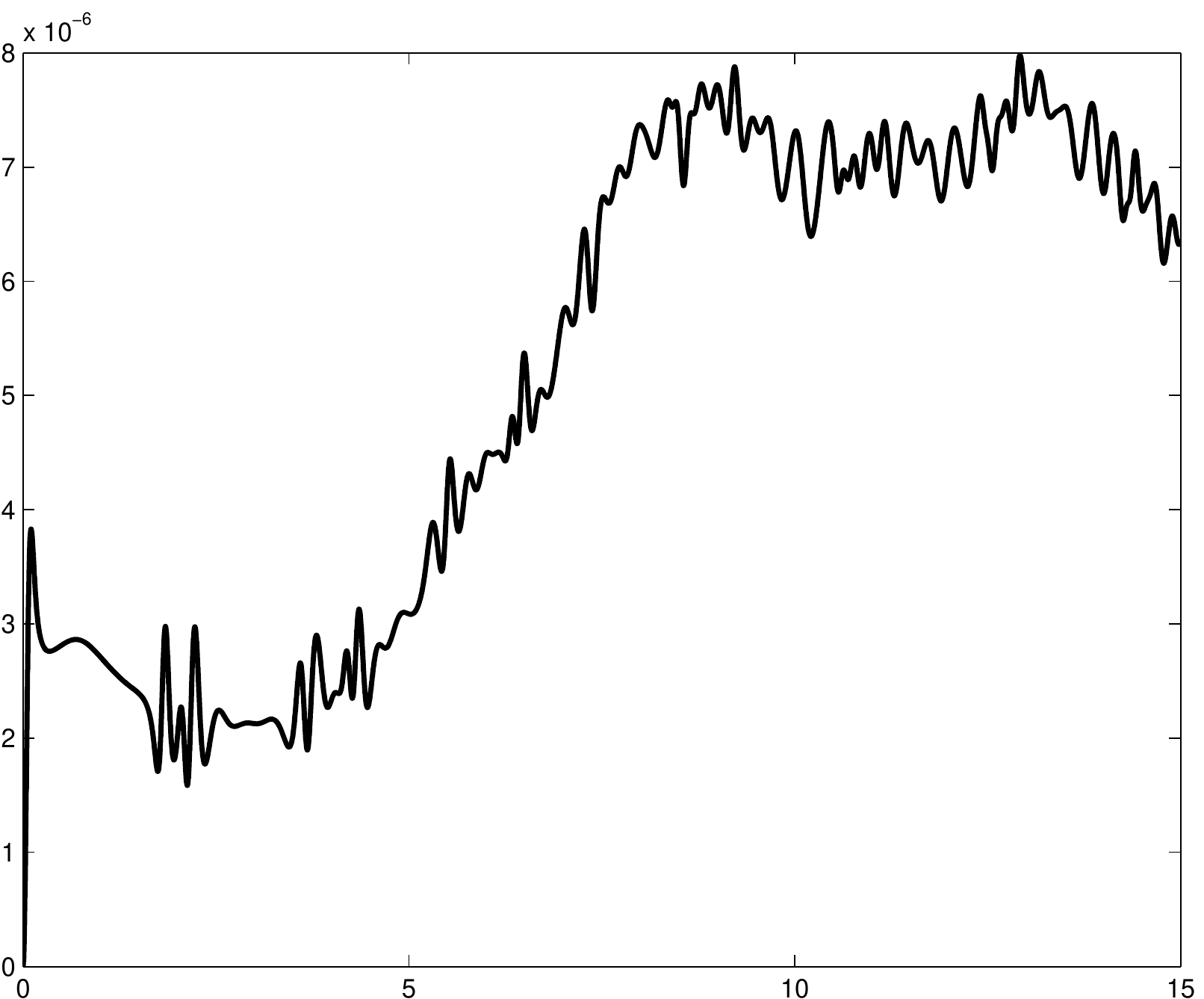}
  \end{minipage}
  \begin{minipage}{5cm}
  \includegraphics[width=1\textwidth]{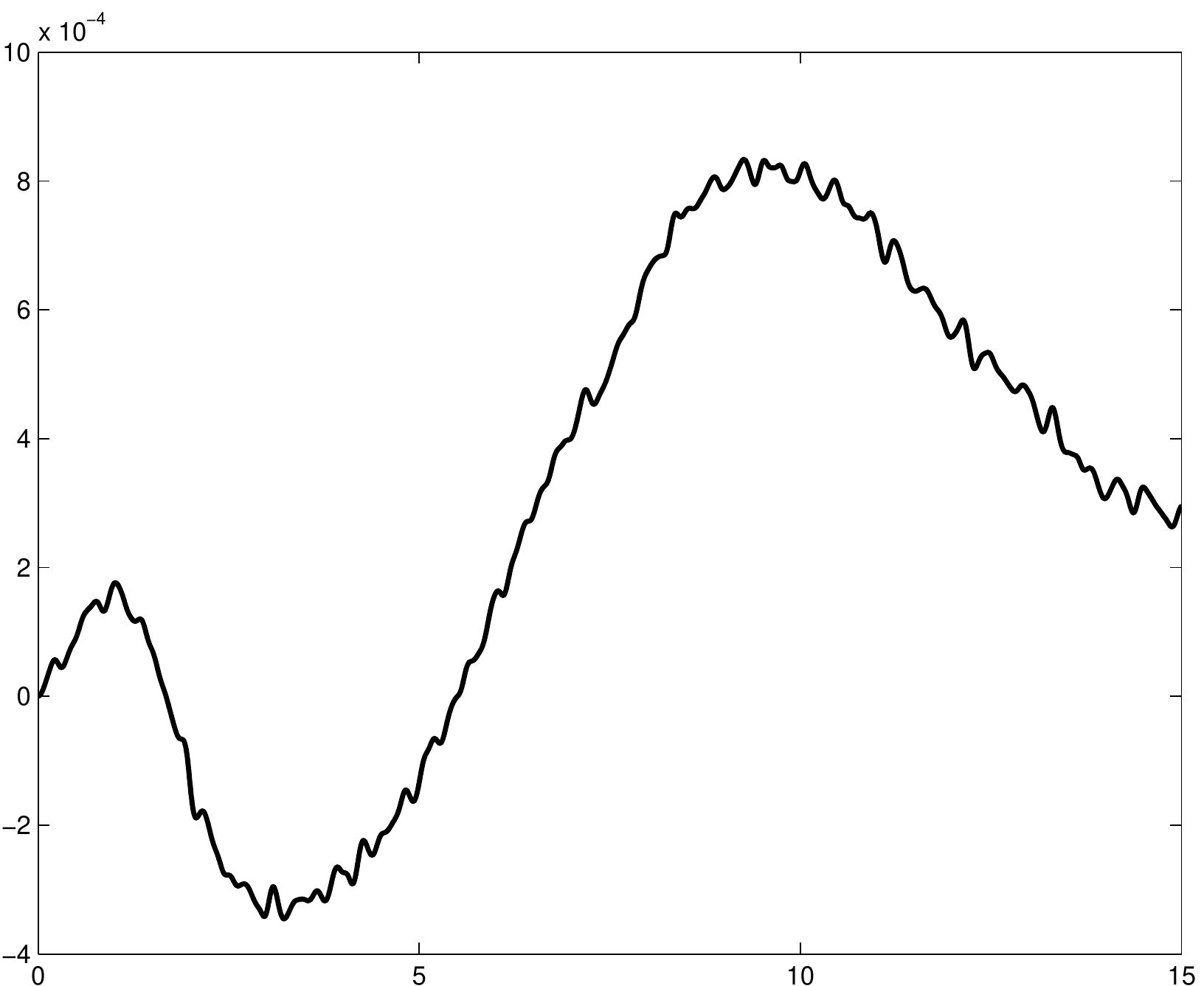}
  \end{minipage}
  \caption{\small Example \ref{zonal}: {Same as Fig. \ref{fig:steadycon3} except for example \ref{zonal} with $N=32$.}}
  \label{fig:zonalcon}
\end{figure}

\begin{figure}[htbp]
   \centering
   \begin{minipage}{7cm}
  \includegraphics[width=7cm,height=7cm]{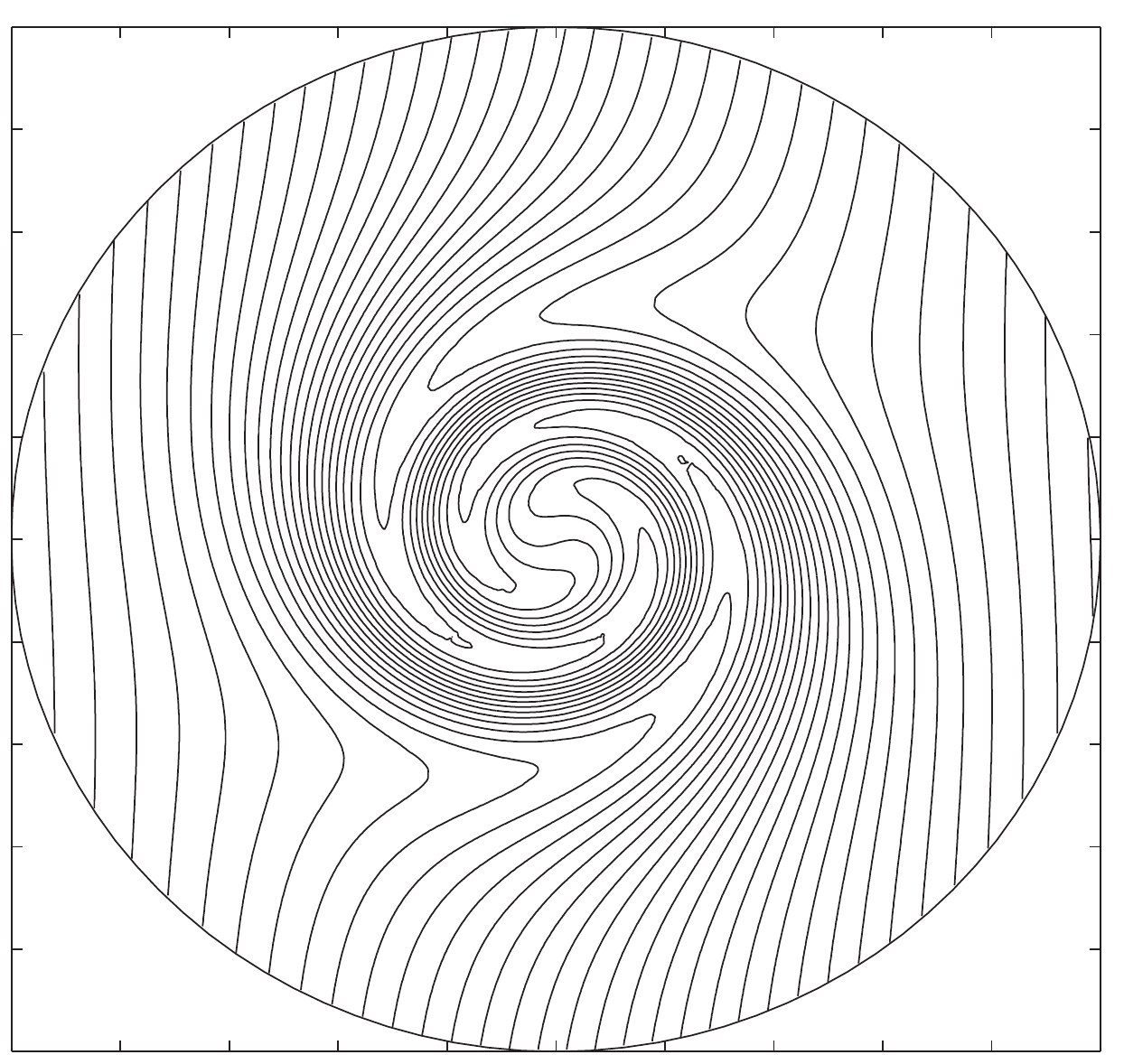}
  \end{minipage}
   \begin{minipage}{7cm}
  \includegraphics[width=7cm,height=7cm]{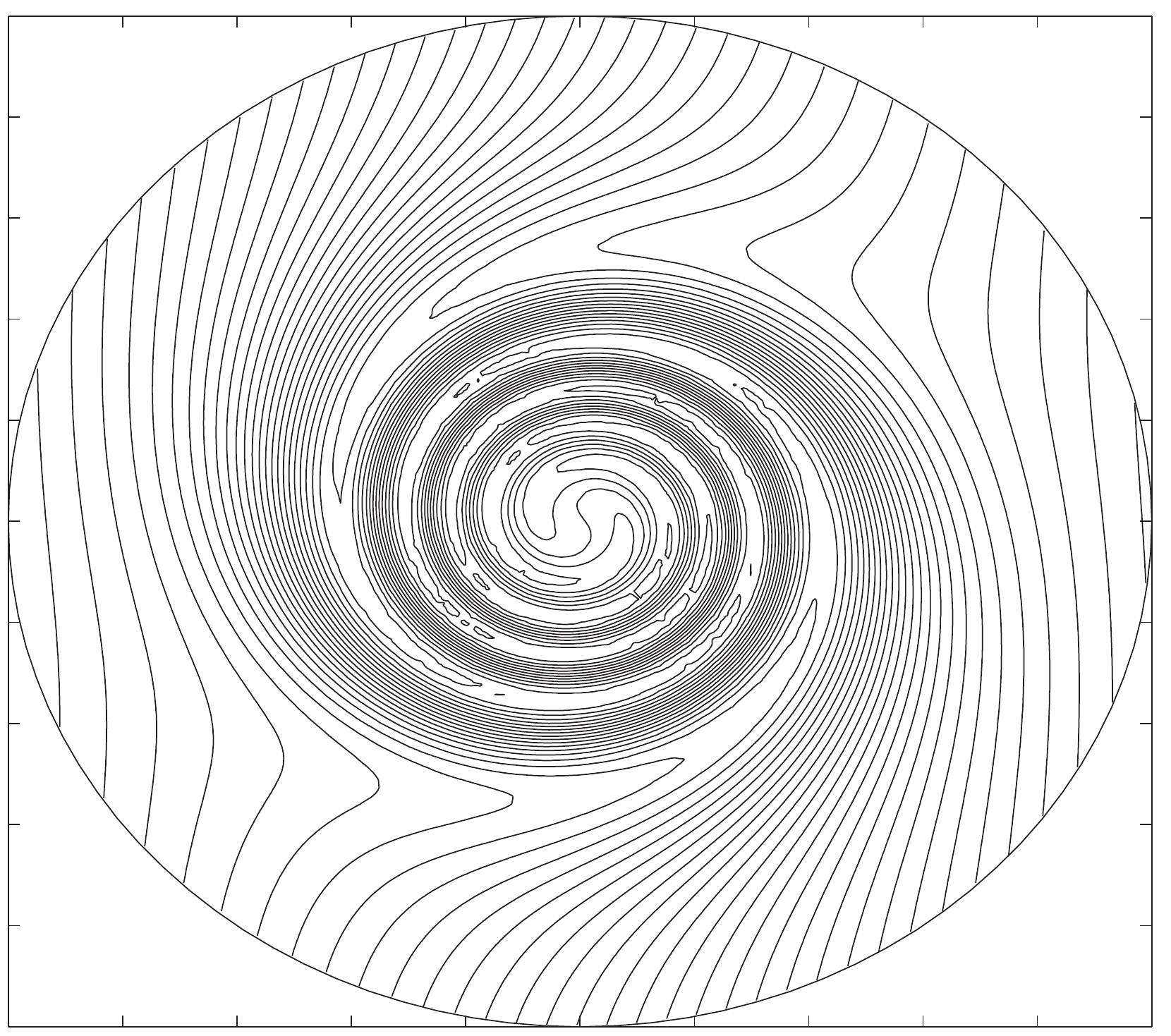}
  \end{minipage}
   \caption{\small Example \ref{defor}: The exact heights $h$  at $t = 6, 12$ days (from left to right), which are viewed from the North {{pole}} of the Earth. Contour lines are equally spaced  {{from}} 300 m to 960 m with a {stepsize}  of 20 m.}
    \label{fig:def0}
\end{figure}

\begin{figure}
\centering
   \begin{minipage}{7cm}
  \includegraphics[width=7cm,height=7cm]{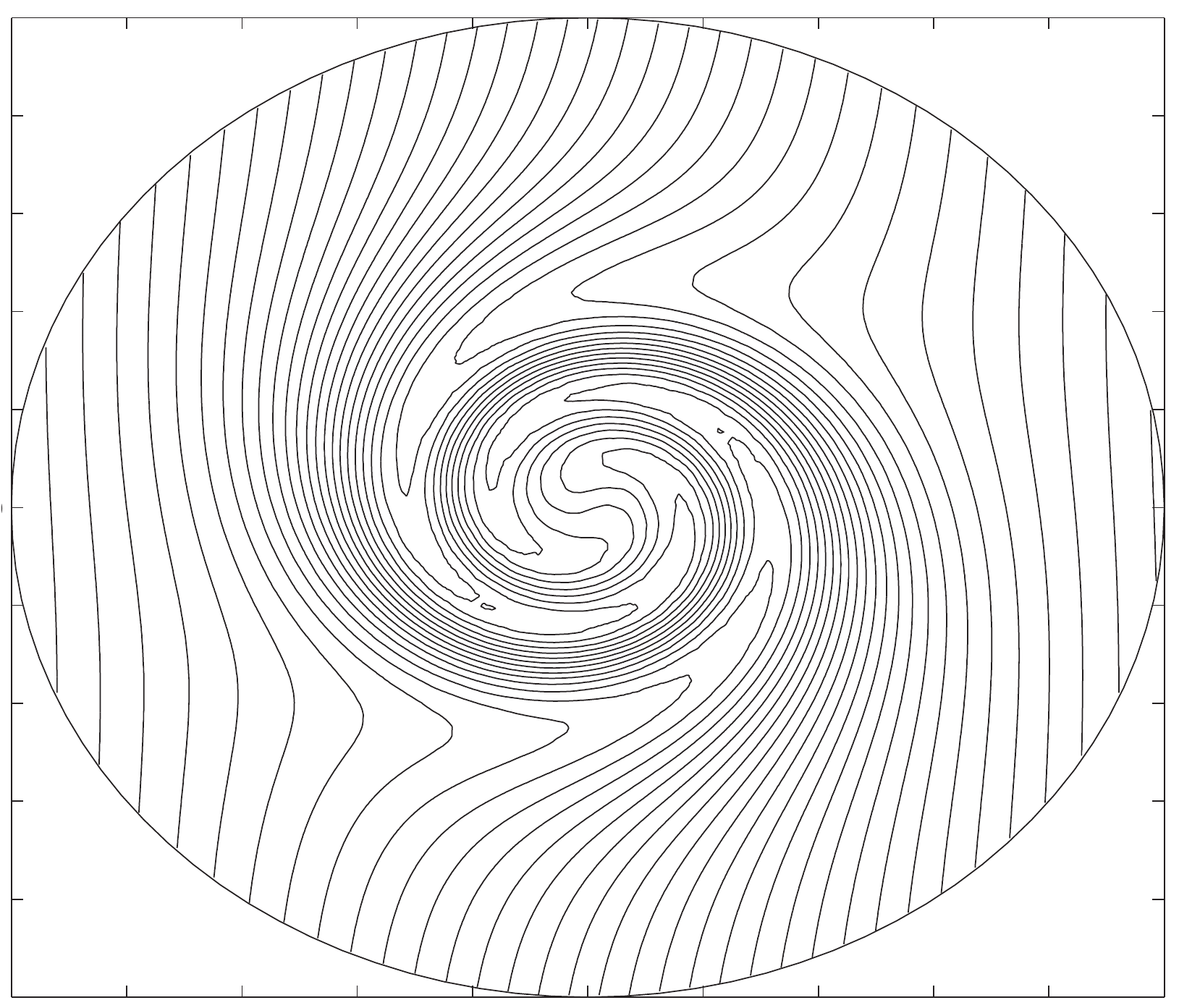}
  \end{minipage}
   \begin{minipage}{7cm}
  \includegraphics[width=7cm,height=7cm]{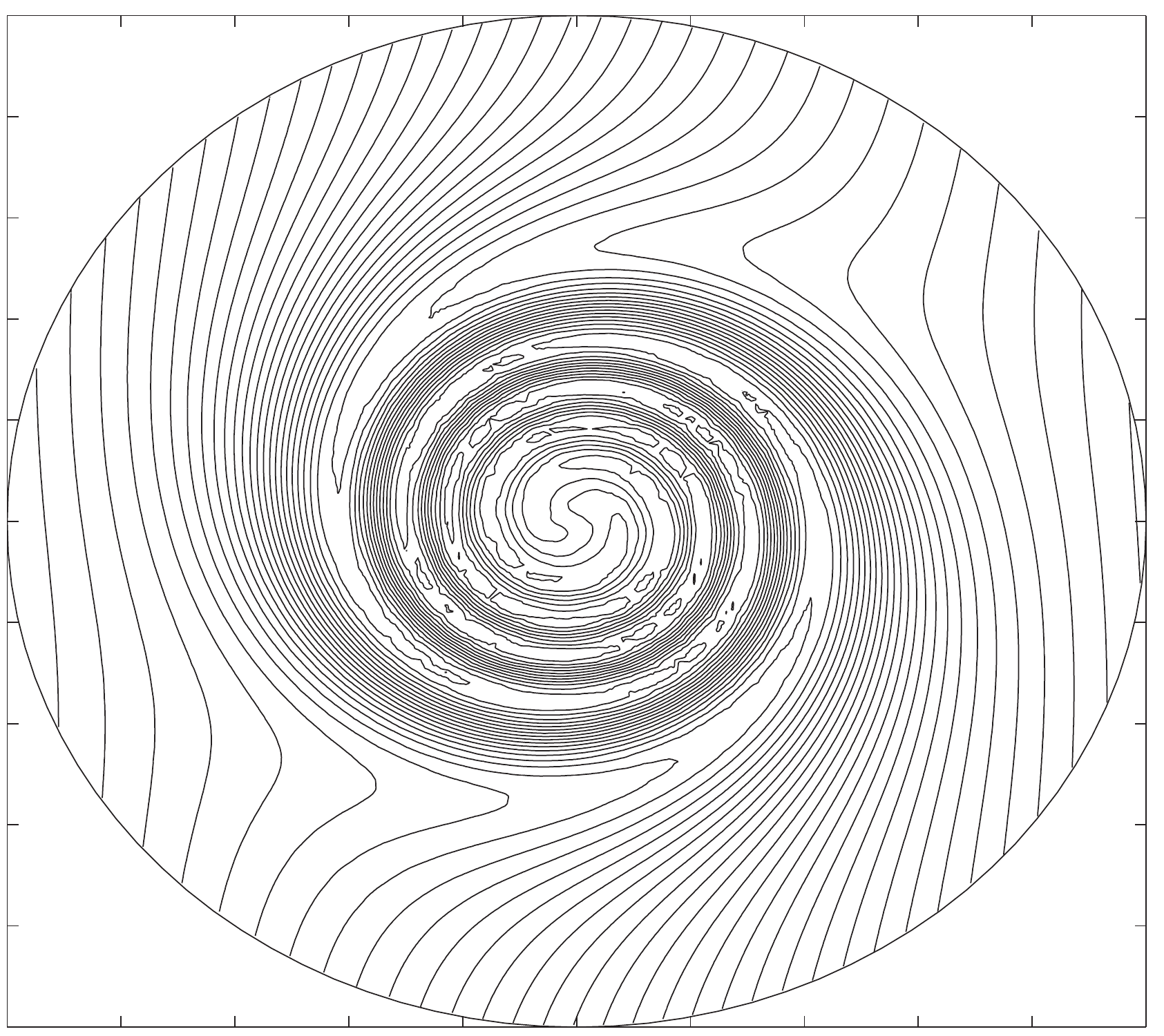}
  \end{minipage}
  \caption{\small Same as Fig.~\ref{fig:def0} except for the numerical heights $h$ obtained by using ${ \mathbb{P}^{3}}$-based RKDLEG
method with $N = 32$.}
  \label{fig:def1}
\end{figure}

% % % % % % % % % % % % % % % % % % % % % % % % % % % % % % % % % % % % % % % % % % % % % % % % % %
\begin{example}[Deformational flow]\label{defor}\rm
This example is an extension of  the pure advection  flow  but with deformation introduced in \cite{Nair: 2002}
to the SWEs \eqref{eq:latlon primitive} by adding
two ``source'' terms
\[
%\begin{cases}
%\frac{\partial u_{s}}{\partial t}+\frac{1}{R\cos\eta}\frac{\partial u_{s}}{\partial \xi}u_{s}+\frac{\partial u_{s}}{\partial \eta}v_{s}-\left(f+\frac{u_{s}}{R}\tan\eta\right)v_{s}+\frac{g}{R\cos\eta}\frac{\partial h}{\partial \xi}&=\frac{g}{R\cos\eta}\frac{\partial h^{(e)}}{\partial \xi},\\
%\frac{\partial v_{s}}{\partial t}+\frac{1}{R\cos\eta}\frac{\partial v_{s}}{\partial \xi}u_{s}+\frac{\partial v_{s}}{\partial \eta}v_{s}+\left(f+\frac{u_{s}}{R}\tan\eta\right)u_{s}+\frac{g}{R}\frac{\partial h}{\partial \eta}&=\frac{g}{R}\frac{\partial h^{(e)}}{\partial \eta}+\left(f+\frac{u_{s}^{(e)}}{R}\tan\eta\right)u_{s}^{(e)},
%\end{cases}
-\frac{g}{\cos\eta}     \frac{\partial     \big(\tanh(\frac{\rho}{\gamma}\sin(\xi-\omega t))\big)                 }{\partial \xi},
\ -{g} \frac{\partial         \big(\tanh(\frac{\rho}{\gamma}\sin(\xi-\omega t))\big)           }{\partial \eta}
+\left(f+          \omega\sin\eta \right) R\omega\cos\eta.
\]
to the right-hand side of two momentum equations respectively.
%should be added to the right hand side of the momentum equations in Eq. \eqref{eq:latlon primitive}, respectively.
% and  computed in \cite{Nair:2005-1,Pudykiewicz: 2011,Chen:2008,Chen:2014}.
The initial height $h$ and  velocity  vector $(u_s, v_s)$ are specified by
\[
h(\xi,\eta,0)=R-R\tanh\left(\frac{\rho}{\gamma}\sin\xi\right),\quad
u_{s}(\xi,\eta,0)=R\omega\cos\eta,\quad
v_{s}(\xi,\eta,0)=0,
\]
where $\rho=\rho_{0}\cos\eta$, $\rho_{0}=3$, $\gamma=5$,  and the angular velocity
\[
\omega=
\begin{cases}
\frac{3\sqrt{3}}{2}u_{0}\frac{\tanh\rho}{\rho\cosh^{2}\rho},&   \rho\neq0,\\
0,&    \rho=0,
\end{cases}
\]
with $u_{0}=\frac{\pi}{6} \mbox{day}^{-1}$.
The exact height field is taken as
\[
h(\xi,\eta,t)=R-R\tanh(\frac{\rho}{\gamma}\sin(\xi-\omega t)),
\]
 and shown in Fig.~\ref{fig:def0} for  $t=6$ and $12$ days.

Fig.~\ref{fig:def1} gives the heights $h$ at $t=6$ and $12$ days  obtained by using the ${ \mathbb{P}^{3}}$-based RKDLEG method with $N=32$. It is obvious that they are in accordance with those in Fig.~\ref{fig:def0}.
Fig.~\ref{fig:defcompare} gives  the numerical results at $t=6$ days obtained by using the ${ \mathbb{P}^{1}}$-based RKDLEG method and  ${\mathbb{P}^{1}}$-based RKDG method with {Godunov's} flux.
Comparing them to the exact
solution in the left plot of Fig.~\ref{fig:def0},
we see that the ${ \mathbb{P}^{1}}$-based RKDG method with {Godunov's} flux
gives an inaccurate solution in this case.

%It is obvious that the numerical result of RKDLEG method is closer to the exact solution.
\end{example}

\begin{figure}[htbp]
   \centering
   \begin{minipage}{7cm}
  \includegraphics[width=7cm,height=7cm]{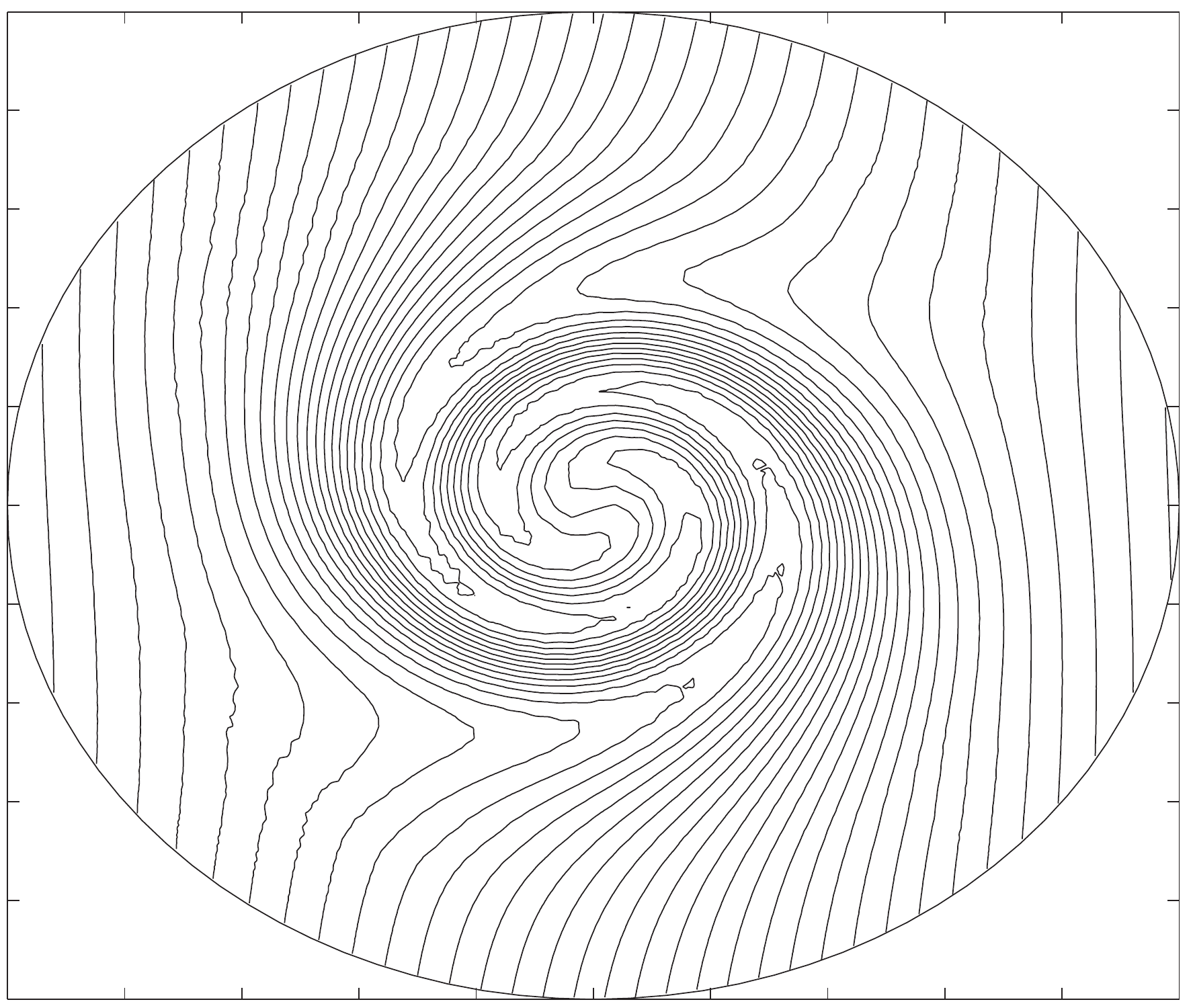}
  \end{minipage}
   \begin{minipage}{7cm}
  \includegraphics[width=7cm,height=7cm]{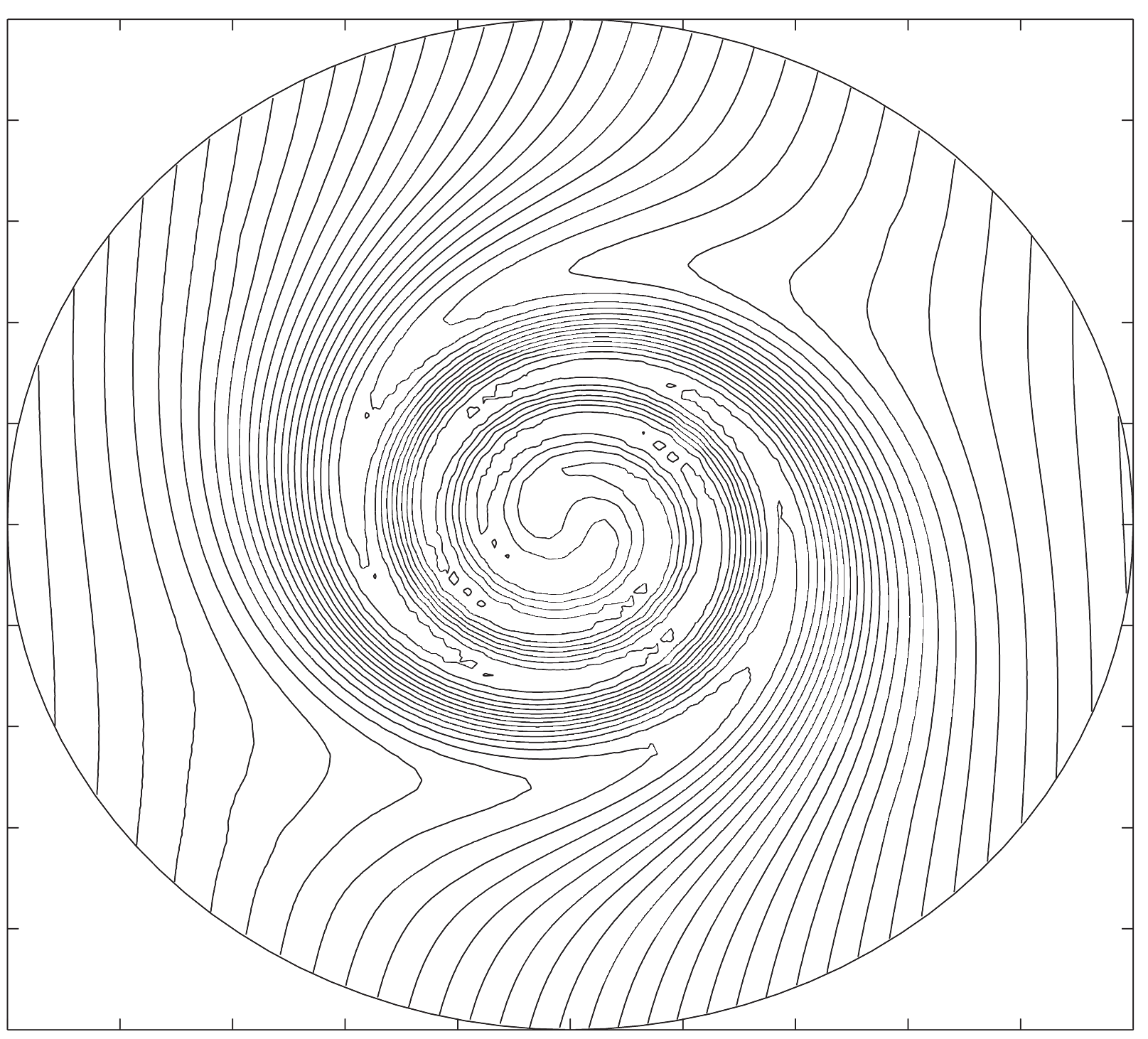}
  \end{minipage}
  \caption{\small Same as the left plot in Fig.~\ref{fig:def1} except for the ${\mathbb{P}^{1}}$-based RKDLEG method (left) and RKDG method with {Godunov's} flux (right).}
  \label{fig:defcompare}
\end{figure}

% % % % % % % % % % % % % % % % % % % % % % % % % % % % % % % % % % % % % % % % % % % %
\begin{example}[Rossby-Haurwitz wave]\label{rossby}\rm
Rossby-Haurwitz waves are steadily propagating solutions of the fully nonlinear non-divergent barotropic vorticity equation on a sphere
and have  been used to test shallow water numerical models,
see the 6th case of the standard shallow-water test {provided} by Williamson et al. \cite{Williamson:1992}.
Rossby-Haurwitz waves with zonal wave-numbers less than or equal to 5 are commonly believed to be stable, otherwise unstable.

The initial height and divergence-free velocity vector are
specified as follows
\begin{align*}
h(\xi,\eta,0)=&h_{0}+g^{-1}R^{2}\left(A\left(\eta\right)+B\left(\eta\right)\cos\left(r\xi\right)+C\left(\eta\right)\cos\left(2r\xi\right)\right),\\
u_{s}(\xi,\eta,0)=&RK\cos\eta+RK\cos^{r-1}\eta\left(r\sin^{2}\eta-\cos^2\eta\right)\cos\left(r\xi\right),\\
v_{s}(\xi,\eta,0)=&-RKr\cos^{r-1}\eta\sin\eta\sin\left(r\xi\right),
\end{align*}
where
\begin{align*}
A\left(\eta\right)= &\frac{K}{2}\left(2\Omega+K\right)\cos^{2}\eta+\frac{1}{4}K^{2}\cos^{2r}\eta\left[\left(r+1\right)\cos^{2}\eta+\left(2r^{2}-r-2\right)-\frac{2r^2}{\cos^{2}\eta}\right],\\
B\left(\eta\right)= &\frac{2\left(\Omega+K\right)K}{\left(r+1\right)\left(r+2\right)}\cos^{r}\eta\left[\left(r^2+2r+2\right)-\left(r+1\right)^{2}\cos^{2}\eta\right],\\
C\left(\eta\right)= &\frac{1}{4}K^{2}\cos^{2r}\eta\left[\left(r+1\right)\cos^{2}\eta-\left(r+2\right)\right],
\end{align*}
with $ K=7.848\times10^{-6}$ s$^{-1}$, the zonal wave number $r=4$,
and $h_{0}=8000$ m.
% $A,B,C$ are functions of the latitude $\eta$.
%Some comments of properties and numerical solutions from some global models of Rossby-Haurwitz wave are described in \cite{Thuburn:2000}.

%In particular, the Rossby-Haurwitz wave remains stable during the entire time of simulation propagating eastwards and the initial structure of the wave number 4 is well maintained with only minimal vacillations in shape, so it provides a good test for long time simulation \cite{Pudykiewicz: 2011}.

%The divergence-free initial velocity field is given by stream function which is in spherical component
%\[
%\psi(\xi,\eta)=-R^{2}K\sin\eta+R^{2}K\cos^{r}\eta\sin\eta\cos\left(r\xi\right).
%\]

Fig.~\ref{fig:rossby2} displays the heights at $t = 7$ and $14$ days obtained by using the ${  \mathbb{P}^{3}}$-based RKDLEG method with $N=48$.
Those results agree well with the widely accepted reference solutions \cite{Jakob: 1995}, the wave propagates steadily eastward, and superposed on this steady propagation are small vacillations in the wave structure.
%without change of shape with the angular velocity %$\omega$
%$\big(r(3+r)K-2\Omega\big)/{(1+r)(2+r)}$.
Fig. \ref{fig:steadycon3} gives the time evolution of the  relative {{conservation}} errors of total mass, total energy and potential enstrophy.
It is obvious that the total mass is conservative and the error in the total
energy is about $O(10^{-7})$, but the error in the total
potential enstrophy is slightly big and its order of magnitude
is ${-2}$.
%Orders of magnitude are written in powers of 10. For example, the order of magnitude of 1500 is 3, since 1500 may be written as 1.5¡Á103.

Figs.~\ref{fig:rossby0} and \ref{fig:rossby1} also give the solutions at $t=7$ and $14$ days
obtained respectively by using the ${ \mathbb{P}^{1}}$-based RKDLEG method and ${ \mathbb{P}^{1}}$-based RKDG method with {Godunov's} flux with $N=48$.
Comparing them, it is not difficult to see that
 the RKDLEG method may get the solutions more similar to the reference solutions \cite{Jakob: 1995} than  the RKDG method with {Godunov's} flux.

\end{example}

\begin{figure}[htbp]
   \centering
   \begin{minipage}{7.5cm}
  \includegraphics[width=7cm,height=4cm]{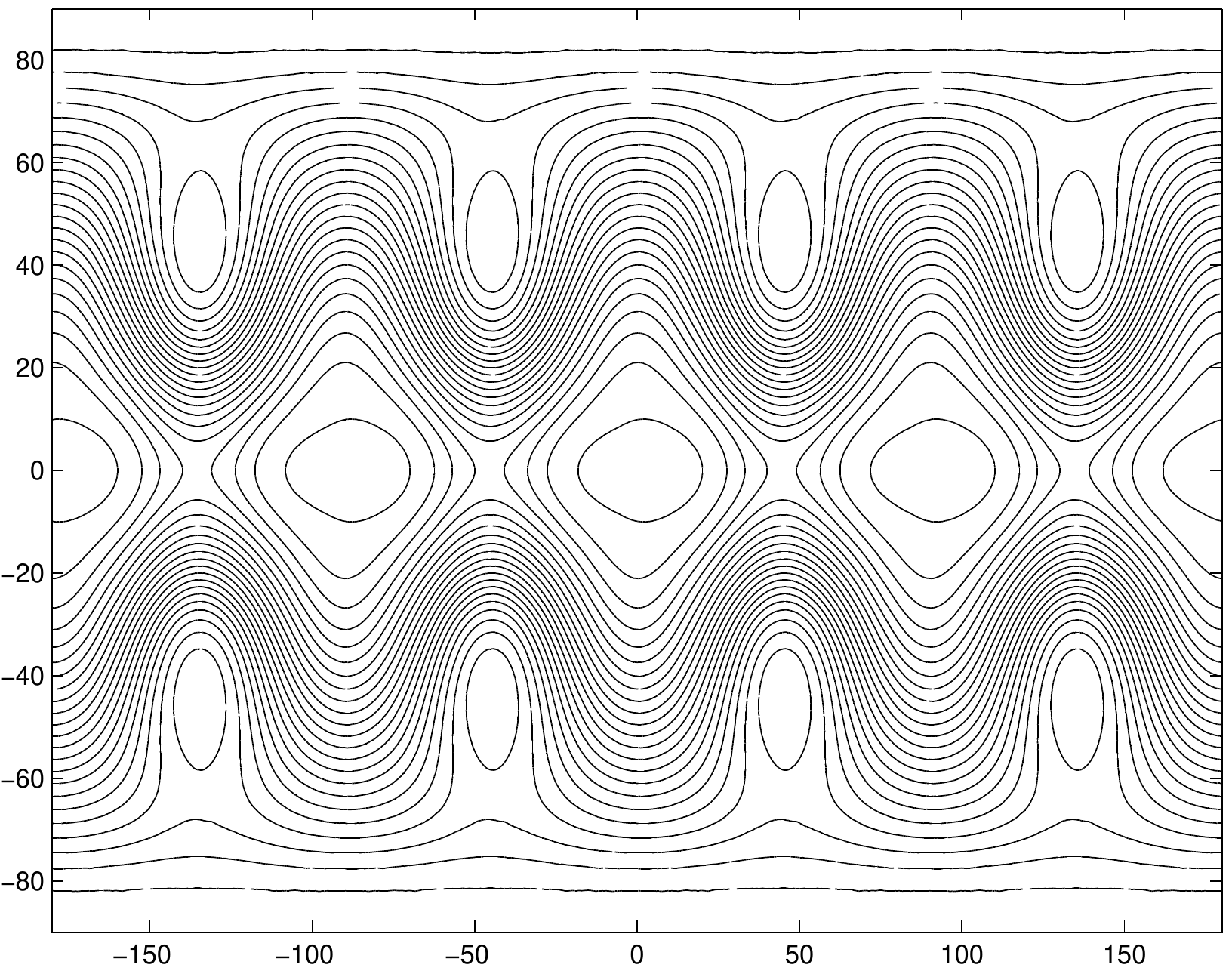}
  \end{minipage}
   \begin{minipage}{7.5cm}
  \includegraphics[width=7cm,height=4cm]{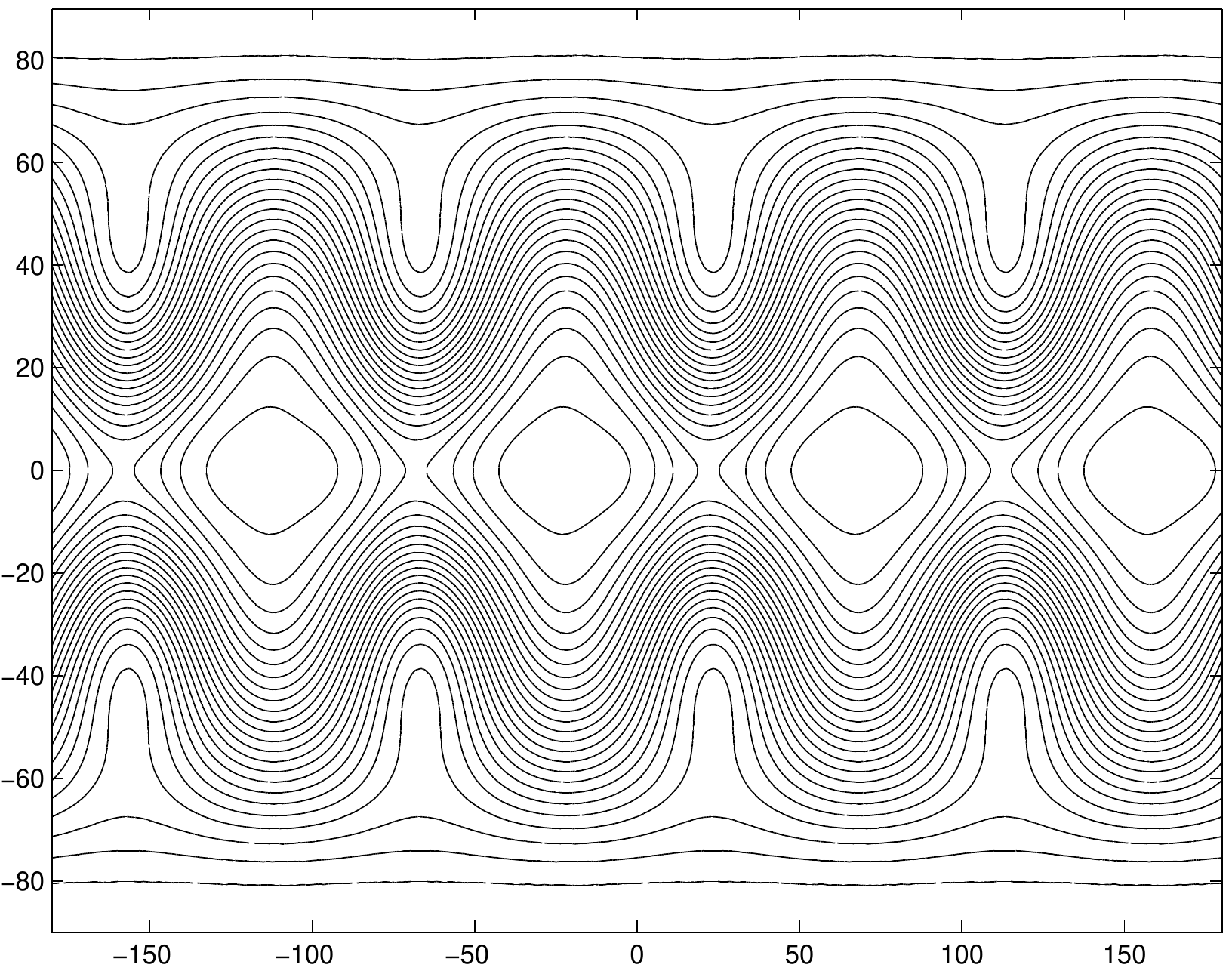}
\end{minipage}
  \caption{\small
  Example \ref{rossby}: The heights at $t = 7$ (left) and 14 days (right) obtained by using ${ \mathbb{P}^{3}}$-based RKDLEG method with $N = 48$. Contour lines are equally spaced {from} $8100$ m to $11000$ m with a stepsize of $100$ m.
  }\label{fig:rossby2}
\end{figure}

\begin{figure}[htbp]
  \centering
  \begin{minipage}{5cm}
  \includegraphics[width=1\textwidth]{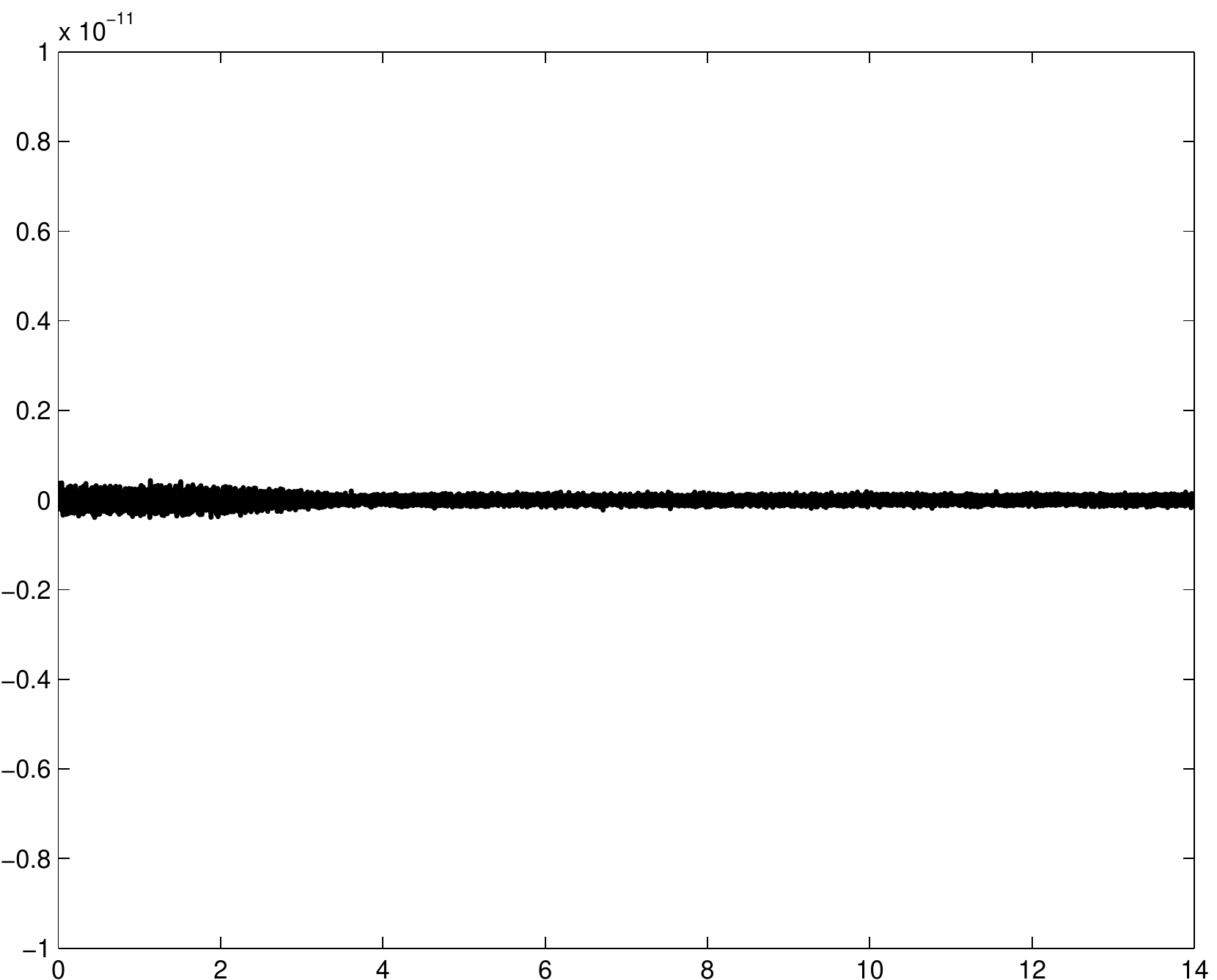}
  \end{minipage}
  \begin{minipage}{5cm}
  \includegraphics[width=1\textwidth]{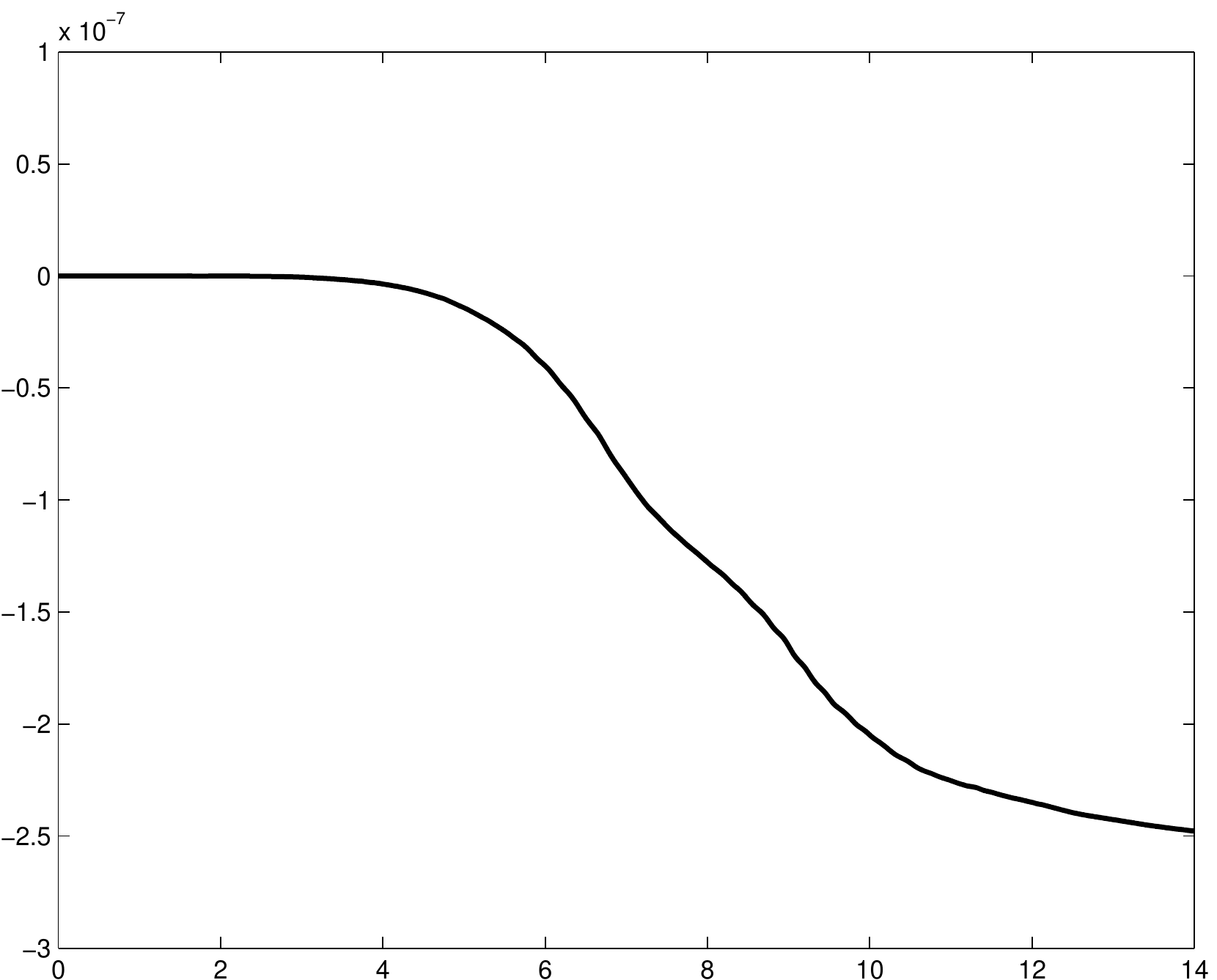}
  \end{minipage}
  \begin{minipage}{5cm}
  \includegraphics[width=1\textwidth]{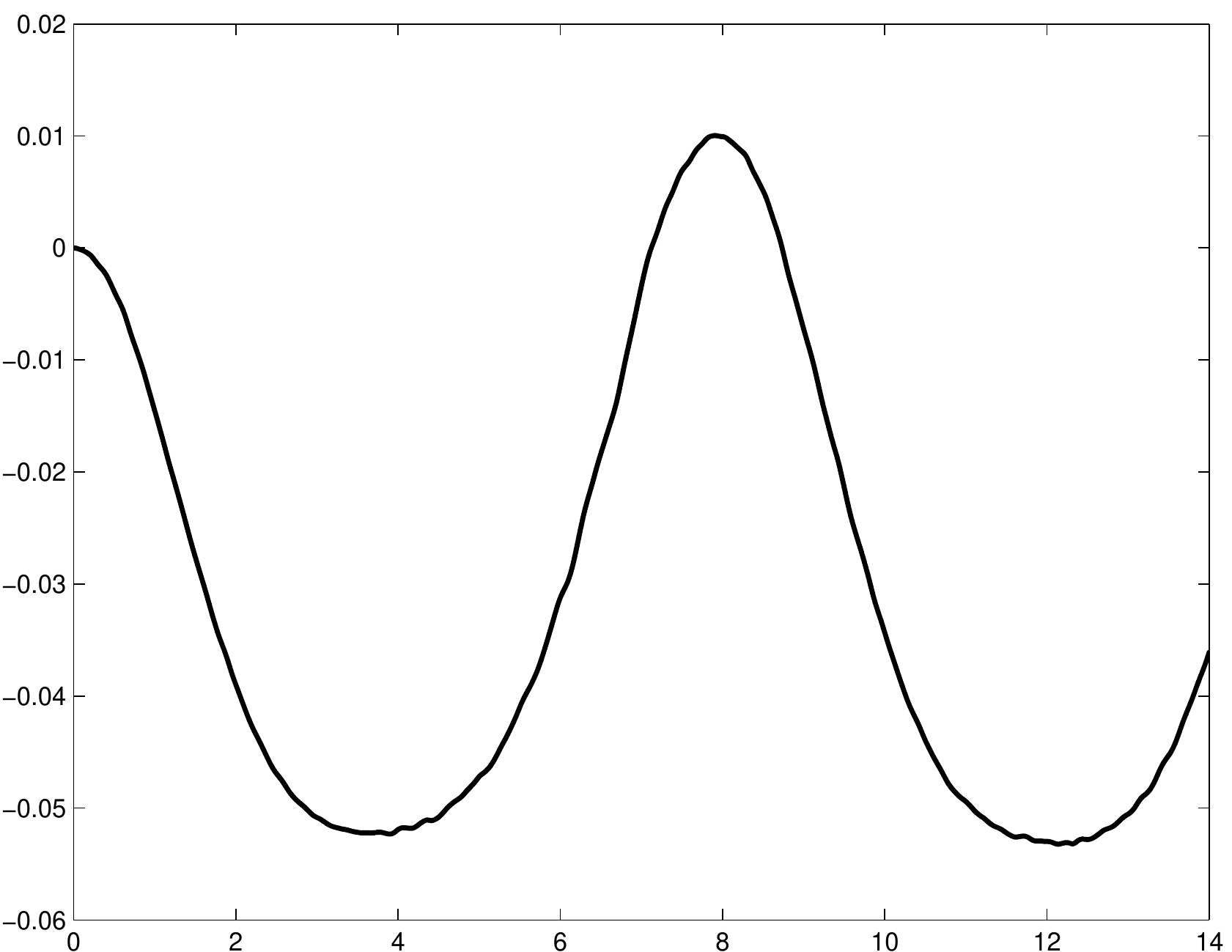}
  \end{minipage}
  \caption{\small Example \ref{rossby}.
  {Same as Fig. \ref{fig:steadycon3} except for example \ref{rossby} with $N=48$.}}
  \label{fig:rossbycon}
\end{figure}

\begin{figure}[htbp]
   \centering
   \begin{minipage}{7.5cm}
  \includegraphics[width=7cm,height=4cm]{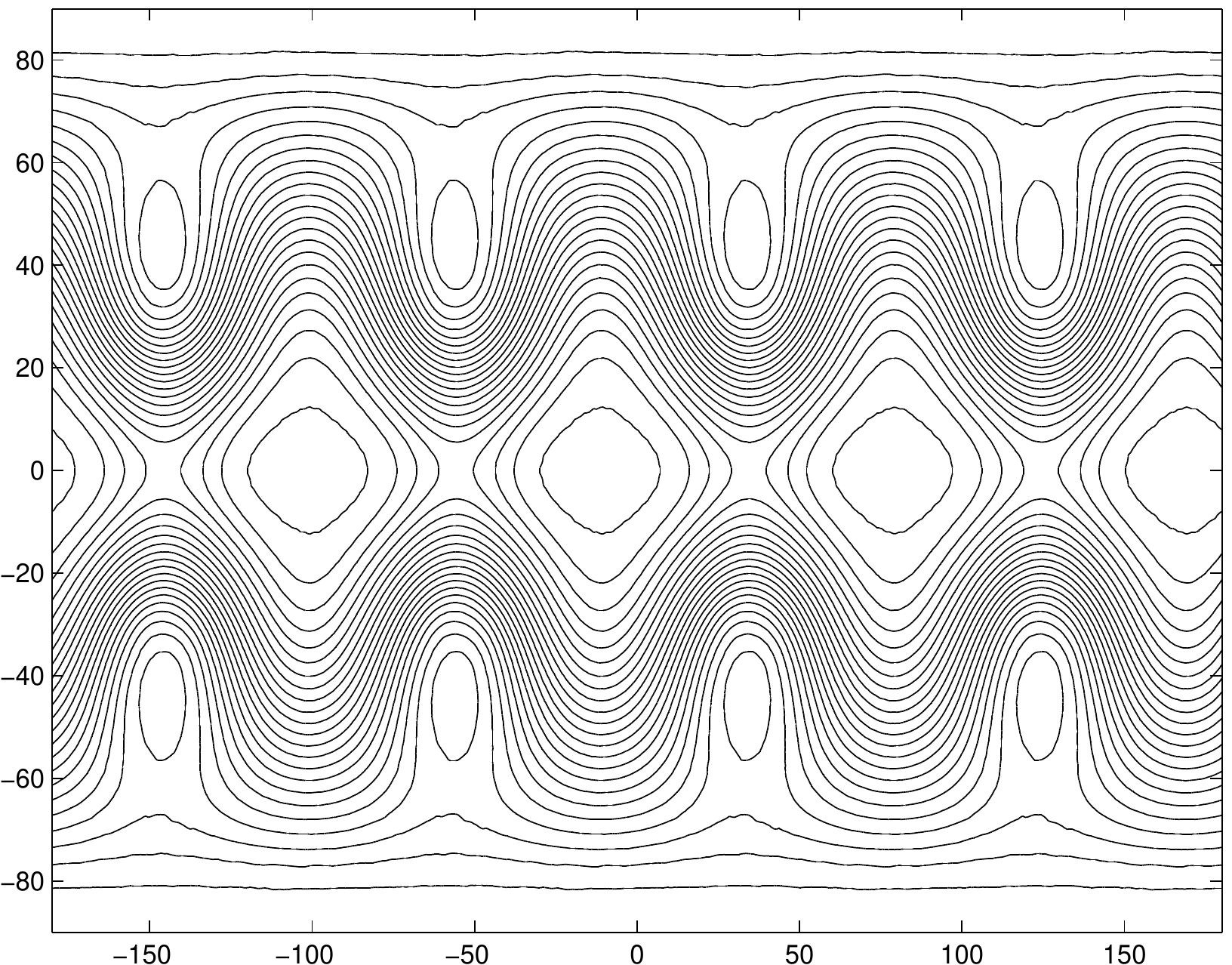}
  \end{minipage}
   \begin{minipage}{7.5cm}
  \includegraphics[width=7cm,height=4cm]{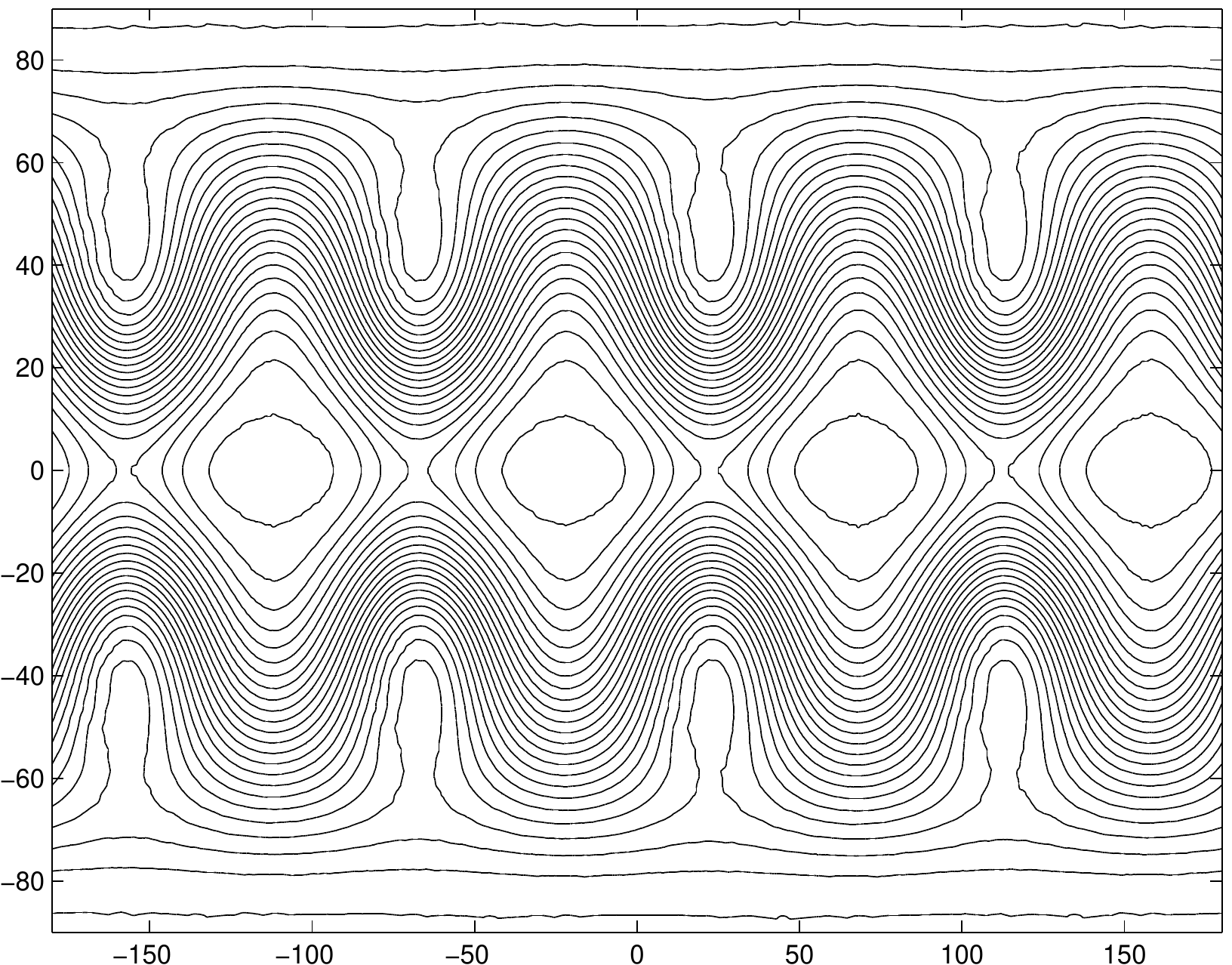}
  \end{minipage}
    \caption{\small
    Same as Fig.~\ref{fig:rossby2} except for the ${\mathbb{P}^{1}}$-based RKDLEG method.}
   % Example \ref{rossby}: The heights at $t = 7$ (left) and 14 days (right) obtained by using ${ \mathbb{P}^{1}}$-based RKDLEG method with $N = 48$. Contour lines are equally spaced {from} $8100$ m to $11000$ m with a stepsize of $100$ m.}
\label{fig:rossby0}
\end{figure}

\begin{figure}
   \centering
   \begin{minipage}{7.5cm}
  \includegraphics[width=7cm,height=4cm]{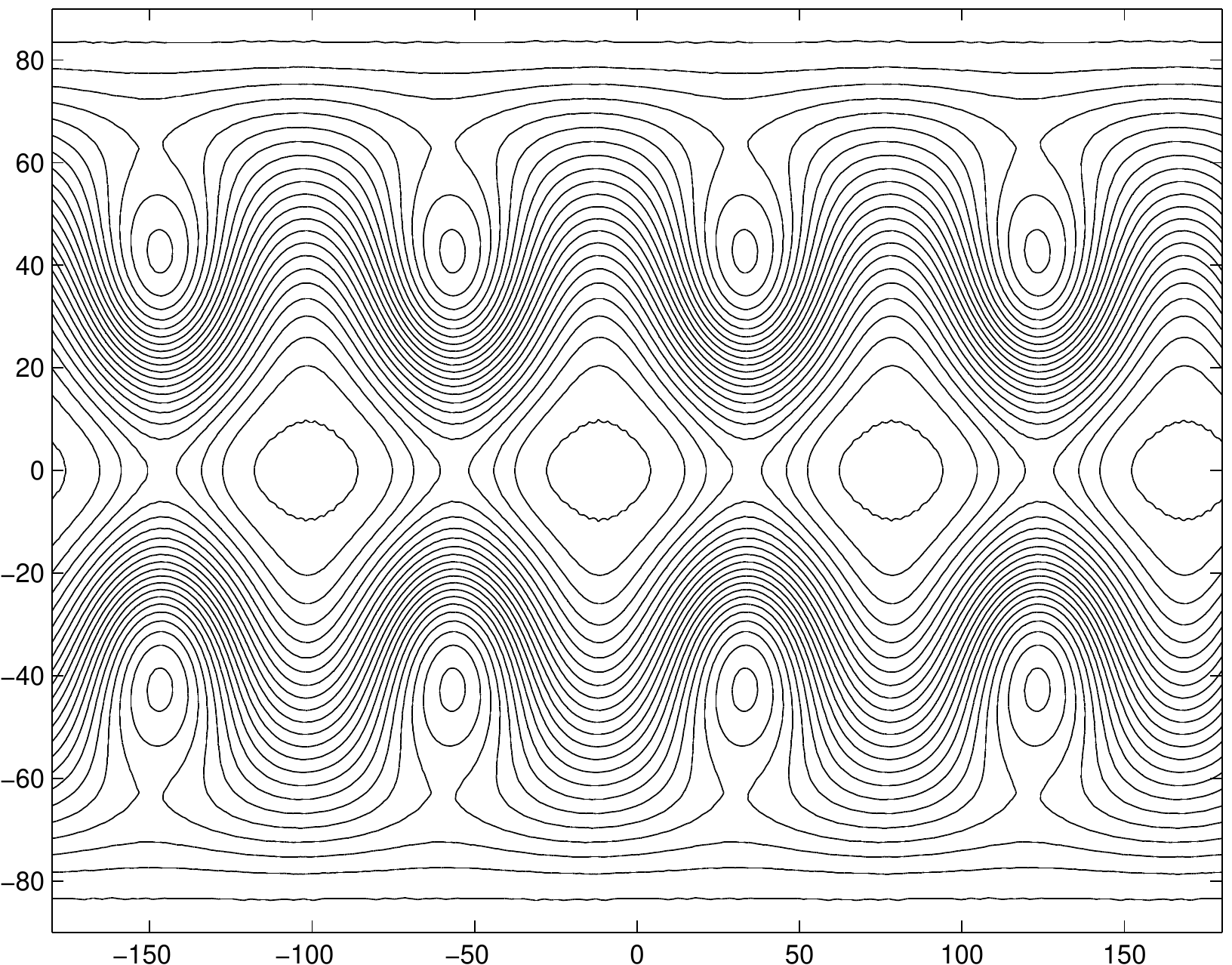}
  \end{minipage}
   \begin{minipage}{7.5cm}
  \includegraphics[width=7cm,height=4cm]{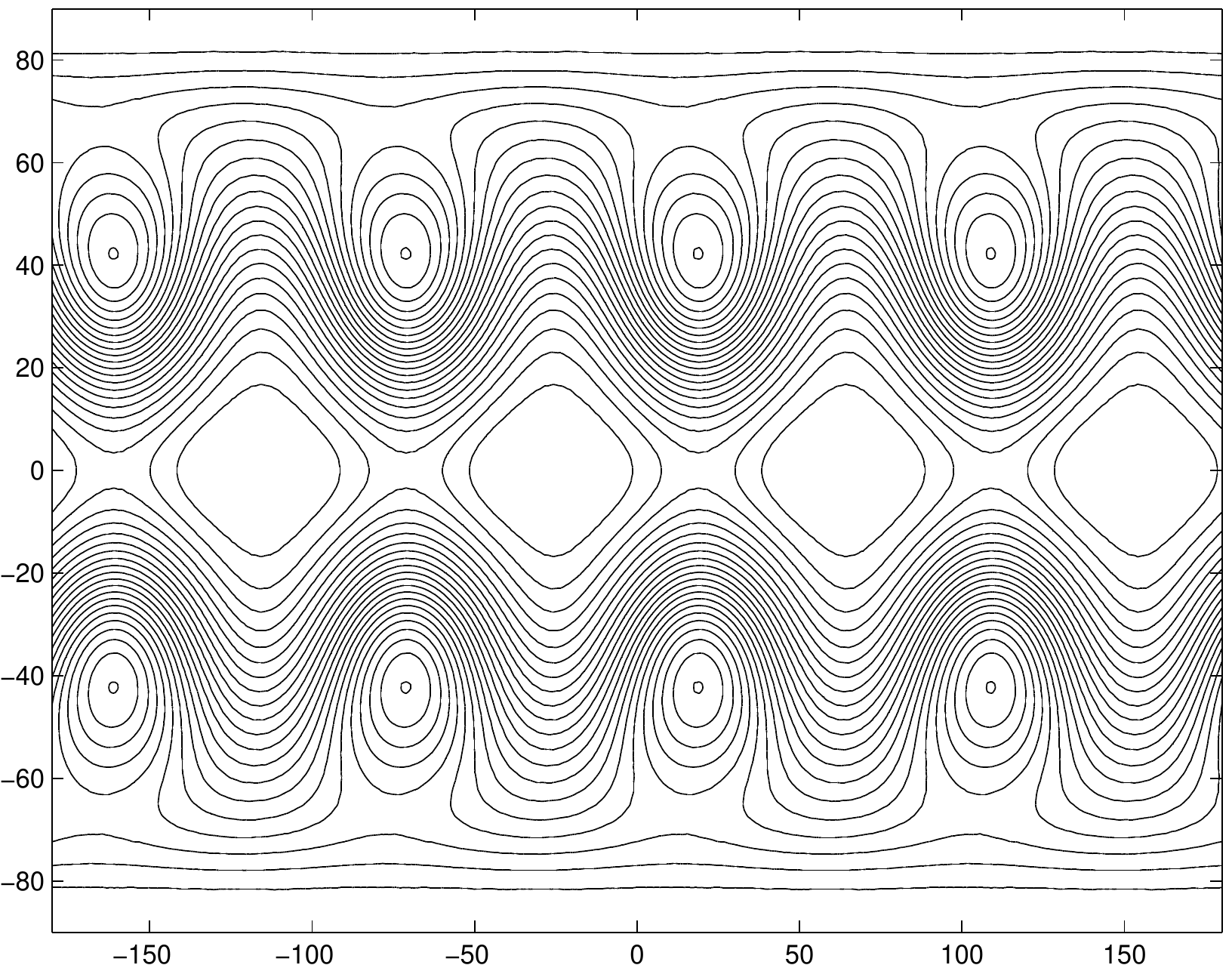}
  \end{minipage}
  \caption{\small Same as Fig.~\ref{fig:rossby2} except for the ${\mathbb{P}^{1}}$-based RKDG method with {Godunov's} flux.}
  \label{fig:rossby1}
\end{figure}

% % % % % % % % % % % % % % % % % % % % % % % % % % % % %
\begin{example}[Cross-polar flow]\label{polar}\rm
It is first proposed in \cite{McDonald:1989}.
Initially, there are a low and high patterns which are symmetrically located at the left and  right hand sides of the pole, respectively, when it is viewed from above. The low or high pattern rotates in clockwise direction around the pole \cite{Nair:2005-2}.

The initial height and velocity vector are taken as
\begin{align*}
h(\xi,\eta,0)=&h_{0}-2g^{-1}R\omega u_{0}\sin^{3}\eta\cos\eta\sin\xi,\\
u_{s}(\xi,\eta,0)=&-u_{0}\sin\xi\sin\eta\left(4\cos^{2}\eta-1\right),\\
v_{s}(\xi,\eta,0)=&u_{0}\sin^{2}\eta\cos\xi,
\end{align*}
where $h_{0}=5.768\times10^{4}g^{-1}\mbox{ m}$
and $u_{0}=20 \mbox{ m/s}$.
It means that the initial cross-polar flow is of strength $u_0$,
and both wind components were zero at the equator.

Fig.~\ref{fig:polarhuv} shows the solutions $(h,u_s,v_s)$ at $t=10$ days obtained  by using the ${ \mathbb{P}^{3}}$-based RKDLEG method  with $N=54$, where
the contour lines of $h$  are equally spaced from $5350$ m to $6330$ m with an interval of $80$ m, while
the contour lines of $u_{s}$ and $v_{s}$ are taken from $-21\mbox{ m/s}$ to $21\mbox{ m/s}$  and from  $-13\mbox{ m/s}$ to $13\mbox{ m/s}$ with a stepsize of $2\mbox{ m/s}$, respectively.
Our results are comparable to those given in \cite{Giraldo: 2002,Nair:2005-2}.
The conservation of total mass,   energy and potential enstrophy
may be demonstrated via
the relative {{conservation}} error plots given in Fig.~\ref{fig:polarcon}.
The errors in the total energy and potential enstrophy
are about $O(10^{-13})$ and $O(10^{-3})$ respectively.
\end{example}

\begin{figure}[htbp]
   \centering
   \begin{minipage}{5cm}
  \includegraphics[width=5cm,height=5cm]{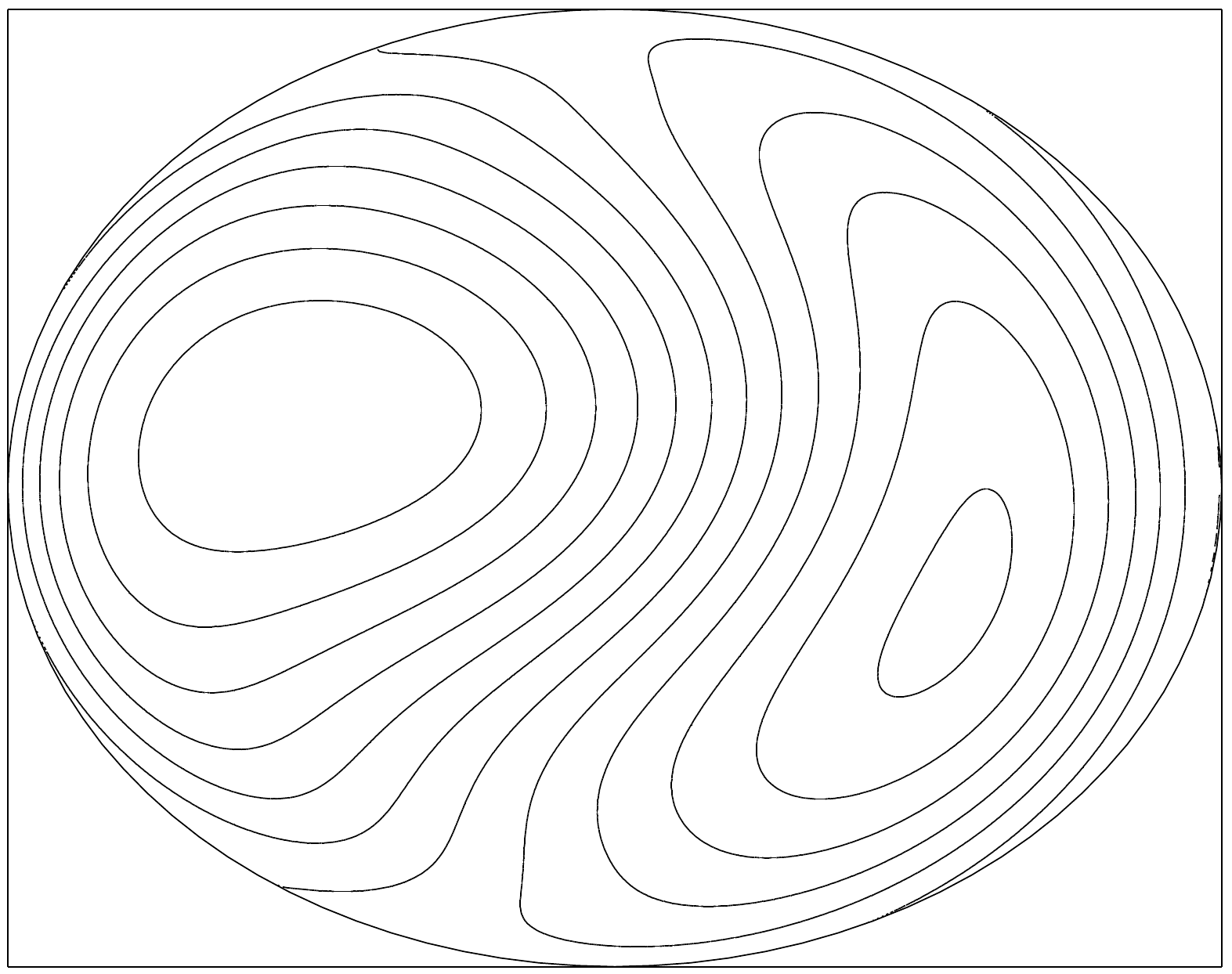}
  \end{minipage}
   \begin{minipage}{5cm}
  \includegraphics[width=5cm,height=5cm]{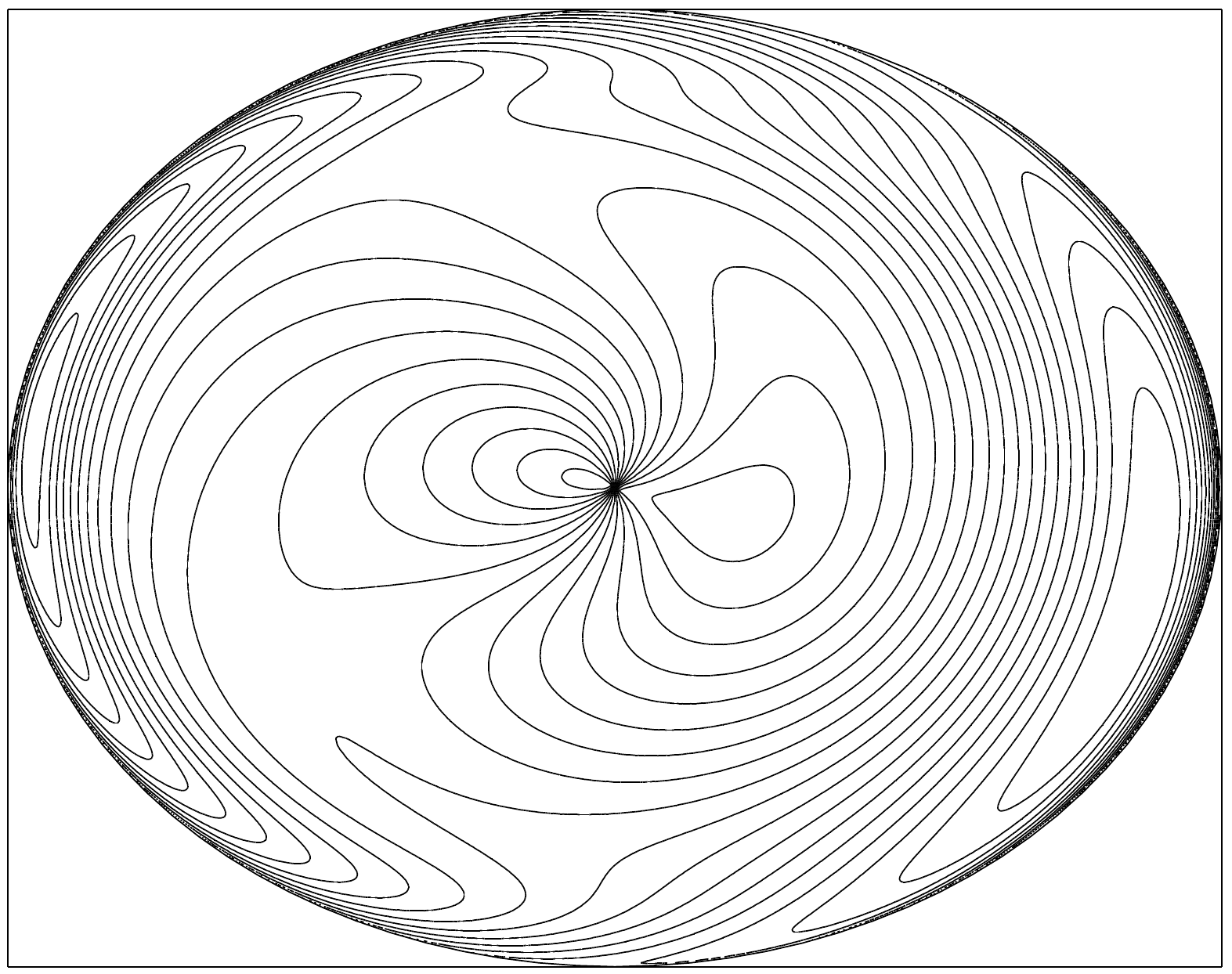}
  \end{minipage}
  \begin{minipage}{5cm}
  \includegraphics[width=5cm,height=5cm]{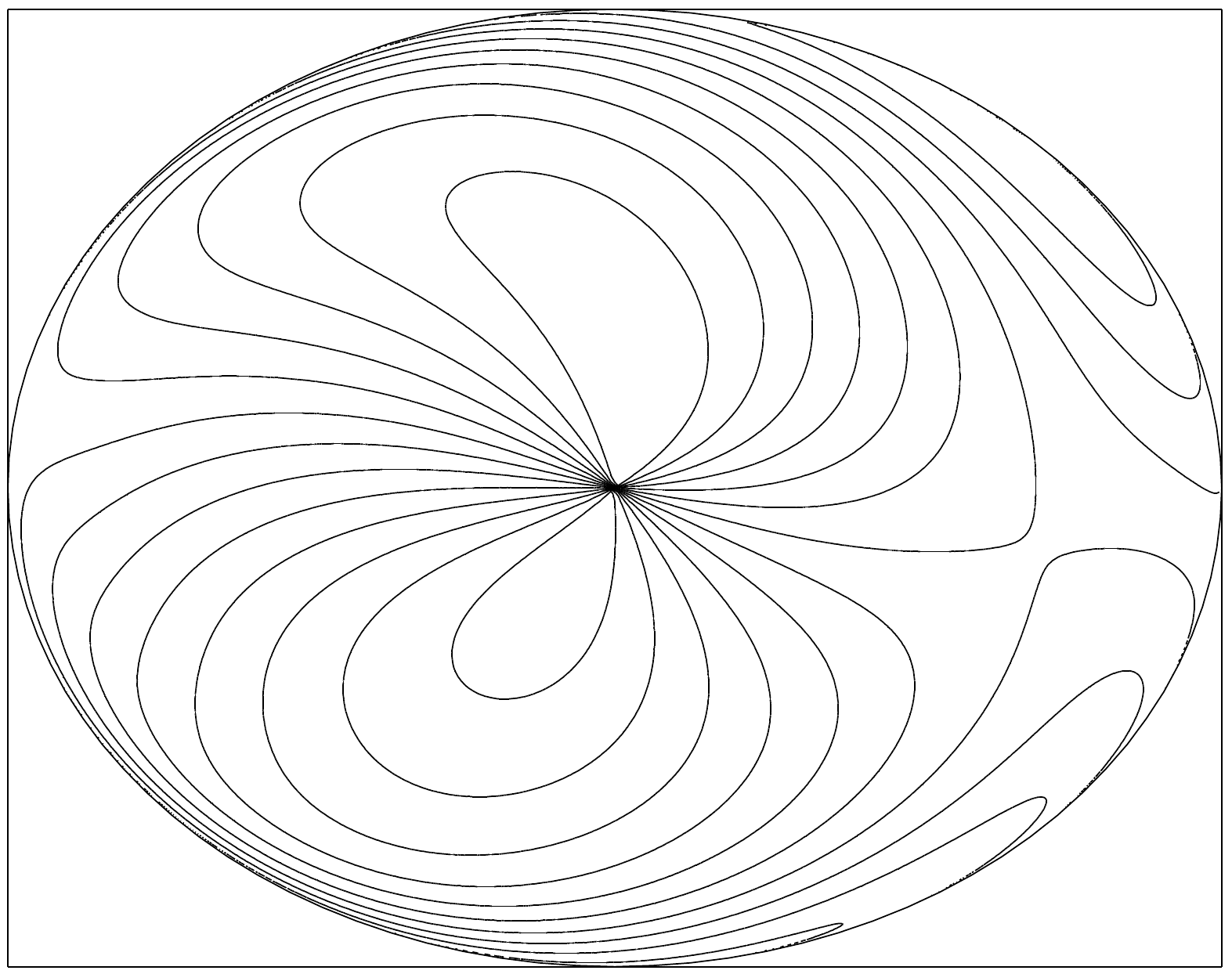}
  \end{minipage}
  \caption{\small Example \ref{polar}: The solutions $h$, $u_{s}$, and $v_{s}$ (from left to right) at $t=10$ days obtained by using ${\mathbb{P}^{3}}$-based RKDLEG method  with $N=54$, which are viewed from the North {{pole}}.}
  \label{fig:polarhuv}
\end{figure}

\begin{figure}[htbp]
  \centering
  \begin{minipage}{5cm}
  \includegraphics[width=1\textwidth]{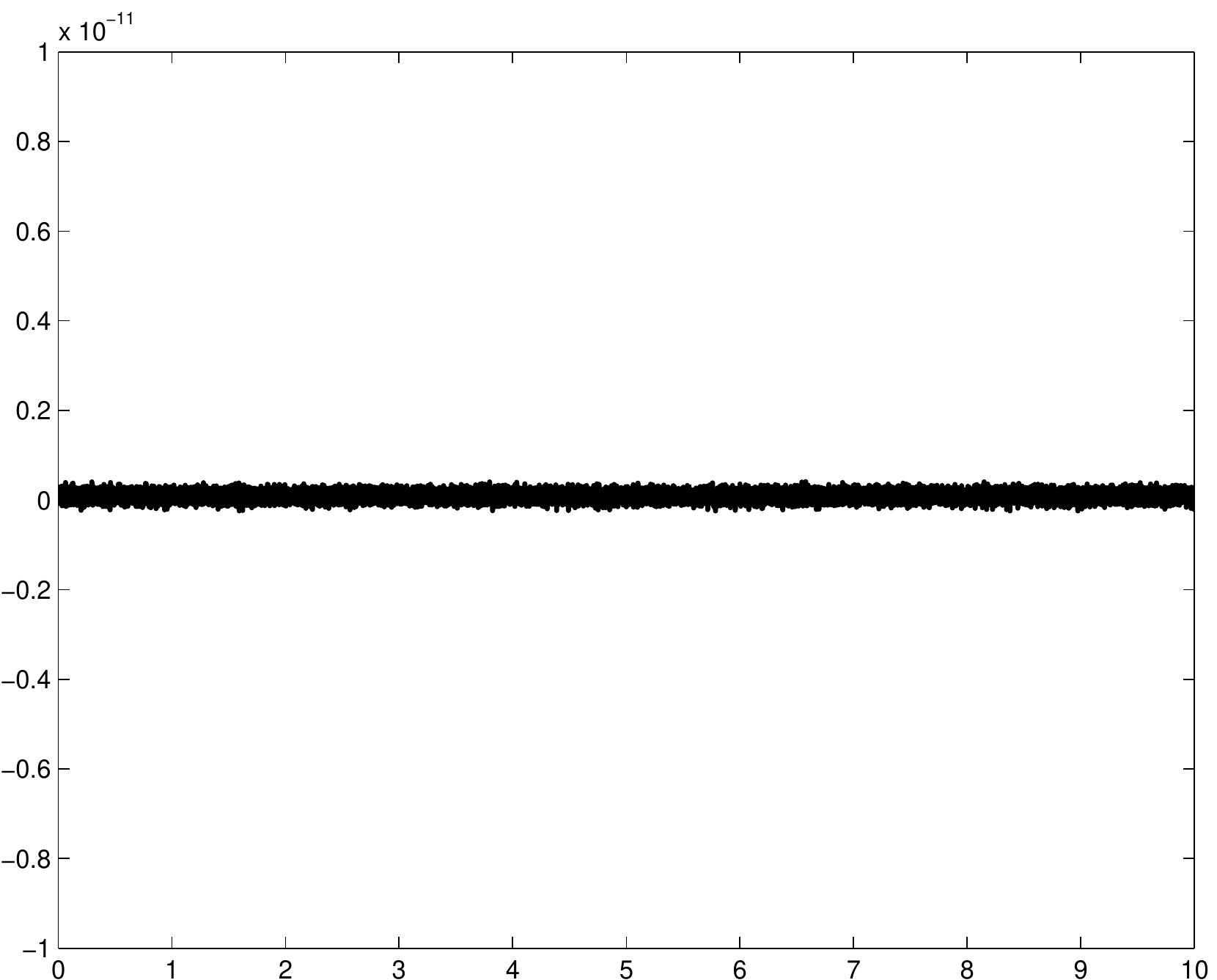}
  \end{minipage}
  \begin{minipage}{5cm}
  \includegraphics[width=1\textwidth]{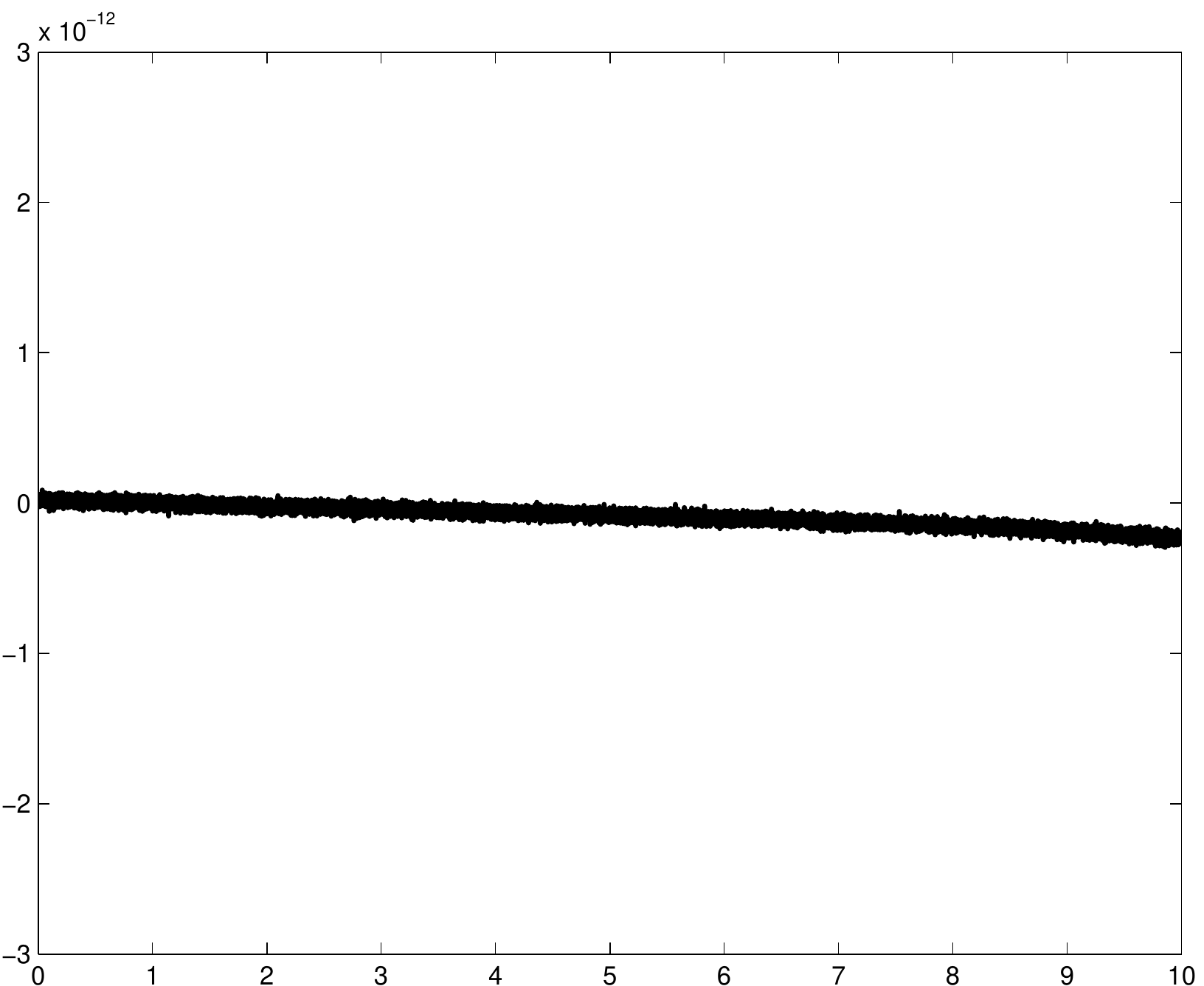}
  \end{minipage}
  \begin{minipage}{5cm}
  \includegraphics[width=1\textwidth]{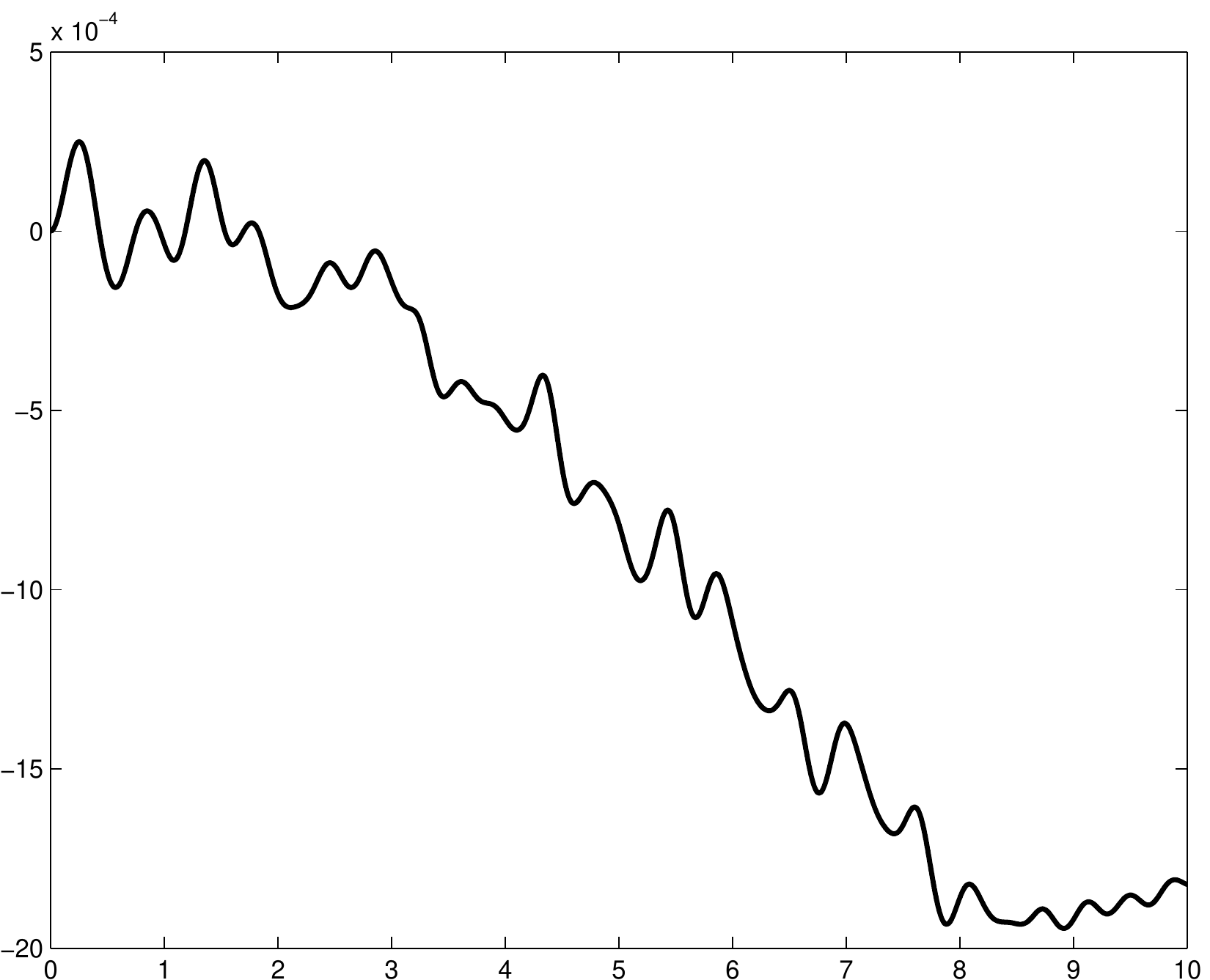}
  \end{minipage}
  \caption{\small Example \ref{polar}: {Same as Fig. \ref{fig:steadycon3} except for example \ref{polar} with $N=54$.}}
  \label{fig:polarcon}
\end{figure}

\begin{example}[Instable barotropic  jet flow]\label{baro}\rm
This instable barotropic jet flow  introduced in \cite{Galewsky: 2004}
 is similar to Williamson's test 2,  but more difficult and challenging  for the numerical methods due to the instable wave structure within a more narrow zonal region and  the dynamic balance in the solutions.
 %, thus  this test is very challenging for the numerical methods.
%due to numerical errors
Specially, it has a great challenge to the numerical methods on the
cubed-sphere grid in Fig. \ref{fig:sphere} (a)-{(c)},
see \cite{StCyr: 2008}, because
 the jet flow is driven by a relatively mild perturbation and then passes over the cubed-sphere edges several times in a long time range.

Initially, the zonal velocity fields are chosen as follows
\begin{align*}
u_{s}(\xi,\eta,0)=&
\begin{cases}
u_{\max}e_{n}^{-1}\exp\left[\frac{1}{\left(\eta-\eta_{0}\right)\left(\eta-\eta_{1}\right)}\right],&\mbox{if}\  \eta_{0}<\eta<\eta_{1},\\
0,&\mbox{otherwise},
\end{cases}\\
v_{s}(\xi,\eta,0)=&0,
\end{align*}
while the  balanced height $h$ is calculated by the following balance equation
\[
h\left(\xi,\eta,0\right)=h_{0}-g^{-1}\int_{-\frac{\pi}{2}}^{\eta}Ru_{s}\left(\eta'\right)\left[f+\frac{\tan\eta'}{R}u_{s}
\left(\eta'\right)\right]d\eta',
\]
where $h_{0}=1000$ m, $u_{\max}=80\mbox{ m/s}$, $\eta_{0}=\frac{\pi}{7}$, $\eta_{1}=\frac{\pi}{2}-\eta_{0}$, and
$e_{n}=\exp\left(-\frac{4}{\left(\eta_{1}-\eta_{0}\right)^2}\right)$.
In order to initiate the instability,   an initial perturbation
\[
h'(\xi,\eta)=\hat{h}\cos(\eta)\exp\left[-\left(\frac{\xi}{\alpha}\right)^{2}-\left(\frac{\eta_{2}-\eta}{\beta}\right)^{2}\right],
\]
is added to the above  balanced height $h\left(\xi,\eta,0\right)$,
where $\hat{h}=120$ m, $\alpha=\frac{1}{3}$, $\beta=\frac{1}{15}$, and $\eta_{2}=\frac{\pi}{4}$.
It implies that the initial height $h$ has a   large  gradient near the cubed-sphere edges in Fig. \ref{fig:sphere} (a)-{(c)}, and the initial perturbation is located on the edge shared
by the subregions $P_1$ and $P_5$ in Fig. \ref{fig:sphere} (a). %\cite{Chen:2008,Chen:2014}.

Fig.~\ref{fig:barocompare} compares
the  relative vorticities $\varsigma$ at $t = 6$ days obtained by using
${ \mathbb{P}^{3}}$-based RKDLEG method, ${ \mathbb{P}^{3}}$-based RKDG method with {Godunov's} flux, and FVLEG method with {fifth-order accurate weighted essentially non-oscillatory (WENO5) reconstruction} with $N=32$.
 It is obvious that the RKDG method with {Godunov's} flux is influenced by 4-wave errors within the longitude interval $[-260,-120]$, %\cite{Chen:2008}
and the FVLEG method with WENO5 reconstruction can not give the correct result in comparison to the reference solution in \cite{Galewsky: 2004} due to the fast growing error  as shown in Fig. \ref{fig:errorcompare}. %{\color{red}{example}} \ref{time}.
 Fig.~\ref{fig:barocon} gives the relative {{conservation}} errors
 at $t = 6$ days in the total mass, total energy and potential enstrophy
 obtained by using the ${ \mathbb{P}^{3}}$-based RKDLEG method with $N=32$.
 It is seen that the total mass is conservative, and
  other errors  are about  $O(10^{-6} )$ and $O(10^{-3} )$, respectively.
%
%  The long time evolution of the unstable barotropic jet flow is useful to demonstrate the conservation properties of an shallow water model.
Fig.~\ref{fig:baro}  further {investigates} the convergence of
 the ${\mathbb{P}^{3}}$-based RKDLEG method, where
 the relative vorticities are obtained on the finer grids of  $N=64$, $96$, and $128$, respectively.
%The result with $N=32$ has already not influenced by the 4-wave errors in relative vorticity field .
Those results look very similar to the reference solution in \cite{Galewsky: 2004} except {{for}}  very little oscillation.

\end{example}

\begin{figure}[htbp]
   \centering
  \includegraphics[width=1 \textwidth]{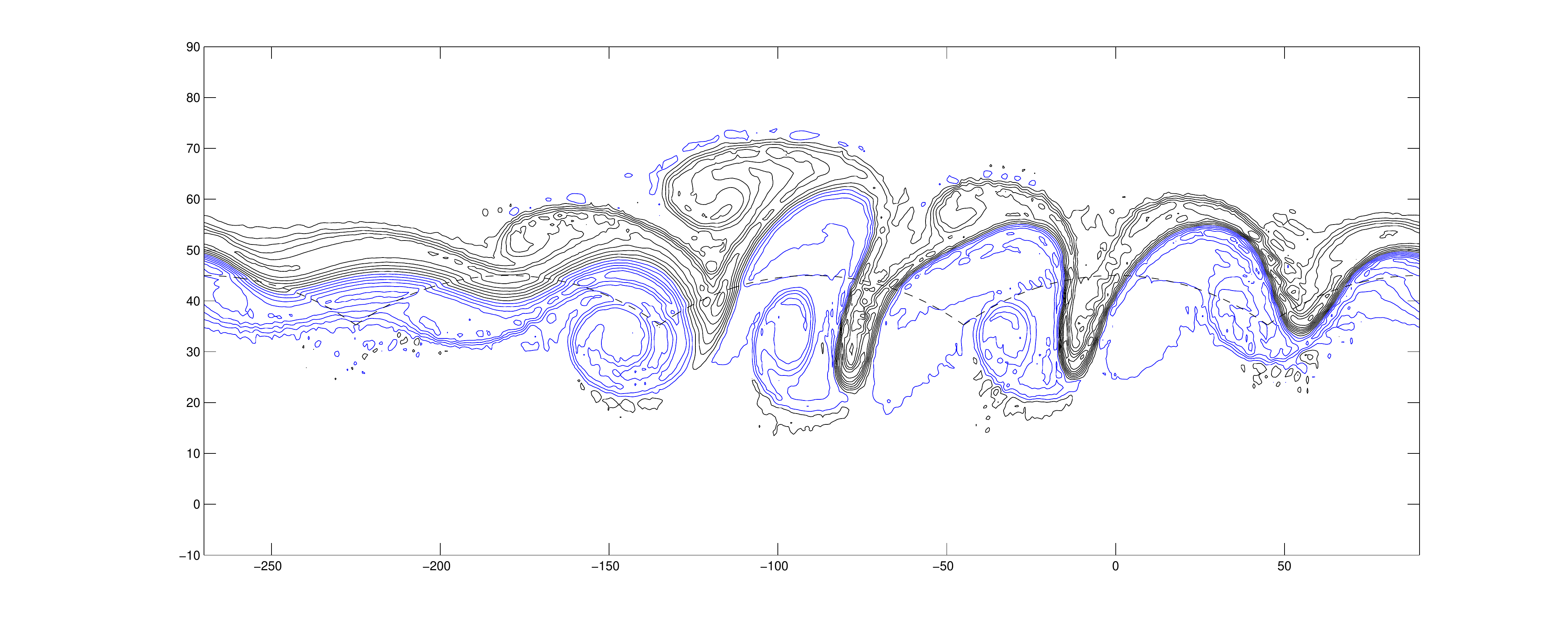}

  \centering
  \includegraphics[width=1 \textwidth]{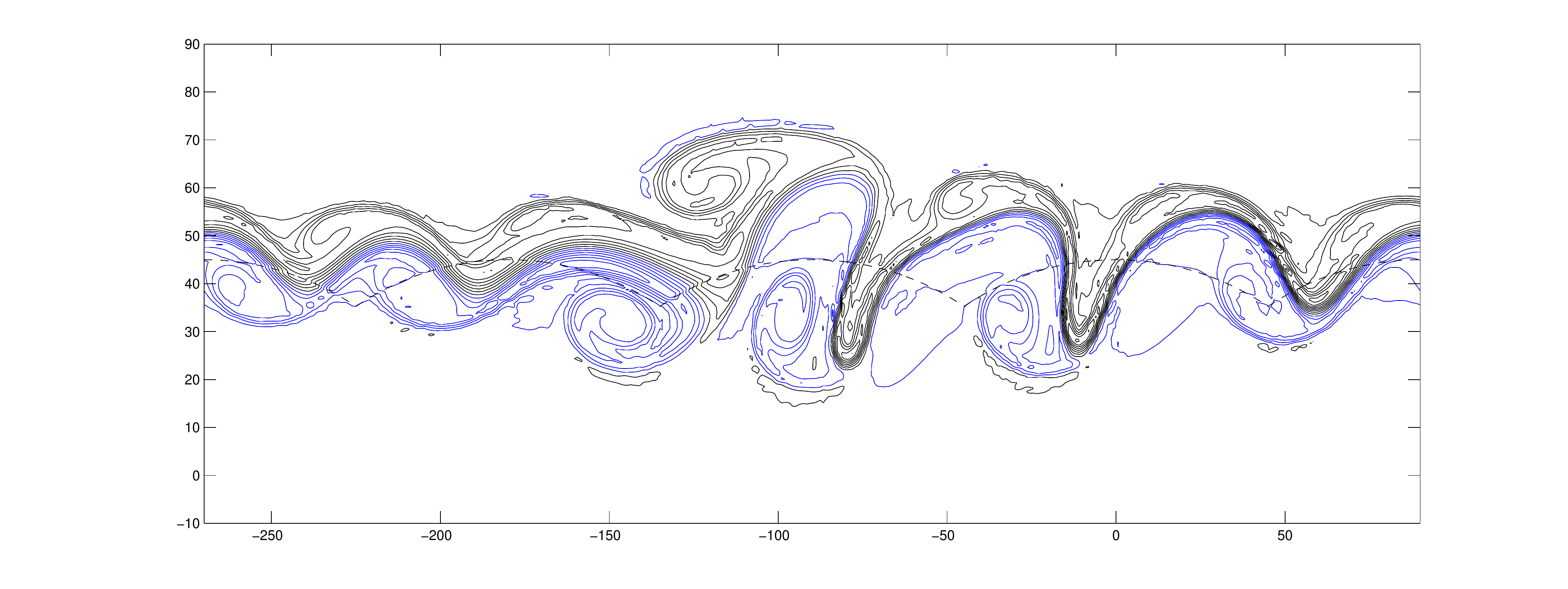}

  \centering
  \includegraphics[width=1 \textwidth]{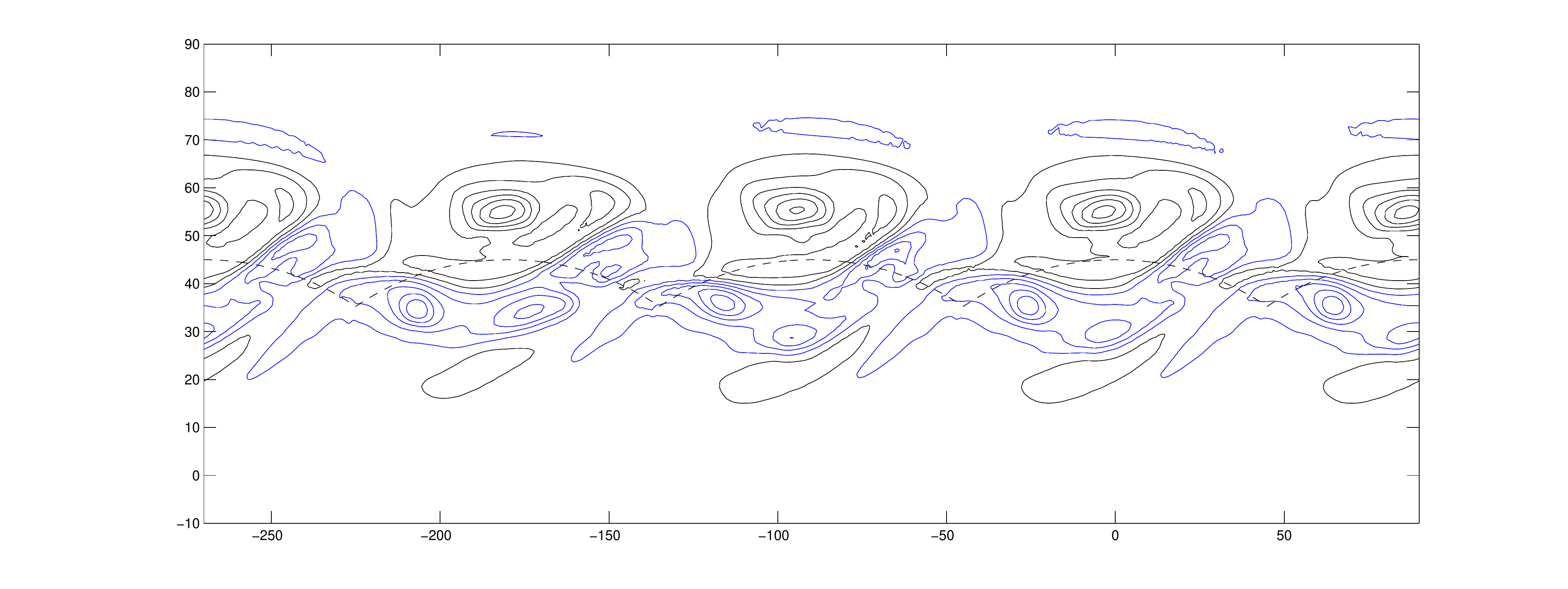}
  \caption{\small Example \ref{baro}: Relative vorticities at $t = 6$ days obtained by using by ${ \mathbb{P}^{3}}$-based RKLEG method, ${\mathbb{P}^{3}}$-based RKDG method with {Godunov's} flux and FVLEG method with WENO5 reconstruction (from top to bottom) with $N = 32$. Contour lines are equally spaced from $-1.1\times 10^{-4} \mbox{ s}^{-1}$ to $-1\times 10^{-5}\mbox{ s}^{-1}$ in dashed lines and from $1\times 10^{-5}\mbox{ s}^{-1}$ to $1.5\times 10^{-4}\mbox{ s}^{-1}$ in solid lines with an interval of $2\times 10^{-5}\mbox{ s}^{-1}$.}\label{fig:barocompare}
\end{figure}

\begin{figure}[htbp]
  \centering
  \begin{minipage}{5cm}
  \includegraphics[width=1\textwidth]{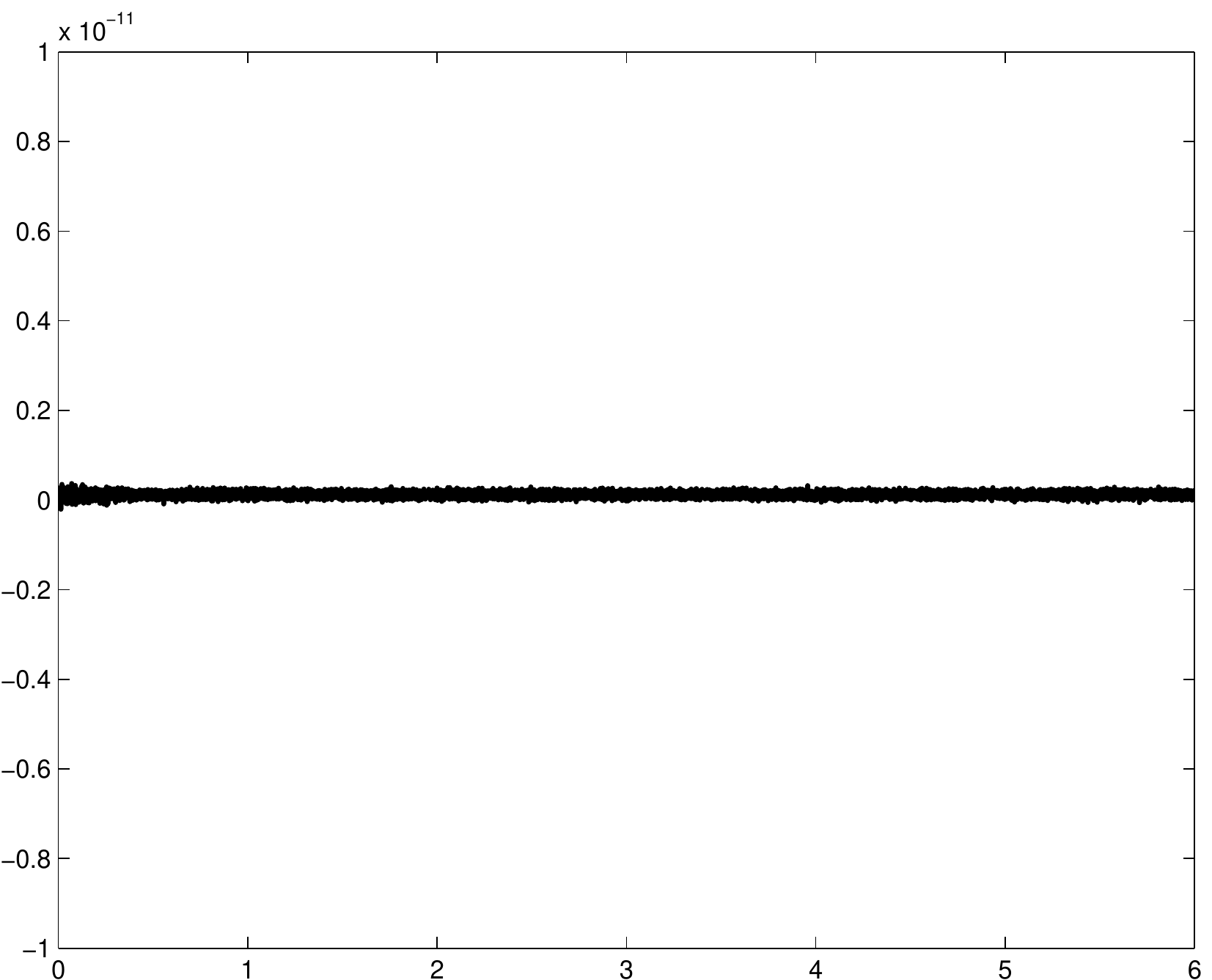}
  \end{minipage}
  \begin{minipage}{5cm}
  \includegraphics[width=1\textwidth]{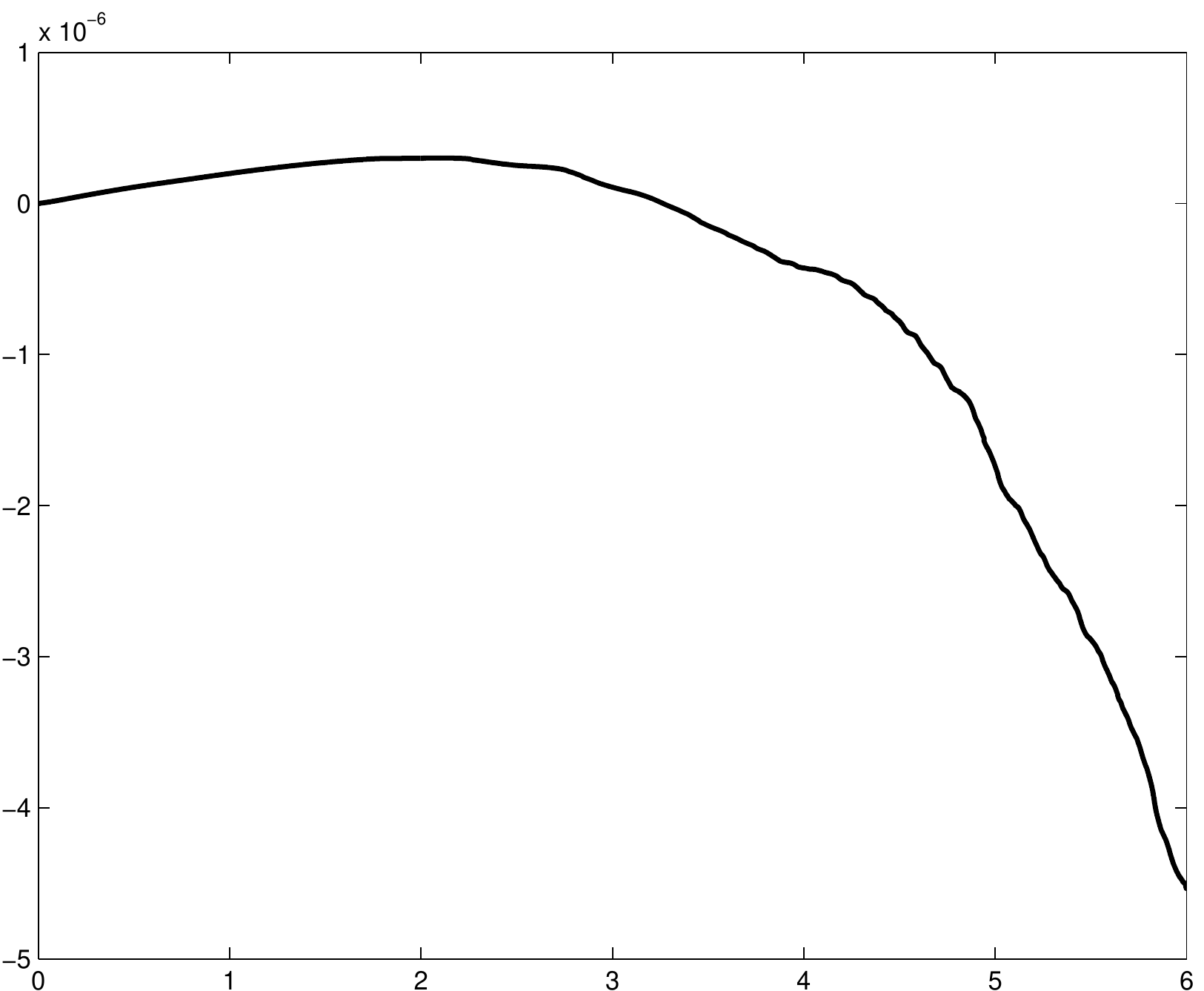}
  \end{minipage}
  \begin{minipage}{5cm}
  \includegraphics[width=1\textwidth]{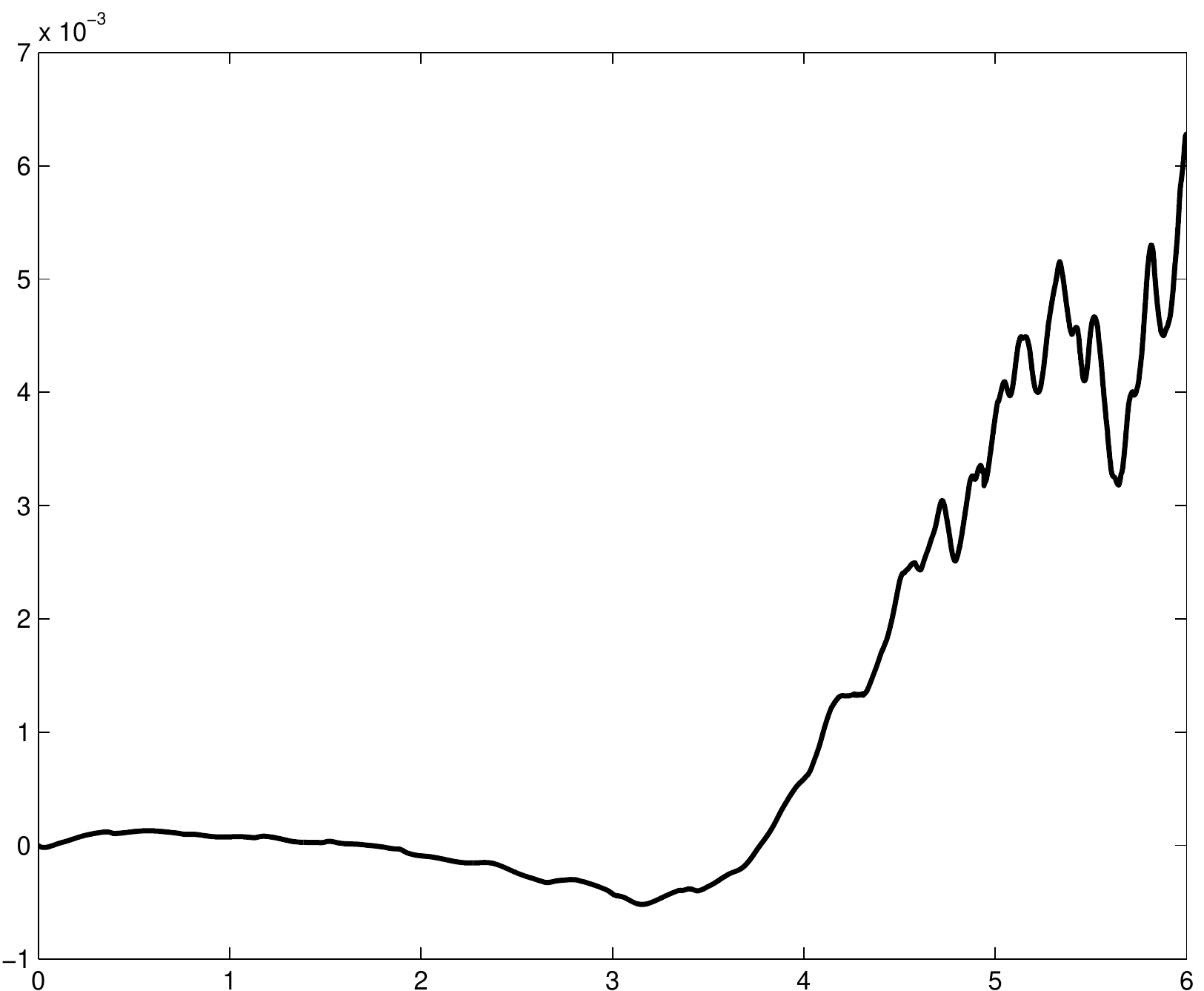}
  \end{minipage}
  \caption{\small Example \ref{baro}: {Same as Fig. \ref{fig:steadycon3} except for example \ref{baro} with $N=32$.}}
  \label{fig:barocon}
\end{figure}

\begin{figure}[htbp]
  \centering
  \includegraphics[width=1 \textwidth]{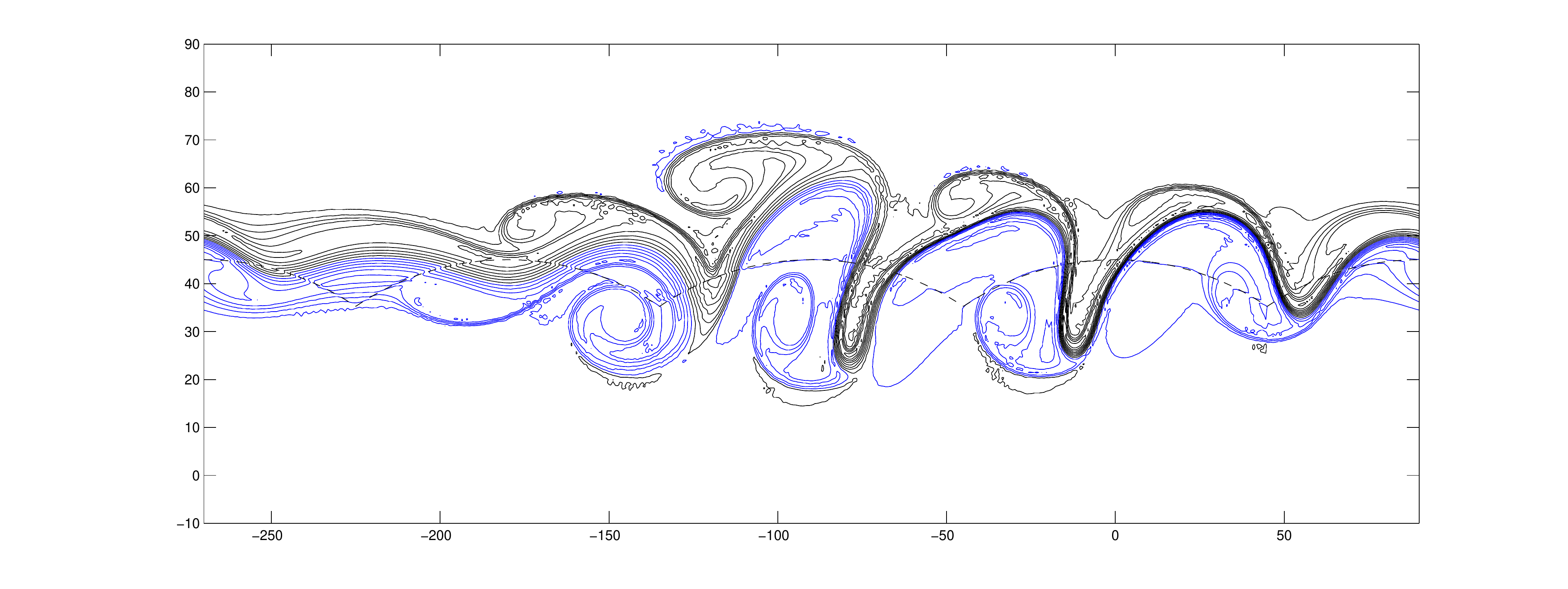}

   \centering
  \includegraphics[width=1 \textwidth]{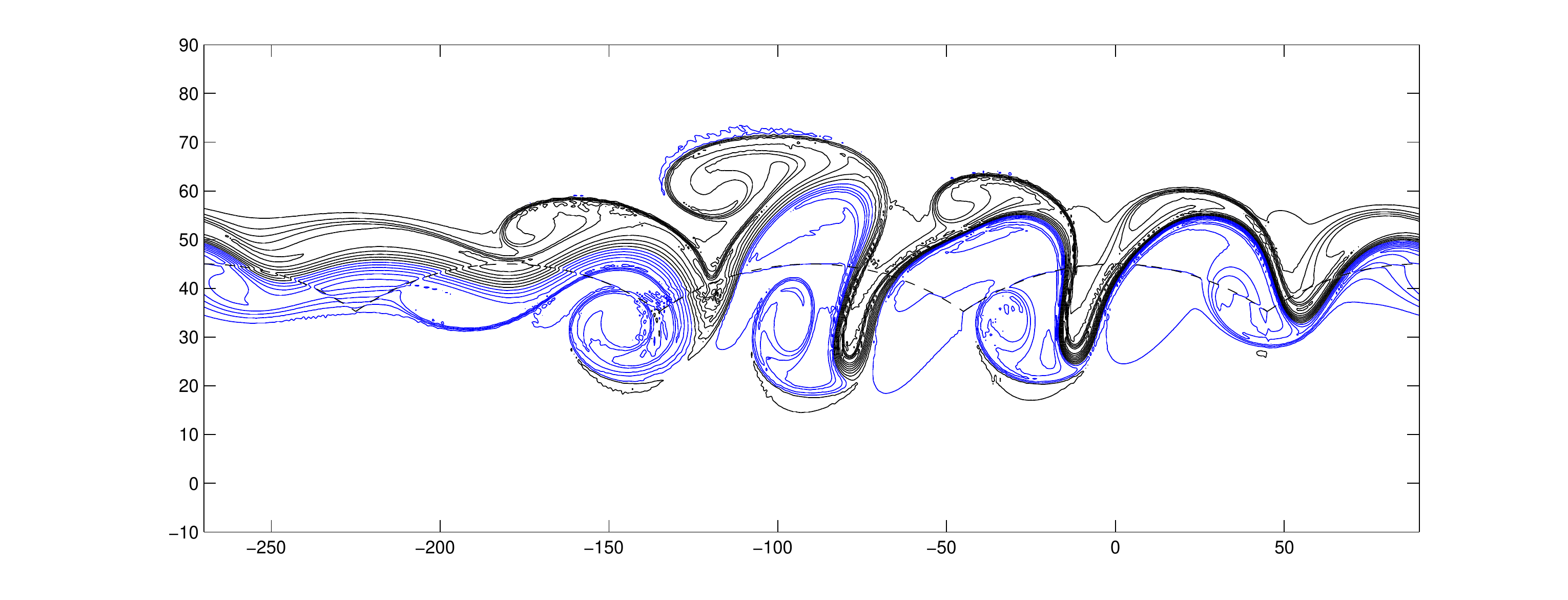}

  \centering
  \includegraphics[width=1 \textwidth]{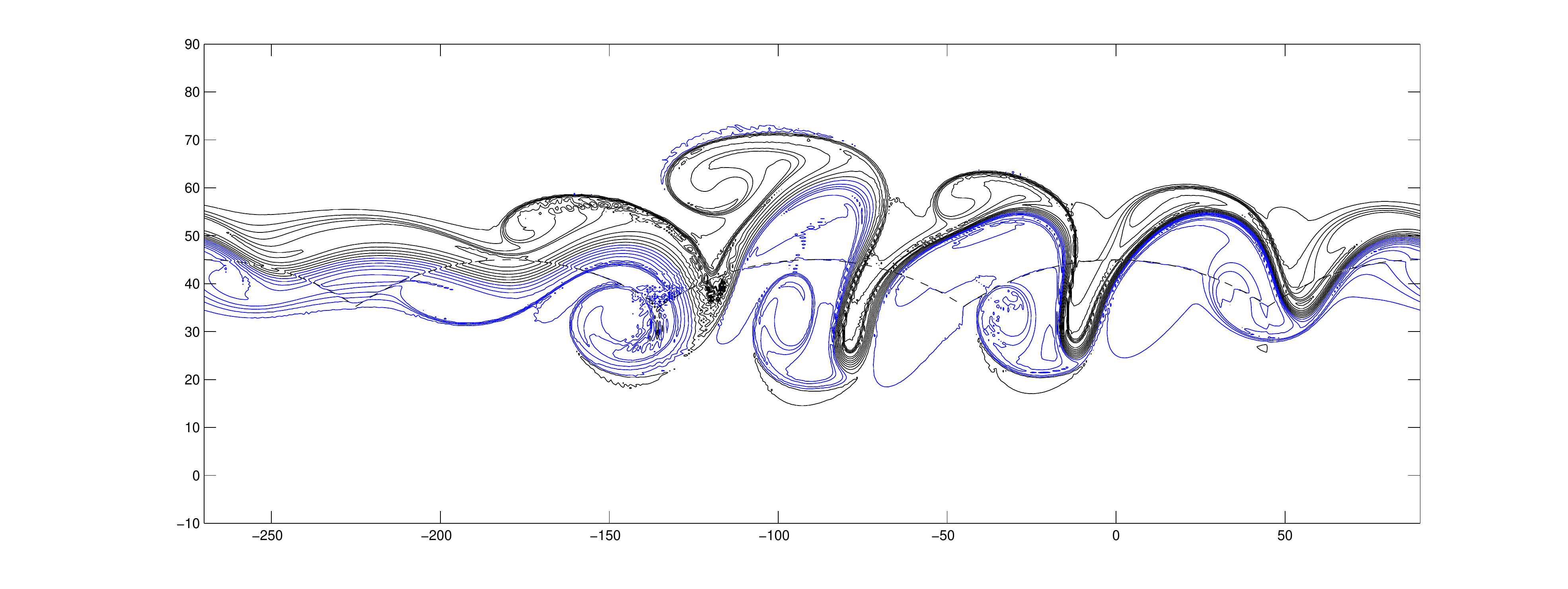}
  \caption{\small Example \ref{baro}: Relative vorticity obtained by using ${\mathbb{P}^{3}}$-based RKLEG method with $N = 64$, $96$, and $128$ (from top to bottom) at $t=6$ days.
  %Contour lines are equally spaced from $-1.1\times 10^{-4} \mbox{ s}^{-1}$ to $-0.1\times 10^{-4}\mbox{ s}^{-1}$ by dashed lines and from $0.1\times 10^{-4}\mbox{ s}^{-1}$ to $1.5\times 10^{-4}\mbox{ s}^{-1}$ by solid lines with an interval of $0.2\times 10^{-4}\mbox{ s}^{-1}$.
  }\label{fig:baro}
\end{figure}

\section{Conclusions}\label{sec:conclud}
The paper developed arbitrary high order accurate Runge-Kutta discontinuous local evolution Galerkin (RKDLEG)  methods on the cubed-sphere grid for the shallow water equations {{SWEs}}.
% with  the aid of the reference coordinates.
The exact and approximate  evolution operators of the locally linearized {SWEs} in the reference coordinates
were first derived based on the theory of bicharacteristics,
then the approximate  local evolution operator {was} combined
with the Runge-Kutta discontinuous  Galerkin (RKDG) methods for the SWEs in spherical geometry.
In other words,
 the proposed  RKDLEG methods were built on genuinely multi-dimensional approximate local evolution operator of the locally linearized {SWEs} in the spherical geometry by considering all bicharacteristic directions,
 instead of  the dimensional splitting method or  one-dimensional Riemann solver in the direction normal to the cell interface.
A special treatment on the edges of the cubed sphere face
was given, where
the approximate local evolution operator of the SWE{{s}} in the
 LAT/LON coordinates  was replaced with that of the SWE{{s}} in the
 reference coordinates  in order to
 ensure the conservation of numerical flux there.
%
% The approximate local evolution operator of the SWE{\color{red}{s}} in the
% reference coordinates at the point in the cubed sphere face,
%
 %
  Several benchmark problems were numerically solved to check the accuracy and performance of our RKDLEG methods, in comparison to the RKDG method with Godunov's flux etc.
  The results showed that
  in comparison to  the {RKDG} method with {Godunov's} flux  and {FVLEG} method ,
  the proposed RKDLEG methods were competitive to
  solve those standard tests
  of Williamson et al. in terms of accuracy, good multi-dimensional behavior, and long time simulation.

%first we evolving the solutions of corresponding locally linearized {\color{red}{SWEs}} on the cubed-sphere grid along all of the infinitely bicharacteristic directions to get the exact operator. Then using suitable approximating for the ``source'' terms, the approximate evolution operator was given, and to simplify the evaluation, approximate local evolution operator was given through taking limit of the approximate evolution operator at the time level $t_{n}$ as the time $t_{n}+\tau$ approached $t_{n}$. Due to the reference coordinates, we had special treatment with the proof of the conservation of the numerical fluxes on the patch boundaries.

\section*{Acknowledgements}
This work was partially supported by the National Natural Science Foundation of China (Nos.
91330205 \& 11421101).

% Special project ``High performance computing''  of the national key research and development program (No. 2016YFB0200603)

\begin{appendices}
\section{}
\label{sec:AppendixA}
This appendix presents the detailed procedure to evaluate $\{\theta_{i},i=1,2,\cdots,\hat{N}\} $ in the
definition of  approximate  evolution operator $\mathcal{E}_{h}(\tau) $ or $\mathcal{E}_{h,0} $
   in Section %\ref{subsec:Approximate}
\ref{subsubsec:nonlocal} or \ref{subsubsec:local}.
%for the inner points of each patch for $0<\tau\leq\Delta t_{n} $ which is presented
% in the $(x, y)$ space

\subsection{The inner points on the cell edge within the subregion} %  $ \partial C_{j+\frac{1}{2},k+\frac{1}{2}}$ }
\label{case1}
%Consider the evolution operator $\mathcal{E}_{h}(\tau) $ at the inner points
%on the cell edge  $ \partial C_{j+\frac{1}{2},k+\frac{1}{2}}$.
 Without loss of generality,    consider  the point $(x_{m}^{G},y_{k},t_{n}) $, denoted by ${\tt P_{0}}$, on the bottom edge
 of cell $ C_{j+\frac{1}{2},k+\frac{1}{2}} $.
 %while the case{\color{red}{s}} of other edges {\color{red}{are}} completely similar.
 Only the edge $ \mathcal{L}_{{\tt P}_{0}} =\left\{\left(x,y_{k}\right)|x_{j-1}\leq x\leq x_{j}\right\} $   intersects with the closed curve
  $\mathcal{C}_{{\tt P}}^{n}=\left\{\left(x_{j}-d_{1}^{(\ell)}(\theta)\tau,y_{k}-d_{2}^{(\ell)}(\theta)\tau, t_{n}\right) ={\tt Q}(\theta)|\ell=1,3,\theta\in\left[0,2\pi\right)\right\}
$ possibly,  under the restriction on $\Delta t_{n}$ in \eqref{delta t}. Obviously,   the number of  interaction points between the edge $ \mathcal{L}_{{\tt P_{0}}} $
with the closed curve $ \mathcal{C}_{{\tt P}}^{n}  $ is equal to the number of real solutions to the algebraic equation $d_{2}^{(1)}(\theta)=0 $.
\begin{lemma}
\label{lemma:d2theta}
If the inequality
\begin{equation}
\label{condition1}
|\tilde{v}|<\tilde{c}\sqrt{\tilde{g}^{22}\tilde{h}},
\end{equation}
holds, then $d_{2}^{(1)}(\theta)=0 $ has two real solutions, which are located in the interval $(-\frac{\pi}{2},\frac{\pi}{2})$ and $(\frac{\pi}{2},\frac{3\pi}{2})$, respectively; otherwise, it has less than two real solutions.
\end{lemma}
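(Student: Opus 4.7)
The plan is to reduce the equation $d_2^{(1)}(\theta)=0$ to a scalar condition on $\tilde{G}_s$ and then carry out a range-and-multiplicity analysis. By the preceding explicit formula $d_2^{(1)}(\theta)=\tilde{v}-\tilde{c}\tilde{G}_s(\theta)$, the equation is equivalent to
\[
\tilde{v}\,\tilde{K}_\theta \;=\; \tilde{c}\bigl(\tilde{g}^{12}\cos\theta+\tilde{g}^{22}\sin\theta\bigr),
\]
so counting roots in $[0,2\pi)$ becomes a question about the level sets of the smooth $2\pi$-periodic function $\tilde{G}_s$.

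First I would determine the range of $\tilde{G}_s$. Writing $\vec{n}=(\cos\theta,\sin\theta)^{T}$ and $\tilde{G}_s(\theta)=(\vec{e}_2^{\,T}\tilde{G}^{-1}\vec{n})/\sqrt{\vec{n}^{\,T}\tilde{G}^{-1}\vec{n}}$, the substitution $\vec{n}=\tilde{G}^{1/2}\vec{m}$ reduces the optimization to a Cauchy--Schwarz computation, giving $\max_\theta\tilde{G}_s=\sqrt{\vec{e}_2^{\,T}\tilde{G}^{-1}\vec{e}_2}=\sqrt{\tilde{g}^{22}}$, attained at $\vec{n}=\vec{e}_2$, i.e.\ at $\theta=\pi/2$. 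The antisymmetry $\tilde{G}_s(\theta+\pi)=-\tilde{G}_s(\theta)$ then places the minimum $-\sqrt{\tilde{g}^{22}}$ at $\theta=3\pi/2$ and shows these are the only global extrema. Together this identifies the sharp threshold for solvability, matching (up to the precise constant appearing in \eqref{condition1}) the stated condition.

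Next, to count the solutions I would square the reduced equation and set $t=\tan\theta$, obtaining a quadratic
\[
\tilde{g}^{22}\bigl(\tilde{v}^{2}-\tilde{c}^{2}\tilde{g}^{22}\bigr)t^{2}+2\tilde{g}^{12}\bigl(\tilde{v}^{2}-\tilde{c}^{2}\tilde{g}^{22}\bigr)t+\bigl(\tilde{v}^{2}\tilde{g}^{11}-\tilde{c}^{2}(\tilde{g}^{12})^{2}\bigr)=0.
\]
Using $\det\tilde{G}^{-1}=\tilde{g}^{11}\tilde{g}^{22}-(\tilde{g}^{12})^{2}>0$ the discriminant simplifies to $4\tilde{v}^{2}(\tilde{c}^{2}\tilde{g}^{22}-\tilde{v}^{2})\det\tilde{G}^{-1}$, which is strictly positive precisely in the regime of the stated condition (and $\tilde{v}\neq 0$). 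This yields two real roots $t_1,t_2$, hence four candidate angles $\theta_i,\theta_i+\pi$ with $\theta_i\in(-\pi/2,\pi/2)$; the sign constraint from the unsquared equation, together with $\tilde{G}_s(\theta+\pi)=-\tilde{G}_s(\theta)$, selects exactly one angle from each pair, giving two genuine solutions. When the stated condition fails, the discriminant is non-positive, so at most one real root survives.

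It remains to argue that one solution lies in each of $(-\pi/2,\pi/2)$ and $(\pi/2,3\pi/2)$. For this I would establish strict monotonicity of $\tilde{G}_s$ on each open half-period: since $\pm\pi/2$ are the only critical points, the function sweeps $(-\sqrt{\tilde{g}^{22}},\sqrt{\tilde{g}^{22}})$ monotonically on each of the two half-periods, so $\tilde{G}_s(\theta)=\tilde{v}/\tilde{c}$ has at most one root on each. Combined with the count of two genuine solutions, this forces one in each interval. The main obstacle is showing that $\pm\pi/2$ are the \emph{only} critical points of $\tilde{G}_s$: while they appear as extrema from the Lagrange/Cauchy--Schwarz argument, ruling out further critical points requires showing that the numerator $2N'D-ND'$ of $\tilde{G}_s'$, with $N=\tilde{g}^{12}\cos\theta+\tilde{g}^{22}\sin\theta$ and $D=\tilde{K}_\theta^{2}$, factors as a multiple of $\cos\theta$ with a non-vanishing remaining factor, which reduces once more to the positivity $\det\tilde{G}^{-1}>0$ used above.
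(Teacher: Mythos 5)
Your proposal is correct, but it is a longer road to the same destination. The paper's proof is a three-line monotonicity argument: it quotes the explicit derivative $\frac{d}{d\theta}d_{2}^{(1)}(\theta)=-\frac{\tilde{c}}{\tilde{\Lambda}\tilde{K}_{\theta}^{3}}\cos\theta$ (already recorded in the proof of Lemma \ref{lemma:phipsi}), concludes that $d_{2}^{(1)}$ decreases on $[-\frac{\pi}{2},\frac{\pi}{2}]$ and increases on $[\frac{\pi}{2},\frac{3\pi}{2}]$, evaluates the extrema at $\theta=\mp\frac{\pi}{2}$, and applies the intermediate value theorem. The step you single out as the ``main obstacle'' --- showing $\pm\frac{\pi}{2}$ are the \emph{only} critical points of $\tilde{G}_{s}$ --- is therefore exactly the fact the paper already has in closed form; your proposed factorization does go through (one computes $N'D-\tfrac{1}{2}ND'=\det(\tilde{\vec{G}}^{-1})\cos\theta$, so $\tilde{G}_{s}'(\theta)=\det(\tilde{\vec{G}}^{-1})\cos\theta/\tilde{K}_{\theta}^{3}$, positive times $\cos\theta$ as needed), so there is no gap, only unfinished arithmetic. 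Your Cauchy--Schwarz computation of the range and the $t=\tan\theta$ discriminant analysis are both correct (the discriminant is indeed $4\tilde{v}^{2}(\tilde{c}^{2}\tilde{g}^{22}-\tilde{v}^{2})\det\tilde{\vec{G}}^{-1}$) but become redundant once monotonicity plus the endpoint values are known; note also that at $\tilde{v}=0$ the discriminant vanishes even though two genuine roots exist (the double root of the squared equation splits into $\theta$ and $\theta+\pi$), so it is the monotonicity argument, not the discriminant, that actually closes the root count in all cases. Finally, you are right to flag the constant: both routes give the threshold $|\tilde{v}|<\tilde{c}\sqrt{\tilde{g}^{22}}$, and the extra factor $\tilde{h}$ appearing under the square root in \eqref{condition1} (and in the paper's displayed extrema) appears to be a typo rather than a substantive difference.
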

\begin{proof}
Since $d_{2}^{(1)}(\theta) $ is a $2\pi$-periodic function of $\theta$,  our attention may be restricted to the interval $[-\frac{\pi}{2},\frac{3\pi}{2}] $. Taking the derivative of $d_{2}^{(1)}(\theta) $ with respect to $\theta$ gives
\[
\frac{d}{d\theta}d_{2}^{(1)}(\theta)= -\frac{\tilde{c}}{\tilde{\Lambda}\tilde{K_{\theta}}^{3} } \cos\theta.
\]
Because $ -\frac{\tilde{c}}{\tilde{\Lambda}\tilde{K_{\theta}}^{3} }<0$, the function $d_{2}^{(1)}(\theta) $ decreases monotonically in the interval $[-\frac{\pi}{2},\frac{\pi}{2}] $ and increases monotonically in the interval $[\frac{\pi}{2},\frac{3\pi}{2}] $ so that the maximum and minimum values of $d_{2}^{(1)}(\theta) $ are
\begin{align*}
\max_{\theta\in[-\frac{\pi}{2},\frac{3\pi}{2}]} d_{2}^{(1)}(\theta) &= d_{2}^{(1)}(-\frac{\pi}{2})=\tilde{v}+\tilde{c}\sqrt{\tilde{g}^{22}\tilde{h}},\\
\min_{\theta\in[-\frac{\pi}{2},\frac{3\pi}{2}]} d_{2}^{(1)}(\theta) &= d_{2}^{(1)}(\frac{\pi}{2})=\tilde{v}-\tilde{c}\sqrt{\tilde{g}^{22}\tilde{h}}.
\end{align*}
Therefore, the sufficient and necessary condition for that $d_{2}^{(1)}(\theta)=0 $ has two
real solutions is
\[
\min_{\theta\in[-\frac{\pi}{2},\frac{3\pi}{2}]} d_{2}^{(1)}(\theta)<0<\max_{\theta\in[-\frac{\pi}{2},\frac{3\pi}{2}]} d_{2}^{(1)}(\theta),
\]
which is equivalent to \eqref{condition1}. It completes the proof.
\qed\end{proof}

With the aid of {{Lemma}} \ref{lemma:d2theta},  $\theta_{i} $  may be evaluated as follows:
\begin{itemize}
\item If the inequality \eqref{condition1} holds, then the nonlinear equation $d_{2}^{(1)}(\theta)=0 $ is iteratively solved by using Newton's method
\[
\theta^{(m+1)}=\theta^{(m)}-\frac{d_{2}^{(1)}(\theta^{(m)})}{ \frac{d}{d\theta}d_{2}^{(1)} (\theta^{(m)})},
\ m=0,1,2,3,\cdots,
\]
with the initial guesses $\theta^{(0)} =0$ and $\pi$ respectively to get two approximate solutions $\theta_{1}$ and $\theta_{2} $, and set $\hat{N}=2$ in the approximate evolution operator $\mathcal{E}_{h}(\tau) $ or $\mathcal{E}_{h,0} $.
\item If the inequality \eqref{condition1} does not hold, then set $\hat{N}=1 $ and $\theta_{1}=0$ in the approximate   evolution operator $\mathcal{E}_{h}(\tau) $ or $\mathcal{E}_{h,0}$.
\end{itemize}

\subsection{The end points on the cell edge  within the subregion} %  $ \partial C_{j+\frac{1}{2},k+\frac{1}{2}}$}
\label{case2}
%Consider the approximate (or approximate local) evolution operator $\mathcal{E}_{h}(\tau)$ (or $\mathcal{E}_{h,0}$) at the mesh point of the edge of cell $ C_{j+\frac{1}{2},k+\frac{1}{2} } $.
Use ${\tt P_{0}}$ and ${\tt P}$ to denote the grid points $\left(x_{j},y_{k},t_{n}\right)$ and  $\left(x_{j},y_{k},t_{n}+\tau\right) $  respectively, and
%As mentioned in Section \ref{subsubsec:nonlocal},
 consider the possible intersection points between the bicharacteristic cone past ${\tt P} $ and the four cell edges $\mathcal{L}_{{\tt P}_{0}}^{\ell},\ell=1,2,3,4$.
The angles $\theta_{i}$ corresponding to those intersection points  may be evaluated by solving the equations
$d_{1}^{(1)}(\theta)=0$ and $d_{2}^{(1)}(\theta)=0$,
respectively.
Similarly, for the first equation,  the following conclusion holds.
\begin{lemma}
\label{lemma:d1theta}
If the inequality
\begin{equation}
\label{condition2}
\left|\tilde{u}\right|<\tilde{c}\sqrt{\tilde{g}^{11}\tilde{h}},
\end{equation}
holds, then the equation $d_{1}^{(1)}(\theta)=0 $ has two real solutions, which are located in the interval $\left(0,\pi\right)$ and $\left(\pi,2\pi\right)$, respectively; otherwise, it has less than two real solutions.
\end{lemma}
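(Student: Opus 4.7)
The plan is to mirror the argument of Lemma \ref{lemma:d2theta} essentially verbatim, exploiting the structural symmetry between $d_1^{(1)}(\theta)=\tilde u-\tilde c\tilde G_c(\theta)$ and $d_2^{(1)}(\theta)=\tilde v-\tilde c\tilde G_s(\theta)$. Since $d_1^{(1)}$ is $2\pi$-periodic in $\theta$, I restrict attention to the interval $[0,2\pi]$; the extrema of $\tilde G_c$ will turn out to sit at $\theta=0$ and $\theta=\pi$, rather than at $\theta=\pm\pi/2$ as was the case for $\tilde G_s$. The computational fact that drives the argument is the derivative identity
\[
\frac{d}{d\theta}d_1^{(1)}(\theta)=\frac{\tilde c}{\tilde{\Lambda}\,\tilde K_\theta^{3}}\sin\theta,
\]
which is already recorded inside the proof of Lemma \ref{lemma:phipsi}. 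Because the prefactor $\tilde c/(\tilde{\Lambda}\tilde K_\theta^{3})$ is strictly positive, the sign of this derivative coincides with that of $\sin\theta$, and hence $d_1^{(1)}$ is strictly increasing on $[0,\pi]$ and strictly decreasing on $[\pi,2\pi]$.

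Next I would identify the extrema of $d_1^{(1)}$ over $[0,2\pi]$ as its values at the endpoints $\theta=0$ and $\theta=\pi$ of the two monotonicity intervals. Evaluating $\tilde G_c(\theta)=(\tilde g^{11}\cos\theta+\tilde g^{12}\sin\theta)/\tilde K_\theta$ at these two points, using $\tilde K_0=\tilde K_\pi=\sqrt{\tilde g^{11}}$, gives $\tilde G_c(0)=\sqrt{\tilde g^{11}}$ and $\tilde G_c(\pi)=-\sqrt{\tilde g^{11}}$, whence
\[
\min_{\theta\in[0,2\pi]}d_1^{(1)}(\theta)=\tilde u-\tilde c\sqrt{\tilde g^{11}\tilde h},\qquad\max_{\theta\in[0,2\pi]}d_1^{(1)}(\theta)=\tilde u+\tilde c\sqrt{\tilde g^{11}\tilde h},
\]
using the same normalization convention for the square-root factor as in Lemma \ref{lemma:d2theta}. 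The condition that $d_1^{(1)}(\theta)=0$ admits two real roots in $[0,2\pi)$ is then precisely $\min<0<\max$, which rearranges to inequality \eqref{condition2}.

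Finally, under inequality \eqref{condition2}, strict monotonicity on each half-period combined with the intermediate value theorem produces exactly one root in $(0,\pi)$ and one in $(\pi,2\pi)$; if the inequality fails, the same monotonicity bounds the total number of roots by one. I do not anticipate any substantive obstacle, since every ingredient has already been assembled in the proof of Lemma \ref{lemma:d2theta}; the only point requiring minor care is the bookkeeping of the square-root normalization so that the final form of the threshold matches that stated for $d_2^{(1)}$ in Lemma \ref{lemma:d2theta}, and the verification that the extrema indeed occur at $\theta=0,\pi$ (rather than at a shifted pair as for $\tilde G_s$) because of the quarter-period offset between $\tilde G_c$ and $\tilde G_s$.
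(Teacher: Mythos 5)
Your proof is correct and is exactly the argument the paper intends: the paper omits the proof of Lemma \ref{lemma:d1theta}, saying only that it holds ``similarly'' to Lemma \ref{lemma:d2theta}, and your mirrored argument (derivative $\frac{d}{d\theta}d_1^{(1)}(\theta)=\frac{\tilde c}{\tilde\Lambda\tilde K_\theta^3}\sin\theta$ giving strict monotonicity on $[0,\pi]$ and $[\pi,2\pi]$, extrema $\tilde u\mp\tilde c\sqrt{\tilde g^{11}}$ at $\theta=0,\pi$, then the intermediate value theorem on each half-period) is precisely that similar argument. Your caution about the square-root normalization is warranted: direct evaluation gives $\tilde G_c(0)=\sqrt{\tilde g^{11}}$, so the natural threshold is $\tilde c\sqrt{\tilde g^{11}}$, and the extra factor $\tilde h$ in \eqref{condition2} is simply inherited from the way the paper writes the corresponding bound in Lemma \ref{lemma:d2theta}.
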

 With the help of {{Lemmas}} \ref{lemma:d2theta} and \ref{lemma:d1theta},
 the value of $\theta_{i}$ may be  evaluated as follows:
\begin{itemize}
\item If both \eqref{condition1} and \eqref{condition2} do not hold, then set $\hat{N}=1 $, $\theta_{1}=0 $.

\item If \eqref{condition1} holds while \eqref{condition2} does not hold, then iteratively solve $d_{2}^{(1)}(\theta)=0 $ with Newton's method to get its two solutions denoted by $\theta_{1}$ and $\theta_{2} $ with the initial guesses $0$ and $\pi$ respectively, and set $\hat{N}=2 $.

\item If \eqref{condition2} holds while \eqref{condition1} does not hold, then iteratively solve $d_{1}^{(1)}(\theta)=0 $ with Newton's method to get its two solutions denoted by $\theta_{1}$ and $\theta_{2} $ with the initial guesses $\frac{\pi}{2}$ and $\frac{3\pi}{2}$ respectively, and set $\hat{N}=2 $.

\item If both \eqref{condition1} and \eqref{condition2} hold,
then iteratively solve  $d_{2}^{(1)}(\theta)=0 $ and $d_{1}^{(1)}(\theta)=0 $ with Newton's method to obtain four angles,
which are labeled in anticlockwise direction
as $\theta_{1},\theta_{2},\theta_{3},\theta_{4} $, and set $\hat{N}=4$.
\end{itemize}

% % % % % % % % % % % % % % % % % % % % % % % % % % % % % % % % % % % % % % % % % % % %5
\section{}
\label{sec:AppendixC}
This appendix presents the detailed procedure to evaluate $\left\{\theta_{i},i=1,2,\cdots,\hat{N}\right\} $ in the approximate  evolution operator $\mathcal{E}_{h}(\tau) $ or $\mathcal{E}_{h,0}$
for   the subregion boundaries  in the  LAT/LON $(\xi,\eta )$ plane,
 %for $0<\tau\leq\Delta t_{n} $ which is presented
 see Section \ref{subsubsection:edges}.
%
%Since we want {\color{red}{to}} obtain the the approximate (or approximate local) evolution operator $\mathcal{E}_{h}(\tau) $(or $\mathcal{E}_{h,0} $) for SWE{\color{red}{s}} on LAT/LON coordinates, corresponding angle of the intersection points between the curves which are the projection of the cell edges in $(x,y)$ and the bottom of the bicharacteristic cone should be evaluated.
%there are 12 subregion boundaries,
For the sake of convenience,  %On the cubed-sphere grid,  see Fig. \ref{fig:sphere} (a) or (b),
use $\mathcal{L}_{i}$ to denote the  subregion boundary between the $i$th and   $\big((i+1)\mod4\big)$th subregions,
  $i=1,2,3,4$. Also use $\mathcal{L}_{i}^{up} $ (resp. $\mathcal{L}_{i}^{down} $)
to  denote the subregion boundary between $i$th and  $5$th (resp. $6$th) subregions,
 $i=1,2,3,4$.

The subregion boundaries $\mathcal{L}_{1}$ and $\mathcal{L}_{1}^{up} $ are only discussed in the following,
because other subregion boundaries may be similarly treated by using the  translation and reflection transformations.

% % % % % % % % % % % % % % % % % % % % % % % % % % % % % % % % % % % %
\subsection{The inner points of the cell edge on the subregion boundary}

Since the cell edges  $ \partial C_{j+\frac{1}{2},k+\frac{1}{2}}$ on  $\mathcal{L}_{1}$
in  the LAT/LON $(\xi,\eta) $ plane are just part of the longitude lines (i.e. $\xi=$const),
the number of  interaction points between the edge $ \mathcal{L}_{{1}} $
and the bottom of the bicharacteristic cone is equal to the number of real solutions to the algebraic equation $d_{2}^{(1)}(\theta)=0 $
%  the intersection points between the curves which are the projection{\color{red}{s}} of the cell edges and the bottom of the bicharacteristic cone {\color{red}{are}} still contingent on whether or not the algebraic equation $d_{1}^{(1)}(\theta)=0 $ has solution.
such that  the method in Appendix \ref{case1} may be directly used to evaluate the value of $\theta_i$.

The  cell edge  $ \partial C_{j+\frac{1}{2},k+\frac{1}{2}}$ on
$ \mathcal{L}_{1}^{up}$ in  the LAT/LON $(\xi,\eta) $ plane
 consists of those  curves satisfying the equation
 \begin{equation}\label{EQ-App-B-01}
 \tan\eta=\cos\xi\tan y,
 \end{equation}
 where $y=y_{\hat{N}}$ and $\xi\in[-\frac{\pi}{4},\frac{\pi}{4}]$.
 %, and $\left(\xi,\eta\right)$
 %is the coordinate of  mapping point on the cell edges  $ \partial C_{j+\frac{1}{2},k+\frac{1}{2}}$
 %to the LAT/LON space.
 Use $(\xi_{0},\eta_{0}) $ to
 denote the inner point on the cell edges $(x,y_{N}) $  in
 the LAT/LON $(\xi,\eta)$ plane.
The task is  to get the
intersection points between the curve \eqref{EQ-App-B-01} and
\[
\mathcal{C}_{{\tt P}}^{n}=\left\{\left(x-d_{1}^{(\ell)}(\theta)\tau,y_{N}-d_{2}^{(\ell)}(\theta)\tau,t_{n}\right)=
{\tt Q}(\theta)|\ell=1,3,\theta\in[0,2\pi)\right\}.
\]
Substituting point $\left(\xi_{0}-d_{1}^{(1)}(\theta)\tau,\eta_{0}-d_{2}^{(1)}(\theta)\tau\right) $ into the equation \eqref{EQ-App-B-01} gives
\[
\tan(\eta_{0}-d_{2}^{(1)}(\theta)\tau)
=\cos(\xi_{0}-d_{1}^{(1)}(\theta)\tau)\tan y_{N},
 \]
thus one has
\[
\tan\eta_{0}-\tan(\eta_{0}-d_{2}^{(1)}(\theta)\tau)
=\cos\xi_{0}\tan y_{N}-\cos(\xi_{0}-d_{1}^{(1)}(\theta)\tau) \tan y_{N}.
\]
Using   Lagrange's mean value theorem, and letting $\tau\to 0 $,  one yields
\begin{equation}
\label{curve1}
\sec^{2}\eta_{0} d_{2}^{(1)}(\theta)=-\sin\xi_{0} d_{1}^{(1)}(\theta).
\end{equation}
%Similar to {{Lemmas}} \ref{lemma:d2theta} and \ref{lemma:d1theta},
%Thus we have the following conclusion.

\begin{lemma}
\label{lemma:l1}
If the inequality
\begin{equation}
\label{conditionedge1}
\mathcal{F}_{c}\left(\theta_{\min},\xi_{0},\eta_{0},y_{N}\right) <0< \mathcal{F}_{c}\left(\theta_{\max},\xi_{0},\eta_{0},y_{N}\right),
\end{equation}
holds, where
$\mathcal{F}_{c}(\theta,\xi_{0},\eta_{0},y): =\sec^{2}\eta_{0} d_{2}^{(1)}(\theta)+\sin\xi_{0} d_{1}^{(1)}(\theta)\tan y $,
 then \eqref{curve1} has two real solutions, which are located in the interval $(\theta_{\min},\theta_{\max})$ and $(\theta_{\max},\theta_{\min}+2\pi) $, respectively; otherwise, it has less than two real solutions, where $\theta_{\min} $ satisfies
\[
\sin\xi_{0}\sin\theta\tan y_{N}-\sec^{2}\eta_{0}\cos\theta=\sqrt{\sin^{2}\xi_{0}\tan^{2}y_{N}+\sec^{4}\eta_{0}}\sin(\theta-\theta_{\min}) ,\]
and $\theta_{\max}=\theta_{\min}+\pi$.
\end{lemma}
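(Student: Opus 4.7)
The plan mirrors the monotonicity argument used to prove Lemma~\ref{lemma:d2theta} (and its sibling Lemma~\ref{lemma:d1theta}): first determine the critical points of $\mathcal{F}_c(\theta,\xi_0,\eta_0,y_N)$ as a function of $\theta$, then exploit strict monotonicity on each arc to count zeros via the intermediate value theorem, and finally identify the two located intervals $(\theta_{\min},\theta_{\max})$ and $(\theta_{\max},\theta_{\min}+2\pi)$ as the pieces on which the derivative has definite sign.

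First I would compute $\tfrac{d}{d\theta}\mathcal{F}_c$. Using the identities $\tfrac{d}{d\theta}d_1^{(1)}(\theta)=\tfrac{\tilde{c}}{\tilde{\Lambda}\tilde{K}_\theta^3}\sin\theta$ and $\tfrac{d}{d\theta}d_2^{(1)}(\theta)=-\tfrac{\tilde{c}}{\tilde{\Lambda}\tilde{K}_\theta^3}\cos\theta$, already recorded in the proof of Lemma~\ref{lemma:phipsi}, a direct differentiation of the definition of $\mathcal{F}_c$ yields
\begin{equation*}
\frac{d\mathcal{F}_c}{d\theta} \;=\; \frac{\tilde{c}}{\tilde{\Lambda}\tilde{K}_\theta^3}\bigl(\sin\xi_0\sin\theta\tan y_N - \sec^2\eta_0\cos\theta\bigr).
\end{equation*}
Since $\tilde{c}/(\tilde{\Lambda}\tilde{K}_\theta^3)>0$, the sign of the derivative coincides with that of the parenthesized expression. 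Putting the latter into amplitude–phase form gives $R\sin(\theta-\theta_{\min})$ with amplitude $R=\sqrt{\sin^2\xi_0\tan^2 y_N+\sec^4\eta_0}$, which is precisely the defining relation of $\theta_{\min}$ in the statement; hence the critical points of $\mathcal{F}_c$ on one period are exactly $\theta_{\min}$ and $\theta_{\max}=\theta_{\min}+\pi$.

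Next I would read off the sign of the derivative on the two resulting arcs: $\sin(\theta-\theta_{\min})>0$ on $(\theta_{\min},\theta_{\max})$ and $\sin(\theta-\theta_{\min})<0$ on $(\theta_{\max},\theta_{\min}+2\pi)$. Consequently $\mathcal{F}_c$ is strictly increasing on the first arc and strictly decreasing on the second, so $\theta_{\min}$ and $\theta_{\max}$ are respectively the global minimizer and maximizer of $\mathcal{F}_c$ over one period. Applying the intermediate value theorem to each monotone branch, the equation $\mathcal{F}_c(\theta,\xi_0,\eta_0,y_N)=0$ admits exactly one solution in $(\theta_{\min},\theta_{\max})$ and exactly one in $(\theta_{\max},\theta_{\min}+2\pi)$ if and only if $\mathcal{F}_c(\theta_{\min})<0<\mathcal{F}_c(\theta_{\max})$; if either strict inequality fails, monotonicity forces at most one solution on a full period, proving the ``two real solutions'' versus ``less than two real solutions'' dichotomy claimed.

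I do not foresee a substantial obstacle here; the only mildly delicate step is collecting the derivative into the single sinusoid $R\sin(\theta-\theta_{\min})$, which follows by matching the coefficients of $\sin\theta$ and $\cos\theta$ against $R\cos\theta_{\min}=\sin\xi_0\tan y_N$ and $R\sin\theta_{\min}=\sec^2\eta_0$. Periodicity of $\mathcal{F}_c$ in $\theta$, inherited from $d_1^{(1)}$ and $d_2^{(1)}$, is what lets us restrict to one period $[\theta_{\min},\theta_{\min}+2\pi)$ and hence count intersections with $\mathcal{C}_{\tt P}^n$ unambiguously.
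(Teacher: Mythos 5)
Your proposal is correct and is essentially the argument the paper intends: the lemma is stated without proof, but it is the direct analogue of Lemma~\ref{lemma:d2theta}, whose printed proof uses exactly your strategy of differentiating via $\frac{d}{d\theta}d_{1}^{(1)}=\frac{\tilde{c}}{\tilde{\Lambda}\tilde{K}_{\theta}^{3}}\sin\theta$, $\frac{d}{d\theta}d_{2}^{(1)}=-\frac{\tilde{c}}{\tilde{\Lambda}\tilde{K}_{\theta}^{3}}\cos\theta$, identifying the two monotone arcs separated by the extremizers, and counting zeros by the intermediate value theorem. Your amplitude--phase reduction correctly recovers the defining relation for $\theta_{\min}$ and $\theta_{\max}=\theta_{\min}+\pi$, so no gap remains.
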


With  the aid of Lemma \ref{lemma:l1},
the value of $\theta_{i}$ may be  evaluated as follows:
\begin{itemize}
\item If \eqref{conditionedge1} holds, then iteratively solve $\mathcal{F}_{c}\left(\theta,\xi_{0},\eta_{0},y_{N}\right)=0 $ with Newton's method to get its two solutions denoted by $\theta_{1}$ and $\theta_{2} $ with the initial guesses $ \frac{1}{2}(\theta_{\min}+\theta_{\max})$ and $\frac{1}{2}(\theta_{\min}+\theta_{\max}) +\pi$, respectively, and set $\hat{N}=2$.
\item If \eqref{conditionedge1} does not hold, then set $\hat{N}=1$, $\theta_{1}=0$.
\end{itemize}

% % % % % % % % % % % % % % % % % % % % % % % % % % % % % % % % % % % %
\subsection{The end points  of the cell edge on the subregion boundary}
Use ${\tt P_{0}}$, and ${\tt P}$
to denote the grid points {$(x_{N},y_{k},t_{n})$ and $(x_{N},y_{k},t_{n}+\tau) $} on  $\mathcal{L}_{1}$.
% As mentioned in Section \ref{subsubsection:edges}, we only need to consider the possible intersection points between the bottom of the bicharacteristic cone and projection{{s}} of the four $\mathcal{L}_{{\tt P}_{0}}^{\ell},\ell=1,2,3,4 $cell edges on LAT/LON coordinates.
In this case,   the cell edges on $\mathcal{L}_{1}$ mapping to the LAT/LON $(\xi,\eta) $ plane are part of the longitude lines (i.e. $\xi$=const), and the curves  satisfying
\eqref{EQ-App-B-01} and $\tan\eta=\cos(\xi-\frac{\pi}{2})\tan y $ respectively,
where  $y=y_{k} $, {see schematic diagram in Fig \ref{fig:boundary} (a)}.  %, $(\xi,\eta) $ is the projection of a point on the cell edges to LAT/LON coordinates.
Use $(\xi_{0},\eta_{0})$  to denote the  LAT/LON coordinates
of  the end points of the cell edges $(x_{N},y_{k}) $.
Corresponding angles $\theta_i$
of two possible intersection points  between the edge
 (i.e. $\xi$=const) and the bottom of the bicharacteristic cone
may  be  evaluated by solving $d_{1}^{(1)}(\theta)=0$,
while the  angles $\theta_i$ corresponding to other interaction points
are gotten by solving
the following  equations
\[\begin{cases}
 \mathcal{F}_{c}\left(\theta,\xi_{0},\eta_{0},y_{k}\right)=0, &
 d_{1}^{(1)}(\theta)>0, \\
   \mathcal{F}_{c}\left(\theta,\xi_{0}-\frac{\pi}{2},\eta_{0},y_{k}\right)=0, & d_{1}^{(1)}(\theta)<0.
 \end{cases}
\]
%Use {{Lemma}} \ref{lemma:l1} to find out the possible intersection points, and judge the sign of $d_{1}^{(1)}(\theta)$ at the intersection points.
Using  the procedure in   Appendix \ref{case2} gives %\ref{sec:AppendixA},
$\hat{N}$ and $\theta_{i},i=1,\cdots,\hat{N} $.

{{The}}  cell edges on $ \mathcal{L}_{1}^{up}$  mapping to the LAT/LON
plane are the longitude line (i.e. $\xi$=const),
or the curve satisfying  \eqref{EQ-App-B-01} or $\tan\xi\tan x=\sin\lambda $, here $x=x_{j},y=y_{N} $, $\xi\in[-\frac{\pi}{4},\frac{\pi}{4}] $,
and $(\xi,\eta) $ denotes the point on the cell edges in the LAT/LON plane, {see schematic diagram in Fig \ref{fig:boundary} (b)}.
Use $(\xi_{0},\eta_{0})$
 to denote the end point of the cell edges $(x_{j},y_{N}) $ in the LAT/LON space.
 The angles relating  to the intersection points between the cell edges and the bottom of
 the bicharacteristic cone may  be obtained by solving  the equation
 $\mathcal{F}_{c}\left(\theta,\xi_{0},\eta_{0},y_{N}\right)=0$, and
\begin{align*}
\begin{cases}
 d_{2}^{(1)}(\theta)=0, & \mathcal{F}_{c}\left(\theta,\xi_{0},\eta_{0},y_{N}\right)>0,\\
\mathcal{F}_{s}\left(\theta,\xi_{0},\eta_{0},x_{j}\right)=0, & \mathcal{F}_{c}\left(\theta,\xi_{0},\eta_{0},y_{N}\right)<0,
\end{cases}\end{align*}
respectively, where
\begin{equation}
\label{curve2}
\mathcal{F}_{s}\left(\theta,\xi_{0},\eta_{0},x_{j}\right):= \sec^{2}\eta_{0}\tan x_{j}d_{2}^{(1)}(\theta)-\cos\xi_{0} d_{1}^{(1)}(\theta).
\end{equation}

For the equation  $\mathcal{F}_{s}\left(\theta,\xi_{0},\eta_{0},x_{j}\right)=0 $,
 the following conclusion holds.
\begin{lemma}
\label{lemma:l2}
If the inequality
\begin{equation}
\label{conditionedge2}
\mathcal{F}_{s}\left(\theta_{\min},\xi_{0},\eta_{0},x_{j}\right)<0<\mathcal{F}_{s}\left(\theta_{\max},\xi_{0},\eta_{0},x_{j}\right),
\end{equation}
holds, then the equation $\mathcal{F}_{s}\left(\theta,\xi_{0},\eta_{0},x_{j}\right)=0 $ has two real solutions, which are in the interval $(\theta_{\min},\theta_{\max})$ and $(\theta_{\max},\theta_{\min}+2\pi) $, respectively; otherwise, it has less than two real solutions, where $\theta_{\min} $ satisfies
\[
-\sec^{2}\eta_{0}\tan x_{j}\cos\theta-\cos\xi_{0}\sin\theta=\sqrt{\sec^{4}\eta_{0}\tan^{2}x_{j}+\cos^{2}\xi_{0}}\sin(\theta-\theta_{\min}),
\]
and $\theta_{\max}=\theta_{\min}+\pi $.
\end{lemma}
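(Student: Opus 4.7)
The plan is to mimic closely the strategy already used for Lemmas \ref{lemma:d2theta}, \ref{lemma:d1theta}, and in particular Lemma \ref{lemma:l1}: analyze the monotonicity of $\mathcal{F}_{s}(\cdot,\xi_{0},\eta_{0},x_{j})$ on a period of length $2\pi$ by differentiating, reduce the derivative to a single sinusoid, and then invoke the Intermediate Value Theorem on the two monotone branches.

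First I would compute $\frac{d}{d\theta}\mathcal{F}_{s}$. Using the identities already established in the proof of Lemma \ref{lemma:phipsi},
\[
\frac{d}{d\theta}d_{1}^{(1)}(\theta)=\frac{\tilde{c}}{\tilde{\Lambda}\tilde{K}_{\theta}^{3}}\sin\theta,\qquad
\frac{d}{d\theta}d_{2}^{(1)}(\theta)=-\frac{\tilde{c}}{\tilde{\Lambda}\tilde{K}_{\theta}^{3}}\cos\theta,
\]
differentiating the definition \eqref{curve2} gives
\[
\frac{d}{d\theta}\mathcal{F}_{s}(\theta,\xi_{0},\eta_{0},x_{j})=\frac{\tilde{c}}{\tilde{\Lambda}\tilde{K}_{\theta}^{3}}\bigl(-\sec^{2}\eta_{0}\tan x_{j}\cos\theta-\cos\xi_{0}\sin\theta\bigr).
\]
Since $\tilde{c}/(\tilde{\Lambda}\tilde{K}_{\theta}^{3})>0$, the sign of this derivative coincides with the sign of $-\sec^{2}\eta_{0}\tan x_{j}\cos\theta-\cos\xi_{0}\sin\theta$, and a standard amplitude/phase manipulation rewrites the latter as $\sqrt{\sec^{4}\eta_{0}\tan^{2}x_{j}+\cos^{2}\xi_{0}}\,\sin(\theta-\theta_{\min})$, which is exactly the defining relation of $\theta_{\min}$ given in the statement.

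Next I would read off the monotone structure. Setting the derivative to zero on $[\theta_{\min},\theta_{\min}+2\pi)$ gives precisely the two points $\theta_{\min}$ and $\theta_{\max}=\theta_{\min}+\pi$; the derivative is positive on $(\theta_{\min},\theta_{\max})$ and negative on $(\theta_{\max},\theta_{\min}+2\pi)$. Hence $\mathcal{F}_{s}$ attains its global minimum on the period at $\theta_{\min}$ and its global maximum at $\theta_{\max}$, and on each of the two open intervals it is strictly monotone. By the Intermediate Value Theorem (applied on each branch) a zero exists in $(\theta_{\min},\theta_{\max})$ if and only if $\mathcal{F}_{s}(\theta_{\min})<0<\mathcal{F}_{s}(\theta_{\max})$, in which case strict monotonicity makes it unique; an entirely symmetric statement holds for a zero in $(\theta_{\max},\theta_{\min}+2\pi)$, giving the second root. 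Conversely, if condition \eqref{conditionedge2} fails then either $\mathcal{F}_{s}(\theta_{\min})\ge 0$ or $\mathcal{F}_{s}(\theta_{\max})\le 0$, and strict monotonicity on each branch forces $\mathcal{F}_{s}$ to have at most one zero on the full period (at a boundary extremum).

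I do not anticipate a conceptual obstacle: the argument is structurally identical to the proof of Lemma \ref{lemma:l1}, with the new trigonometric combination $-\sec^{2}\eta_{0}\tan x_{j}\cos\theta-\cos\xi_{0}\sin\theta$ replacing the one occurring there. The only place where some care is needed is the reduction to a single sinusoid, since one must verify that the amplitude $\sqrt{\sec^{4}\eta_{0}\tan^{2}x_{j}+\cos^{2}\xi_{0}}$ is strictly positive so that $\theta_{\min}$ is well defined; this is automatic except in the degenerate coincidence $\tan x_{j}=0$ and $\cos\xi_{0}=0$, which cannot occur for a cell edge point on $\mathcal{L}_{1}^{up}$ with $\xi_{0}\in[-\pi/4,\pi/4]$, so this case may be safely excluded.
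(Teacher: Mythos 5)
Your proposal is correct and follows exactly the template the paper itself uses for the one lemma of this family it does prove (Lemma \ref{lemma:d2theta}): differentiate, collapse the derivative to a positive factor times $\sqrt{\sec^{4}\eta_{0}\tan^{2}x_{j}+\cos^{2}\xi_{0}}\,\sin(\theta-\theta_{\min})$, read off that $\mathcal{F}_{s}$ is strictly increasing on $(\theta_{\min},\theta_{\max})$ and strictly decreasing on $(\theta_{\max},\theta_{\min}+2\pi)$, and apply the intermediate value theorem on each monotone branch. Your extra remark that the amplitude is nonzero because $\cos\xi_{0}\neq 0$ for $\xi_{0}\in[-\frac{\pi}{4},\frac{\pi}{4}]$ is a correct and worthwhile detail that the paper leaves implicit.
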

It means that  two solutions of the equation $\mathcal{F}_{s}\left(\theta,\xi_{0},\eta_{0},x_{j}\right)=0 $ may be iteratively gotten
by using Newton's method  with the initial guesses $ \frac{1}{2}(\theta_{\min}+\theta_{\max})$ and $\frac{1}{2}(\theta_{\min}+\theta_{\max}) +\pi$, respectively,
if \eqref{conditionedge2} holds.

% Use {\color{red}{lemma}} \ref{lemma:l2} to find out the possible intersection points, and judge the sign of $ \mathcal{F}_{c}\left(\theta,\xi_{0},\eta_{0},y_{\color{red}N}\right)$ at the intersection points. Similar to Appendix \ref{case2}, $\hat{N}$ and $\theta_{i},i=1,\cdots,\hat{N} $ can be obtained.
Following  the procedure in   Appendix \ref{case2} gives %\ref{sec:AppendixA},
$\hat{N}$ and $\theta_{i},i=1,\cdots,\hat{N} $.
\begin{figure}[htbp]
  \centering
    \subfigure[{\small $\mathcal{L}_{1}$}]{
    	\begin{minipage}[c]{7cm}
    		\includegraphics[width=7cm,height=5cm]{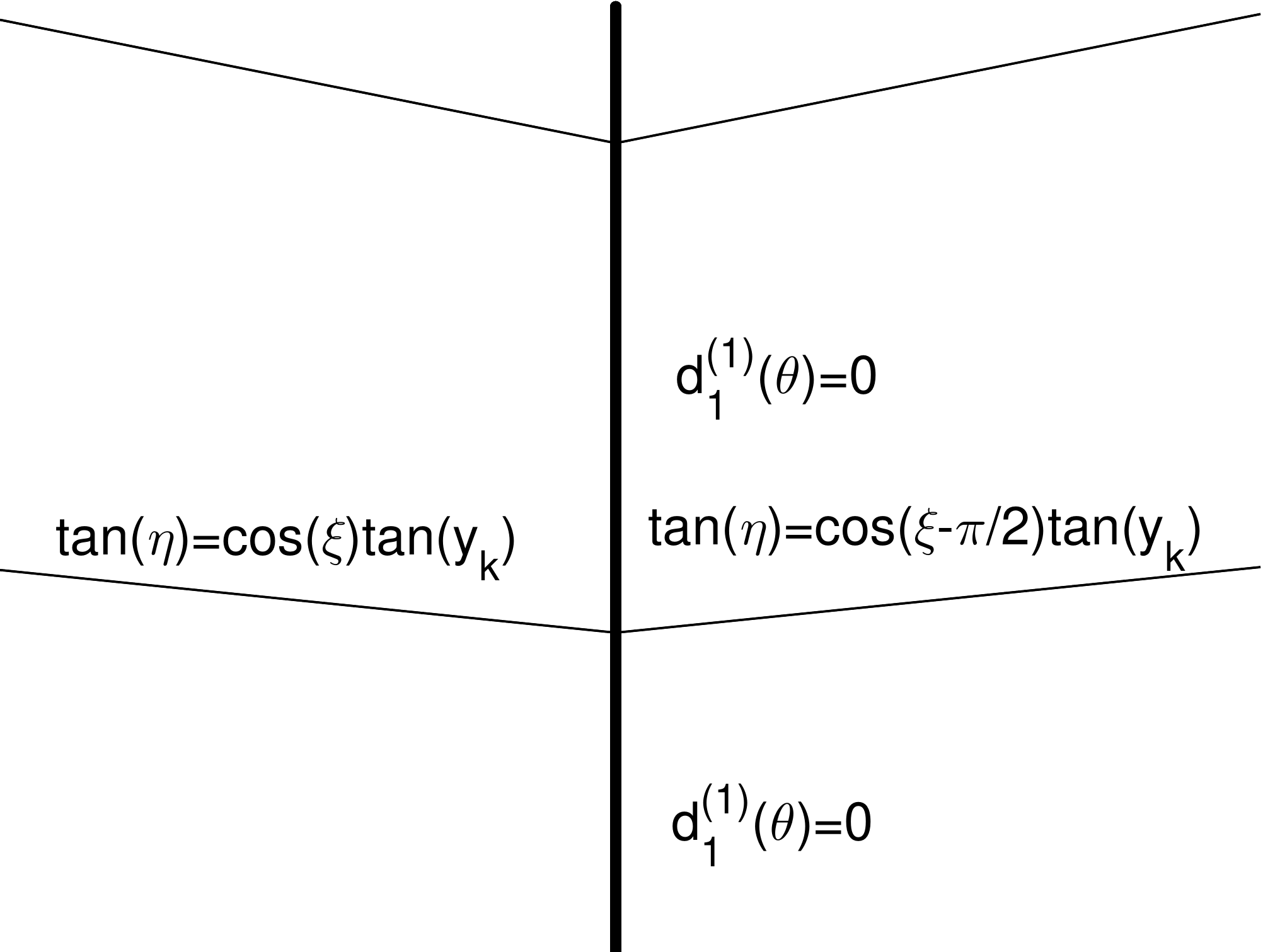}
    	\end{minipage}}\qquad
  \subfigure[{\small $\mathcal{L}_{1}^{up}$}]{
  \begin{minipage}{7cm}
  \includegraphics[width=7cm,height=5cm]{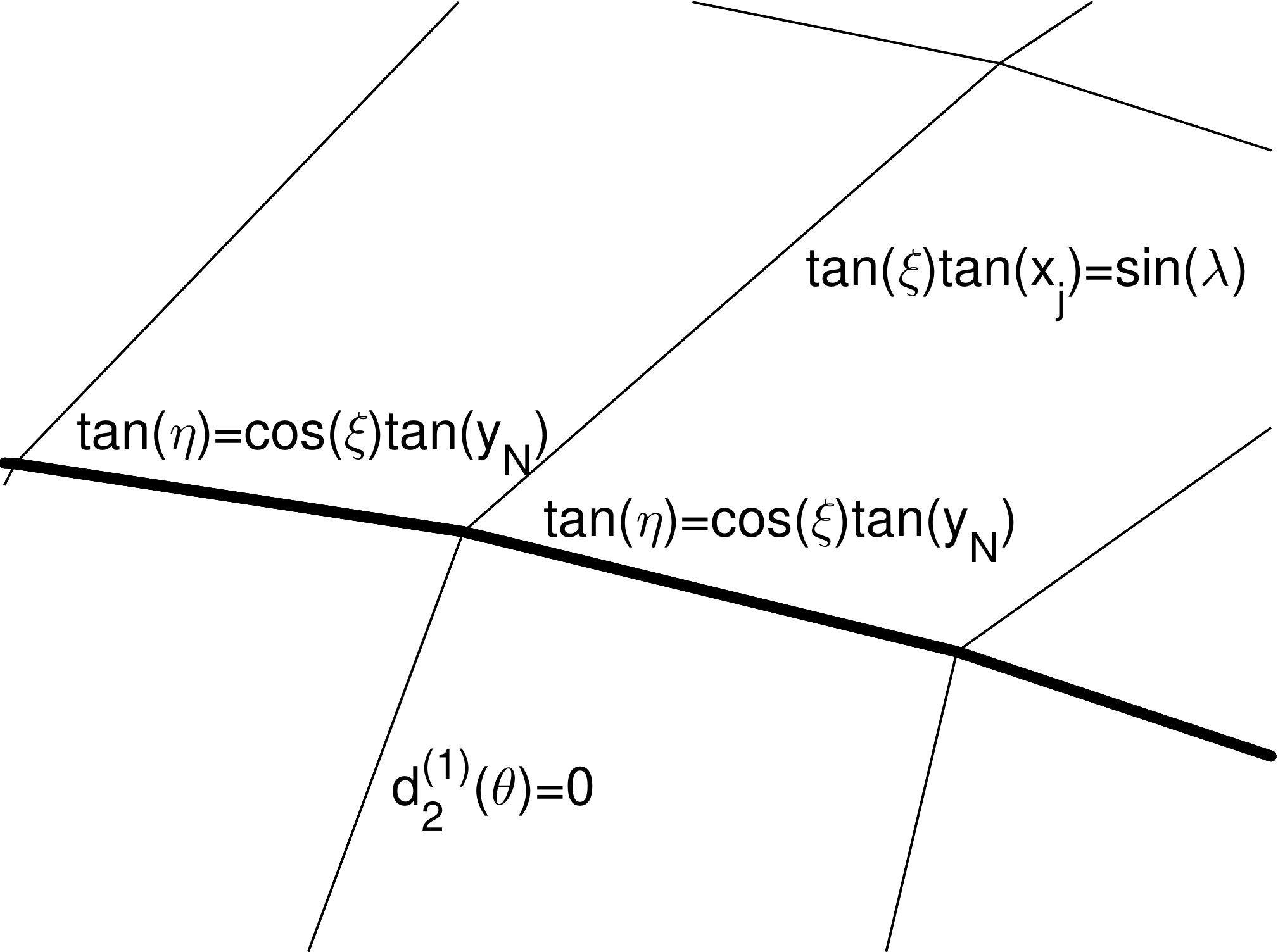}
  \end{minipage}}
  \caption{Schematic diagram of the subregion boundary.}
  \label{fig:boundary}
\end{figure}

\end{appendices}

\end{document}